\def\multilimits@{\bgroup
  \Let@
  \restore@math@cr
  \default@tag
 \baselineskip\fontdimen10 \scriptfont\tw@
 \advance\baselineskip\fontdimen12 \scriptfont\tw@
 \lineskip\thr@@\fontdimen8 \scriptfont\thr@@
 \lineskiplimit\lineskip
 \vbox\bgroup\ialign\bgroup\hfil$\m@th\scriptstyle{##}$\hfil\crcr}
\def\Sb{_\multilimits@}
\def\Sp{^\multilimits@}
\def\endSb{\crcr\egroup\egroup\egroup}
\def\smallmatrix{\null\,\vcenter\bgroup
 \Let@\restore@math@cr\default@tag
 \baselineskip6\ex@ \lineskip1.5\ex@ \lineskiplimit\lineskip
 \ialign\bgroup\hfil$\m@th\scriptstyle{##}$\hfil&&\thickspace\hfil
 $\m@th\scriptstyle{##}$\hfil\crcr}
\def\endsmallmatrix{\crcr\egroup\egroup\,}
\theoremstyle{plain}
\newtheorem{theorem}{Theorem}[section]
\newtheorem{lemma}[theorem]{Lemma}
\newtheorem{proposition}[theorem]{Proposition}
\newtheorem{corollary}[theorem]{Corollary}
\theoremstyle{definition}
\newtheorem{definition}[theorem]{Definition}
\newtheorem{example}[theorem]{Example}
\theoremstyle{remark}
\newtheorem{remark}[theorem]{Remark}
\newtheorem*{rem-not}{Remark on the Notation}
\numberwithin{equation}{section}
\def\sgn{\mathop{\rm sgn}}
\def\Res{\mathop{\rm Res}}
\def\Z{\mathbb{Z}}
\def\Q{\mathbb{Q}}
\def\R{\mathbb{R}}
\def\C{\mathbb{C}}
\def\calA{\mathcal{A}}
\def\abs#1{\left|#1\right|}
\newcommand{\nidan}[2]{\stackrel{\scriptstyle{#1}}{#2}}
\begin{document}
\title{
Converse theorems for automorphic distributions and 
Maass forms of level $N$
}
\author{Tadashi Miyazaki
\!\!\thanks{The first and the second authors are supported by JSPS KAKENHI Grant Number 15K04800. }
\footnote{Department of Mathematics, College of Liberal Arts and Sciences, Kitasato University, 1-15-1 Kitasato, Minamiku, Sagamihara, Kanagawa, 252-0373, Japan. 
E-mail:\texttt{miyaza@kitasato-u.ac.jp}}\,,
Fumihiro Sato${}^*$\!\!
\footnote{Institute for Mathematics and Computer Science, Tsuda College, 2-1-1 Tsuda-machi, Kodaira-shi,
Tokyo, 187-8577, Japan. E-mail:\texttt{fsato@tsuda.ac.jp }}\,, 
Kazunari Sugiyama\!\!
\footnote{Department of Mathematics, Chiba Institute of Technology, 2-1-1 
Shibazono, Narashino, Chiba, 275-0023, Japan. 
E-mail:\texttt{skazu@sky.it-chiba.ac.jp}}\,\,
and Takahiko Ueno\!\!
\footnote{School of Medicine, St.\ Marianna University, 2-16-1 Sugao, Miyamae-ku, Kawasaki, Kanagawa, 216-8511, Japan. 
E-mail:\texttt{t2ueno@marianna-u.ac.jp}}
}
\date{\today}
\maketitle

\begin{abstract}
We investigate the relations for $L$-functions satisfying certain functional equation,  summationa formulas of Voronoi-Ferrar type and Maass forms of integral and half-integral weight. 
Summation formulas of Voronoi-Ferrar type can be viewed as an automorphic property of distribution vectors of non-unitary principal series representations of the double covering group of $SL(2)$. 
Our goal is converse theorems for automorphic distributions and Maass forms of level $N$ characterizing them by analytic properties of the associated $L$-functions. 
As an application of our converse theorems, we construct Maass forms from the two-variable zeta functions related to quadratic forms studied by Peter and the fourth author. 
\end{abstract}

\section*{Introduction}

It is well-known that the functional equation of the Riemann zeta function, the Poisson summation formula, and the transformation formula for the Jacobi theta function (Thetanullwert) are equivalent (see Hamburger \cite{Hamburger}). 
In the present paper we investigate similar relations for Maass forms of integral and half-integral weight, and give converse theorems for automorphic distributions and Maass forms of level $N$. 
As an application of our converse theorems, we construct Maass forms from the two-variable zeta functions related to quadratic forms studied by Peter \cite{Peter} and the fourth author \cite{Ueno}.

{\bf 0.1}\quad 
Let $\ell$ be an integer, $N$ a positive integer, and $\lambda$ a complex number different from $\frac{1-k}2$ $(k = 0,1,2,\ldots)$. 
We assume that $N \equiv 0 \pmod 4$ if $\ell$ is odd. 
Our main concern is the relation between the following three objects:
\begin{enumerate}
\def\labelenumi{(\Roman{enumi})}
\item
$L$-functions  
\[
\xi_\pm(\alpha;s)=\sum_{n=1}^\infty \frac{\alpha(\pm n)}{n^s}, \quad 
\xi_\pm(\beta;s)=\sum_{n=1}^\infty \frac{\beta(\pm n)}{n^s}
\]
satisfying the functional equation
\begin{eqnarray}
\lefteqn{
N^{s+2\lambda-2}
 \begin{pmatrix}
  e^{\pi s i/2}  & e^{-\pi s i/2} \\
  e^{-\pi s i/2}  & e^{\pi s i/2} 
 \end{pmatrix}
 \left(
 \begin{array}{c}
 \Xi_{+}(\alpha;s) \\[2pt] \Xi_{-}(\alpha;s)
 \end{array}
 \right)
} \nonumber \\
 & & = 
 \begin{pmatrix}
  i^{\ell} \cdot e^{-\pi (2-2\lambda-s) i/2}  & i^{\ell} \cdot e^{\pi (2-2\lambda-s) i/2} \\
  e^{\pi (2-2\lambda-s) i/2}  & e^{-\pi (2-2\lambda-s) i/2} 
 \end{pmatrix}
 \left(
 \begin{array}{c}
 \Xi_{+}(\beta;2-2\lambda-s) \\[2pt] \Xi_{-}(\beta;2-2\lambda-s)
 \end{array}
 \right)
\label{eqn:FE in Introduction}
\end{eqnarray}
and some additional analytic conditions. 
Here we put $\Xi_{\pm}(\ast;s)=(2\pi)^{-s}\Gamma(s)\xi_{\pm}(\ast;s)$.
\item Summation formula of the form 
\begin{eqnarray}
\lefteqn{
\alpha(\infty) f(\infty)+
 \sum_{n=-\infty}^{\infty} \alpha(n) (\mathcal{F}f)(n)
} \nonumber \\
 & & = \beta(\infty) f_{\infty}(\infty) 
      + \sum_{n=-\infty}^{\infty} \beta(n) (\mathcal{F}f_{\infty})  \left(\frac{n}{N}\right)  \quad (f \in \mathcal{V}_{\lambda,\ell}^\infty).
\label{eqn:Ferrar-Suzuki in Introduction}
\end{eqnarray}
Here $\mathcal{V}_{\lambda,\ell}^\infty$ is the space of $C^\infty$-functions $f$  on $\R$ such that 
\[
f_\infty(x):=(\mathrm{sgn}(x))^{\ell/2}|x|^{-2\lambda}f(-1/x)
\]
is also a $C^\infty$-function on $\R$, $f(\infty)=f_\infty(0)$, $f_\infty(\infty)=i^\ell f(0)$, and $\mathcal{F}$ denotes the Fourier transform. 
\end{enumerate}

Note that the space $\mathcal{V}^\infty_{\lambda,\ell}$ is the space of smooth vectors in the line model of the principal series representation of $SL_2(\R)$ for even $\ell$ and of the double covering group of  $SL_2(\R)$ for odd $\ell$.   
In this paper, we introduce a group $\widetilde{G}$ isomorphic to the direct product of $\Z/2\Z$ and the double covering group of $SL_2(\R)$, and consider the action of $\widetilde{G}$ on $\mathcal{V}_{\lambda,\ell}^\infty$, which we call the principal series representation of $\widetilde{G}$. 

\begin{enumerate}
\def\labelenumi{(\Roman{enumi})}
\setcounter{enumi}{2}
\item Maass forms of weight $\ell/2$ 
\begin{align*}
 F_\alpha(z) &= \alpha(\infty) \cdot y^{\lambda-\ell/4}+
 \alpha(0)\cdot i^{-\ell/2} \cdot 
 \frac{(2\pi) 2^{1-2\lambda} \Gamma(2\lambda-1)}
 {\Gamma\left(\lambda+\frac{\ell}{4}\right)
 \Gamma\left(\lambda-\frac{\ell}{4}\right)} \cdot y^{1-\lambda-\ell/4} \\[3pt]
  &\quad + \sum
 \begin{Sb}
 n=-\infty \\
 n\neq 0
 \end{Sb}^{\infty} \alpha(n) \cdot 
 \frac{i^{-\ell/2}\cdot \pi^{\lambda} \cdot  |n|^{\lambda-1}}
{\Gamma\left(\lambda+\frac{\sgn(n)\ell}{4}\right)} \cdot 
 y^{-\ell/4}\, W_{\frac{\sgn(n)\ell}{4}, \lambda-\frac{1}{2}}\left(4\pi|n|y\right)
\cdot e^{2\pi i nx},  \\
G_\beta(z) &=  N^\lambda \beta(\infty) \cdot y^{\lambda-\ell/4} 
  + N^{1-\lambda}  \beta(0) \cdot i^{-\ell/2} \cdot
 \frac{(2\pi) 2^{1-2\lambda} \Gamma(2\lambda-1)}
 {\Gamma\left(\lambda+\frac{\ell}{4}\right)
 \Gamma\left(\lambda-\frac{\ell}{4}\right)} \cdot y^{1-\lambda-\ell/4} \\[3pt]
 \label{form:Results2}
  &\quad + N^{1-\lambda}  \sum
 \begin{Sb}
 n=-\infty \\
 n\neq 0
 \end{Sb}^{\infty} \beta(n) \cdot 
 \frac{i^{-\ell/2} \cdot \pi^{\lambda} \cdot  |n|^{\lambda-1}}
{\Gamma\left(\lambda+\frac{\sgn(n)\ell}{4}\right)} \cdot 
 y^{-\ell/4}\, W_{\frac{\sgn(n)\ell}{4}, \lambda-\frac{1}{2}}\left(4\pi|n|y\right)
\cdot e^{2\pi i nx} 
\end{align*}
satisfying 
\begin{equation}
F_\alpha\left(-\frac{1}{Nz}\right)(\sqrt{N} z)^{-\ell/2}=G_\beta(z).
\label{eqn:transformation formula Introduction}
\end{equation}
Here $z=x+iy$ is in the Poincar\'e upper half plane, and $W_{\mu,\nu}(y)$ denotes the Whittaker function. 

\end{enumerate}

{\bf 0.2}\quad 
First we explain the relation between (I) and (II). 
A general recipe of deriving a summation formula of Voronoi type from Dirichlet series with functional equation was given by Ferrar \cite{Ferrar1}, \cite{Ferrar2}. 
The summation formula in (II) can be viewed as the Ferrar summation formula obtained from the $L$-functions satisfying the functional equation in (I), though the formulation in (II) is different from Ferrar's. 
In Ferrar's formulation, the test function is $\varphi=\mathcal{F}f$, not $f$, and  the left hand side and the right hand side of the summation formula carry information about $\varphi$ and the integral transform $\mathcal{I}\varphi=\mathcal{F}((\mathcal{F}^{-1}\varphi)_\infty)$ of $\varphi$, respectively. 
The advantage of the formulation in (II) is that  it reveals a relation of the summation formula to the (non-unitary) principal series representation of $\widetilde{G}$, as explained in \cite[Chapter III.5]{LewisZagier} in the case $\ell=0$ and $N=1$. 
Indeed, if the summation formula  \eqref{eqn:Ferrar-Suzuki in Introduction}  holds for all $f \in \mathcal{V}^\infty_{\lambda,\ell}$, then both sides of  \eqref{eqn:Ferrar-Suzuki in Introduction}  give two different expressions of the same continuous linear functional  on $\mathcal{V}^\infty_{\lambda,\ell}$, and the expressions obtained from the left and right hand sides are invariant under the actions of $\tilde n(1)$ and 
$\tilde{\bar n}(N)$, respectively. 
Here $\tilde n(1)$ and $\tilde{\bar n}(N)$ are the lifts to $\widetilde G$ of $n(1)=\left( \begin{smallmatrix} 1 & 1 \\ 0 & 1 \end{smallmatrix}\right)$ and $\bar n(N)= \left( \begin{smallmatrix} 1 & 0 \\ N & 1 \end{smallmatrix}\right)$, respectively.  
Hence the continuous linear functional obtained from the summation formula defines a distribution vector of the dual principal series representation automorphic under the group $\widetilde \Delta(N)$ generated by $\tilde n(1)$ and $\tilde{\bar n}(N)$ (see Lemma \ref{lemma:distribution interpretation of sum formula}). 
In the following we call an element in the continuous dual of $\mathcal{V}^\infty_{\lambda,\ell}$ a {\it distribution}. 

To the extent of the authors' knowledge, 
the summation formula as given in (II) 
was first introduced by Suzuki \cite{SuzukiAD} 
(in a more general setting of prehomogeneous vector spaces $(GL(n),Sym(n))$) 
in the name of ``distribution with automorphy''. 
So we call \eqref{eqn:Ferrar-Suzuki in Introduction} {\it the Ferrar-Suzuki
summation formula (of level $N$)}. 
Suzuki \cite{SuzukiAD} associated $L$-functions $\xi_\pm(\alpha;s)$ and $\xi_\pm(\beta;s)$ with both sides of the summation formula and proved their functional equation by a modification of the method in the theory of prehomogeneous vector spaces. 
Tamura extended the result of Suzuki to more general prehomogeneous vector spaces in \cite{Tamura}. 
In \cite{SuzukiAD} (and also in \cite{Tamura}), the space of test functions is restricted to the space of $C^\infty$-functions on $\R^\times=\R\setminus\{0\}$ with compact support, and hence the terms involving $\alpha(\infty)$ and $\beta(\infty)$ vanish.  
This means that  information on the poles of $\xi_\pm(\alpha;s)$ and $\xi_\pm(\beta;s)$ is lost 
(see \cite[Remark to Theorem 1]{SuzukiAD} and \cite[Theorem 3]{Tamura}).  
To consider the summation formula for any $f \in \mathcal{V}^\infty_{\lambda,\ell}$, we have to extend the definition of the Fourier transform $\mathcal{F}f$, since $f \in \mathcal{V}^\infty_{\lambda,\ell}$ is not necessarily integrable unless $\Re(\lambda)>\frac12$.  
We discuss this problem in \S 2 and establish basic properties of the generalized  Fourier transformation. 
In particular we prove the local functional equation relating the Mellin transform of $f \in \mathcal{V}^\infty_{\lambda,\ell}$ to the Mellin transform of the Fourier transform $\mathcal{F}f$ (Theorem \ref{thm:LFE}). 
If $f$ belongs to the subspace of rapidly decreasing functions in  $\mathcal{V}^\infty_{\lambda,\ell}$, then the local functional equation is the Tate local functional equation over $\R$ (the simplest example of the local functional equations in the theory of prehomogeneous vector spaces; see \cite[\S 4.2, Example 1]{PVBook}). 
This extension of the local functional equation is a special case of a more general result due to Lee \cite[Theorem 5.2]{Lee16}. 
We need, however, more precise information, which has not yet been obtained in the general context, such as the location of poles and the exact values of residues. 

Based on the results in \S 2 on the Fourier transformation, 
we discuss the passage from the summation formula (II) to $L$-functions (I) and from (I) to (II) in \S3. 
First in \S 3.1, we derive from the Ferrar-Suzuki summation formula \eqref{eqn:Ferrar-Suzuki in Introduction} that $(s-1)(s-2+2\lambda)\xi_\pm(\alpha;s)$ and  $(s-1)(s-2+2\lambda)\xi_\pm(\beta;s)$ can be continued to entire functions  and satisfy the functional equation \eqref{eqn:FE in Introduction} (Theorem \ref{thm:FEofZeta1}). 
We also calculate the residues at the poles, which are expressed in terms of $\alpha(0), \beta(0), \alpha(\infty), \beta(\infty)$. 
In \S 3.2, conversely, we derive the Ferrar-Suzuki summation formula assuming some  analytic properties of $L$-functions (Theorem \ref{thm:ConverseTheoremForSummationFormula}). 
Here we follow basically the method of Suzuki \cite{SuzukiAD}. 
His method is based on the idea in the theory of prehomogeneous vector spaces 
 (namely, the combination of the summation formula for arbitrary test functions and the local functional equation), and
is suited well to the proof of the converse theorem as explained in \cite[\S 2]{MSintro}
in the case of the Poisson summation formula (see also \cite{SatoHamburger}). 
Miller and Schmid \cite{MSjfa} discussed the passage from automorphic distributions to Dirichlet series with functional equation based on their detailed study  on regularity of distributions  (see also \cite{MSrankin} and \cite{Schmid}). Our method here is more elementary than theirs.

{\bf 0.3}\quad 
The passage from the summation formula (II) to Maass forms (III) is given by the Poisson transformation, which is a $\widetilde{G}$-equivariant mapping from the continuous dual of $\mathcal{V}^\infty_{\lambda,\ell}$ to the space of eigenfunctions of the hyperbolic Laplacian of weight $\ell/2$ of moderate growth. 
As we observed above, the Ferrar-Suzuki summation formula \eqref{eqn:Ferrar-Suzuki in Introduction} defines an automorphic distribution for the group $\widetilde\Delta(N)=\langle \tilde{n}(1), \tilde{\bar n}(N) \rangle$, and hence its Poisson transform, which is given by $F_\alpha(z)$ in (III), is a Maass form automorphic for $\widetilde\Delta(N)$. 
The transformation formula \eqref{eqn:transformation formula Introduction} is derived from the comparison of the Poisson transforms of both sides of the summation formula. 

The group $\widetilde\Delta(N)$ is a subgroup of the lift $\widetilde\Gamma_0(N)$ of $\Gamma_0(N)$ to $\widetilde{G}$. 
In \S 4 we prove a converse theorem that gives a condition for the automorphic distribution for $\widetilde\Delta(N)$ associated with the Ferrar-Suzuki summation  formula to be automorphic for the larger group $\widetilde\Gamma_0(N)$ in terms of twists of the corresponding $L$-functions by Dirichlet characters (Theorem \ref{thm:ConverseTheoremForCongSubgp}). 
By the Poisson transformation, this yields immediately a converse theorem for Maass forms for $\widetilde\Gamma_0(N)$ (Theorem \ref{corollary:Maassforms}), which is an analogue of the converse theorem of Weil \cite{Weil} for holomorphic modular forms of integral weight (in the case of even $\ell$) and its generalization by Shimura \cite[Section 5]{Shimura73} to holomorphic modular forms of half-integral weight (in the case of odd $\ell$).  
A merit of our approach is to keep calculations involving the Whittaker function to a minimum.  The information on the Whittaker function we need is only the fact that  the Whittaker function appears as the Fourier transform of the Poisson kernel (see \eqref{eqn:Fourier transform of Poisson kernel}). 
We say now a few words about the assumptions in the converse theorem. 
In our converse theorem some analytic properties (functional equations, location of poles and so on) are assumed for the $L$-functions twisted by Dirichlet characters of prime modulus  including the principal characters. 
In the original converse theorem of Weil for holomorphic modular forms (\cite{Weil}), the assumptions are imposed for  the $L$-functions twisted  only by primitive Dirichlet characters of prime modulus. 
As pointed out in Gelbart-Miller \cite[\S 3.4]{GM},  it has been considered difficult to transfer  the argument of Weil to the Maass form case. 
In \cite{DG} and \cite{DG2}, Diamantis and Goldfeld proved a converse theorem for double Dirichlet series by considering the twists of Dirichlet series  by Dirichlet characters  including imprimitive characters (the principal characters), namely by the method originally found by Razar \cite{Razar} for holomorphic modular forms.  We follow the approach of Razar and Diamantis-Goldfeld. 
In a paper \cite{NO} that appeared very recently, Nuerurer and Oliver succeeded in modifying the argument of Weil to get a converse theorem for Maass forms of weight $0$ avoiding the twists by imprimitive characters.   
However, in the case of half-integral weight, the functional equation relates the $L$-function twisted  by the Legendre character to the $L$-function twisted  by the principal  character. We do not therefore try to weaken the assumptions in the converse theorem. 

{\bf 0.4}\quad 
In \S 5, applying our converse theorem for Maass forms (Theorem \ref{corollary:Maassforms}), we show that the two-variable zeta functions related to quadratic forms studied by Peter \cite{Peter} and the fourth author \cite{Ueno} independently can be viewed as $L$-functions associated with Maass forms for $\widetilde{\Gamma}_0(N)$ (Theorem \ref{thm:Maass form for Ueno zeta}). 
Let $A=(a_{ij})$ be a nondegenerate half-integral symmetric matrix of size $m$. 
Denote by $D$ and $N$, respectively, the determinant and the level of $2A$. 
Put 
\begin{eqnarray*}
 Z(n,w) &=& \sum_{l=1}^{\infty}\frac{\sharp\left\{
 v\in \Z^{m}/l\Z^m \, |\, {}^tvAv\equiv n \pmod{l}
 \right\}}{l^{w}}, \\
 Z^{*}(n,w) &=& \sum_{l=1}^{\infty} \frac{\sharp\left\{
  v^* \in \Z^{m}/2lA \Z^m \, |\, 4^{-1}N\cdot {}^tv^*A^{-1}v^*\equiv n
  \pmod{Nl}\right\}}{l^w}.
\end{eqnarray*}
Then the zeta functions considered in  \cite{Peter} and \cite{Ueno} are 
\[
  \zeta_{\pm}(w, s) =  \sum_{n=1}^{\infty}Z(\pm n, w)n^{-s},\qquad 
  \zeta_{\pm}^{*}(w,s) = N^{s}\cdot  \sum_{n=1}^{\infty} Z^{*}(\pm n, w) n^{-s}. 
\]
The analytic properties of the zeta functions $\zeta_{\pm}(w, s), \zeta^*_{\pm}(w, s)$ and their twists by Dirichlet characters necessary for the application of the converse theorem are essentially obtained in \cite{Ueno}, and it is shown that, if we put   
\begin{gather*}
\alpha(n) = e^{\pi i (p-q)/4}|D|^{-1/2} Z(n,w), \quad 
\beta(n) = Z^*(n,w)  
\quad (n \in \Z,\ n\ne0),  \\
\alpha(0) = e^{\pi i (p-q)/4}|D|^{-1/2} Z(0,w), \quad 
\beta(0) = |D|^{-1} Z^*(0,w), \\
\alpha(\infty) =  \zeta(w-m+1), \quad 
\beta(\infty) = e^{-\pi i (p-q)/4}|D|^{-1/2} \zeta(w-m+1), 
\end{gather*}
then the functions $F_\alpha(z)$ and $G_\beta(z)$ in (III) are Maass forms of weight $\ell/2$ with $\ell=p-q+4k$ for $\widetilde\Gamma_0(N)$ (Theorem \ref{thm:Maass form for Ueno zeta}). Here $p$ and $q$ are respectively the numbers of  positive and negative eigenvalues of $A$, and $k$ is an arbitrary integer. 
Mizuno \cite[Theorem 1]{Mizuno} obtained the same Maass forms for positive definite $A$, and this generalizes the result of Mizuno to indefinite $A$. 
The simplest case of the two-variable zeta functions is the double Dirichlet series studied by Shintani \cite{Shintani}.
In this special case, by using their converse theorem for double Dirichlet series,  Diamantis and Goldfeld (\cite{DG} and \cite{DG2}) proved more precisely that the corresponding Maass forms are linear combinations of metaplectic Eisenstein series.

\subsubsection*{Notation}
We denote by $\Z, \Q,\R$, and $\C$ the ring of integers, the field of rational numbers,  the field of real numbers, and  the field of complex numbers, respectively. 
We write $\R^\times=\R\setminus\{0\}$. 
The set of positive integers and the set of non-negative integers are denoted by $\Z_{>0}$ and $\Z_{\geq0}$, respectively. 
The identity matrix of size $m$ is denoted by $1_m$. 
The real part and the imaginary part of a complex number $s$ are denoted by $\Re(s)$ and $\Im(s)$, respectively. 
For complex numbers $\alpha, z$ with $\alpha\ne0$, $\alpha^z$ always stands for the principal value, namely, $\alpha^z = \exp((\log|\alpha|+i\,\mathrm{arg}\, \alpha)z)$ with $-\pi<\mathrm{arg}\, \alpha\leq\pi$. 
We use $\mathbf{e}[x]$ to denote $\exp(2\pi i x)$. 
The quadratic residue symbol $\left(\frac{\ast}{\ast}\right)$ has the same meaning as in Shimura~\cite[p.~442]{Shimura73}. For a meromorphic function $f(s)$ with a pole at $s=\alpha$, we denote its residue at $s=\alpha$ by $\displaystyle \Res_{s=\alpha} f(s)$. 

\section{Preliminaries}

\subsection{A covering group of $SL_2(\R)$}

Let $G=SL_2(\R)=\left\{\left. g\in M_2(\R)\right| \det g=1\right\}$. 
The group $G$ acts on the Poincar\'{e} upper half plane
$\mathcal{H}:=\{z\in \C \, |\, \Im(z)>0\}$ by
\begin{equation}
 \label{form:LinearFractionalTransform}
 gz= \frac{az+b}{cz+d} \qquad
 \text{for} \quad
 g=\begin{pmatrix}
 a & b\\
 c & d
 \end{pmatrix}.
\end{equation}
We put $j(g, z)= cz+d$. 
We define a (non-connected) covering group
$\widetilde{G}$ of $G$ by
\begin{equation}
\label{form:DefOfMetaplecticGp}
 \widetilde{G}=\left\{
 (g, \varphi) \left|  
 \begin{array}{l}
 g\in G, \varphi\ \text{is a holomorphic function on} \; 
 \mathcal{H}\\
 \text{satisfying}\;  \varphi(z)^4 = j(g,z)^2
 \end{array}\right.
 \right\}.
\end{equation}
For $\tilde{g}_1=(g_1, \varphi_1), \tilde{g}_2=(g_2, \varphi_2)\in \widetilde{G}$, 
the product  $\tilde{g}_1\cdot \tilde{g}_2$ is defined by
\begin{equation}
 \label{form:DefOfProductInGtilde}
 \tilde{g}_1\cdot \tilde{g}_2=(g_1 g_2, \, \varphi_3), \qquad 
 \ \varphi_3(z):=\varphi_1(g_{2}z)\varphi_2(z).
\end{equation}
For $\tilde{g}=(g, \varphi)\in \widetilde{G}$, there exists a unique element
$\xi$ of the multiplicative group $\mu_4:=\{\pm 1, \pm i\}$ such that 
\[
 \varphi(z)=\xi \cdot j(g,z)^{1/2}= \xi \cdot (cz+d)^{1/2}.
\]
We put 
\begin{equation}
[g,\xi]=(g, \xi j(g,z)^{1/2}) \quad (g \in G,\ \xi \in \mu_4). 
 \label{form:TwoParis}
\end{equation}

Since $j(g_1g_2,z)=j(g_1,g_2z)j(g_2,z)$ for $g_1, g_2\in G$, there exists a $\sigma(g_1, g_2) \in \{\pm1\}$ such that
\begin{equation}
 \label{form:FactorSystemSigma}
 j(g_1, g_2 z)^{1/2} = \sigma(g_1, g_2)\cdot 
 \frac{j(g_1 g_2, z)^{1/2}}{j(g_2, z)^{1/2}}. 
\end{equation}
Using $\sigma(g_1,g_2)$, we can write the product in $\widetilde G$ as
\begin{equation}
 \label{form:ProductUsingSigma}
 [g_1, \xi_1]\cdot [g_2, \xi_2]= \left[g_1 g_2, \, \xi_1 \xi_2 \cdot \sigma(g_1, g_2)\right].
\end{equation}
The explicit values of $\sigma(g_1, g_2)$ are given by the following lemma due to Petersson (see Maass~\cite[Theorem~16]{MaassLN} for a proof).  

\begin{lemma}
\label{thm:FactorSystem}
For $g_1, g_2\in G$, we put $g_3=g_1 g_2$ and 
write
$g_i =
\begin{pmatrix}
a_i & b_i \\
c_i & d_i
\end{pmatrix} \; (i=1,2,3)$. 
We also write $\sigma(g_1, g_2)= \exp(\pi i \varpi(g_1, g_2))$. 
Then we have 
\[
 4 \cdot \varpi(g_1, g_2)=
 \begin{cases}
 \sgn c_1+\sgn c_2-\sgn c_3-\sgn(c_1 c_2 c_3),
 & \text{if} \; \; c_1 c_2 c_3\neq 0, \\[5pt]
 -(1-\sgn c_1)(1-\sgn c_2), & \text{if} \;\;
 c_1 c_2\neq 0, \, c_3=0, \\[5pt]
 (1-\sgn d_1)(1+\sgn c_2), & \text{if} \;\;
 c_2 c_3\neq 0, c_1=0,\\[5pt]
 (1+\sgn c_1)(1-\sgn a_2), & \text{if}\;\;
 c_1 c_3 \neq 0, c_2=0, \\[5pt]
 (1-\sgn d_1)(1-\sgn a_2), & \text{if}\;\;
 c_1=c_2=c_3=0.
 \end{cases}
\]
\end{lemma}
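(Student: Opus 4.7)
The plan is to determine the sign $\sigma(g_1,g_2)\in\{\pm1\}$ by tracking principal arguments of the three square roots appearing in \eqref{form:FactorSystemSigma}. Squaring both sides together with the standard cocycle identity $j(g_1,g_2z)j(g_2,z)=j(g_1g_2,z)$ immediately gives $\sigma(g_1,g_2)^2=1$. With the convention $w^{1/2}=|w|^{1/2}\exp(i\arg(w)/2)$, $\arg(w)\in(-\pi,\pi]$, equation \eqref{form:FactorSystemSigma} is equivalent to
\[
 \arg j(g_1,g_2z) + \arg j(g_2,z) - \arg j(g_1g_2,z) = 2\pi n
\]
for a unique integer $n$, and then $\sigma(g_1,g_2)=(-1)^n$, i.e.\ $\varpi(g_1,g_2)\equiv n\pmod{2}$. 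Since $n$ is an integer-valued continuous function of $z \in \mathcal{H}$, it is independent of $z$, and I would compute at $z=iy$ with $y>0$.

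Choosing $z=iy$ and using $a_id_i-b_ic_i=1$, one obtains the explicit expressions
\[
 j(g_2,iy) = d_2+ic_2y,\qquad j(g_1g_2,iy) = d_3+ic_3y,
\]
\[
 j(g_1,g_2(iy)) = \frac{c_2c_3y^2+d_2d_3 + ic_1y}{c_2^2y^2+d_2^2},
\]
which make the quadrant of each factor immediately visible. Letting $y\to+\infty$ pins down each argument in the limit: $\arg j(g_2,iy)\to(\pi/2)\sgn c_2$ when $c_2\ne0$ (and $\arg$ is $0$ or $\pi$ according to $\sgn d_2$ when $c_2=0$), similarly for $j(g_1g_2,iy)$, while $\arg j(g_1,g_2(iy))\to 0$ if $c_2c_3>0$ and $\to\pi\sgn c_1$ if $c_2c_3<0$. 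In the degenerate subcases, $c_1=0$ forces $j(g_1,g_2(iy))=d_1$, whose sign is $\sgn d_1$ via $a_1d_1=1$, and $c_2=0$ forces $j(g_2,iy)=d_2$ with $\sgn d_2=\sgn a_2$ via $a_2d_2=1$, accounting for the appearance of $\sgn d_1$ and $\sgn a_2$ in the stated formulas.

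It remains to add up the limiting arguments in each of the five cases and reduce modulo $4\pi$. In the generic case $c_1c_2c_3\ne0$, setting $\epsilon=\sgn(c_2c_3)\in\{\pm1\}$, the three contributions sum to $(\pi/2)(\sgn c_2-\sgn c_3)+(\pi/2)(1-\epsilon)\sgn c_1$; dividing by $\pi/2$ and using $\epsilon\sgn c_1=\sgn(c_1c_2c_3)$ yields the first expression $\sgn c_1+\sgn c_2-\sgn c_3-\sgn(c_1c_2c_3)$ for $4\varpi(g_1,g_2)$. The four remaining cases are sign computations of the same flavour using the degenerate analyses indicated above. The main obstacle is the bookkeeping on the branch cut $\arg=\pm\pi$: whenever one of the $c_i$ vanishes or the real part $c_2c_3y^2+d_2d_3$ changes sign, one must decide on which side of the cut the principal branch evaluates, which is resolved by retaining the subdominant imaginary correction $c_1y/(c_2^2y^2+d_2^2)$ to distinguish $0^{\pm}$ from $\pm\pi^{\mp}$, since this is precisely what flips $\sigma$. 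Once this branch-tracking is in place, the five cases reduce to a routine but somewhat lengthy sign calculation, as carried out in \cite[Theorem~16]{MaassLN}.
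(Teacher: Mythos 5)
The paper itself offers no proof of this lemma—it simply cites Maass~\cite[Theorem~16]{MaassLN}—so your argument stands as a self-contained replacement, and its strategy is sound. The reduction $\sigma(g_1,g_2)=(-1)^n$ with $2\pi n=\arg j(g_1,g_2z)+\arg j(g_2,z)-\arg j(g_1g_2,z)$ is exactly right for the principal branch, and constancy of $n$ holds because each argument is continuous on $\mathcal{H}$ (for $c\neq0$ the value $cz+d$ stays in an open half-plane, for $c=0$ it is a nonzero real constant), so evaluating along $z=iy$ and letting $y\to+\infty$ is legitimate: the identity $2\pi n=\sum\arg$ holds exactly for every finite $y$, and each term has a limit. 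Your displayed formula for $j(g_1,g_2(iy))$ is correct, but it silently uses the identity $c_3d_2-c_2d_3=c_1$ (from $g_3=g_1g_2$ and $\det g_2=1$); this is the one non-obvious input and should be stated. With it, your generic-case computation $4n=\sgn c_1+\sgn c_2-\sgn c_3-\sgn(c_1c_2c_3)$ checks out, and the four degenerate cases do follow from your limit data once you add the elementary relations $c_3=c_1a_2+d_1c_2$ (so $\sgn c_3=\sgn(d_1c_2)$ when $c_1=0$, and $\sgn c_3=\sgn(c_1a_2)$ when $c_2=0$) and, when $c_3=0$, $c_2d_3=-c_1$, which fixes $\arg d_3$ via $\sgn d_3=-\sgn(c_1c_2)$; I verified that all five cases then reproduce the stated table. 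Two small points: in the case $c_3=0$ the limit of $\arg j(g_1,g_2(iy))$ is $(\pi/2)\sgn c_1$ (the numerator's imaginary part dominates its constant real part), a subcase not covered by your dichotomy on $\sgn(c_2c_3)$ and worth writing out; and the branch-cut worry is lighter than you suggest, since for finite $y$ none of the three values ever lies on the cut—the only place the "side of the cut" matters is the limit $\arg j(g_1,g_2(iy))\to\pi\sgn c_1$ when $c_2c_3<0$, which your subdominant-imaginary-part remark handles correctly. So the proposal is a correct proof; compared with merely citing Maass, it buys a short, self-contained verification at the cost of the routine case-by-case sign check you defer.
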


Let $p:\widetilde G \rightarrow G$ $(p([g,\xi])=g)$ be the standard projection. 
We define the subgroups $\widetilde G_1$, $\widetilde{M}$, $\widetilde{M}_0$ and $\widetilde{M}_1$ of $\widetilde G$ by 
\begin{gather*}
\widetilde G_1 := \left\{\left.[g,\xi] \in \widetilde G\;\right|\;\xi=\pm1\right\}, \quad 
\widetilde{M} := \left\{[\epsilon 1_2, \xi]\left|\; \epsilon=\pm1,\ \xi^4=1\right.\right\}, \\
\widetilde{M}_0 := \left\{[1_2, \xi]\left|\; \xi^4=1\right.\right\}, \quad
\widetilde{M}_1 := \widetilde{M} \cap \widetilde G_1 
              = \left\{[1_2,1], [1_2,-1], [-1_2,1], [-1_2,-1]\right\}. 
\end{gather*}
Then $\widetilde G_1$ is the identity component of $\widetilde G$, and $\widetilde G_1$ is the double covering group of $G$. 
The groups $\widetilde{M}$ and $\widetilde{M}_1$ are the centers of $\widetilde G$ and $\widetilde G_1$, respectively. 
The group $\widetilde{M}_0$ is the kernel of the projection $p:\widetilde G \rightarrow G$. 
The group $\widetilde G$ is isomorphic to the group $\widetilde{G}_1 \times \Z/2\Z$ via the isomorphism 
\[
\widetilde{G}_1 \times \Z/2\Z \longrightarrow \widetilde{G},   \qquad
([g,\xi],a\bmod2) \longmapsto [g,\xi]\cdot[-1_2,-i]^a.
\]
Note that $[-1_2,-i]$ is an element of order $2$ belonging to the center $\widetilde{M}$, and the canonical lift of $-1_2 \in \Gamma_0(4)$ to $\widetilde{G}$ (see \eqref{form:DefOfGammaStar} below). 

For real numbers $a, x$ with $a\ne0$ and a positive integer $N$, we use the standard notation
\[
 d(a)= 
 \begin{pmatrix}
 a & 0 \\
 0 &  a^{-1}
 \end{pmatrix}, \quad 
 n(x)=
 \begin{pmatrix}
 1 & x \\
 0 & 1
 \end{pmatrix}, \quad 
 \bar{n}(x)=
 \begin{pmatrix}
 1 & 0 \\
 x & 1
 \end{pmatrix}, \quad
 w_N=\begin{pmatrix}
 0 & -1/\sqrt{N} \\
 \sqrt{N} & 0
 \end{pmatrix},
\]
and lift these elements in $G$ to $\widetilde G$ by 
\begin{equation}
 \label{form:DefOfTildeNandBarN}
 \tilde{d}(a):=[d(a), \, 1], \quad
 \tilde{n}(x):=[n(x), \, 1], \quad
 \tilde{\bar{n}}(x):=[\bar{n}(x), \, 1], \quad
 \tilde w_N:=[w_N,1]. 
\end{equation}
If $N=1$, we simply write $w$ and $\tilde w$ instead of $w_1$ and $\tilde w_1$, respectively. 

Concrete calculations of group multiplication in $\widetilde G$ can be done by Lemma~\ref{thm:FactorSystem}. For example, we can easily obtain the following identities needed later: 
%
\begin{gather}
\tilde{\bar{n}}(-x) = \tilde w\tilde n(x) \tilde w^{-1}, 
\label{eqn:commutation of w and nx}\\
\tilde w_N  =  \tilde w \tilde d\left(\sqrt N\right), 
\label{eqn: wN equals d w} \\
\tilde w_N \tilde w^{-1}=\tilde d\left(\tfrac{1}{\sqrt N}\right), 
\label{eqn: wN w equals d} \\
\tilde w_N^{-1} \left[\begin{pmatrix} a & b \\ cN & d \end{pmatrix}, \xi\right] \tilde w_N
 = \left[\begin{pmatrix} d & -c \\ -bN & a \end{pmatrix}, \xi\sigma\right] 
\quad (ad-bcN=1,\ \xi^4=1,\ N>0). 
\label{eqn:commutation of wN and gamma0N}
\end{gather}
In \eqref{eqn:commutation of wN and gamma0N}, $\sigma$ is given by
\[
\sigma = \begin{cases}
             -1 & ([a<0,\ b>0,\ c\geq0] \vee [a<0,\ b\leq0,\ c<0]), \\
              1 & (\text{otherwise}).
\end{cases}
\]

\subsection{Principal series representations of $\widetilde{G}$}

In this subsection, we introduce the principal series representations of
$\widetilde{G}$. For details, we refer to Bruggeman-Lewis-Zagier~\cite{BLZ}, 
Suzuki~\cite{SuzukiWeil}, Yoshida~\cite{Yoshida}.

We fix a complex number $\lambda$ and an integer $\ell$. 
For a $C^\infty$-function $f$ on $\R$, we put
\begin{equation}
 \label{form:FinftyAndF}
 f_{\infty}(x)= (\mathrm{sgn}\,x)^{\ell/2}
  |x|^{-2\lambda}\cdot  f \left(-\dfrac{1}{x}\right). 
\end{equation}
Here $(\mathrm{sgn}\,x)^{\ell/2}$ denotes the principal value, namely, 
 $(\mathrm{sgn}\,x)^{\ell/2}=1$ or $i^{\ell}$ according as $x>0$ or $x<0$. 
We consider the space $\mathcal{V}_{\lambda, \ell}^{\infty}$ of $C^\infty$-functions $f$ on $\R$ such that the function $f_\infty$, which is originally defined for $x\ne0$, can be extended to a $C^\infty$-function on $\R$. 
Then 
we have
\begin{equation}
(f_\infty)_\infty(x)=i^\ell f(x),  
\label{eqn:square of infty-operation}
\end{equation}
and the mapping $f \mapsto f_\infty$ preserves $\mathcal{V}_{\lambda, \ell}^{\infty}$. 
When it is necessary to make the dependence of $f_\infty(x)$ on the parameters $\lambda, \ell$ explicit, we write $f_{\infty,\lambda,\ell}(x)$. 

We define semi-norms $\nu_{N,\lambda}$ $(N \in \Z_{\geq0})$ on $\mathcal{V}_{\lambda, \ell}^{\infty}$ by 
\[
 \nu_{N,\lambda}(f)=\sup_{x\in\R}\left\{(1+x^2)^{\Re(\lambda)}\cdot 
 \sum_{j=0}^{N} |({D_\lambda}^j f)(x)|\right\}, 
\]
where $D_\lambda$ is the differential operator given by
\[
 D_\lambda= (1+x^2)\frac{d}{dx}+ 2\lambda x.
\]
The space  $\mathcal{V}_{\lambda, \ell}^{\infty}$ becomes a Fr\'echet space with the topology defined by the family of semi-norms $\{\nu_{N,\lambda}\;|\; N  \in \Z_{\geq0}\}$. 

A continuous representation $\pi_{\lambda,\ell}$ of $\widetilde G$ on  $\mathcal{V}_{\lambda, \ell}^{\infty}$ (the (non-unitary) principal series representation) can be defined  as follows:
Let $\tilde{g}=[g,\xi]\in \widetilde{G}$, 
$g=\begin{pmatrix}
a & b \\
c & d 
\end{pmatrix}\in SL_2(\R)$ and $f\in \mathcal{V}_{\lambda, \ell}^{\infty}$.
Then, if $-cx+a\neq 0$, 
\begin{equation}
\label{form:DefOfActionOnVlambdaMu}
 \left(\pi_{\lambda, \ell}(\tilde{g})f\right)(x)
=
 |-cx+a|^{-2\lambda}\cdot 
 f\left(\frac{dx-b}{-cx+a}\right)\cdot
 \left\{
 \begin{array}{ll}
 \xi^{-\ell} &  (\text{if}\; \, -cx+a>0), \\[3pt]
 (i\xi)^{-\ell}&  (\text{if}\; \,  -cx+a<0, c\geq 0), \\[3pt]
 (-i\xi)^{-\ell} & (\text{if}\; \,  -cx+a<0, c< 0),
 \end{array}
 \right.
\end{equation}
and, if $-cx+a=0$, 
\begin{equation}
\label{form:DefOfActionOnVlambdaMuAtInfty}
 \left(\pi_{\lambda, \ell}(\tilde{g})f\right)(x)
= |c|^{2\lambda}\cdot f_{\infty}(0)\cdot
\left\{
\begin{array}{lc}
\xi^{-\ell} & (-cx+a=0, c<0), \\[3pt]
(i\xi)^{-\ell} & (-cx+a=0, c>0).
\end{array}
\right.
\end{equation}
Note that the representation $\pi_{\lambda,\ell}$ is determined completely to the restriction to the identity component $\widetilde{G}_1$, since $\widetilde{G}=\widetilde{G}_1 \cup [-1_2,-i]\cdot\widetilde{G}_1$ and 
$(\pi_{\lambda,\ell}([-1_2,-i])f)(x)=f(x)$ $(f \in \mathcal{V}^\infty_{\lambda,\ell})$. If $\ell$ is even, then $\pi_{\lambda,\ell}([1_2,\pm1])f(x)=f(x)$ $(f \in \mathcal{V}^\infty_{\lambda,\ell})$ and hence $\pi_{\lambda,\ell}$ is essentially a representation of $G \cong \widetilde{G}/\widetilde{M}_0$.

In terms of $\pi_{\lambda, \ell}$, 
the relation~\eqref{form:FinftyAndF} of $f$ and $f_{\infty}$ can be
rephrased as 
\begin{equation}
 \label{form:FInftyIsPiTau}
 f_{\infty}(x) = \left(\pi_{\lambda, \ell}(\tilde{w}^{-1})f\right)(x), \qquad 
 \tilde{w}^{-1}= [w^{-1},1] =
 \left[
 \begin{pmatrix}
 0 & 1\\
 -1 & 0
 \end{pmatrix},1\right].
\end{equation}

We call a continuous linear functional $T:\mathcal{V}_{\lambda,\ell}^{\infty}\to \C$ a {\it distribution} (on $\mathcal{V}_{\lambda,\ell}^{\infty}$), and 
denote by $\left(\mathcal{V}_{\lambda,\ell}^{\infty}\right)^*$ the space of distributions on $\mathcal{V}_{\lambda,\ell}^{\infty}$. 
We define the action $\pi^*_{\lambda, \ell}$ 
of $\widetilde{G}$ on $T\in \left(\mathcal{V}_{\lambda,\ell}^{\infty}\right)^*$
by
\begin{equation}
 \label{form:DefOfActionOnDualSpace}
  \left(\pi^*_{\lambda, \ell}(\tilde{g})T\right)(f)= T\left(\pi_{\lambda, \ell}(\tilde{g}^{-1})f\right)
\quad \text{for}\; \; \tilde{g}\in \widetilde{G}, f\in \mathcal{V}_{\lambda,\ell}^{\infty}.
\end{equation}

We prove a lemma needed in \S 2 in extending the Fourier transform and the local zeta functions to functions in $\mathcal{V}_{\lambda,\ell}^\infty$.

\begin{lemma}
\label{lem:diff of line model}
If $f$ is in $\mathcal{V}_{\lambda,\ell}^\infty$, then the derivative $f'$ is in 
$\mathcal{V}_{\lambda+1/2,\ell+2}^\infty$. More generally the $m$-th derivative $f^{(m)}$ is in $\mathcal{V}_{\lambda+m/2,\ell+2m}^\infty$, and there exists a positive constant $C$ independent of $f$ satisfying 
\begin{equation}
\nu_{0,\lambda+m/2}\left(f^{(m)}\right) = 
\sup_{x \in \R} \left\{(1+x^2)^{\Re(\lambda)+m/2} \left|f^{(m)}(x)\right|\right\}
 \leq C \nu_{m,\lambda}(f). 
\label{eqn:estimate of m-th derivative}
\end{equation}
\end{lemma}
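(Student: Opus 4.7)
The plan is to prove the smoothness assertion first, and then derive the norm estimate from a separate polynomial identity. For the smoothness, I would differentiate the defining relation $f_\infty(x) = (\mathrm{sgn}\,x)^{\ell/2}|x|^{-2\lambda} f(-1/x)$ for $x\neq 0$ (using $\frac{d}{dx}|x|^{-2\lambda} = -2\lambda x\, |x|^{-2\lambda-2}$), multiply through by $x$, and apply the elementary identity $(\mathrm{sgn}\,x)^{\ell/2}\cdot\mathrm{sgn}(x) = (\mathrm{sgn}\,x)^{(\ell+2)/2}$. This should yield the key formula
\[
(f')_{\infty,\lambda+1/2,\ell+2}(x) = x\,(f_{\infty,\lambda,\ell})'(x) + 2\lambda\,f_{\infty,\lambda,\ell}(x) \qquad (x\neq 0),
\]
whose right-hand side extends to a $C^\infty$-function on $\R$ since $f_{\infty,\lambda,\ell}$ does. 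Hence $(f')_{\infty,\lambda+1/2,\ell+2}$ is $C^\infty$ on $\R$, i.e. $f'\in\mathcal{V}_{\lambda+1/2,\ell+2}^\infty$, and iterating $m$ times gives $f^{(m)}\in\mathcal{V}_{\lambda+m/2,\ell+2m}^\infty$.

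For the estimate, I would prove by induction on $m$ the identity
\[
(1+x^2)^m\, f^{(m)}(x) = \sum_{j=0}^{m} P_{m,j}(x)\, (D_\lambda^j f)(x),
\]
where $P_{m,j}(x)\in\C[x]$ has degree at most $m-j$. The case $m=1$ is just $(1+x^2)f' = D_\lambda f - 2\lambda x f$, giving $P_{1,1}=1$ and $P_{1,0}=-2\lambda x$. For the induction step, differentiating the identity for $m$, multiplying by $(1+x^2)$, and invoking the rewrite $(1+x^2)\frac{d}{dx}(D_\lambda^j f) = D_\lambda^{j+1} f - 2\lambda x\, D_\lambda^j f$ produces the recursion
\[
P_{m+1,j}(x) = P_{m,j-1}(x) + (1+x^2)\,P_{m,j}'(x) - (2\lambda+2m)\,x\,P_{m,j}(x),
\]
with the convention $P_{m,-1}=P_{m,m+1}=0$, from which $\deg P_{m+1,j}\leq m+1-j$ is checked term-by-term.

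Finally, the degree bound implies $|P_{m,j}(x)|\leq C(1+x^2)^{(m-j)/2}$; dividing the identity by $(1+x^2)^{m/2}$ and multiplying by $(1+x^2)^{\Re(\lambda)}$ yields
\[
(1+x^2)^{\Re(\lambda)+m/2}\,|f^{(m)}(x)| \leq C \sum_{j=0}^{m}(1+x^2)^{\Re(\lambda)-j/2}\,|(D_\lambda^j f)(x)| \leq C(1+x^2)^{\Re(\lambda)}\sum_{j=0}^{m}|(D_\lambda^j f)(x)|,
\]
so that $\nu_{0,\lambda+m/2}(f^{(m)})\leq C\,\nu_{m,\lambda}(f)$ upon taking the supremum. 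The main obstacle is the combinatorial bookkeeping of polynomial degrees in the recursion for $P_{m,j}$; once the recursion is set up, the remaining steps are routine differentiation and estimation.
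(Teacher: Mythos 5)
Your proposal is correct and follows essentially the same route as the paper: the identity $(f')_{\infty,\lambda+1/2,\ell+2}(x)=x(f_\infty)'(x)+2\lambda f_\infty(x)$ for the smoothness statement, and the expansion $(1+x^2)^m f^{(m)}=\sum_{j}P_{m,j}\,D_\lambda^j f$ with $\deg P_{m,j}\leq m-j$ for the norm estimate. The only difference is that you make explicit the recursion for the polynomials $P_{m,j}$, which the paper merely asserts; your recursion and degree bookkeeping check out.
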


\begin{proof}
By definition, the function 
\[
f_\infty(x)=(\mathrm{sgn}\,x)^{\ell/2} |x|^{-2\lambda}f\left(-\frac1x\right)
\]
is in $C^\infty(\R)$. 
Differentiating both sides of this identity, we have for $x \ne 0$
\[
 (f_\infty)'(x)  
 = -2\lambda (\mathrm{sgn}\,x)^{\ell/2} |x|^{-2\lambda} x^{-1} f\left(-\frac1x\right) + (\mathrm{sgn}\,x)^{\ell/2} |x|^{-2\lambda-2}f'\left(-\frac1x\right).
\]
Hence the function
\begin{eqnarray*}
(\mathrm{sgn}\,x)^{(\ell+2)/2} |x|^{-2\lambda-1}f'\left(-\frac1x\right)
 &=& x (f_\infty)'(x) + 2\lambda (\mathrm{sgn}\,x)^{\ell/2} |x|^{-2\lambda} f\left(-\frac1x\right) \\ 
 &=& x (f_\infty)'(x) + 2\lambda f_\infty(x)
\end{eqnarray*}
is  in $C^\infty(\R)$. 
This shows that $f'$ is in $\mathcal{V}_{\lambda+1/2,\ell+2}^\infty$ and 
\begin{equation}
\label{eqn:f' infty}
(f')_{\infty,\lambda+1/2,\ell+2}(x) = x (f_\infty)'(x) + 2\lambda f_\infty(x). 
\end{equation}
From this it is obvious that $f^{(m)} \in \mathcal{V}_{\lambda+m/2,\ell+2m}^\infty$. 
To show the inequality \eqref{eqn:estimate of m-th derivative}, we note that there exist polynomials $u_{m,k}(x)$ of degree at most $m-k$ satisfying
\[
(1+x^2)^m f^{(m)}(x) = \sum_{k=0}^m u_{m,k}(x) {D_\lambda}^kf(x) \quad (m \geq 0). 
\]
Hence we have
\begin{eqnarray*}
(1+x^2)^{\Re(\lambda)+m/2} \left|f^{(m)}(x)\right| 
 &\leq& (1+x^2)^{\Re(\lambda)} \sum_{k=0}^m \frac{\left|u_{m,k}(x)\right|}{ (1+x^2)^{m/2}}\cdot \left|{D_\lambda}^kf(x)\right| \\
 &\leq& C (1+x^2)^{\Re(\lambda)} \sum_{k=0}^m \left|{D_\lambda}^kf(x)\right|, 
\end{eqnarray*}
where 
\[
C = \max_{0 \leq k \leq m}\left[
      \sup_{x \in\R} \left\{\frac{\left|u_{m,k}(x)\right|}{ (1+x^2)^{m/2}}\right\}
      \right].
\]
Notice that $C$ is finite, since the degree of $u_{m,k}(x)$ is not greater than $m$.  
\end{proof}

\subsection{Poisson transform}

Let $\ell\in \Z$ and $F:\mathcal{H}\to \C$ be a complex-valued function on 
the Poincar\'{e} upper half plane $\mathcal{H}$. 
For $\tilde{g}=(g, \varphi)\in\widetilde{G}$, we define the slash operator $\big|_{\ell}$ by
\begin{equation}
 \label{form:DefOfActionOfTildeGonH}
\left(F\big|_{\ell}\, \tilde{g}\right)(z)= F(gz)\cdot \varphi(z)^{-\ell},
\end{equation}
where $gz$ denotes the linear fractional transformation 
\eqref{form:LinearFractionalTransform}.
By \eqref{form:DefOfProductInGtilde}, we have
\begin{equation}
 \label{form:CompositionOfSlash}
 F\big|_{\ell} \left(\tilde{g}_1 \cdot \tilde{g}_2\right) = \left(F\big|_{\ell}\, \tilde{g}_1\right)\big|_{\ell} \, \tilde{g}_2
\end{equation}
for $\tilde{g}_1, \tilde{g}_2\in \widetilde{G}$. 

Furthermore, 
we define the hyperbolic Laplacian 
$\Delta_{\ell/2}$ of weight $\ell/2$  on $\mathcal{H}$ by  
\begin{equation}
\label{form:Laplacian}
 \Delta_{\ell/2} = -y^2\left(\frac{\partial^2}{\partial x^2}+
 \frac{\partial^2}{\partial y^2}\right)+ \frac{i \ell y}{2}\left(\frac{\partial}{\partial x}
 +i\frac{\partial}{\partial y}\right) \quad (z=x+iy \in \mathcal{H}). 
\end{equation}
It is known that
for $C^{\infty}$-functions $F$ on $\mathcal{H}$ and $\tilde{g}\in \widetilde{G}$,
\begin{equation}
 \label{form:CommutativityOfDelta}
\left(\Delta_{\ell/2} F\right)\big|_\ell \tilde{g} 
 = \Delta_{\ell/2}  \left(F\big|_\ell \tilde{g}\right).
\end{equation}

For $\lambda\in \C, \ell\in \Z$, 
$u \in \R$, $z\in \mathcal{H}$, we define the {\it Poisson kernel\/} $p_{\lambda, \ell}(u, z)$ by 
\begin{equation}
 \label{form:DefOfPoissonKernel}
 p_{\lambda, \ell}(u, z)=
 \frac{y^{\lambda-(\ell/4)}}{|z-u|^{2\lambda-(\ell/2)}\cdot (z-u)^{\ell/2}}.
\end{equation}
When we fix $z\in \mathcal{H}$ (resp.\  $u \in \R$) and 
regard $p_{\lambda, \ell}(u, z)$ as a function of $u$ (resp.\ $z$), we denote it by
$p_{\lambda, \ell, z}(u)$ (resp.\ $q_{\lambda, \ell,u}(z)$).
Then it is easy to check the following lemma.

\begin{lemma}
 \label{lem:PoissonKerIntertwines}
 As a function of $u$, the Poisson kernel $p_{\lambda, \ell, z}(u)$
 is an element of $\mathcal{V}_{\lambda, \ell}^{\infty}$. We have
 \begin{equation}
 \label{form:PoissonKerIntertwines}
 \left(\pi_{\lambda, \ell}(\tilde{g})p_{\lambda, \ell, z}\right)(u)=
  \left(q_{\lambda, \ell, u}\big|_{\ell} \tilde{g}\right)(z) \qquad (\tilde{g}\in \widetilde{G}). 
 \end{equation}
 Furthermore, $q_{\lambda, \ell,u}(z)$ is an eigenfunction of the
Laplacian $\Delta_{\ell/2}$ defined by \eqref{form:Laplacian}. That is, 
\begin{equation}
\label{form:EigenfunctionYlambda}
\Delta_{\ell/2} q_{\lambda, \ell, u}(z) = \left(\lambda-\frac{\ell}{4}\right)
\left(1-\lambda-\frac{\ell}{4}\right) \cdot q_{\lambda, \ell, u}(z).
\end{equation}
\end{lemma}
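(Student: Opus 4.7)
I would address (3) first, then (2) by covariance on generators, and deduce (1) as a corollary of (2). For (3), the cleanest route is to reduce to the weight function $y^{\lambda-\ell/4}$: a direct application of \eqref{form:Laplacian} (exploiting that $y^s$ is $x$-independent) gives
\[
\Delta_{\ell/2}\,y^s \;=\; -s(s-1)\,y^s \;-\; \tfrac{\ell s}{2}\,y^s \;=\; s\bigl(1-s-\tfrac{\ell}{2}\bigr)y^s,
\]
so the eigenvalue at $s=\lambda-\ell/4$ is $(\lambda-\ell/4)(1-\lambda-\ell/4)$. A one-line check using $\tilde w=[w,1]$ and $j(w,z)=z$ gives $(y^{\lambda-\ell/4}\big|_\ell\tilde w)(z)=(y/|z|^2)^{\lambda-\ell/4}z^{-\ell/2}=q_{\lambda,\ell,0}(z)$; then, since $\mathrm{Im}(z-u)=y$, applying $\big|_\ell\tilde n(-u)$ and using \eqref{form:CompositionOfSlash} yields $q_{\lambda,\ell,u} = y^{\lambda-\ell/4}\big|_\ell(\tilde w\tilde n(-u))$. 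By \eqref{form:CommutativityOfDelta} the Laplacian commutes with the slash operator, so $q_{\lambda,\ell,u}$ inherits the same eigenvalue.

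For (2), I would first observe that \eqref{form:PoissonKerIntertwines} is multiplicative in $\tilde g$: if the identity holds for $\tilde g_i$ ($i=1,2$), then $\pi_{\lambda,\ell}(\tilde g_i)p_{\lambda,\ell,z}=\varphi_i(z)^{-\ell}\,p_{\lambda,\ell,g_iz}$ as functions of $u$, and combining these via \eqref{form:DefOfProductInGtilde} yields the identity for $\tilde g_1\tilde g_2$. It therefore suffices to check \eqref{form:PoissonKerIntertwines} on a generating set of $\widetilde G$. For $\tilde n(t)$ both sides reduce to $p_{\lambda,\ell,z+t}(u)$; for $\tilde d(a)$ ($a>0$) both sides collapse to $a^{2\lambda}y^{\lambda-\ell/4}/\bigl(|a^2 z-u|^{2\lambda-\ell/2}(a^2 z-u)^{\ell/2}\bigr)$; the central element $[-1_2,-i]$ acts trivially on both sides. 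The remaining case $\tilde g=\tilde w$ is a direct case split on the sign of $-cu+a=-u$ in \eqref{form:DefOfActionOnVlambdaMu}, matched with the principal-branch value of $(z-u)^{\ell/2}$ on the slash side.

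For (1), smoothness of $u\mapsto p_{\lambda,\ell,z}(u)$ on $\R$ is immediate, since $z-u$ never vanishes for $z\in\mathcal{H}$, $u\in\R$. Applying (2) to $\tilde g=\tilde w^{-1}$ and invoking \eqref{form:FInftyIsPiTau} gives
\[
(p_{\lambda,\ell,z})_\infty(u) \;=\; \bigl(q_{\lambda,\ell,u}\big|_\ell\tilde w^{-1}\bigr)(z) \;=\; (-z)^{-\ell/2}\,p_{\lambda,\ell,\,-1/z}(u),
\]
where the first factor is constant in $u$. Since $-1/z\in\mathcal{H}$, this extension is smooth on all of $\R$, so $p_{\lambda,\ell,z}\in\mathcal{V}^\infty_{\lambda,\ell}$. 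The main obstacle I anticipate is the $\tilde w$-case of Step~2: reconciling the three sign cases in \eqref{form:DefOfActionOnVlambdaMu} and the associated factors $\xi^{-\ell}$, $(i\xi)^{-\ell}$, $(-i\xi)^{-\ell}$ with the principal-branch conventions for $(z-u)^{\ell/2}$ and $z^{\ell/2}$ requires careful bookkeeping; once this single branch-matching is done, multiplicativity propagates the identity to all of $\widetilde G$ and the remainder of the lemma follows formally.
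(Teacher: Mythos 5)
The paper offers no proof of this lemma (it is dismissed with ``it is easy to check''), so there is no argument to compare routes with; your plan supplies the omitted verification and is correct in substance. Part (3) is fine: $\Delta_{\ell/2}y^{s}=s\bigl(1-s-\tfrac{\ell}{2}\bigr)y^{s}$, the identity $q_{\lambda,\ell,u}=y^{\lambda-\ell/4}\big|_{\ell}\bigl(\tilde w\,\tilde n(-u)\bigr)$ is right (and agrees with the formula $q_{\lambda,\ell,0}=y^{\lambda-\ell/4}\big|_{\ell}\tilde w$ the authors themselves use in \S 4.1), and \eqref{form:CommutativityOfDelta} transports the eigenvalue. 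The generator checks you carry out ($\tilde n(t)$, $\tilde d(a)$, $[-1_2,-i]$) are correct, and $\{\tilde n(t),\tilde w,[-1_2,-i]\}$ does generate $\widetilde G$: note $\tilde w^{4}=[1_2,-1]$ and $[-1_2,-i]\,\tilde w^{2}=[1_2,i]$, so the kernel $\widetilde M_0$ is accounted for. Your constant in part (1) is also the right one: a direct computation gives $(p_{\lambda,\ell,z})_\infty(u)=y^{\lambda-\ell/4}\,|1+uz|^{-(2\lambda-\ell/2)}(1+uz)^{-\ell/2}=(-z)^{-\ell/2}p_{\lambda,\ell,-1/z}(u)$, which is manifestly smooth on all of $\R$ and consistent with $(p_{\lambda,\ell,z})_\infty(0)=y^{\lambda-\ell/4}$ as used in \S 4.1 (it differs by $i^{\ell}$ from the factor written parenthetically in Example \ref{example:Fourier transform of Poisson kernel}; your version is the consistent one).

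Two points need attention. First, doing (2) before (1) is mildly circular: the value of $\pi_{\lambda,\ell}(\tilde g)p_{\lambda,\ell,z}$ at the point with $-cu+a=0$ is defined through \eqref{form:DefOfActionOnVlambdaMuAtInfty} via $f_\infty(0)$, and your multiplicativity step uses that $\pi_{\lambda,\ell}$ is a representation on $\mathcal{V}^\infty_{\lambda,\ell}$ --- both presuppose $p_{\lambda,\ell,z}\in\mathcal{V}^\infty_{\lambda,\ell}$, which is exactly part (1). This is repairable inside your own scheme: either prove (1) first by the one-line computation of $(p_{\lambda,\ell,z})_\infty$ displayed above (smooth since $1+uz\neq0$ for $u\in\R$), or verify \eqref{form:PoissonKerIntertwines} first only at points with $-cu+a\neq0$ (a purely pointwise identity needing no membership), deduce (1) from the $\tilde w^{-1}$ case via \eqref{form:FInftyIsPiTau}, and then obtain the singular-point case by continuity. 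Second, the $\tilde w$ case you defer is the only substantive computation; it does go through, the key branch fact being that for $u\in\R$, $z\in\mathcal{H}$ and $\zeta=1+uz$ one has $\arg(-\zeta/z)=\arg\zeta-\arg z+\pi$, i.e.\ $(-\zeta/z)^{\ell/2}=i^{\ell}\zeta^{\ell/2}z^{-\ell/2}$; with this, both sides of \eqref{form:PoissonKerIntertwines} for $\tilde g=\tilde w$ reduce to $i^{-\ell}\,y^{\lambda-\ell/4}\,|1+uz|^{-(2\lambda-\ell/2)}(1+uz)^{-\ell/2}$ in all three sign cases of \eqref{form:DefOfActionOnVlambdaMu}, including $u=0$. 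With these two adjustments the proof is complete.
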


\begin{definition}
The Poisson transform $\mathcal{P}_{\lambda,\ell}T$ of a distribution $T\in \left(\mathcal{V}_{\lambda, \ell}^{\infty}\right)^*$ is defined by
\[
 (\mathcal{P}_{\lambda, \ell}T)(z)= T(p_{\lambda, \ell, z}).
\]
\end{definition}

We need the following basic properties of the Poisson transforms.

\begin{lemma}
\label{lem:GrowthOfPtrans}
$(1)$ 
For $\tilde g \in \widetilde G$, we have 
\[
\left(\mathcal{P}_{\lambda, \ell}T\big|_{\ell} \, \tilde{g}\right)(z)
 = \mathcal{P}_{\lambda, \ell}\left(\pi^*_{\lambda, \ell}(\tilde{g}^{-1})T\right)(z).
\]

$(2)$ 
For $T\in \left(\mathcal{V}_{\lambda,\ell}^{\infty}\right)^*$, the Poisson transform $\mathcal{P}_{\lambda, \ell}T$ satisfies the differential equation
\[
 \Delta_{\ell/2} \cdot (\mathcal{P}_{\lambda, \ell}T)(z) 
  =  \left(\lambda-\frac{\ell}{4}\right)\left(1-\lambda-\frac{\ell}{4}\right) \cdot
 (\mathcal{P}_{\lambda, \ell}T)(z),
\]
and hence  $\mathcal{P}_{\lambda, \ell}T$  is a real analytic function on $\mathcal{H}$.

$(3)$ 
For $T\in \left(\mathcal{V}_{\lambda,\ell}^{\infty}\right)^*$, then there exist some
$C>0$ and $r\in\Z_{\geq 0}$ such that 
\[
 \left|\left(\mathcal{P}_{\lambda, \ell}T\right)(z)\right|\leq C
 \cdot \left(\frac{|z+i|^2}{y}\right)^r \qquad (z=x+iy\in \mathcal{H}).
\]
\end{lemma}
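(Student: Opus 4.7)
The plan is to handle the three parts in sequence, with Lemma~\ref{lem:PoissonKerIntertwines} serving as the main tool throughout.

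For~(1), I would first rewrite the intertwining identity \eqref{form:PoissonKerIntertwines} in the form $p_{\lambda,\ell,gz} = \varphi(z)^{\ell}\,\pi_{\lambda,\ell}(\tilde g)p_{\lambda,\ell,z}$, using $q_{\lambda,\ell,u}(gz)=p_{\lambda,\ell,gz}(u)$ and writing $\tilde g=(g,\varphi)$. Applying $T$ and invoking the definition \eqref{form:DefOfActionOnDualSpace} of the dual action then gives $(\mathcal{P}_{\lambda,\ell}T\big|_\ell \tilde g)(z) = \varphi(z)^{-\ell}T(p_{\lambda,\ell,gz}) = T(\pi_{\lambda,\ell}(\tilde g)p_{\lambda,\ell,z}) = \bigl(\pi^*_{\lambda,\ell}(\tilde g^{-1})T\bigr)(p_{\lambda,\ell,z})$, which is exactly $\mathcal{P}_{\lambda,\ell}\bigl(\pi^*_{\lambda,\ell}(\tilde g^{-1})T\bigr)(z)$.

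For~(2), the strategy is to push $\Delta_{\ell/2}$ past $T$. Since $T$ is continuous on the Fréchet space $\mathcal{V}_{\lambda,\ell}^\infty$, it commutes with parameter differentiation provided the parametrised family of test vectors is at least $C^2$ into $\mathcal{V}_{\lambda,\ell}^\infty$. I would therefore first verify, using the explicit formula \eqref{form:DefOfPoissonKernel}, that $z\mapsto p_{\lambda,\ell,z}$ is smooth into $\mathcal{V}_{\lambda,\ell}^\infty$, by checking that the difference quotients in $x$ and $y$ converge in every semi-norm $\nu_{N,\lambda}$. The eigenvalue identity \eqref{form:EigenfunctionYlambda} from Lemma~\ref{lem:PoissonKerIntertwines} then pulls outside $T$ and yields the asserted differential equation. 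Real analyticity follows from elliptic regularity, since $\Delta_{\ell/2} - (\lambda-\ell/4)(1-\lambda-\ell/4)$ is elliptic with real-analytic coefficients on $\mathcal{H}$.

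For~(3), continuity of $T$ on $\mathcal{V}_{\lambda,\ell}^\infty$ gives some $N\in\Z_{\geq 0}$ and $C_0>0$ with $|T(f)| \leq C_0 \nu_{N,\lambda}(f)$, so the statement reduces to the explicit growth estimate $\nu_{N,\lambda}(p_{\lambda,\ell,z}) \leq C (|z+i|^2/y)^r$ for some integer $r = r(N,\lambda,\ell) \geq 0$. I would prove this by writing $p_{\lambda,\ell,z}(u) = y^{\lambda-\ell/4}(z-u)^{-\lambda-\ell/4}(\bar z-u)^{-\lambda+\ell/4}$, expanding $D_\lambda^j p_{\lambda,\ell,z}(u)$ as a finite sum of products of powers of $(z-u)$, $(\bar z-u)$ with polynomials in $u$, and then combining the elementary inequalities $|z-u|\geq y$, $\sqrt{1+u^2}=|u-i|\leq |u-z|+|z+i|$, and $|z+i|^2\geq y^2$. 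This estimate is the main obstacle: the exponents of $y$ and $|z+i|$ produced by each summand depend on the signs of $\Re\lambda\pm\ell/4$, so a careful case analysis is needed to absorb everything into a single integer power of $|z+i|^2/y$.
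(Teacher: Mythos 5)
Your parts (1) and (2) are correct and are essentially the paper's own route: the paper disposes of both by saying they "follow immediately from Lemma \ref{lem:PoissonKerIntertwines}", and your computation for (1) (rewriting \eqref{form:PoissonKerIntertwines} as $p_{\lambda,\ell,gz}=\varphi(z)^{\ell}\,\pi_{\lambda,\ell}(\tilde g)p_{\lambda,\ell,z}$ and unwinding the dual action) together with your justification in (2) of moving $\Delta_{\ell/2}$ past $T$ (smoothness of $z\mapsto p_{\lambda,\ell,z}$ into the Fr\'echet space, then \eqref{form:EigenfunctionYlambda}, then elliptic regularity) is exactly the argument being left implicit. For (3), however, you diverge from the paper: the paper does not prove the growth estimate at all but cites Lewis \cite[Theorem~4.1]{Lewis78} and Oshima \cite[Theorem~3.5]{Oshima}, whereas you propose the self-contained proof via continuity of $T$ (so $|T(f)|\le C_0\nu_{N,\lambda}(f)$) plus the bound $\nu_{N,\lambda}(p_{\lambda,\ell,z})\le C(|z+i|^2/y)^r$. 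That strategy is sound and is essentially what underlies the cited results; it buys a proof independent of the literature, at the cost of a genuinely fiddly kernel estimate.

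One concrete warning about that estimate: if you expand $D_\lambda^{\,j}p_{\lambda,\ell,z}$ naively as a sum of (polynomial in $u$)$\times(z-u)^{-a}(\bar z-u)^{-b}$ and bound each summand separately, the argument breaks down, because individual summands do not have the decay $|u|^{-2\Re(\lambda)}$ that the semi-norm requires. Already for $j=1$ the piece $2\lambda u\,p_{\lambda,\ell,z}(u)$ alone makes $\sup_u (1+u^2)^{\Re(\lambda)}|{\cdot}|$ infinite; only the cancellation against $(1+u^2)\,\partial_u p_{\lambda,\ell,z}$ produces the correct decay (a direct computation for $\ell=0$ shows the cubic terms in $u$ cancel, leaving a quadratic numerator over $|z-u|^{2\Re(\lambda)+2}$). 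So you must organize the expansion to preserve this cancellation, e.g.\ prove by induction that $D_\lambda^{\,j}p_{\lambda,\ell,z}$ is a finite sum of terms $c(z,\bar z)\,y^{\lambda-\ell/4}(z-u)^{-a}(\bar z-u)^{-b}$ with $\Re(a+b)\ge 2\Re(\lambda)$ and coefficients of controlled polynomial degree in $z,\bar z$, by always rewriting the numerator polynomials in powers of $(z-u)$ and $(\bar z-u)$ rather than of $u$. Once that is done, your elementary inequalities $|z-u|\ge y$ and $\sqrt{1+u^2}\le|u-z|+|z+i|$, together with the observation that $|z+i|^2/y\ge\max\{y,\,1/y,\,1\}$ (so stray factors $y^{\pm c}$ and powers of $|z+i|^2/y^2$ are absorbed into a single power of $|z+i|^2/y$), do yield the stated bound, and the case analysis you anticipate on the signs of the exponents is then routine.
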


The first and the second assertions follow immediately from Lemma~\ref{lem:PoissonKerIntertwines}. 
For the third assertion, we refer to Lewis~\cite[Theorem~4.1]{Lewis78} and Oshima~\cite[Theorem~3.5]{Oshima}.
Furthermore, the Poisson transform is a bijection of  $\left(\mathcal{V}_{\lambda,\ell}^{\infty}\right)^*$ onto the space of functions on $\mathcal{H}$ with the properties in (2) and (3) above for  generic $\lambda$.
 (see  Oshima-Sekiguchi~\cite[Theorem~3.15]{OS}  and Wallach \cite[\S 11.9]{Wallach} for more general treatment).

\subsection{Maass forms for $\widetilde{\Gamma}_0(N)$}

For a positive integer $N$, let
\[
 \Gamma_0(N)=\left\{ \left.
 \begin{pmatrix}
 a & b \\
 c & d 
 \end{pmatrix}\in SL_2(\Z) \right| 
 c\equiv 0 \pmod{N}
 \right\}.
\]
The subgroups $\widetilde{\Gamma}_0(N)$ 
of $\widetilde{G}$ corresponding to $\Gamma_0(N)$
are defined differently, in accordance with the parity of $\ell$. 

\medskip

\noindent
(1) In the case of even $\ell$, the subgroup 
$\widetilde{\Gamma}_0(N)$ of $\widetilde{G}$ is defined by
\begin{equation}
 \label{form:TildeGammaForEven}
 \widetilde{\Gamma}_0(N)= \left\{
 \bar{\gamma}_+,\ \bar{\gamma}_-\, |\, \gamma\in \Gamma_0(N) \right\}, 
\quad \bar{\gamma}_\pm=[\gamma,\pm1]=(\gamma,\pm j(\gamma, z)^{1/2}).
\end{equation}
If we write $\ell=2\kappa\, (\kappa\in \Z)$, 
then the action  \eqref{form:DefOfActionOfTildeGonH} 
of $\bar{\gamma}_{\pm}$ becomes
\begin{equation}
 \label{form:ActionInEvenWeight}
 \left(F\big|_{\ell}\, \bar{\gamma}_{\pm}\right)(z)= F(\gamma z)\cdot j(\gamma, z)^{-\kappa},  
\end{equation}
and coincides with the standard action of $\gamma\in \Gamma_0(N)$ with
integral weight $\kappa=\ell/2$.

\medskip

\noindent
(2) In the case of odd $\ell$, we use the ``theta multiplier''. 
We put 
\[
\theta(z)=\sum_{n=-\infty}^{\infty} \exp(2\pi i n^2 z), \qquad
J(\gamma, z)=\frac{\theta(\gamma z)}{\theta(z)}.
\]
Then it is well-known that
\[
J(\gamma, z)= \varepsilon_{d}^{-1} \cdot \left(\frac{c}{d}\right)\cdot 
 (cz+d)^{1/2} \quad \text{for}\; \; \gamma=
 \begin{pmatrix}
 a & b \\
 c & d 
 \end{pmatrix}\in \Gamma_0(4), 
\]
where 
\begin{equation}
\label{eqn:def of epsilond}
\varepsilon_d = \begin{cases}
 1 & (d \equiv 1 \pmod 4), \\
 i & (d \equiv 3 \pmod 4).
 \end{cases}
\end{equation}
Note that 
\[
 J(\gamma, z)^{2}=\left(\frac{-1}{d}\right)\cdot j(\gamma, z).
\]
Thus, if we put 
\begin{equation}
 \label{form:DefOfGammaStar} 
 \gamma^*= \left(\gamma, \ J(\gamma, z)\right)   
 = \left[\gamma, \ \varepsilon_d^{-1}\cdot \left(\frac{c}{d}\right)\right]
\qquad 
 \text{for}\;\; \gamma\in \Gamma_0(4), 
\end{equation}
then the map $\gamma\mapsto \gamma^{*}$ defines an injective homomorphism of
$\Gamma_0(4)$ into $\widetilde{G}$. 
In the case of odd $\ell$, for a positive integer $N$ with $4|N$, 
the group $\widetilde{\Gamma}_0(N)$ is defined to be the image of 
$\Gamma_0(N)$ under this injective homomorphism. 
That is, 
\begin{equation}
 \label{form:TildeGammaForOdd}
 \widetilde{\Gamma}_0(N) 
 = \left\{
 \left(\gamma, \ J(\gamma, z)\right)\, |\, 
 \gamma\in \Gamma_0(N)
 \right\}.
\end{equation}

Let $\chi$ be a Dirichlet character mod ${N}$. Then we use 
the same symbol $\chi$ to denote the character of $\widetilde{\Gamma}_0(N)$ 
defined by
\begin{equation}
 \label{form:ChiInducesCharOnGamma0N}
 \chi(\tilde\gamma)=\chi(d) \qquad \text{for 
 $\tilde\gamma=[\gamma,\xi]$ with 
 $\gamma=
 \begin{pmatrix}
 a & b \\
 c & d 
 \end{pmatrix}\in \Gamma_0(N)$}.
\end{equation}

\begin{definition}[Maass forms]
\label{defn:DefOfMaassForms}
Let $\ell \in \Z$, and $N$ be a positive integer, with $4|N$ when $\ell$ is odd.  
A complex-valued $C^{\infty}$-function $F(z)$ on $\mathcal{H}$
is called {\it a Maass form for} $\widetilde{\Gamma}_0(N)$
of weight $\ell/2$ with character $\chi$, if the following three conditions 
are satisfied;
\begin{enumerate}
\def\labelenumi{(\roman{enumi})}
\item 
$F\big|_{\ell}\, \tilde{\gamma}= \chi(\tilde\gamma) \cdot F$ for 
 every $\tilde{\gamma}\in \widetilde{\Gamma}_0(N)$, 
\item 
$\Delta_{\ell/2} F= \Lambda \cdot F$ with some $\Lambda\in \C$,
\item  
$F$ is of moderate growth at every cusp, namely,  for every $A \in SL_2(\Z)$, there exist positive constants $C$, $K$ and $\nu$ depending on $F$ and $A$ such that 
\[
|(F\big|_{\ell} [A,1])(z)|< C y^{\nu} \quad \text{if}\;\;   y= \Im(z)>K. 
\]
\end{enumerate}
We call $\Lambda$ the {\it eigenvalue} of $F$. 
\end{definition}

\begin{remark}
\label{remark:CondOnCharacter}
In the condition (i) above, let 
\[
\tilde{\gamma} 
 = \begin{cases}
      [-1_2, 1]=(-1_2,i) & (\ell \equiv 0 \pmod 2), \\
      (-1_2)^*=[-1_2, -i]=(-1_2,1) & (\ell \equiv 1 \pmod 2).
    \end{cases}
\]
Then, we have
\[ 
  \left(F\big|_{\ell}\, \tilde{\gamma}\right)(z) = F(z) \cdot
\begin{cases}
      i^{\ell} & (\ell \equiv 0 \pmod 2), \\
      1 & (\ell \equiv 1 \pmod 2).
\end{cases}
\]
Hence there exist no non-trivial Maass forms for $\widetilde{\Gamma}_0(N)$
of weight $\ell/2$ with character $\chi$, unless $\chi(-1) = i^\ell$ or $1$ according as $\ell \equiv 0 \pmod 2$ or $\ell \equiv 1 \pmod 2$. 
\end{remark}

\begin{definition}
\label{defn:AutomDistriGamma0}
Let $\widetilde\Gamma$ be a discrete subgroup of $\widetilde G$ and $\chi$ be a character of $\widetilde\Gamma$. 
We say that a distribution $T\in \left(\mathcal{V}_{\lambda, \ell}^{\infty}\right)^*$
is {\it automorphic} for $\widetilde{\Gamma}$ 
with character $\chi$ if
\begin{equation}
 \left(\pi^*_{\lambda, \ell}(\tilde{\gamma})T\right)(f) 
 = \chi({\tilde\gamma}) T(f) 
  \qquad \text{for}\;\; \tilde{\gamma}\in 
\widetilde{\Gamma}, f\in \mathcal{V}_{\lambda, \ell}^{\infty}.
\end{equation}
When the character $\chi$ is trivial, we simply say that $T$ is automorphic for $\widetilde\Gamma$. 
\end{definition}

In Lemma~\ref{lem:GrowthOfPtrans}, we saw that 
the Poisson transform is
an intertwining operator between $\left(\mathcal{V}_{\lambda, \ell}^{\infty}\right)^*$ 
and the space of eigenfunctions on $\mathcal{H}$ with respect to $\Delta_{\ell/2}$
of moderate growth, and therefore, 
the image of an automorphic distribution through the Poisson transform
is a Maass form.

\begin{theorem}
\label{thm:MaassFormFromAutomDist}
Let $T\in \left(\mathcal{V}_{\lambda, \ell}^{\infty}\right)^*$ be an automorphic distribution 
on $\mathcal{V}_{\lambda, \ell}^{\infty}$ for $\widetilde{\Gamma}_{0}(N)$
with character $\chi$. Then the Poisson transform 
$(\mathcal{P}_{\lambda, \ell}T)(z)$ of $T$ is
a Maass form for $\widetilde{\Gamma}_0(N)$ of weight $\ell/2$ with
character $\chi^{-1}$ and eigenvalue $(\lambda-\ell/4)(1-\lambda-\ell/4)$.
\end{theorem}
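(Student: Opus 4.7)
The plan is to verify the three defining properties of Definition~\ref{defn:DefOfMaassForms} for $F:=\mathcal{P}_{\lambda,\ell}T$, using Lemma~\ref{lem:GrowthOfPtrans} as the essential input. For condition (i), for any $\tilde\gamma\in\widetilde{\Gamma}_0(N)$, Lemma~\ref{lem:GrowthOfPtrans}(1), the automorphy hypothesis $\pi^*_{\lambda,\ell}(\tilde\gamma^{-1})T=\chi(\tilde\gamma)^{-1}T$ (Definition~\ref{defn:AutomDistriGamma0}), and the linearity of $\mathcal{P}_{\lambda,\ell}$ combine at once to give
\[
(F|_{\ell}\tilde\gamma)(z)=\mathcal{P}_{\lambda,\ell}\bigl(\pi^{*}_{\lambda,\ell}(\tilde\gamma^{-1})T\bigr)(z)=\chi(\tilde\gamma)^{-1}F(z),
\]
which is the transformation law with character $\chi^{-1}$. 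Condition (ii) on the Laplacian eigenvalue is exactly Lemma~\ref{lem:GrowthOfPtrans}(2).

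The bulk of the work lies in condition (iii), moderate growth at every cusp. Fixing $A\in SL_2(\Z)$ and setting $\tilde A=[A,1]$ and $T_A:=\pi^{*}_{\lambda,\ell}(\tilde A^{-1})T$, a second application of Lemma~\ref{lem:GrowthOfPtrans}(1) rewrites $F|_{\ell}\tilde A$ as the Poisson transform $\mathcal{P}_{\lambda,\ell}T_A$, and Lemma~\ref{lem:GrowthOfPtrans}(3) then yields
\[
|(F|_{\ell}\tilde A)(z)|\le C\left(\frac{|z+i|^{2}}{y}\right)^{r}=C\left(\frac{x^{2}+(y+1)^{2}}{y}\right)^{r}.
\]
This is not yet a bound of the form $C'y^{\nu}$, because of the quadratic dependence on $x$. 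To remove it, I would exploit the parabolic stabilizer of the cusp $A\cdot i\infty$: there exist $h>0$ and $\gamma_{0}\in\Gamma_{0}(N)$ with $A^{-1}\gamma_{0}A=\pm n(h)$; lifting $\gamma_{0}$ to a suitable $\tilde\gamma_{0}\in\widetilde{\Gamma}_{0}(N)$ and applying condition (i) to $\tilde\gamma_{0}$ yields, via the composition law \eqref{form:CompositionOfSlash}, an identity $(F|_{\ell}\tilde A)(z+h)=c\cdot(F|_{\ell}\tilde A)(z)$ with $|c|=1$. Hence $|F|_{\ell}\tilde A|$ is periodic in $x$ of period $h$, so it suffices to estimate it on the strip $|x|\le h/2$, $y>K$, where $|z+i|^{2}/y\le C_{1}y$, giving the required bound $|(F|_{\ell}\tilde A)(z)|\le C'y^{r}$.

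The main obstacle will be making the parabolic-periodicity step precise inside the metaplectic cover: the relation $A^{-1}\gamma_{0}A=\pm n(h)$ lifts to $\widetilde{G}$ only up to elements of the center $\widetilde{M}_{0}$ and involves the factor system of Lemma~\ref{thm:FactorSystem}, while in the odd-$\ell$ case the correct lift of $\gamma_{0}$ is prescribed by the theta multiplier \eqref{form:DefOfGammaStar} rather than by $[\,\cdot\,,1]$. However, since only the absolute value of the resulting quasi-periodicity constant $c$ enters the growth estimate, and any such constant lies in $\mu_{4}$, these cocycle subtleties are merely bookkeeping rather than a genuine analytic obstruction.
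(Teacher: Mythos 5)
Your proposal follows the same route as the paper's proof: conditions (i) and (ii) of Definition~\ref{defn:DefOfMaassForms} are read off from Lemma~\ref{lem:GrowthOfPtrans} (1), (2) together with the automorphy of $T$, and condition (iii) is obtained by applying Lemma~\ref{lem:GrowthOfPtrans} (1), (3) to $\pi^*_{\lambda,\ell}([A,1]^{-1})T$. The one place where you go beyond the paper is condition (iii): the paper deduces an $O(y^{\nu})$ bound ``uniformly with respect to $x$'' directly from the estimate $C\left(|z+i|^{2}/y\right)^{r}$ of Lemma~\ref{lem:GrowthOfPtrans} (3), whereas you correctly note that this estimate is not uniform in $x$ and repair the gap with the quasi-periodicity $(F|_{\ell}\tilde A)(z+h)=c\,(F|_{\ell}\tilde A)(z)$, $|c|=1$, furnished by a parabolic element $\gamma_{0}=A\,n(h)\,A^{-1}\in\Gamma_{0}(N)$ (for instance $h=N$ always works), which reduces the estimate to the strip $|x|\le h/2$, and the metaplectic cocycle and choice of lift only change $c$ by a fourth root of unity times a character value, so they are indeed harmless. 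This added step is a correct tightening of the same argument rather than a different method, and it addresses a point the paper passes over.
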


\begin{proof}
Let $T\in \left(\mathcal{V}_{\lambda, \ell}^{\infty}\right)^*$ is an automorphic distribution
for $\widetilde{\Gamma}_0(N)$ with character $\chi$. 
Then, by Lemma \ref{lem:GrowthOfPtrans} $(1)$ and $(2)$,  the Poisson transform of $T$ satisfies
\begin{gather*}
 \left(\mathcal{P}_{\lambda, \ell}T\big|_{\ell}\, \tilde{\gamma}\right)(z) 
  = \mathcal{P}_{\lambda, \ell} (\pi^*_{\lambda, \ell}(\tilde{\gamma}^{-1})T)(z)
  = \chi(\tilde\gamma)^{-1}\cdot  (\mathcal{P}_{\lambda, \ell}T)(z) 
 \quad (\tilde{\gamma}\in \widetilde{\Gamma}_0(N)), \\
 \Delta_{\ell/2} \cdot (\mathcal{P}_{\lambda, \ell}T)(z) 
  =  \left(\lambda-\frac{\ell}{4}\right)\left(1-\lambda-\frac{\ell}{4}\right) \cdot
 (\mathcal{P}_{\lambda, \ell}T)(z). 
\end{gather*}
Let $A \in SL_2(\Z)$. 
Then Lemma~\ref{lem:GrowthOfPtrans} $(1)$ and $(3)$ applied to $\pi^*_{\lambda,\ell}([A,1]^{-1})T$  imply that there exists a positive constant
$\nu$ such that 
\[
 (\mathcal{P}_{\lambda,\ell}T\big|_\ell[A,1])(z) 
   =  (\mathcal{P}_{\lambda,\ell}(\pi^*_{\lambda,\ell}([A,1]^{-1})T))(z) 
   = O\left(y^{\nu}\right) \quad (y \to \infty)
\]
uniformly with respect to $x$. 
This shows the moderate growth condition (iii).
\end{proof}

Theorem \ref{thm:MaassFormFromAutomDist} was proved in Unterberger \cite[Theorem 3.2]{U} in the case of the full modular group $SL_2(\Z)$ and $\ell=0$. 
Concrete examples of the correspondence between automorphic distributions and Maass forms are studied in Kato \cite{Kato} for Maass forms attached to zeta functions of real quadaratic fields with Gr\"o\ss encharacters, and in Shimeno \cite{Shimeno} and Unterberger \cite[Sections 2 and 3]{U} for the Eisenstein series of $SL_2(\Z)$. 

\section{Local functional equations on $\mathcal{V}_{\lambda,\ell}^{\infty}$}

In this section we consider the local zeta functions $\int |x|_\pm^{s-1} f(x)\,dx$ and  $\int |t|_\pm^{s-1} \mathcal{F}f(t)\,dt$ for $f \in \mathcal{V}_{\lambda,\ell}^\infty$, and  prove the local functional equation which relate these local zeta fucntions. 
Here $\mathcal{F}f$ denotes the Fourier transform of $f$. 
If $f$ is a rapidly decreasing function, then the local zeta functions and the local  functional equation  are well-studied (\cite{GS}, \cite{Igusa}, \cite{PVBook}).  
Our purpose here is to extend them for arbitrary $f \in \mathcal{V}_{\lambda,\ell}^\infty$.   

If $\Re(\lambda) \leq \frac 12$, then a function $f$ in $\mathcal{V}_{\lambda,\ell}^\infty$ is not necessarily absolutely integrable and the integral $\int_\R f(x) \exp(2\pi i xt)\,dx$ defining the Fourier transform may not converge. 
So we start to extend the definition of the Fourier transformation.

\subsection{Fourier transforms of functions in $\mathcal{V}_{\lambda,\ell}^{\infty}$}

For simplicity, we put $\mathbf{e}[x]=\exp(2\pi i x)$. 
Let $f(x)$ be a function in  $\mathcal{V}_{\lambda,\ell}^{\infty}$. 
Then $f(x)=O(\abs{x}^{-2\Re(\lambda)})$ $(\abs{x}\rightarrow\infty)$. 
Hence, if $\Re(\lambda)>\frac12$, then $f(x)$ is absolutely integrable and the Fourier transform  
\[
\widehat{f}(t) = \int_\R f(x)\mathbf{e}[xt]\,dx
\]
is well-defined. 
To define the Fourier transformation for functions in $\mathcal{V}_{\lambda,\ell}^{\infty}$ with $\Re(\lambda) \leq \frac12$, we use the following lemma. 

\begin{lemma}
\label{lem:FT0}
If $\Re(\lambda)>\frac12$ and $t \ne 0$, then we have 
\[
\widehat{f}(t) = \left(\frac{-1}{2\pi i t}\right)^m \widehat{f^{(m)}}(t)
\]
for $m \geq 0$. 
\end{lemma}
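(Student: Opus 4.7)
The plan is to perform integration by parts $m$ times and check that all boundary contributions vanish. The only thing to justify is that each $f^{(k)}$ appearing in the integration by parts is absolutely integrable and tends to $0$ at $\pm\infty$, which follows directly from the previous lemma.

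By Lemma \ref{lem:diff of line model}, $f^{(k)} \in \mathcal{V}_{\lambda+k/2,\ell+2k}^\infty$ for every $k \geq 0$, and hence
\[
|f^{(k)}(x)| \leq \nu_{0,\lambda+k/2}(f^{(k)})\,(1+x^2)^{-\Re(\lambda)-k/2}.
\]
Since $\Re(\lambda) > \tfrac12$, the right-hand side is integrable on $\R$, so $\widehat{f^{(k)}}$ is well defined by the absolutely convergent integral. Moreover $f^{(k)}(x) \to 0$ as $|x|\to\infty$ for every $k\geq 0$.

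I will first establish the case $m=1$. Integrating by parts on the interval $[-R,R]$,
\[
\int_{-R}^{R} f'(x)\,\mathbf{e}[xt]\,dx
 = \bigl[f(x)\mathbf{e}[xt]\bigr]_{-R}^{R} - 2\pi i t \int_{-R}^{R} f(x)\,\mathbf{e}[xt]\,dx.
\]
Letting $R \to \infty$, the boundary term vanishes since $f(\pm R) \to 0$, and both integrals converge absolutely, giving
\[
\widehat{f'}(t) = -2\pi i t\,\widehat{f}(t),
\]
which rearranges (for $t\neq 0$) to $\widehat{f}(t) = (-2\pi i t)^{-1}\widehat{f'}(t)$.

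The general case then follows by induction on $m$: applying the $m=1$ case to $f^{(m-1)} \in \mathcal{V}_{\lambda+(m-1)/2,\ell+2(m-1)}^\infty$, whose real part of the principal series parameter is $\Re(\lambda)+(m-1)/2 > 1/2$, yields
\[
\widehat{f^{(m-1)}}(t) = \left(\frac{-1}{2\pi i t}\right)\widehat{f^{(m)}}(t),
\]
and combining with the induction hypothesis gives the claim.

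No real obstacle is expected here: the only subtlety is that for functions in $\mathcal{V}_{\lambda,\ell}^\infty$ one does not have Schwartz-class decay, but Lemma \ref{lem:diff of line model} guarantees just enough polynomial decay of every derivative to run the standard integration by parts argument once $\Re(\lambda) > \tfrac12$.
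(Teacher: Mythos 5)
Your proof is correct, and it is exactly the standard argument the paper leaves implicit (the lemma is stated there without proof): one integration by parts per step, with the decay $f^{(k)}(x)=O\bigl((1+x^2)^{-\Re(\lambda)-k/2}\bigr)$ from Lemma \ref{lem:diff of line model} ensuring absolute convergence and the vanishing of the boundary terms. Nothing further is needed.
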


\begin{definition}
\label{dfn:FT}
For $t\ne0$, we define the Fourier transform $\mathcal{F}f(t)$ of $f \in \mathcal{V}_{\lambda,\ell}^\infty$ by 
\[
\mathcal{F}f(t) = \left(\frac{-1}{2\pi i t}\right)^m \widehat{f^{(m)}}(t),  
\]
where $m$ is a positive integer chosen so that $\Re(\lambda)+\frac m2 > \frac 12$. 
Then, by Lemma \ref{lem:diff of line model}, $f^{(m)}$ is in $\mathcal{V}_{\lambda+m/2,\ell+2m}^\infty$ and $\widehat{f^{(m)}}(t)$ is defined.  By Lemma \ref{lem:FT0}, $\mathcal{F}f(t)$ is independent of the choice of $m$. 
\end{definition}

Note that we use the symbol $\widehat{f}$ to denote the Fourier transform of an integrable function $f$, and $\mathcal{F}f$ to denote the Fourier transform in the generalized sense in Definition \ref{dfn:FT}. 
Of course we have $\mathcal{F}f=\widehat{f}$ for an integrable function $f$. 

\begin{remark}
When $\Re(\lambda)\leq\frac12$, the limit $\lim_{t\rightarrow0}\mathcal{F}f(t)$ does not exist in general. 
We shall define later a regularized value $\mathcal{F}f(0)$ for $\lambda\not\in\frac12-\frac12\Z_{\geq0}$. 
\end{remark}

\begin{lemma}
\label{lem:FT01}
For $f \in \mathcal{V}_{\lambda,\ell}^\infty$, the Fourier transform $\mathcal{F}f$ is a $C^\infty$-function on $\R^\times$ and satisfies 
\begin{equation}
(\mathcal{F}f)^{(k)}(t) = (2\pi i)^k \mathcal{F}(x^kf)(t) \quad (t \ne 0)
\label{eqn:derivative of Ff}
\end{equation}遯ｶ�｢
for any non-negative integer $k$. 
\end{lemma}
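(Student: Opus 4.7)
The plan is to prove both assertions by induction on $k$, after first identifying the principal series that contains $x^k f$.

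First I would check that for any $k \in \Z_{\geq 0}$ and any $f \in \mathcal{V}_{\lambda,\ell}^\infty$, the product $x^k f$ belongs to $\mathcal{V}_{\lambda-k/2,\ell-2k}^\infty$. Using the identity $x^{-k}=(\sgn x)^k|x|^{-k}$ for integer $k$, a direct substitution into \eqref{form:FinftyAndF} yields
\[
(x^k f)_{\infty,\lambda-k/2,\ell-2k}(x) = (-1)^k f_{\infty,\lambda,\ell}(x),
\]
which lies in $C^\infty(\R)$ by hypothesis. In particular $\mathcal{F}(x^k f)$ is well defined via Definition \ref{dfn:FT}, by choosing the parameter $m'$ there large enough that $\Re(\lambda-k/2)+m'/2>\tfrac12$.

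For the $C^\infty$ claim, given any $k\in\Z_{\geq 0}$ I would choose $m$ in Definition \ref{dfn:FT} large enough that $\Re(\lambda)+m/2>\tfrac12$ and $x^j f^{(m)}\in L^1(\R)$ for every $0\le j\le k$; by the decay estimate $|f^{(m)}(x)|=O(|x|^{-2\Re(\lambda)-m})$ coming from Lemma \ref{lem:diff of line model}, this only requires $m>1+k-2\Re(\lambda)$. With such an $m$ one may differentiate $\widehat{f^{(m)}}(t)$ under the integral sign $k$ times; combined with the fact that $(-1/(2\pi it))^m$ is $C^\infty$ on $\R^\times$, the Leibniz rule then gives $\mathcal{F}f\in C^k(\R^\times)$, and letting $k\to\infty$ shows $\mathcal{F}f\in C^\infty(\R^\times)$.

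For the derivative formula I would induct on $k$, the case $k=0$ being immediate from Definition \ref{dfn:FT}. The inductive step applied to $x^k f\in\mathcal{V}^\infty_{\lambda-k/2,\ell-2k}$ reduces to the special case $k=1$ for an arbitrary $g\in\mathcal{V}_{\mu,n}^\infty$, so it suffices to prove $\frac{d}{dt}\mathcal{F}g(t)=2\pi i\,\mathcal{F}(xg)(t)$ for $t\ne 0$. Fixing $m$ large enough that both $g^{(m)}$ and $xg^{(m)}$ lie in $L^1(\R)$, I would differentiate the definition $\mathcal{F}g(t)=(-1/(2\pi it))^m\widehat{g^{(m)}}(t)$ using $\frac{d}{dt}\widehat{g^{(m)}}(t)=2\pi i\,\widehat{xg^{(m)}}(t)$, and would expand $\mathcal{F}(xg)(t)$ at level $m+1$ via the Leibniz identity $(xg)^{(m+1)}=xg^{(m+1)}+(m+1)g^{(m)}$ together with the integration-by-parts identity $\widehat{g^{(m+1)}}(t)=-2\pi it\,\widehat{g^{(m)}}(t)$ and $\widehat{xh}(t)=(2\pi i)^{-1}\frac{d}{dt}\widehat{h}(t)$. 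Both quantities collapse to
\[
-\frac{m}{2\pi it}\Bigl(\tfrac{-1}{2\pi it}\Bigr)^m\widehat{g^{(m)}}(t)+\Bigl(\tfrac{-1}{2\pi it}\Bigr)^m\widehat{xg^{(m)}}(t),
\]
completing the induction.

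The main obstacle is really just bookkeeping: one must coordinate the integer $m$ across the several Fourier transforms that appear ($\mathcal{F}g$, $\mathcal{F}(xg)$, and their successive derivatives) so that the decay of $g^{(m)}$ simultaneously justifies each integration by parts and each differentiation under the integral sign. Lemma \ref{lem:diff of line model} makes any such finite collection of integrability requirements attainable by taking $m$ sufficiently large, after which the identity reduces to an elementary algebraic rearrangement.
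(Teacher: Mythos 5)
Your proposal is correct and takes essentially the same route as the paper: reduce everything to the case $k=1$ applied to $x^k f$ (which stays in the family $\mathcal{V}^\infty_{\lambda-k/2,\,\ell-2k}=\mathcal{V}^\infty_{\lambda-k/2,\,\ell+2k}$), pick $m$ large in Definition \ref{dfn:FT}, differentiate the product $\left(\frac{-1}{2\pi i t}\right)^m\widehat{f^{(m)}}(t)$, and match it against a Leibniz expansion of $\mathcal{F}(xf)$. The only cosmetic difference is that you expand $\mathcal{F}(xg)$ at level $m+1$ and use $\widehat{g^{(m+1)}}(t)=-2\pi i t\,\widehat{g^{(m)}}(t)$, whereas the paper expands at level $m$ via $(xf)^{(m)}=mf^{(m-1)}+xf^{(m)}$; both reduce to the same expression.
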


\begin{proof}
It is enough to prove that $\mathcal{F}f$ is differentiable and the identity \eqref{eqn:derivative of Ff} holds for $k=1$. 
First note that $xf(x) \in \mathcal{V}_{\lambda-1/2,\ell+2}^\infty$ and $xf(x)=O(|x|^{-2\Re(\lambda)+1})$ $(|x| \rightarrow \infty)$. 
Hence, if $\Re(\lambda)>1$, then the function 
\[
\frac{\partial f(x)\mathbf{e}[xt]}{\partial t}
 = 2\pi i  x f(x)\mathbf{e}[xt]
\]
is an integrable function of $x$ and we have 
\[
(\mathcal{F}f)'(t)= (\widehat{f})'(t)
  = 2\pi i \widehat{(xf)}(t)
  = 2\pi i \mathcal{F}(xf)(t).
\]

Now let $\lambda$ be arbitrary. 
Choose a positive integer $m$ larger than $2-2\Re(\lambda)$. Then we have 
\[
\mathcal{F}f(t) = \left(\frac{-1}{2\pi i t}\right)^m \widehat{f^{(m)}}(t).
\]
By the choice of $m$ and what we have seen above, the function $\widehat{f^{(m)}}(t)$ is differentiable and 
\[
\frac{d}{dt} \widehat{f^{(m)}}(t) = 2\pi i \widehat{(xf^{(m)})}(t).
\]
Hence, 
\begin{eqnarray*}
\frac{d}{dt} \mathcal{F}f(t) 
  &=& 2\pi i \left(m \left(\frac{-1}{2\pi i t}\right)^{m+1} \widehat{f^{(m)}}(t) 
         +  \left(\frac{-1}{2\pi i t}\right)^m \widehat{xf^{(m)}}(t) \right) \\
  &=& 2\pi i \left(\frac{-m}{2\pi i t} \mathcal{F}{f}(t) 
         +  \left(\frac{-1}{2\pi i t}\right)^m \widehat{xf^{(m)}}(t) \right).
\end{eqnarray*}遯ｶ�｢
On the other hand, we have 
\[
\frac{d^m}{dx^m} (xf(x)) = m f^{(m-1)}(x) + xf^{(m)}(x)
\]
and therefore 
\begin{eqnarray*}
\mathcal{F}(xf)(t) 
 &=& \left(\frac{-1}{2\pi i t}\right)^m 
          \left(m \widehat{f^{(m-1)}}(t) + \widehat{xf^{(m)}}(t)\right) \\
 &=& \frac{-m}{2\pi i t}\mathcal{F}f(t) + \left(\frac{-1}{2\pi i t}\right)^m \widehat{xf^{(m)}}(t).
\end{eqnarray*}
This shows that $ \mathcal{F}f$ is differentiable and the identity \eqref{eqn:derivative of Ff} holds for $k=1$. 
\end{proof}

\begin{lemma}
\label{lem:FT1}
For every non-negative integer $m$ and every sufficiently large $N$, there exists a constant $C_{m,N}$ such that  the inequality  
\[
\left|(\mathcal{F}f)^{(m)}(t)\right| \leq C_{m,N}\cdot \nu_{N,\lambda-m/2}(x^m f) |t|^{-N} 
\]
holds for $t\ne0$. 
Hence, for a fixed $t\ne0$, the evaluation map $f \mapsto \mathcal{F}f(t)$ is  continuous and defines a distribution on $\mathcal{V}_{\lambda,\ell}^\infty$. 
\end{lemma}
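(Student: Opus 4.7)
The plan is to combine Lemma~\ref{lem:FT01}, which trades $t$-derivatives of $\mathcal{F}f$ for Fourier transforms of $x$-multiples, with iterated integration by parts to produce the $|t|^{-N}$ decay. Setting $g := x^m f$ and $\mu := \lambda - m/2$, Lemma~\ref{lem:FT01} gives $(\mathcal{F}f)^{(m)}(t) = (2\pi i)^m \mathcal{F}g(t)$, so it will be enough to prove $|\mathcal{F}g(t)| \leq C|t|^{-N}\,\nu_{N,\mu}(g)$ for $N$ sufficiently large. The choice of shift by $-m/2$ is dictated by the pointwise estimate $g(x) = x^m f(x) = O(|x|^{-2\Re(\mu)})$ as $|x| \to \infty$.

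For $N$ large enough that $\Re(\mu) + N/2 > 1/2$, Definition~\ref{dfn:FT} applied to $g$ yields $\mathcal{F}g(t) = \left(\frac{-1}{2\pi i t}\right)^N \widehat{g^{(N)}}(t)$, hence $|\mathcal{F}g(t)| \leq (2\pi)^{-N}|t|^{-N}\int_\R |g^{(N)}(x)|\,dx$, and the task reduces to controlling this integral by $\nu_{N,\mu}(g)$. For this I would invoke the polynomial identity already recorded in the proof of Lemma~\ref{lem:diff of line model}, namely $(1+x^2)^N g^{(N)}(x) = \sum_{k=0}^N u_{N,k}(x)\, D_\mu^k g(x)$ with $\deg u_{N,k} \leq N-k$. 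Estimating $|u_{N,k}(x)| \leq C(1+x^2)^{N/2}$ and invoking the defining bound $(1+x^2)^{\Re(\mu)} |D_\mu^k g(x)| \leq \nu_{N,\mu}(g)$ gives $|g^{(N)}(x)| \leq C(1+x^2)^{-N/2 - \Re(\mu)}\nu_{N,\mu}(g)$. For $N > 1 - 2\Re(\mu) = 1 + m - 2\Re(\lambda)$ the weight $(1+x^2)^{-N/2 - \Re(\mu)}$ is integrable on $\R$, and combining these yields the claimed bound with an explicit constant involving $\int_\R (1+x^2)^{-N/2 - \Re(\mu)}\,dx$.

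The continuity assertion then follows at once by specializing to $m = 0$: the estimate $|\mathcal{F}f(t)| \leq C_{0,N}\,\nu_{N,\lambda}(f)\,|t|^{-N}$ is exactly the statement that $f \mapsto \mathcal{F}f(t)$ is continuous with respect to the Fr\'echet topology defined by the seminorms $\{\nu_{N,\lambda}\}_{N \geq 0}$, so it defines an element of $(\mathcal{V}_{\lambda,\ell}^\infty)^*$. There is no substantive obstacle here; the only bookkeeping is to ensure that a single $N$ can be chosen large enough to handle both the integration-by-parts step (so that $g^{(N)}$ is absolutely integrable) and the decay of the weight $(1+x^2)^{-N/2-\Re(\mu)}$ at infinity. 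Both constraints are of the form $N > $ (a constant depending on $m$ and $\Re(\lambda)$) and are therefore simultaneously satisfiable, so the argument goes through uniformly.
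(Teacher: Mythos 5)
Your proof is correct and follows essentially the same route as the paper: reduce to the case $m=0$ via Lemma~\ref{lem:FT01} applied to $g=x^m f\in\mathcal{V}^\infty_{\lambda-m/2,\ell+2m}$, then bound $|\widehat{g^{(N)}}(t)|$ using the seminorm estimate of Lemma~\ref{lem:diff of line model} and integrability of $(1+x^2)^{-\Re(\lambda)+m/2-N/2}$ for $N$ large. The only cosmetic difference is that you re-derive the inequality \eqref{eqn:estimate of m-th derivative} from the polynomial identity rather than simply citing it.
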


\begin{proof}
By Lemma \ref{lem:FT01} it is enough to prove the case where $m=0$. 
For any positive integer $N > 1-2\Re(\lambda)$, it follows from  the inequality \eqref{eqn:estimate of m-th derivative} that  
\[
\left|\mathcal{F}f(t)\right| 
 \leq  \frac{1}{(2\pi|t|)^N} \int_\R \left|f^{(N)}(x)\right|\,dx 
 \leq  \frac{C \nu_{N,\lambda}(f) }{(2\pi|t|)^N} \int_\R \frac{1}{(1+x^2)^{\Re(\lambda)+N/2}}\,dx 
 =  C_N\cdot \nu_{N,\lambda}(f) |t|^{-N},
\]
where $C_N=\frac{C}{(2\pi)^N} \int_\R \frac{1}{(1+x^2)^{\Re(\lambda)+N/2}}\,dx < \infty$.
\end{proof}

\subsection{Local zeta functions on $\mathcal{V}_{\lambda,\ell}^{\infty}$}

We denote by $|x|_\pm^s$  the functions given by 
\[
|x|_+^s = \begin{cases}
             x^s & (x>0), \\
             0 & (x \leq 0),
\end{cases}
\quad 
|x|_-^s = \begin{cases}
             0 & (x \geq 0), \\
             |x|^s & (x<0).
\end{cases}
\]
For $f \in \mathcal{V}_{\lambda,\ell}^\infty$, we consider the local zeta functions defined by 
\[
\Phi_\pm(f;s)=\int_{\R} |x|_\pm^{s-1}f(x)\,dx=\int_0^\infty x^{s-1}f(\pm x)\,dx.
\]
Since $f$ is in $C^\infty(\R)$ and $f(x)=O(|x|^{-2\Re(\lambda)})$ $(|x| \to \infty)$, the integrals defininig $\Phi_\pm(f;s)$ converge absolutely only when $0<\Re(s)$ and $\Re(s)<2\Re(\lambda)$. 
Therefore the integrals have no meaning unless $\Re(\lambda)>0$. 
To define the local zeta functions $\Phi_\pm(f;s)$ for arbitrary $\lambda$, we use 
the identity 
\begin{equation}
\Phi_\pm (f;s) = \frac{(\mp1)^{m}}{s(s+1)\cdots(s+m-1)}\cdot 
                      \Phi_\pm(f^{(m)};s+m) \quad (m \geq 1),
\label{eqn:anal cont of Phi}
\end{equation}
which is valid for $\Re(\lambda)>0$ and $0<\Re(s)<2\Re(\lambda)$. 
This identity is obtained by applying the relation
\[
\Phi_\pm (f;s) = \left[\frac{x^sf(\pm x)}{s}\right]_0^\infty \mp \frac1s\int_0^\infty x^{s} f'(\pm x)\,dx 
 = \mp \frac1s \cdot \Phi_\pm(f';s+1) 
\]
repeatedly. 
Note that, by Lemma \ref{lem:diff of line model}, we have $f^{(m)}(x)=O(|x|^{-2\Re(\lambda)-m})$  $(|x| \to \infty)$, and hence the integral on the right hand side of \eqref{eqn:anal cont of Phi} converges absolutely for  
$-m<\Re(s)<2\Re(\lambda)$.

\begin{definition}
\label{dfn:LZ}
For $\lambda \in \C$ (not necessarily with positive real part), take a positive integer $m$ such that $-m<2\Re(\lambda)$, and we define the local zeta functions $\Phi_\pm (f;s)$ for $f \in \mathcal{V}_{\lambda,\ell}^\infty$ by the expression on the right hand side of  \eqref{eqn:anal cont of Phi}. 
Then, since $\Phi_\pm (f;s)$ is determined independently of the choice of $m$, and $m$ can be arbitrary large, $\Phi_\pm (f;s)$ are defined for $\Re(s)<2\Re(\lambda)$ as meromorphic functions of $s$.  
Note that, with this definition, the identity  \eqref{eqn:anal cont of Phi} holds without any restriction of $\Re(s)$ and $\Re(\lambda)$. 
\end{definition}

\begin{theorem}
\label{thm:loc zeta}
For  $f \in \mathcal{V}_{\lambda,\ell}^\infty$, the local zeta functions $\Phi_\pm (f;s)$ have analytic continuations to meromorphic functions of $s$ in $\C$ with poles only at $s=-k, 2\lambda+k$ $(k \in \Z_{\geq0})$. 
The orders of the poles are at most $1$ and the residues are given by 
\begin{eqnarray}
\mathop{\mathrm{Res}}_{s=-k} \Phi_\pm (f;s) 
 &=&  \frac{(\pm1)^{k}}{k!} f^{(k)}(0), 
\label{eqn:residue of Phi at -k}\\ 
\mathop{\mathrm{Res}}_{s=2\lambda+k} \Phi_\pm (f;s) 
 &=&  -(\mp 1)^{-\ell/2} \frac{ (\mp1)^{k} (f_\infty)^{(k)}(0)}{k!}.
\label{eqn:residue of Phi at 2lambda+k}
\end{eqnarray}
unless $\lambda = -\frac q2 \in -\frac12\Z_{\geq0}$ and $0 \leq k \leq q$. 
In the exceptional case the poles at $s=-(q-k)$ and $s=2\lambda+k$ $(0 \leq k \leq q)$ coincide  and the residue there is given by 
\begin{equation}
\mathop{\mathrm{Res}}_{s=-(q-k)} \Phi_\pm (f;s) 
 = \frac{(\pm1)^{q-k}}{(q-k)!} f^{(q-k)}(0) - (\mp 1)^{-\ell/2} \frac{ (\mp1)^{k} (f_\infty)^{(k)}(0)}{k!}.
\label{eqn:residue of Phi exceptional case}
\end{equation}
\end{theorem}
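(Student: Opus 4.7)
The plan is to analytically continue $\Phi_\pm(f;s)$ by splitting the defining integral at $x=1$:
\[
\Phi_\pm(f;s) = \int_0^1 x^{s-1} f(\pm x)\,dx + \int_1^\infty x^{s-1} f(\pm x)\,dx =: I_0^\pm(s) + I_\infty^\pm(s).
\]
The two terms will be shown to be meromorphic in $s$ with pole sets $\{s = -k\}$ and $\{s = 2\lambda + k\}$ respectively, and the residues of each will be read off directly from Taylor expansions. The identity \eqref{eqn:anal cont of Phi} and Definition \ref{dfn:LZ} guarantee that the resulting continuation agrees with the one already obtained in the strip $\Re(s)<2\Re(\lambda)$.

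For $I_0^\pm(s)$, I insert the Taylor expansion $f(\pm x) = \sum_{j=0}^{M} (\pm 1)^j f^{(j)}(0)\,x^j/j! + R_M^\pm(x)$ with $R_M^\pm(x) = O(x^{M+1})$. The $j$th term contributes $(\pm 1)^j f^{(j)}(0)/(j!(s+j))$, while the remainder integral is holomorphic in $\Re(s) > -M-1$. Letting $M \to \infty$ gives meromorphic continuation of $I_0^\pm$ to all of $\C$ with only simple poles at $s = -k$ and residues $(\pm1)^k f^{(k)}(0)/k!$, matching \eqref{eqn:residue of Phi at -k}.

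For $I_\infty^\pm(s)$, which is absolutely convergent for $\Re(s)<2\Re(\lambda)$, I perform the substitution $y = 1/x$. The definition \eqref{form:FinftyAndF} of $f_\infty$ combined with the principal-value convention $(\mathrm{sgn}\,x)^{\ell/2} \in \{1, i^\ell\}$ gives, for $y>0$, the identity $f(\pm 1/y) = (\mp 1)^{-\ell/2} y^{2\lambda} f_\infty(\mp y)$, whence
\[
I_\infty^\pm(s) = (\mp 1)^{-\ell/2} \int_0^1 y^{2\lambda-s-1} f_\infty(\mp y)\,dy.
\]
Since $f_\infty \in C^\infty(\R)$, Taylor expanding $f_\infty(\mp y)$ at $0$ and arguing exactly as for $I_0^\pm$ produces simple poles of $I_\infty^\pm$ only at $s = 2\lambda + k$ with residue $-(\mp1)^{-\ell/2}(\mp1)^k(f_\infty)^{(k)}(0)/k!$, which is \eqref{eqn:residue of Phi at 2lambda+k}.

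Finally I combine the two analyses. In the generic case $\lambda \notin -\tfrac12\Z_{\geq 0}$ the two pole families are disjoint, so formulas \eqref{eqn:residue of Phi at -k} and \eqref{eqn:residue of Phi at 2lambda+k} hold as stated and each pole is simple. In the exceptional case $\lambda = -q/2$, the poles $s=-(q-k)$ of $I_0^\pm$ coincide with $s=2\lambda+k$ of $I_\infty^\pm$ for $0 \le k \le q$; both contributions are simple and come from independent sources (behavior at $0$ versus behavior at $\infty$), so their residues simply add, yielding \eqref{eqn:residue of Phi exceptional case}. The main delicate point is correctly tracking the branch factor $(\mp 1)^{-\ell/2}$ in the change of variables $x \mapsto 1/x$, since $f_\infty$ is evaluated on the opposite half-line from $f$; once this is pinned down consistently for both signs of $\Phi_\pm$, the residue formulas follow at once from the elementary computation $\Res_{s=-\alpha} (s+\alpha)^{-1} = 1$ applied to the Taylor coefficients.
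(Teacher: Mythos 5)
Your plan is the paper's own: split at $x=1$, Taylor-expand near $0$ for the first piece, and convert the second piece via $x\mapsto 1/x$ into an integral of $f_\infty$ near $0$; your tracking of the factor $(\mp1)^{-\ell/2}$ and both residue formulas, including the additivity of the two simple contributions in the exceptional case $\lambda=-\tfrac q2$, are correct. The one step you have not actually supplied is the identification of your continued function $I_0^\pm(s)+I_\infty^\pm(s)$ with $\Phi_\pm(f;s)$ as it is \emph{defined} in Definition \ref{dfn:LZ} when $\Re(\lambda)\le 0$. In that regime there is no ``defining integral'' to split: $\int_0^\infty x^{s-1}f(\pm x)\,dx$ converges for no $s$, the half-planes of convergence of $I_0^\pm$ (namely $\Re(s)>0$) and of $I_\infty^\pm$ (namely $\Re(s)<2\Re(\lambda)\le 0$) are disjoint, and \eqref{eqn:anal cont of Phi} is an identity for the full integral, so quoting it together with Definition \ref{dfn:LZ} does not by itself compare your sum with the regularized $\Phi_\pm$ on the strip $\Re(s)<2\Re(\lambda)$. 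As written, your argument establishes the theorem only for $\Re(\lambda)>0$, where the splitting and the overlap-of-domains argument are legitimate.

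The gap is exactly where the paper does extra work: one proves analogues of \eqref{eqn:anal cont of Phi} for the two pieces separately by repeated integration by parts, e.g.\ for $\Re(s)>0$
\[
I_0^\pm(s)=\sum_{k=0}^{m-1}\frac{(\mp1)^k f^{(k)}(\pm1)}{s(s+1)\cdots(s+k)}
+\frac{(\mp1)^m}{s(s+1)\cdots(s+m)}\int_0^1 x^{s+m-1}f^{(m)}(\pm x)\,dx,
\]
and the companion identity for $I_\infty^\pm(s)$ on $\Re(s)<2\Re(\lambda)$, which carries the same boundary terms $f^{(k)}(\pm1)$ with the opposite sign. Adding the two, the boundary terms cancel and one obtains
\[
I_0^\pm(s)+I_\infty^\pm(s)=\frac{(\mp1)^m}{s(s+1)\cdots(s+m)}\int_0^\infty x^{s+m-1}f^{(m)}(\pm x)\,dx
\qquad(-m<\Re(s)<2\Re(\lambda)),
\]
which is precisely the expression defining $\Phi_\pm(f;s)$ in Definition \ref{dfn:LZ} (here Lemma \ref{lem:diff of line model} guarantees the convergence for $f^{(m)}$). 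With this gluing identity in place for arbitrarily large $m$, your pole-location and residue computations apply verbatim for every admissible $\lambda$, and the proof is complete.
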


\begin{proof}
We consider the integrals 
\[
\Phi_{\pm,0}(f;s) = \int_0^1 x^{s-1}f(\pm x)\,dx. \quad 
\Phi_{\pm,\infty}(f;s) = \int_1^\infty x^{s-1}f(\pm x)\,dx \quad (f \in \mathcal{V}_{\lambda,\ell}^\infty). 
\]
The integrals $\Phi_{\pm,0}(f;s)$ and $\Phi_{\pm,\infty}(f;s)$ converge absolutely for $\Re(s)>0$ and $\Re(s)<2\Re(\lambda)$, respectively.  
By the definition of $f_\infty$, we have 
\begin{equation}
\Phi_{\pm,\infty}(f;s)=(\mp1)^{-\ell/2}\Phi_{\mp,0}(f_\infty;2\lambda-s)
\label{eqn:Phi0toInf}
\end{equation}
for $\Re(s)<2\Re(\lambda)$. 
For a positive integer $n$, we write
\[
f(\pm x)=\sum_{k=0}^{n-1} \frac{(\pm1)^kf^{(k)}(0)}{k!} x^k + f_n(\pm x).
\]
Here $f_n(x)=O(x^n)$ $(x \rightarrow 0)$. 
Then, for $\Re(s)>0$, 
\begin{eqnarray*}
\Phi_{\pm,0}(f;s) 
 &=& \sum_{k=0}^{n-1} \frac{(\pm1)^kf^{(k)}(0)}{k!} \int_0^1 x^{s+k-1}\,dx
     +\int_0^1 x^{s-1} f_n(x)\,dx \\
 &=& \sum_{k=0}^{n-1} \frac{(\pm1)^kf^{(k)}(0)}{k!}\cdot\frac{1}{s+k}
     +\int_0^1 x^{s-1} f_n(x)\,dx.
\end{eqnarray*}
Since the integral involving $f_n$ converges absolutely for $\Re(s)>-n$, and $n$ can be arbitrary large, $\Phi_{\pm,0}(f;s)$ has an analytic continuation to a meromorphic function of $s$ in $\C$ with poles of order at most $1$ only at $s=-k$ $(k \in \Z_{\geq0})$. 
The residue at $s=-k$ is equal to $(\pm1)^kf^{(k)}(0)/(k!)$.   
The same argument yields that $\Phi_{\pm,\infty}(f;s)=i^{-\ell}\Phi_{\mp,0}(f_\infty;2\lambda-s)$ has an analytic continuation to a meromorphic function of $s$ in $\C$ with poles of order at most $1$ only at $s=2\lambda+k$ $(k \in \Z_{\geq0})$. 
The residue at $s=2\lambda+k$ is equal to $-(\mp1)^{-\ell/2}(\mp1)^k(f_\infty)^{(k)}(0)/(k!)$. 

For any positive integer $m$, we have 
\[
\Phi_{\pm,0}(f;s) 
 = \sum_{k=0}^{m-1} \frac{(\mp1)^k f^{(k)}(\pm1)}{s(s+1)\cdots(s+k)} 
       + \frac{(\mp1)^m}{s(s+1)\cdots(s+m)} \int_0^1 x^{s+m-1}f^{(m)}(\pm x)\,dx
\]
for $\Re(s)>0$, and 
\[ 
\Phi_{\pm,\infty}(f;s) 
 = -\sum_{k=0}^{m-1} \frac{(\mp1)^k f^{(k)}(\pm1)}{s(s+1)\cdots(s+k)} 
       + \frac{(\mp1)^m}{s(s+1)\cdots(s+m)} \int_1^\infty x^{s+m-1}f^{(m)}(\pm x)\,dx
\]
for $\Re(s)<2\Re(\lambda)$. 
These are  analogues of the identity  \eqref{eqn:anal cont of Phi} for $\Phi_{\pm,0}$ and $\Phi_{\pm,\infty}$, and proved by repeated use of integration by parts.
The integral on the right hand sides of the upper (resp.\ lower) identity 
converges absolutely for $\Re(s)>-m$ (resp.\  $2\Re(\lambda)> \Re(s)$). 
This shows that the identity
\[
\Phi_{\pm,0}(f;s) + \Phi_{\pm,\infty}(f;s) 
 = \frac{(\mp1)^m}{s(s+1)\cdots(s+m)} \int_0^\infty x^{s+m-1}f^{(m)}(\pm x)\,dx
\]
holds for $2\Re(\lambda)> \Re(s)>-m$. 
This proves that 
\begin{equation}
\Phi_\pm(f;s) = \Phi_{\pm,0}(f;s) + \Phi_{\pm,\infty}(f;s),  
\label{eqn:Phi= 0+inf}
\end{equation}遯ｶ�｢
and the theorem follows from what we have proved for  $\Phi_{\pm,0}(f;s)$ 
and $\Phi_{\pm,\infty}(f;s)$.
\end{proof}

\begin{corollary}
For $f \in \mathcal{V}_{\lambda,\ell}^{\infty}$, we have 
\begin{equation}
\label{form:PhiFinfty}
\Phi_\pm(f;s)=(\mp1)^{-\ell/2}\Phi_{\mp}(f_\infty;2\lambda-s).
\end{equation}
\end{corollary}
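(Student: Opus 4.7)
The plan is to derive the claimed identity from the two pieces of information already established inside the proof of Theorem \ref{thm:loc zeta}, namely the decomposition \eqref{eqn:Phi= 0+inf}
\[
\Phi_\pm(f;s)=\Phi_{\pm,0}(f;s)+\Phi_{\pm,\infty}(f;s),
\]
and the change--of--variables relation \eqref{eqn:Phi0toInf}
\[
\Phi_{\pm,\infty}(g;s)=(\mp1)^{-\ell/2}\Phi_{\mp,0}(g_\infty;2\lambda-s)
\]
valid for every $g\in\mathcal{V}_{\lambda,\ell}^\infty$ in the strip $\Re(s)<2\Re(\lambda)$. Relation \eqref{eqn:Phi0toInf} already disposes of the $\infty$-part of $\Phi_\pm(f;s)$. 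The only thing left is to express the $0$-part of $\Phi_\pm(f;s)$ as an $\infty$-part of $\Phi_\mp(f_\infty;\,\cdot)$.

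To do this, first I would apply \eqref{eqn:Phi0toInf} with $g=f_\infty\in\mathcal{V}_{\lambda,\ell}^\infty$ and use the involution formula \eqref{eqn:square of infty-operation}, $(f_\infty)_\infty=i^\ell f$, which gives
\[
\Phi_{\pm,\infty}(f_\infty;s)=i^\ell(\mp1)^{-\ell/2}\Phi_{\mp,0}(f;2\lambda-s).
\]
Solving for $\Phi_{\mp,0}(f;\,\cdot)$ and relabeling $\pm\leftrightarrow\mp$ yields
\[
\Phi_{\pm,0}(f;s)=(\mp1)^{-\ell/2}\Phi_{\mp,\infty}(f_\infty;2\lambda-s),
\]
after the elementary check $i^{-\ell}(\pm1)^{\ell/2}=(\mp1)^{-\ell/2}$ in both sign cases. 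Adding this to \eqref{eqn:Phi0toInf} (with the signs compatibly matched) gives
\[
\Phi_\pm(f;s)=(\mp1)^{-\ell/2}\bigl(\Phi_{\mp,0}(f_\infty;2\lambda-s)+\Phi_{\mp,\infty}(f_\infty;2\lambda-s)\bigr)=(\mp1)^{-\ell/2}\Phi_\mp(f_\infty;2\lambda-s),
\]
on the open strip where both decompositions converge absolutely, namely $0<\Re(s)<2\Re(\lambda)$; this strip is nonempty once $\Re(\lambda)>0$.

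For arbitrary $\lambda$ the identity then follows by analytic continuation: both sides are meromorphic functions of $s$ on $\mathbb{C}$ by Theorem \ref{thm:loc zeta} (applied to $f$ and to $f_\infty\in\mathcal{V}_{\lambda,\ell}^\infty$), and they agree on a nonempty open set after a harmless shift of $\lambda$ into the half-plane $\Re(\lambda)>0$; alternatively, one observes that the identity \eqref{eqn:anal cont of Phi} propagates the equality out of the strip of absolute convergence. The only real bookkeeping is the verification of the sign constant $(\mp 1)^{-\ell/2}$, where one must be careful that $(\pm1)^{\ell/2}$ is interpreted as a principal value exactly as in the definition of $f_\infty$; this is the single place where an error is easy to make but is purely mechanical.
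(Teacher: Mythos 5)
Your proposal is correct and follows essentially the same route as the paper: it combines the decomposition $\Phi_\pm=\Phi_{\pm,0}+\Phi_{\pm,\infty}$ with the relation \eqref{eqn:Phi0toInf} applied both to $f$ and to $f_\infty$ (using $(f_\infty)_\infty=i^\ell f$), and your sign bookkeeping $i^{-\ell}(\pm1)^{\ell/2}=(\mp1)^{-\ell/2}$ is right. The extra remarks on convergence and continuation are harmless but not needed beyond what Theorem \ref{thm:loc zeta} already provides.
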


\begin{proof}
By \eqref{eqn:square of infty-operation}, \eqref{eqn:Phi0toInf}
and \eqref{eqn:Phi= 0+inf}, we have 
\begin{eqnarray*}
\Phi_\pm(f;s) &=& \Phi_{\pm,0}(f;s) + \Phi_{\pm,\infty}(f;s) \\
     &=&   (\mp1)^{-\ell/2}\Phi_{\mp,\infty}(f_\infty;2\lambda-s) 
       +  (\mp1)^{-\ell/2}\Phi_{\mp,0}(f_\infty;2\lambda-s) \\
     &=& (\mp1)^{-\ell/2}\Phi_{\mp}(f_\infty;2\lambda-s).
\end{eqnarray*}
\end{proof}

\begin{lemma}
For a fixed $s \ne -k, 2\lambda+k$ $(k  \in \Z_{\geq0})$, the linear functional 
on $\mathcal{V}_{\lambda,\ell}^\infty$ defined by $f \mapsto \Phi_\pm(f;s)$ is continuous and defines a distribution on $\mathcal{V}_{\lambda,\ell}^\infty$. 
\label{lem:continuity of LZ}
\end{lemma}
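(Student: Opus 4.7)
The plan is to exhibit $\Phi_\pm(f;s)$, for each fixed $s$ outside the pole set, as a finite sum of maps each of which is continuous on the Fr\'echet space $\mathcal{V}_{\lambda,\ell}^\infty$. I will reuse the decomposition $\Phi_\pm(f;s)=\Phi_{\pm,0}(f;s)+\Phi_{\pm,\infty}(f;s)$ introduced in the proof of Theorem \ref{thm:loc zeta}, and handle the two pieces separately.

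First I would fix $s$ with $s\ne -k$ for every $k\in\Z_{\geq0}$, and choose $m\in\Z_{>0}$ so large that $\Re(s)+m>0$. The integration-by-parts identity
\[
\Phi_{\pm,0}(f;s)=\sum_{k=0}^{m-1}\frac{(\mp1)^k f^{(k)}(\pm1)}{s(s+1)\cdots(s+k)}+\frac{(\mp1)^m}{s(s+1)\cdots(s+m-1)}\int_{0}^{1}x^{s+m-1}f^{(m)}(\pm x)\,dx
\]
already established in the proof of Theorem~\ref{thm:loc zeta} expresses $\Phi_{\pm,0}(f;s)$ as a linear combination of the pointwise evaluations $f^{(k)}(\pm1)$ ($0\le k\le m-1$) plus an absolutely convergent integral. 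By Lemma~\ref{lem:diff of line model}, $\left|f^{(k)}(\pm1)\right|\leq C\,\nu_{k,\lambda}(f)$ and, on $[0,1]$, $\left|f^{(m)}(\pm x)\right|\leq C'\,\nu_{m,\lambda}(f)$; hence the whole expression is bounded by a constant times $\nu_{m,\lambda}(f)$. The denominators do not vanish precisely because $s\notin\{0,-1,\ldots,-(m-1)\}$, and we may take $m$ arbitrarily large, so continuity on $\mathcal{V}_{\lambda,\ell}^\infty$ follows for $s\ne -k$.

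For the second piece I would invoke the identity \eqref{eqn:Phi0toInf}, namely
\[
\Phi_{\pm,\infty}(f;s)=(\mp1)^{-\ell/2}\,\Phi_{\mp,0}(f_\infty;\,2\lambda-s),
\]
which by analytic continuation is valid for all $s$ away from the exceptional points. Applying the argument of the previous paragraph to $f_\infty$ in place of $f$ (with $2\lambda-s$ in place of $s$), and using that $f\mapsto f_\infty=\pi_{\lambda,\ell}(\tilde w^{-1})f$ is a continuous linear operator on $\mathcal{V}_{\lambda,\ell}^\infty$ by \eqref{form:FInftyIsPiTau}, yields continuity of $\Phi_{\pm,\infty}(f;s)$ whenever $2\lambda-s\ne -k$, i.e.\ $s\ne 2\lambda+k$.

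Summing the two continuous linear functionals gives the desired continuity of $f\mapsto\Phi_\pm(f;s)$ on $\mathcal{V}_{\lambda,\ell}^\infty$ for every $s\notin\{-k,\,2\lambda+k:k\in\Z_{\geq0}\}$. The only mildly delicate point is the continuity of the point-evaluations $f^{(k)}(\pm1)$ and of $f\mapsto f_\infty$; both are handled by the estimate \eqref{eqn:estimate of m-th derivative} of Lemma~\ref{lem:diff of line model} and the continuity of the principal series representation, and no genuine obstacle arises beyond bookkeeping of the shifts $\lambda\mapsto\lambda+m/2$, $\ell\mapsto\ell+2m$ when taking derivatives.
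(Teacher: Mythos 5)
Your argument is correct, and it takes a route that differs from the paper's in a useful way. The paper proves continuity only in the range $-m<\Re(s)<2\Re(\lambda)$, directly from the identity \eqref{eqn:anal cont of Phi} together with the estimate \eqref{eqn:estimate of m-th derivative}, and then disposes of the remaining values of $s$ (in particular $\Re(s)\geq 2\Re(\lambda)$) by appealing to the general principle that analytic continuation preserves continuity, citing the discussions in Gel'fand--Shilov and Igusa. You instead make the continuation explicit: the decomposition $\Phi_\pm(f;s)=\Phi_{\pm,0}(f;s)+\Phi_{\pm,\infty}(f;s)$ from the proof of Theorem \ref{thm:loc zeta}, the integration-by-parts formula for $\Phi_{\pm,0}$, and the reflection $\Phi_{\pm,\infty}(f;s)=(\mp1)^{-\ell/2}\Phi_{\mp,0}(f_\infty;2\lambda-s)$ give, for every fixed $s\notin\{-k,2\lambda+k\}$, a closed expression that is manifestly bounded by a constant times $\nu_{m,\lambda}(f)$ (using \eqref{eqn:estimate of m-th derivative} and the fact that $f\mapsto f_\infty=\pi_{\lambda,\ell}(\tilde w^{-1})f$ is continuous, indeed seminorm-preserving). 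This buys a self-contained proof valid on the whole plane at once, with no appeal to an abstract continuation-preserves-continuity argument; the paper's proof is shorter in the convergent range but outsources the rest to the cited references. Note also that your use of the finer decomposition is genuinely needed: the global reflection formula \eqref{form:PhiFinfty} alone would leave a gap when $\Re(\lambda)\leq 0$, whereas $\Phi_{\pm,0}$ and $\Phi_{\pm,\infty}$ individually cover $s\neq-k$ and $s\neq 2\lambda+k$ for arbitrary $\lambda$. One small bookkeeping point in your favor: after $m$ integrations by parts the remainder denominator is indeed $s(s+1)\cdots(s+m-1)$ as you wrote (the paper's displayed formula with $s(s+1)\cdots(s+m)$ appears to be a misprint), and in any case this does not affect the continuity estimate.
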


\begin{proof}
First assume that $\Re(s)<2\Re(\lambda)$. 
Choose a positive integer $m$ such that  $\Re(s) > -m$. 
Then, by definition, we have
\begin{equation}
\Phi_\pm(f;s) = \frac{(\mp1)^m}{s(s+1)\cdots(s+m-1)}\Phi_\pm(f^{(m)};s+m).  
\label{eqn:AC neg}
\end{equation}
By the estimate \eqref{eqn:estimate of m-th derivative}, 
there exists a positive constant $C$ such that 
\begin{eqnarray*}
\left|\Phi_\pm(f^{(m)};s+m)\right| 
 &\leq & \int_0^\infty \left|f^{(m)}(\pm x)\right| |x|^{\Re(s)+m-1}\,dx \\
 &\leq & C \nu_{m,\lambda}(f) \int_0^\infty \frac{|x|^{\Re(s)+m-1}}{(1+x^2)^{\Re(\lambda)+m/2}}\,dx. 
\end{eqnarray*}
By the choice of $m$, the integral converges and the mapping $f \mapsto \Phi_\pm(f^{(m)};s+m)$ is continuous, and hence the mapping  $f \mapsto \Phi_\pm(f;s)$ is continuous.

Since the analytic continuation of $\Phi_\pm(f;s)$ preserves the continuity (the discussion in \cite[Chapter 1, Appendix 2 and Appendix A]{GS} and \cite[\S 5.2]{Igusa} can be modified in a form applicable to the present situation), 
the mapping  $f \mapsto \Phi_\pm(f;s)$ is continuous for any $\lambda$ and $s$ different from the poles. 
\end{proof}

\subsection{Regularized values of $(\mathcal{F}f)^{(k)}$ at  $0$}
\label{subsect:regularized value}

In \S 2.2, we have defined the Fourier transform $\mathcal{F}f(t)$ $(f \in \mathcal{V}_{\lambda,\ell}^\infty)$ for $t \ne 0$. If $\Re(\lambda) \leq \frac 12$, then $\mathcal{F}f(t)$ is not necessarily  bounded near $t=0$. 
However, it is convenient to introduce the (regularized) value of the $k$-th derivative of $\mathcal{F}f(t)$ at $t=0$ by
\begin{equation}
 \label{form:DefOfFourierF0}
 (\mathcal{F}f)^{(k)}(0):= (2\pi i)^k\left(\Phi_{+}(f;k+1)+(-1)^k \Phi_{-}(f;k+1)\right).
\end{equation}
If $\Re(\lambda) > (k+1)/2$, then, $(\mathcal{F}f)^{(k)}(0)$ coincides with the value at $t=0$ of the $k$-th derivative of $\widehat f(t)$.
Since $\Phi_\pm(f;s)$ are holomorphic at $s=k+1$ if $\lambda \not\in \frac{k+1}2-\frac12\Z_{\geq0}$, the value $(\mathcal{F}f)^{(k)}(0)$ is well-defined for any such $\lambda$. 
Moreover, by Lemma \ref{lem:continuity of LZ}, the mapping $f \mapsto (\mathcal{F}f)^{(k)}(0)$ is continuous. 

\subsection{Analytic continuation of $\mathcal{F}f$ with respect to $\lambda$}

To calculate the Fourier transform $\mathcal{F}f(t)$ for $\lambda$ with small real part, it is convenient to use the method of analytic continuation. 

Let $U$ be a connected open subset of $\C$. 
For $r \in \R$, we put $U_r=\{\lambda \in U\; | \;\Re(\lambda)>r\}$ and assume that $U_{1/2}$ is not empty. 

\begin{proposition}
\label{prop:AC of F}
Let $f_{\lambda}(x)$ be a function on $U \times \R$ satisfying the conditions
\begin{enumerate}
\def\labelenumi{(\alph{enumi})}
\item
For each $\lambda$, $f_{\lambda}(x)$ belongs to $\mathcal{V}_{\lambda,\ell}^\infty$ as a function of $x$. 
\item
For each $x$, $f_{\lambda}(x)$ is a holomorphic function of $\lambda$ in $U$. 
\end{enumerate}
We assume that,  for any compact subset $K$ in $U$ and any sufficiently large positive integer $m$,   
there exists a constant $C_{K,m}$ such that 
\begin{equation}
 \abs{\frac{\partial^m f_\lambda}{\partial x^m}(x) }, 
 \abs{\frac{\partial^m (f_\lambda)_\infty}{\partial x^m}(x) } 
    \leq C_{K,m} (1+x^2)^{-\Re(\lambda)-m/2}
\quad (\forall\lambda \in K,\ x \in \R).
\label{eqn:loc bdd for m-the der}
\end{equation}
Then, if $t \ne 0$, the Fourier transform $\mathcal{F} f_\lambda(t)$ as a function of $\lambda$ is holomorphic in $U$. If $t=0$, the Fourier transform $\mathcal{F} f_\lambda(0)$ as a function of $\lambda$ is holomorphic in $U\setminus\left(\frac12-\frac12\Z_{\geq0}\right)$. 
\end{proposition}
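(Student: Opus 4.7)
The plan is to handle $t \neq 0$ and $t = 0$ separately; in each case I reduce $\mathcal{F}f_\lambda$ to a finite sum of absolutely convergent integrals in which the $\lambda$-dependence is controlled uniformly on compacta, so that standard Morera--Fubini arguments yield holomorphy.

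For $t \neq 0$, fix a compact $K \subset U$ and choose a positive integer $m$ with $\Re(\lambda) + m/2 > 1/2$ for every $\lambda \in K$. Definition~\ref{dfn:FT} gives
\[
\mathcal{F}f_\lambda(t) = \left(\frac{-1}{2\pi i t}\right)^{m} \int_\R \frac{\partial^{m} f_\lambda}{\partial x^{m}}(x)\,\mathbf{e}[xt]\,dx.
\]
The hypothesis \eqref{eqn:loc bdd for m-the der} provides the $\lambda$-uniform integrable majorant $C_{K,m}(1+x^{2})^{-\Re(\lambda) - m/2}$, and condition (b) combined with joint smoothness in $x$ makes the integrand holomorphic in $\lambda$ for each fixed $x$; hence $\mathcal{F}f_\lambda(t)$ is holomorphic on $K$, and $K$ was arbitrary.

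For $t = 0$, formula \eqref{form:DefOfFourierF0} with $k = 0$ gives $\mathcal{F}f_\lambda(0) = \Phi_{+}(f_\lambda;1) + \Phi_{-}(f_\lambda;1)$, so it suffices to show each $\Phi_{\pm}(f_\lambda;1)$ is holomorphic on $U \setminus \bigl(\tfrac{1}{2} - \tfrac{1}{2}\Z_{\geq 0}\bigr)$. Using the splitting \eqref{eqn:Phi= 0+inf} from Theorem~\ref{thm:loc zeta}, the near-zero piece $\Phi_{\pm,0}(f_\lambda;1) = \int_{0}^{1} f_\lambda(\pm x)\,dx$ is holomorphic on all of $U$ by the $m = 0$ case of the previous interchange argument. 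For the tail, \eqref{eqn:Phi0toInf} yields $\Phi_{\pm,\infty}(f_\lambda;1) = (\mp 1)^{-\ell/2}\,\Phi_{\mp,0}\bigl((f_\lambda)_\infty;\,2\lambda - 1\bigr)$ a priori on $\Re(\lambda) > 1/2$, and I continue this jointly in $\lambda$ via the Taylor-expansion argument from the proof of Theorem~\ref{thm:loc zeta}: writing $(f_\lambda)_\infty(\mp x) = \sum_{k=0}^{n-1}\frac{(\mp 1)^{k}(f_\lambda)_\infty^{(k)}(0)}{k!}x^{k} + r_{n,\lambda}(\mp x)$ with $r_{n,\lambda}(x) = O(x^{n})$, one obtains
\[
\Phi_{\mp,0}\bigl((f_\lambda)_\infty;\,2\lambda - 1\bigr) = \sum_{k=0}^{n-1}\frac{(\mp 1)^{k}(f_\lambda)_\infty^{(k)}(0)}{k!\,(2\lambda - 1 + k)} + \int_{0}^{1} x^{2\lambda - 2}\,r_{n,\lambda}(\mp x)\,dx.
\]
The Taylor coefficients depend holomorphically on $\lambda$ by condition (b) and smoothness, and choosing $n > 1 - 2\Re(\lambda)$ on the compact set under consideration makes the remainder integral converge absolutely and holomorphically in $\lambda$ by the bound on $(f_\lambda)_\infty^{(n)}$ in \eqref{eqn:loc bdd for m-the der}. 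The only singularities produced are the simple poles at $2\lambda - 1 + k = 0$, i.e., $\lambda = (1-k)/2 \in \tfrac{1}{2} - \tfrac{1}{2}\Z_{\geq 0}$, which matches exactly the claimed exceptional set.

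The main technical hurdle is propagating $\lambda$-uniform estimates and the holomorphy of the Taylor coefficients $(f_\lambda)_\infty^{(k)}(0)$ through the analytic continuation; once this bookkeeping is done, the pole structure is read off directly from the explicit formula, and one sees that no cancellations occur inside $\tfrac{1}{2} - \tfrac{1}{2}\Z_{\geq 0}$.
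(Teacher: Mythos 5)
Your treatment of $t\ne0$ is exactly the paper's argument (choose $m$ with $\Re(\lambda)+m/2>\tfrac12$ on the compact set, use Definition \ref{dfn:FT} and the majorant from \eqref{eqn:loc bdd for m-the der}), and there is nothing to add there. For $t=0$ you take a different route: you split $\Phi_\pm(f_\lambda;1)=\Phi_{\pm,0}(f_\lambda;1)+\Phi_{\pm,\infty}(f_\lambda;1)$ and continue the tail via \eqref{eqn:Phi0toInf} and the Taylor-expansion device from the proof of Theorem \ref{thm:loc zeta}. The pole locations you read off are correct, but as written this half has a genuine gap, and it sits precisely in what you dismiss as ``bookkeeping''. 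The hypothesis \eqref{eqn:loc bdd for m-the der} controls only \emph{sufficiently large} derivatives of $f_\lambda$ and $(f_\lambda)_\infty$, and conditions (a), (b) give only separate regularity (smooth in $x$ for fixed $\lambda$, holomorphic in $\lambda$ for fixed $x$). Consequently: (i) your near-zero piece $\Phi_{\pm,0}(f_\lambda;1)=\int_0^1 f_\lambda(\pm x)\,dx$ invokes ``the $m=0$ case of the previous interchange argument'', but no $\lambda$-uniform bound on $f_\lambda$ itself on $K\times[0,1]$ is provided by the hypotheses, so the Morera--Fubini interchange is not yet justified; (ii) your explicit formula for $\Phi_{\mp,0}((f_\lambda)_\infty;2\lambda-1)$ requires that each Taylor coefficient $(f_\lambda)_\infty^{(k)}(0)$, $0\le k\le n-1$, be holomorphic (and locally bounded) in $\lambda$, which is not immediate: condition (b) gives holomorphy of $(f_\lambda)_\infty(x)$ only for $x\ne0$, and for the low-order $x$-derivatives at $0$ you have no uniform bound to pass to the limit (a downward-induction/divided-difference argument can repair this, but it has to be carried out). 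The remainder term is the only piece that genuinely needs just the large-$n$ bound.

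The paper's own proof is arranged so that these issues never arise: for $0<\Re(s)<2\Re(\lambda)+m$ one writes, by repeated integration by parts, $\Phi_\pm(f_\lambda;s)$ as the explicit rational factor $(\mp1)^{-\ell/2}(\pm1)^m\big/\big((2\lambda-s)(2\lambda-s+1)\cdots(2\lambda-s+m-1)\big)$ times $\Phi_\mp\!\left(\partial^m(f_\lambda)_\infty/\partial x^m;\,2\lambda-s+m\right)$, and the latter is a single absolutely convergent integral over $(0,\infty)$ whose integrand is holomorphic in $\lambda$ for each $x>0$ and is directly majorized, locally uniformly in $\lambda$, by \eqref{eqn:loc bdd for m-the der}. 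Thus only the one large derivative of $(f_\lambda)_\infty$ ever enters, no values or low-order derivatives at $x=0$ are needed, and at $s=1$ the only possible $\lambda$-singularities are the visible zeros of the denominator, i.e.\ $\lambda\in\frac12-\frac12\Z_{\geq0}$. I recommend either supplying the two missing lemmas for your decomposition (joint local boundedness of $f_\lambda$ on $K\times[0,1]$, and $\lambda$-holomorphy of $(f_\lambda)_\infty^{(k)}(0)$ for all $k<n$), or replacing the $t=0$ half by the single-formula continuation above, which uses nothing beyond the stated hypotheses.
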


\begin{proof}
First we consider the case $t \ne 0$.  
Then, by Definition \ref{dfn:FT}, $\mathcal{F}f_\lambda(t)$ is given by the integral 
\[
\left(\frac{-1}{2\pi i t}\right)^m \int_\R \frac{\partial^m f_\lambda}{\partial x^m}(x) 
 \mathbf{e}[xt]\,dt
\]
for sufficiently large $m$. 
Hence, by the assumption \eqref{eqn:loc bdd for m-the der} for $f_\lambda$, the integral above is convergent uniformly on $K \cap U_{(1-m)/2}$.  
This shows that $\mathcal{F}f_\lambda(t)$ is holomorphic on $U_{(1-m)/2}$. 
Since $m$ can be arbitrary large, $\mathcal{F}f_\lambda(t)$ is holomorphic on $U$. 

Next we consider the case $t=0$. 
Again by \eqref{eqn:loc bdd for m-the der} for $(f_\lambda)_\infty$,  we see that the convergence of  the local zeta function 
\[
\Phi_\mp\left(\frac{\partial^m (f_\lambda)_\infty}{\partial x^m};2\lambda-s+m\right)
\]
is  locally uniform in $2\Re(\lambda)+m>\Re(s)>0$. 
Hence  
\begin{eqnarray*}
\Phi_\pm(f_\lambda;s) 
 &=& (\mp1)^{-\ell/2}\Phi_\mp((f_\lambda)_\infty;2\lambda-s) \\
 &=& \frac{(\mp1)^{-\ell/2}(\pm1)^m}{(2\lambda-s)(2\lambda-s+1)\cdots(2\lambda-s+m-1)}\Phi_\mp\left(\frac{\partial^m (f_\lambda)_\infty}{\partial x^m};2\lambda-s+m\right)
\end{eqnarray*}
is a meromorphic function of $(\lambda,s)$ in  $\{(\lambda,s) \in U\times \C\; | \; 2\Re(\lambda)+m>\Re(s)>0\}$ with possible poles at $s=2\lambda, 2\lambda+1,\ldots,2\lambda+m-1$. Since $m$ can be arbitrary large,  $\Phi_\pm(f_\lambda;s)$ can be extended meromorphically to  $\{(\lambda,s) \in U\times \C\; | \; \Re(s)>0\}$. 
Hence, we see that $(\mathcal{F}f_\lambda)(0)=\Phi_{+}(f_\lambda;1)+ \Phi_{-}(f_\lambda;1)$ is a holomorphic function of $\lambda$ in $U\setminus\left(\frac12-\frac12\Z_{\geq0}\right)$. 
\end{proof}

By the proposition above, if one can calculate the Fourier transform $\int_\R f_\lambda(x)\mathbf{e}[xt]\,dt$ explicitly for $\lambda$ with sufficiently  large real part, then the explicit expression obtained is valid for any $\lambda \in U$ by the principle of analytic continuation.

Now we give examples of applications of Proposition \ref{prop:AC of F}. 

\begin{example}
\label{ex:Lambda-family}
Let $f$ be a function in $\mathcal{V}_{\lambda_0,\ell}^\infty$. 
Put $f_{\lambda}(x)=(1+x^2)^{-\lambda+\lambda_0}f(x)$. 
Then $f_{\lambda}(x)$ satisfies the conditions (a) and (b) in Proposition \ref{prop:AC of F}. 
Since 
\[
\sup_{x\in\R}\left\{(1+x^2)^{\Re(\lambda)+m/2}\abs{\frac{\partial^m f_\lambda}{\partial x^m}(x) } \right\} 
 \leq \nu_{m,\lambda}(f_\lambda) = \nu_{m,\lambda_0}(f), 
\]
and $\nu_{m,\lambda}(f_\lambda)=\nu_{m,\lambda}((f_\lambda)_\infty)$, the famlily $f_{\lambda}(x)$ satisfies also the condition \eqref{eqn:loc bdd for m-the der}, and Proposition \ref{prop:AC of F} can be applied to $f_\lambda$. 
Hence $\mathcal{F}f$ is obtained as the value at $\lambda=\lambda_0$ of the analytic continuation of $\mathcal{F}f_\lambda$. 
For a fixed $u \in \R$, we put $g_\lambda(x)=f_\lambda(x-u)$. 
Then Proposition \ref{prop:AC of F} can be applied also to $g_\lambda$. 
If $\Re(\lambda)>\frac12$, then 
\[
\mathcal{F}(g_\lambda)(t) 
 = \widehat{g_\lambda}(t) = e^{2\pi i ut} \widehat{f_\lambda}(t) 
 = e^{2\pi i ut} \mathcal{F}(f_\lambda)(t).
\]
By  Proposition \ref{prop:AC of F} and the principle of analytic continuation, this identity holds for any $\lambda$, and in parcular, the elementary formula 
\begin{equation}
\mathcal{F}g(t) = e^{2\pi i ut} \mathcal{F}f(t), \quad 
g(x)=f(x-u)=\pi_{\lambda,\ell}(\tilde{n}(u))f(x)\quad (u \in \R)
\label{eqn:translate for FT}
\end{equation}
holds for $f \in \mathcal{V}_{\lambda_0,\ell}^\infty$. 
Similarly, we can prove 
\begin{equation}
(\mathcal{F}f)(ty) = t^{-1}\left(\mathcal{F}(f_{t^{-1}})\right)(y)
\quad (t \ne 0)
\label{eqn:t-mult}
\end{equation}
for $f \in \mathcal{V}_{\lambda_0,\ell}^\infty$ and $\lambda\not\in\frac12-\frac12\Z_{\geq0}$. 

\end{example}

\begin{example}
\label{example:Fourier transform of Poisson kernel}
As the second example, let us consider the Poisson kernel 
\[
p_{\lambda,\ell,z}(u)=\frac{\Im(z)^{\lambda-(\ell/4)}}{|z-u|^{2\lambda-(\ell/2)}(z-u)^{\ell/2}} \quad (\Im(z)>0) 
\]
defined in \S 1.4. 
Then the Fourier transform of $p_{\lambda,\ell,z}$ is given by 
\begin{equation}
(\mathcal{F}p_{\lambda, \ell, z})(t) 
 = \begin{cases}
\dfrac{i^{-\ell/2} \pi^{\lambda} \cdot |t|^{\lambda-1}}
{\Gamma\left(\lambda+\frac{\sgn(t)\ell}{4}\right)} \cdot 
 y^{-\ell/4}\, W_{\frac{\sgn(t)l}{4}, \lambda-\frac{1}{2}}\left(4\pi|t|y\right)\cdot 
 e^{2\pi i tx}
& (t\ne0), \\[4ex]
y^{1-\lambda-(\ell/4)}\cdot i^{-\ell/2} \cdot 
 \dfrac{(2\pi) 2^{1-2\lambda} \Gamma(2\lambda-1)}
 {\Gamma\left(\lambda+\frac{\ell}{4}\right)
 \Gamma\left(\lambda-\frac{\ell}{4}\right)}
 & (t=0),
\end{cases}
\label{eqn:Fourier transform of Poisson kernel}
\end{equation}
where $W_{\mu, \nu}(x)$ is the Whittaker function. 
This can be proved also by using Proposition \ref{prop:AC of F}.  
Indeed, if $\Re(\lambda)>\frac12$, then we have 
\begin{align*}
 (\mathcal{F}p_{\lambda, \ell, z})(t) 
 &= y^{\lambda-(\ell/4)} \int_{-\infty}^{\infty}
 |z-u|^{-2\lambda+(\ell/2)}\cdot (z-u)^{-\ell/2} \cdot e^{2\pi i tu} du \\
 &= y^{1-\lambda-(\ell/4)}  \cdot e^{2\pi i tx} \int_{-\infty}^{\infty} 
  (1+u^2)^{-\lambda+(\ell/4)} \cdot (-u+i)^{-\ell/2} \cdot e^{2\pi i tyu}\, du \\
 &= i^{-\ell/2} y^{1-\lambda-(\ell/4)}  \cdot e^{2\pi i tx} \int_{-\infty}^{\infty} 
  (1+u^2)^{-\lambda} \cdot \left(\frac{1-iu}{1+iu}\right)^{\ell/4} \cdot e^{2\pi i tyu}\, du.
\end{align*}
Here we have chosen the principal branch of the logarithm to define the power function in the integrand.  
Now the identity \eqref{eqn:Fourier transform of Poisson kernel} for $\Re(\lambda)>\frac12$ follows from the integral formula 
\[
 \int_{-\infty}^{\infty} \frac{e^{ixu/2}}
 {(u^2 +1)^{\nu+1/2}} \cdot 
 \left(\frac{1-iu}{1+iu}\right)^{\mu} du
 = \begin{cases}
 \dfrac{\pi(x/4)^{\nu-1/2}}{\Gamma(\mu+\nu+\frac{1}{2})}\cdot W_{\mu, \nu}(x)
   & (x>0), \\[3ex]
 \dfrac{2\pi \Gamma(2\nu) 2^{-2\nu}}{\Gamma(\mu+\nu+\frac{1}{2})
 \Gamma(-\mu+\nu+\frac{1}{2})} & (x=0), 
 \end{cases}
\]
which is derived from \cite[p.~119 (12)]{Erdelyi} for $x>0$, and from \cite[p.~12 (30)]{Erdelyi-HTF} for $x=0$. 
Note that the integral on the left hand side is absolutely convergent for $\Re(\nu)>0$. 
Since the Whittaker function $W_{\mu,\nu}(x)$ is an entire function of $\nu$, the Fourier transform $(\mathcal{F}p_{\lambda, \ell, z})(t)$ is an entire function of $\lambda$  for $t\ne0$, and a meromorphic function of $\lambda$ with poles at $\lambda=\frac{1-k}2$ $(k \in \Z_{\geq0})$ for $t=0$.  
Therefore, once we prove that  Proposition \ref{prop:AC of F} is applicable to $p_{\lambda,\ell,z}$, then the identity \eqref{eqn:Fourier transform of Poisson kernel} holds for any $\lambda$. 
To see this, it is enough to show the estimate \eqref{eqn:loc bdd for m-the der} for $\frac{\partial^m}{\partial u^m}p_{\lambda,\ell,z}(u)$, since 
 $(p_{\lambda,\ell,z})_\infty(u)=z^{-\ell/2}p_{\lambda,\ell,-1/z}(u)$. 
By an elementary calculation, we have
\[
\frac{\partial^m }{\partial u^m}p_{\lambda,\ell,z}(u)
 = \Im(z)^{\lambda-(\ell/4)} 
    \sum_{k=0}^m \frac{P_k(\lambda,\Re(z),u)}{|z-u|^{2\lambda-(\ell/2)+2k}(z-u)^{\ell/2+m+k}},
\]
where $P_k(\lambda,\Re(z),u)$ are polynomials of $\lambda$, $\Re(z)$ and $u$, and  of degree at most $2k$ with respect to $u$. 
Therefore 
\[
\frac{\partial^m }{\partial u^m}p_{\lambda,\ell,z}(u)
 = O\left(\abs{u}^{-2\Re(\lambda)-m}\right) \quad (\abs{u}\rightarrow \infty)
\]
and we can take the $O$-constant not depending on $\lambda$ when $\lambda$ is restricted to a compact subset in $\C$. 
Thus we may apply Proposition \ref{prop:AC of F} to $p_{\lambda,\ell,z}$ and the proof of \eqref{eqn:Fourier transform of Poisson kernel} is completed. 
\end{example}

\begin{remark}
The explicit formula for the Fourier transform of the Poisson kernel \eqref{eqn:Fourier transform of Poisson kernel} was given previously in \cite[(1.5), (4.10)]{SuzukiWeil}.
\end{remark}

\subsection{Dual local zeta functions and local functional equations}
\label{subsect:dual LZ and LFE}

For $f \in \mathcal{V}_{\lambda,\ell}^\infty$, we define the dual local zeta functions by 
\[
\Phi_\pm(\mathcal{F}f;s)=\int_{\R} |x|_\pm^{s-1}\mathcal{F}f(x)\,dx=\int_0^\infty x^{s-1}\mathcal{F}f(\pm x)\,dx.
\]
It follows from Definition \ref{dfn:FT} that the estimate $\mathcal{F}f(t)= O(|t|^{-m})$ $(\abs{t}\rightarrow 0)$ holds for any $m$ with $2\Re(\lambda)+m>1$, 
and by Lemma \ref{lem:FT1}, the Fourier transform $\mathcal{F}f(t)$ is rapidly decreasing at infinity. 
Hence, $\Phi_\pm(\mathcal{F}f;s)$ converge absolutely for $\Re(s)>m_0$, where $m_0$ is the smallest non-negative integer satisfying $m_0>1-2\Re(\lambda)$. 

\begin{theorem}
\label{thm:LFE}
$(1)$ For $f \in \mathcal{V}_{\lambda,\ell}^\infty$, the dual local zeta functions $\Phi_\pm(\mathcal{F}f;s)$ have analytic continuations to meromorphic functions of $s$ in $\C$. 
The poles are located only at $s=-k, 1-2\lambda-k$ $(k \in \Z_{\geq0})$. 

$(2)$ The following local functional equation holds:
\begin{equation}
\label{form:LFE}
\begin{pmatrix}
\Phi_{+}(\mathcal{F}f; s) \\[4pt]
\Phi_{-}(\mathcal{F}f; s) 
\end{pmatrix}
 = (2\pi)^{-s} \Gamma(s)
    \begin{pmatrix}
    e^{{\pi is}/{2}} & e^{-{\pi is}/{2}} \\[4pt]
    e^{-{\pi is}/{2}} & e^{{\pi is}/{2}} 
    \end{pmatrix}
    \begin{pmatrix}
    \Phi_{+}(f; 1-s) \\[4pt]
    \Phi_{-}(f; 1-s) 
    \end{pmatrix}.
\end{equation}

$(3)$ If $\lambda \not\in \frac12\Z$, then all the poles of $\Phi_\pm(\mathcal{F}f;s)$ are of order at most $1$. 
If $\lambda=\frac q2$ $(q\in \Z)$, then the poles at $s=\min\{0,-(q-1)\}-k$ $(k \in \Z_{\geq 0})$ are of order at most $2$, and the other poles are of order at most $1$. 
Except for the (possible double) poles at $s=\min\{0,-(q-1)\}-k$ $(k \in \Z_{\geq 0})$ for $\lambda = \frac q2$ $(q \in \Z)$,  
the residues at the poles are given by 
\begin{eqnarray*}
\mathop{\mathrm{Res}}_{s=-k} \Phi_\pm(\mathcal{F}f;s)
 &=& \frac{(\pm1)^k(\mathcal{F}f)^{(k)}(0)}{k!}, \\
\mathop{\mathrm{Res}}_{s=1-2\lambda-k} \Phi_\pm(\mathcal{F}f;s)
 &=&
 (2\pi)^{2\lambda+k-1} \Gamma(1-2\lambda-k) \cdot \frac{(f_\infty)^{(k)}(0)}{k!} \\
 & & {} \times (\pm i)^{k-1}  \left(e^{\pm \lambda \pi i}  - i^{-\ell} e^{\mp \lambda\pi i}  \right).  
\end{eqnarray*}
\end{theorem}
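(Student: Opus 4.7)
The plan is to reduce the claim to the classical Tate local functional equation by differentiation in $x$. From Definition \ref{dfn:FT}, for any integer $m\geq 1$ and $\Re(s)$ in the region of absolute convergence,
$$\Phi_\pm(\mathcal{F}f;s) = \left(\frac{\mp 1}{2\pi i}\right)^m \Phi_\pm\!\left(\widehat{f^{(m)}};\, s-m\right).$$
I would fix $m$ so large that $\Re(\lambda+m/2)>1/2$, which by Lemma \ref{lem:diff of line model} makes $f^{(m)}\in\mathcal{V}_{\lambda+m/2,\ell+2m}^{\infty}$ integrable and, upon iterating, makes $\widehat{f^{(m)}}$ of rapid decay at infinity. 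The classical local functional equation then reads
$$\Phi_\pm\!\bigl(\widehat{f^{(m)}};\,s'\bigr) = (2\pi)^{-s'}\Gamma(s')\Bigl[e^{\pm\pi i s'/2}\Phi_+(f^{(m)};1-s') + e^{\mp\pi i s'/2}\Phi_-(f^{(m)};1-s')\Bigr].$$
Combining this with the rearrangement $\Phi_\pm(f^{(m)}; 1-s') = (\pm 1)^m \bigl(\Gamma(s'+m)/\Gamma(s')\bigr)\Phi_\pm(f; 1-s)$ of \eqref{eqn:anal cont of Phi} (where $s=s'+m$), and collecting the factors $(\mp 1/(2\pi i))^m$ together with $e^{\pm\pi i(s-m)/2}$ --- in which $(\pm i)^m e^{\mp\pi im/2}$ simplifies to $1$ while $(\pm i)^m e^{\pm\pi im/2}$ simplifies to $(-1)^m$ --- produces \eqref{form:LFE}. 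Since the right-hand side is meromorphic on $\C$ by Theorem \ref{thm:loc zeta}, this simultaneously gives (1) and (2).

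For (3), I would read off the poles of the right-hand side of \eqref{form:LFE}. The factor $\Gamma(s)$ contributes simple poles at $s=-k$ with residue $(-1)^k/k!$, while Theorem \ref{thm:loc zeta} supplies simple poles of $\Phi_\pm(f;1-s)$ at $s=k+1$ and at $s=1-2\lambda-k$. The apparent poles at $s=k+1$ cancel because, for the $+$-row,
$$e^{\pi i(k+1)/2} + (-1)^k e^{-\pi i(k+1)/2} = i^{k+1} - i^{k+1} = 0,$$
and analogously for the $-$-row. At $s=-k$, substituting $e^{\pm\pi i(-k)/2}=(\mp i)^k$ into the bracket and comparing with the definition \eqref{form:DefOfFourierF0} of $(\mathcal{F}f)^{(k)}(0)$ recovers the first residue formula. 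At $s=1-2\lambda-k$, the pole of $\Phi_\pm(f;\,\cdot\,)$ at $2\lambda+k$ plugged into \eqref{form:LFE} yields the second. Double poles can appear only when these loci coincide, $-k = 1-2\lambda-k'$, i.e., $2\lambda\in\Z$; writing $\lambda = q/2$ and solving $k = k'+q-1\geq 0,\ k'\geq 0$ pins the overlap positions to $s=\min\{0,-(q-1)\}-k$.

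The main technical obstacle is justifying the classical local functional equation for $f^{(m)}$, which is merely smooth and integrable rather than Schwartz. A self-contained route is to regularize by $e^{-\epsilon x^2}$, which places $f^{(m)}e^{-\epsilon x^2}$ in the Schwartz class so that Parseval and Fubini deliver the LFE, and then let $\epsilon\to 0^+$: the rapid decay of $\widehat{f^{(m)}}$ and the $L^1$-estimates from Lemma \ref{lem:diff of line model} furnish dominating majorants on both sides. Alternatively one may invoke the more general result of Lee cited after the theorem statement.
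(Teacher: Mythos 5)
Your proposal is correct and its skeleton is the same as the paper's: the identity $\Phi_\pm(\mathcal{F}f;s)=\bigl(\tfrac{\mp1}{2\pi i}\bigr)^m\Phi_\pm\bigl(\mathcal{F}(f^{(m)});s-m\bigr)$, the rearrangement via \eqref{eqn:anal cont of Phi}, the phase bookkeeping yielding \eqref{form:LFE}, and the pole/residue analysis in (3) (cancellation at $s=k+1$, residues at $s=-k$ via \eqref{form:DefOfFourierF0} and at $s=1-2\lambda-k$ via Theorem \ref{thm:loc zeta}, double poles only on the overlap locus) are exactly the paper's steps. The one place you deviate is the justification of the functional equation in the convergent range for the integrable but non-Schwartz function $f^{(m)}\in\mathcal{V}^\infty_{\lambda+m/2,\ell+2m}$: the paper proves this directly, for any element of a space with $\Re(\lambda)>\tfrac12$ and $0<\Re(s)<1$, by inserting the factor $e^{-ty}$ into the $t$-integral, applying Fubini, and letting $y\to0^{+}$ by dominated convergence, so it never needs to leave the class $\mathcal{V}^\infty$. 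Your alternative—Gaussian regularization $e^{-\epsilon x^2}$ in $x$ to reach the Schwartz class and then $\epsilon\to0^{+}$—also works, but the limit on the dual side is the thin spot: $\widehat{f^{(m)}e^{-\epsilon x^2}}$ is the convolution of $\widehat{f^{(m)}}$ with a Gaussian approximate identity, and dominated convergence of $\Phi_\pm(\widehat{f^{(m)}e^{-\epsilon x^2}};s')$ needs a majorant uniform in $\epsilon$ (e.g.\ $\sup_{|v|\ge |t|/2}|\widehat{f^{(m)}}(v)|+Ce^{-ct^2}$, using the rapid decay from Lemma \ref{lem:FT1} and the Gaussian tail); Lemma \ref{lem:diff of line model} alone does not provide it. Citing Lee is a legitimate fallback for (1)--(2), as the paper itself notes, but not for the residue formulas, which you correctly derive by hand. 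One small point to tighten (which the paper also glosses): when $\lambda=-\tfrac q2\in-\tfrac12\Z_{\ge0}$ and $0\le k\le q$, the residue of $\Phi_\pm(f;1-s)$ at $s=k+1$ is the exceptional expression \eqref{eqn:residue of Phi exceptional case}, so your phase cancellation removes only its $f^{(\,\cdot\,)}(0)$-part; this is harmless, since those points already lie in $\{1-2\lambda-k'\}$ and the surviving $f_\infty$-part reproduces the stated residue there.
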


\begin{proof}
First assume that $\Re(\lambda)>\frac12$ and  $0<\Re(s)<1$. 
Then both sides of the functional equation \eqref{form:LFE} converge absolutely, and  $\mathcal{F}f(t)$ is given by the convergent integral $\widehat f(t)=\int_\R f(x)\mathbf{e}[xt]\,dx$. 
Hence, the dominated convergence theorem implies that
\begin{align*}
\Phi_{\pm}(\mathcal{F}f;s)
&=\int_0^{\infty} t^{s-1} \left(
\int_{\R} f(x) e^{\pm 2\pi i xt}\,dx\right)dt \\
&=\lim_{y\to+0} \int_0^{\infty} t^{s-1} e^{-ty}\left(
\int_{\R} f(x)  e^{\pm 2\pi i xt}\,dx\right)dt.
\end{align*}
Further, by Fubini's theorem, we have
\begin{align*}
\Phi_{\pm}(\mathcal{F}f;s)
 &=\lim_{y\to +0} \int_{\R} f(x)\left(
 \int_0^{\infty} t^{s-1} e^{-t(y \mp 2\pi i x)}dt\right)dx \\
 &=\Gamma(s)\cdot \lim_{y\to +0}\int_{\R} (y \mp 2\pi i x)^{-s} f(x) dx.
\end{align*} 
Note that if $0<\Re(s)<1$, then $|x|^{-s}f(x)$ is absolutely integrable, 
and we can apply the dominated convergence theorem again. 
Since
\[
 \lim_{y\to +0} (y\mp 2\pi i x)^{-s} 
  = \left(e^{\mp \mathrm{sgn}(x) \pi i/2} |2\pi x|\right)^{-s},  
\]
we have
\begin{align*}
\Phi_{\pm}(\mathcal{F}f;s)
 &= (2\pi)^{-s} \Gamma(s) \int_\R 
 \left(e^{\pm \pi is/2} |x|_+^{-s} + e^{\mp \pi is/2} |x|_-^{-s}\right) f(x)\,dx \\
 &=(2\pi)^{-s}\cdot \Gamma(s)\cdot 
 \left(e^{\pm \pi is/2} \Phi_{+}(f;1-s)+
 e^{\mp \pi is/2} \Phi_{-}(f;1-s)\right).
\end{align*}
Thus the functional equation  \eqref{form:LFE} has been proved for $\Re(\lambda)>\frac12$ and $1>\Re(s)>0$. 
Since, by Theorem \ref{thm:loc zeta}, $\Phi_{\pm}(f;1-s)$ can be extended to $\C$ meromorphically, the functional equation implies that, if $\Re(\lambda)>\frac12$, then  $\Phi_{\pm}(\mathcal{F}f;s)$ can also be extended to $\C$ meromorphically. 

Let $\lambda$ be arbitrary. 
Then, by taking a positive integer $m$ with $\Re(\lambda)+\frac m2 > \frac12$, the dual local zeta function is given by
\[
\Phi_\pm(\mathcal{F}f;s)
 = \int_\R |t|_\pm^{s-1} \left(\frac{-1}{2\pi i t}\right)^m \mathcal{F}(f^{(m)})(t)\,dt
 = \left(\frac{\mp1}{2\pi i}\right)^m \Phi_\pm(\mathcal{F}(f^{(m)});s-m).
\]
Since $f^{(m)}$ is in $\mathcal{V}_{\lambda+m/2,\ell+2m}^\infty$ and $\Re(\lambda)+\frac m2>\frac12$, the local functional equation  \eqref{form:LFE} holds for $\Phi_\pm(\mathcal{F}(f^{(m)});s-m)$, and so we have
\begin{eqnarray*}
\Phi_\pm(\mathcal{F}f;s)
 &=& \left(\frac{\mp1}{2\pi i}\right)^m (2\pi)^{m-s} \Gamma(s-m) \\
 & & {} \times
       \left(
       e^{\pm \pi i (s-m)/2} \Phi_+(f^{(m)};1+m-s) 
       + e^{\mp \pi i (s-m)/2} \Phi_-(f^{(m)};1+m-s) 
       \right).
\end{eqnarray*} 
Using \eqref{eqn:anal cont of Phi} (valid without any restriction of $\Re(\lambda), \Re(s)$), we rewrite the right hand side to obtain 
\begin{eqnarray*}
\Phi_\pm(\mathcal{F}f;s)
 &=& (\pm i)^m (2\pi)^{-s} \Gamma(s-m) \\
 & & {} \times \frac{\Gamma(m+1-s)}{\Gamma(1-s)}
       \left(
       (-1)^m e^{\pm \pi i (s-m)/2} \Phi_+(f;1-s) 
       +  e^{\mp \pi i (s-m)/2} \Phi_-(f;1-s) 
       \right) \\
 &=& (\pm i)^m (2\pi)^{-s} (-1)^m \Gamma(s) \\
 & & {} \times 
       \left(
      (-i)^{\mp m} e^{\pm \pi i s/2} \Phi_+(f;1-s) 
       +  i^{\pm m} e^{\mp \pi i s/2} \Phi_-(f;1-s) 
       \right) \\
 &=& (2\pi)^{-s} \Gamma(s) 
       \left(
      e^{\pm \pi i s/2} \Phi_+(f;1-s) 
       + e^{\mp \pi i s/2} \Phi_-(f;1-s) 
       \right).
\end{eqnarray*} 
This shows that the local functional equation holds without any restriction on $\lambda$, and the dual local zeta functions $\Phi_{\pm}(\mathcal{F}f;s)$ can be extended to $\C$ meromorphically.  

We now examine the location of poles. 
Possible poles of the right hand side of the local functional equation \eqref{form:LFE} are the poles of $\Gamma(s)$ and $\Phi_\pm(f;1-s)$. 
Hence, by Theorem \ref{thm:loc zeta},  the poles of $\Phi_{\pm}(\mathcal{F}f;s)$ are contained in 
\[
\{-k, 1-2\lambda -k, k+1\; | \; k \in \Z_{\geq0}\}.
\]
The possible poles at $s=k+1$ $(k \in \Z_{\geq0})$ are not actual poles unless $\lambda=-\frac q2 \in -\frac12\Z_{\geq0}$ and $0 \leq k \leq q$, since from Theorem \ref{thm:loc zeta} we have
\begin{eqnarray*}
\mathop{\mathrm{Res}}_{s=k+1} \Phi_{\pm}(\mathcal{F}f;s) 
 &=& (2\pi)^{-(k+1)}\Gamma(k+1) \\
 & & {} \times  \left(e^{\pm (k+1)\pi i/2} \mathop{\mathrm{Res}}_{s=k+1} \Phi_+(f;1-s) + e^{\mp (k+1)\pi i/2} \mathop{\mathrm{Res}}_{s=k+1} \Phi_-(f;1-s)\right) \\
 &=& -(2\pi)^{-(k+1)}\Gamma(k+1)\frac{f^{(k)}(0)}{k!} \left(e^{\pm (k+1)\pi i/2}  + (-1)^ke^{\mp (k+1)\pi i/2} \right)\\
 &=& 0.
\end{eqnarray*}
If $\lambda=-\frac q2$ $(q \in \Z_{\geq0})$, then $\{k+1\;|\;0\leq k \leq q\}$ is included in $\{1-2\lambda-k\;|\;k  \in \Z_{\geq0}\}$.  
Thus the poles of $\Phi_\pm(\mathcal{F}f;s)$ are contained in $\{-k\;|\;k \in \Z_{\geq0}\} \cup \{1-2\lambda-k\;|\;k \in \Z_{\geq0}\}$, and this proves the first assertion in the theorem. 

Since all the poles of $\Gamma(s)$ and $\Phi_\pm(f;1-s)$ are of order $1$, the order of a pole of $\Phi_\pm(\mathcal{F}f;s)$ at $s=s_0$ can be equal to $2$, only when $s_0$ belongs to the set  
\[
A := \{-k\;|\;k \in \Z_{\geq0}\} \cap \{1-2\lambda-k\;|\;k \in \Z_{\geq0}\}
    = \begin{cases}
      \min\{0,-(q-1)\}-\Z_{\geq0} & (\lambda=\frac q2,\ q \in \Z), \\
      \emptyset & (\lambda \not\in \frac12\Z).
      \end{cases} 
\] 
The poles of  $\Phi_\pm(\mathcal{F}f;s)$ not belonging to $A$ are of order at most $1$.

If $-k \not\in A$, then $s=-k$ is a pole of  $\Phi_\pm(\mathcal{F}f;s)$ coming from a pole of $\Gamma(s)$ and the residue is given by 
\begin{eqnarray*}
\mathop{\mathrm{Res}}_{s=-k} \Phi_{\pm}(\mathcal{F}f;s) 
 &=& (2\pi)^{k} \cdot \mathop{\mathrm{Res}}_{s=-k} \Gamma(s) 
      \left(e^{\mp k\pi i/2} \Phi_+(f;k+1) + e^{\pm k\pi i/2}  \Phi_-(f;k+1)\right) \\
 &=& (2\pi)^{k} \cdot \frac{(-1)^k}{k!} \cdot e^{\mp k\pi i/2}  
  \left(\Phi_+(f;k+1) + (-1)^{k}  \Phi_-(f;k+1)\right) \\
 &=& \frac{(\pm1)^k(\mathcal{F}f)^{(k)}(0)}{k!}.
\end{eqnarray*}
If $1-2\lambda-k \not\in A$, then $s=1-2\lambda-k$ is a pole of  $\Phi_\pm(\mathcal{F}f;s)$ coming from a pole of $\Phi_\pm(f;1-s)$ and the residue is given by 
\begin{eqnarray*}
\mathop{\mathrm{Res}}_{s=1-2\lambda-k} \Phi_{\pm}(\mathcal{F}f;s) 
 &=& (2\pi)^{2\lambda+k-1} \Gamma(1-2\lambda-k) 
 \left(e^{\pm (1-2\lambda-k)\pi i/2} \mathop{\mathrm{Res}}_{s=1-2\lambda-k} \Phi_+(f;1-s) \right. \\
 & & {} + \left. e^{\mp(1-2\lambda-k)\pi i/2} \mathop{\mathrm{Res}}_{s=1-2\lambda-k}  \Phi_-(f;1-s)\right) \\
 &=& -(2\pi)^{2\lambda+k-1} \Gamma(1-2\lambda-k) 
 \left(e^{\pm (1-2\lambda-k)\pi i/2} \mathop{\mathrm{Res}}_{s=2\lambda+k} \Phi_+(f;s) \right. \\
 & & {} + \left. e^{\mp(1-2\lambda-k)\pi i/2} \mathop{\mathrm{Res}}_{s=2\lambda+k}  \Phi_-(f;s)\right) \\
 &=& -(2\pi)^{2\lambda+k-1} \Gamma(1-2\lambda-k) \frac{(f_\infty)^{(k)}(0)}{k!} \\
 & & {} \times  \left(e^{\pm (1-2\lambda-k)\pi i/2} i^{-\ell}(-1)^{k+1}
      - e^{\mp(1-2\lambda-k)\pi i/2} \right) \\
 &=& (2\pi)^{2\lambda+k-1} \Gamma(1-2\lambda-k) \frac{(f_\infty)^{(k)}(0)}{k!} \\
 & & {} \times (\pm i)^{k-1}  \left(e^{\pm \lambda \pi i}  - i^{-\ell} e^{\mp \lambda\pi i}  \right). 
\end{eqnarray*}
This proves the theorem.
\end{proof}

\begin{remark}
Analytic continuations 
 of the local zeta functions $\Phi_{\pm}(f; s)$ and $\Phi_{\pm}(\mathcal{F}f; s)$ for $f \in \mathcal{V}_{\lambda,\ell}^\infty$ and the functional equation \eqref{form:LFE}  are included in a more general result due to 
Lee~\cite{Lee16}, in which Lee discussed the local zeta functions associated with prehomogeneous vector spaces of commutative parabolic type. 
However, our results on the singularities seems to be new. 
\end{remark}

\subsection{Order of magnitude of local zeta functions}

\begin{lemma}
\label{lem:EstimatesOfGamma}
For $a_1,a_2, \epsilon \in \R$ with $a_1<a_2$ and $\epsilon>0$, put 
\[
D=D_{a_1,a_2,\epsilon}=\{s\in\C\; | \;a_1\leq \Re(s) \leq a_2,\ |\Im(s)|\geq 2|\Im(\lambda)|+\epsilon\}.
\]
Let $f \in \mathcal{V}^\infty_{\lambda,\ell}$. 
Then, for any positive integer $N$,  there exists a positive constant $C_N$ for which the inequalities 
\[
|\Phi_\pm(f;s)| < C_N|\Im(s)|^{-N}, \quad
|\Phi_\pm(\mathcal{F}f;s)| < C_N|\Im(s)|^{-N}
\]
hold for any $s \in D$.
\end{lemma}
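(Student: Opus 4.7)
The plan is to establish the estimate for $\Phi_\pm(f;s)$ first by iterated integration by parts, and then transfer it to $\Phi_\pm(\mathcal{F}f;s)$ through the local functional equation \eqref{form:LFE} together with Stirling's asymptotic.

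For the first bound, I would split $\Phi_\pm(f;s) = \Phi_{\pm,0}(f;s) + \Phi_{\pm,\infty}(f;s)$ as in the proof of Theorem~\ref{thm:loc zeta} and treat the two pieces separately. For $\Phi_{\pm,0}$ I would use the integration-by-parts identity
\[
\Phi_{\pm,0}(f;s) = \sum_{k=0}^{m-1} \frac{(\mp1)^k f^{(k)}(\pm1)}{s(s+1)\cdots(s+k)} + \frac{(\mp1)^m}{s(s+1)\cdots(s+m)}\int_0^1 x^{s+m-1}f^{(m)}(\pm x)\,dx,
\]
which is valid for $\Re(s) > -m$, choosing $m > -a_1$. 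For $\Phi_{\pm,\infty}$ I would use the reflection $\Phi_{\pm,\infty}(f;s) = (\mp 1)^{-\ell/2}\Phi_{\mp,0}(f_\infty;2\lambda-s)$ from \eqref{eqn:Phi0toInf} and apply the same recipe to $f_\infty$, which is valid for $\Re(2\lambda-s) > -m$, choosing $m > a_2 - 2\Re(\lambda)$. On $D$ we have $|s+j| \geq |\Im(s)|$ for every integer $j \geq 0$, and $|2\lambda - s + j| \geq |\Im(s) - 2\Im(\lambda)| \geq \epsilon$, with $|\Im(s) - 2\Im(\lambda)| \geq |\Im(s)|/2$ once $|\Im(s)| \geq 4|\Im(\lambda)|$. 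The boundary terms $f^{(k)}(\pm 1)$ and $(f_\infty)^{(k)}(\mp 1)$ are constants, while the remainder integrals over $[0,1]$ are bounded uniformly in $s$ on the chosen strip because the integrands are dominated by $x^{\Re(s) + m - 1}$ times an $L^\infty$ function on $[0,1]$. Combining, I obtain $|\Phi_\pm(f;s)| \leq C_m |\Im(s)|^{-m}$ uniformly on $D$, and since $m$ is arbitrary this yields the first assertion.

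For the dual local zeta functions I would invoke the local functional equation of Theorem~\ref{thm:LFE}, which expresses $\Phi_\pm(\mathcal{F}f;s)$ as a linear combination of $\Phi_\pm(f;1-s)$ with coefficients $(2\pi)^{-s}\Gamma(s)e^{\pm\pi is/2}$. Stirling's formula gives $|\Gamma(\sigma + it)| \sim \sqrt{2\pi}\,|t|^{\sigma - 1/2} e^{-\pi|t|/2}$ as $|t| \to \infty$ uniformly for $\sigma$ in a bounded interval, while $|e^{\pm\pi is/2}| = e^{\mp\pi\Im(s)/2}$ contributes an $e^{\pi|\Im(s)|/2}$ factor in the worst case. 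These exponentials cancel, leaving $|(2\pi)^{-s}\Gamma(s) e^{\pm\pi is/2}| = O(|\Im(s)|^{a_2 - 1/2})$ on the strip $a_1 \leq \Re(s) \leq a_2$. Since $|\Im(1-s)| = |\Im(s)|$, the point $1-s$ lies in $D_{1-a_2, 1-a_1, \epsilon}$, and applying the first estimate with $N$ replaced by $N + \lceil a_2 - 1/2 \rceil$ gives $|\Phi_\pm(f;1-s)| = O(|\Im(s)|^{-N - a_2 + 1/2})$, producing the desired $|\Phi_\pm(\mathcal{F}f;s)| = O(|\Im(s)|^{-N})$.

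The only genuinely delicate point is the exponential cancellation between $|\Gamma(s)|$ and $|e^{\pm\pi is/2}|$; both are of magnitude $e^{\pm\pi|\Im(s)|/2}$ and their product grows only polynomially, which is then swallowed by the arbitrarily fast polynomial decay already proved for $\Phi_\pm(f;\cdot)$. Everything else is routine bookkeeping with the two reduction formulas and elementary lower bounds for the denominators on $D$.
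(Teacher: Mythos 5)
The second half of your argument, transferring the estimate from $\Phi_\pm(f;\cdot)$ to $\Phi_\pm(\mathcal{F}f;\cdot)$ via the local functional equation \eqref{form:LFE} and the Stirling bound $\Gamma(s)e^{\pm\pi i s/2}=O(|\Im(s)|^{a_2-1/2})$, is exactly the paper's argument and is fine. The gap is in the first half. Once you split $\Phi_\pm(f;s)=\Phi_{\pm,0}(f;s)+\Phi_{\pm,\infty}(f;s)$ and integrate each piece by parts separately, you create the boundary terms $\frac{(\mp1)^k f^{(k)}(\pm1)}{s(s+1)\cdots(s+k)}$ and, from the reflected piece \eqref{eqn:Phi0toInf}, terms with denominators $(2\lambda-s)(2\lambda-s+1)\cdots(2\lambda-s+k)$. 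These are not harmless ``constants'': the $k=0$ terms decay only like $|\Im(s)|^{-1}$, so treating the two pieces separately, as you propose, yields at best $|\Phi_\pm(f;s)|=O(|\Im(s)|^{-1})$ on $D$, not $O(|\Im(s)|^{-m})$ for arbitrary $m$; the step ``combining, I obtain $C_m|\Im(s)|^{-m}$'' does not follow. The rapid decay of the sum is true only because the boundary terms of the two pieces cancel against each other to high order (for instance the $k=0$ terms combine to $f(\pm1)\bigl(\tfrac1s+\tfrac1{2\lambda-s}\bigr)=O(|\Im(s)|^{-2})$, using $(\mp1)^{-\ell/2}f_\infty(\mp1)=f(\pm1)$), and establishing that cancellation to order $m$ would require explicit relations between $f^{(k)}(\pm1)$ and $(f_\infty)^{(k)}(\mp1)$ that you neither state nor prove; simply asserting it amounts to assuming the lemma.

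The paper sidesteps this entirely by never splitting the integral: it iterates the boundary-term-free identity \eqref{eqn:anal cont of Phi} (integration by parts over all of $(0,\infty)$ in the range of absolute convergence, where both boundary contributions vanish), interposes the reflection \eqref{form:PhiFinfty} applied to the \emph{full} local zeta function of $f^{(m)}$, and then applies \eqref{eqn:anal cont of Phi} once more. The result expresses $\Phi_\pm(f;s)$ as a single absolutely convergent zeta integral, bounded uniformly on $a_1\le\Re(s)\le a_2$ for $m,n$ large, divided by a product of $m+n$ linear factors of the form $s+j$ and $2\lambda-m-s+j$, each of which is bounded below by a constant multiple of $|\Im(s)|$ on $D$ (this is precisely where the hypothesis $|\Im(s)|\ge 2|\Im(\lambda)|+\epsilon$ is used). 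Since $m+n$ is arbitrary, the rapid decay follows with no boundary terms at all. If you replace your split-and-integrate-by-parts step by this device, the rest of your write-up — the lower bounds for $|s+j|$ and $|2\lambda-s+j|$ on $D$, and the functional-equation/Stirling step for $\Phi_\pm(\mathcal{F}f;s)$ — goes through unchanged.
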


\begin{proof}
First note that, for positive integers $m$ and $n$, 
\begin{eqnarray*}
\Phi_\pm(f;s) 
 &=& \frac{(\mp1)^{m}}{s(s+1)\cdots(s+m-1)}\cdot \Phi_\pm(f^{(m)};s+m) \\
 &=& \frac{(\mp1)^{-\ell/2}}{s(s+1)\cdots(s+m-1)}\cdot \Phi_\mp((f^{(m)})_\infty;2\lambda-m-s) \\
 &=& \frac{(\mp1)^{-\ell/2}(\pm1)^n}{s(s+1)\cdots(s+m-1)(2\lambda-m-s)(2\lambda-m-s+1)\cdots(2\lambda-m-s+n)}\\
 & & \quad {}\times \Phi_\mp(\left((f^{(m)})_\infty\right)^{(n)};2\lambda-m+n-s). 
\end{eqnarray*}
The rightmost expression gives an analytic continuation of $\Phi_\pm(f;s)$ in the domain $0 < 2\Re(\lambda)-m+n-\Re(s) < 2\Re(\lambda)+m+n$, equivalently  
$-2m < \Re(s) < 2\Re(\lambda)-m+n$. 
We take $m,n$ so large that $-2m<a_1<a_2<   2\Re(\lambda)-m+n$. 
Then, for any $s$ with $a_1 \leq \Re(s) \leq a_2$, we have
\begin{eqnarray*}
\lefteqn{
\left|\Phi_\mp(\left((f^{(m)})_\infty\right)^{(n)};2\lambda-m+n-s) \right| 
} \\
 & & \leq \int_0^\infty x^{2\Re(\lambda)-\Re(s)-m+n-1} \left|\left((f^{(m)})_\infty\right)^{(n)}(\mp x)\right|\,dx \\
 & & \leq \int_0^1 x^{2\Re(\lambda)-a_2-m+n-1} \left|\left((f^{(m)})_\infty\right)^{(n)}(\mp x)\right|\,dx \\
 & & \quad {} + \int_1^\infty x^{2\Re(\lambda)-a_1-m+n-1} \left|\left((f^{(m)})_\infty\right)^{(n)}(\mp x)\right|\,dx.  
\end{eqnarray*}
Since the integrals $\int_0^1$ and $\int_1^\infty$ are convergent,  
we get a constant $C_{a_1,a_2}$ independent of $s$ such that 
\[
\left|\Phi_\mp(\left((f^{(m)})_\infty\right)^{(n)};2\lambda-m+n-s) \right|
   < C_{a_1,a_2} \quad (a_1  \leq \Re(s) \leq a_2).
\]
Hence, we have 
\begin{eqnarray*}
\left|\Phi_\pm(f;s) \right| 
 &\leq& C_{a_1,a_2} |\Im(s)|^{-m}|\Im(s)-2\Im(\lambda)|^{-n} \\
 &\leq& C_{a_1,a_2} \left(1-\frac{2|\Im(\lambda)|}{2|\Im(\lambda)|+\epsilon}\right)^{-n} |\Im(s)|^{-(m+n)}
\end{eqnarray*}
for any $s \in D$. 
Since $m,n$ can be arbitrary large, the estimate in the lemma is proved for $\Phi_\pm(f;s)$. 
If we notice the estimate  
\[
\Gamma(s) e^{\pm \pi i s/2} = O\left(|\Im(s)|^{a_2-1/2}\right) \quad (a_1 \leq \Re(s) \leq a_2)
\]
as $|\Im(s)| \rightarrow \infty$, which is an immediate consequence of Stirling's estimate for $\Gamma(s)$,  
the estimate for $\Phi_\pm(\mathcal{F}f;s)$ follows from the functional equation 
\eqref{form:LFE} and the estimate for $\Phi_\pm(f;s)$. 
\end{proof}

\section{Summation formulas and $L$-functions}

In this section, we always assume that  $\lambda\not\in \frac 12 - \frac 12 \Z_{\geq0}$. 
This assumption allows us to define $\mathcal{F}f(0)$ for $f \in \mathcal{V}_{\lambda,\ell}^\infty$ (see \S \ref{subsect:regularized value}). 

\subsection{$L$-functions associated with Ferrar-Suzuki summation formulas}

For complex sequences  
$\{\alpha(n)\}, \{\beta(n)\} \, (n \in \Z)$  
of polynomial growth,  complex numbers $\alpha(\infty), \beta(\infty)$, 
and a positive integer $N$,  
we consider the Ferrar-Suzuki summation formula of level $N$
\begin{eqnarray}
\lefteqn{
\alpha(\infty)f(\infty)+\sum_{n=-\infty}^{\infty} \alpha(n)(\mathcal{F}f)(n)
}  \nonumber \\ 
 & & =
\beta(\infty) f_{\infty}(\infty)+
 \sum_{n=-\infty}^{\infty} \beta(n)(\mathcal{F}f_{\infty})\left(\frac{n}{N}\right) 
\quad (f\in \mathcal{V}_{\lambda,\ell}^{\infty}),
\label{form:GivenSumFormula}
\end{eqnarray}
where we put
\begin{equation}
\label{form:DefOfFInfty}
f(\infty):= f_{\infty}(0).  
\end{equation}
Note that, by \eqref{eqn:square of infty-operation}, we have $f_{\infty}(\infty)= i^{\ell}\cdot f(0)$.

First we give an interpretation of the summation formula as  an  automorphic property of distributions. 
Define two linear functionals $T$ and $T_\infty$ on $\mathcal{V}^\infty_{\lambda,\ell}$ by 
\begin{eqnarray*}
T(f) 
  &=& \alpha(\infty)f(\infty)+\sum_{n=-\infty}^{\infty} \alpha(n)(\mathcal{F}f)(n), \\
T_\infty(f) 
  &=& \beta(\infty)f(\infty)+\sum_{n=-\infty}^{\infty} \beta(n)(\mathcal{F}f)\left(\frac{n}{N}\right). 
\end{eqnarray*}
The summation formula can be rewritten as $T(f)=T_\infty(f_\infty)$. 
Hence, by \eqref{form:FInftyIsPiTau} and \eqref{form:DefOfActionOnDualSpace}, we have $T_\infty=\pi^*_{\lambda,\ell}(\tilde{w}^{-1})T$.

\begin{lemma}  
\label{lemma:distribution interpretation of sum formula}
Let $\widetilde{\Delta}(N)$ be the subgroup of $\widetilde{\Gamma}_0(N)$ generated by $\tilde{n}(1)$ and $\tilde{\bar n}(N)$. 
Then the linear functionals $T$ and $T_\infty$ are distributions on $\mathcal{V}^\infty_{\lambda,\ell}$ automorphic for $\widetilde\Delta(N)$ and $\tilde{w}^{-1}\widetilde\Delta(N)\tilde{w}$, respectively. 
\end{lemma}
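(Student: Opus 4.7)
The plan is to verify, first, that $T$ and $T_\infty$ are continuous on $\mathcal{V}^\infty_{\lambda,\ell}$ and, second, that the claimed automorphy properties hold. I would derive the $\tilde n(1)$-invariance of $T$ directly from the translation formula \eqref{eqn:translate for FT}, obtain the $\tilde n(N)$-invariance of $T_\infty$ by the same method, and then pass between $T$ and $T_\infty$ using the identity $T_\infty = \pi^*_{\lambda,\ell}(\tilde w^{-1})T$ (a rewriting of $T_\infty(f_\infty) = T(f)$) combined with the commutation relation \eqref{eqn:commutation of w and nx}.

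For continuity, I would split $T(f)$ into three kinds of terms. The boundary contribution $\alpha(\infty)f(\infty) = \alpha(\infty)f_\infty(0)$ is bounded by a seminorm of $f$, since $f \mapsto f_\infty = \pi_{\lambda,\ell}(\tilde w^{-1})f$ is a continuous operator on the Fr\'echet space $\mathcal{V}^\infty_{\lambda,\ell}$ and $|f_\infty(0)| \le \nu_{0,\lambda}(f_\infty)$. The middle term $\alpha(0)\mathcal{F}f(0)$, defined by the regularized value \eqref{form:DefOfFourierF0} (legitimate under the running assumption $\lambda \notin \tfrac12-\tfrac12\Z_{\ge 0}$), is continuous by Lemma \ref{lem:continuity of LZ}. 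For the remaining sum $\sum_{n\ne 0}\alpha(n)\mathcal{F}f(n)$, Lemma \ref{lem:FT1} gives $|\mathcal{F}f(n)| \le C_N\nu_{N,\lambda}(f)|n|^{-N}$ for every $N$; since $\alpha(n)$ grows at most polynomially, the sum converges absolutely with a single-seminorm bound. The same argument applies verbatim to $T_\infty$.

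For the $\tilde n(1)$-invariance of $T$, I would evaluate $(\pi^*_{\lambda,\ell}(\tilde n(1))T)(f) = T(g)$, where $g(x) := (\pi_{\lambda,\ell}(\tilde n(-1))f)(x) = f(x+1)$. The identity \eqref{eqn:translate for FT}, which by Example \ref{ex:Lambda-family} and the principle of analytic continuation is valid at $t=0$ for the regularized value as well, gives $\mathcal{F}g(n) = e^{-2\pi i n}\mathcal{F}f(n) = \mathcal{F}f(n)$ for every $n \in \Z$. The equality $g(\infty) = f(\infty)$ follows from $g_\infty = \pi_{\lambda,\ell}(\tilde w^{-1}\tilde n(-1))f$ evaluated at $0$ through \eqref{form:DefOfActionOnVlambdaMuAtInfty}, where Lemma \ref{thm:FactorSystem} shows the relevant cocycle to be trivial. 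Thus $T(g) = T(f)$. Replacing $\tilde n(1)$ by $\tilde n(N)$ and $\mathcal{F}f(n)$ by $\mathcal{F}f(n/N)$, the same computation yields the $\tilde n(N)$-invariance of $T_\infty$.

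Finally, \eqref{eqn:commutation of w and nx} with $x=N$ gives $\tilde{\bar n}(-N) = \tilde w\tilde n(N)\tilde w^{-1}$, so
\[
\pi^*_{\lambda,\ell}(\tilde{\bar n}(-N))T
 = \pi^*_{\lambda,\ell}(\tilde w)\,\pi^*_{\lambda,\ell}(\tilde n(N))\,\pi^*_{\lambda,\ell}(\tilde w^{-1})T
 = \pi^*_{\lambda,\ell}(\tilde w)\,\pi^*_{\lambda,\ell}(\tilde n(N))T_\infty
 = \pi^*_{\lambda,\ell}(\tilde w)T_\infty = T,
\]
using $T_\infty = \pi^*_{\lambda,\ell}(\tilde w^{-1})T$ and the $\tilde n(N)$-invariance of $T_\infty$. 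Hence $T$ is invariant under $\tilde{\bar n}(-N)$ and therefore under $\tilde{\bar n}(N) = \tilde{\bar n}(-N)^{-1}$ (the cocycle in $\tilde{\bar n}(-N)\cdot\tilde{\bar n}(N)$ is trivial by Lemma \ref{thm:FactorSystem}). Together with its $\tilde n(1)$-invariance, this gives $\widetilde\Delta(N)$-automorphy of $T$, and $\tilde w^{-1}\widetilde\Delta(N)\tilde w$-automorphy of $T_\infty = \pi^*_{\lambda,\ell}(\tilde w^{-1})T$ follows formally. The main subtlety I foresee is ensuring that \eqref{eqn:translate for FT} continues to hold at $t = 0$ for the regularized value $\mathcal{F}f(0)$ and that $f(\infty)$ is preserved under translation; both points are handled by the analytic-continuation framework of \S 2.4, but they deserve to be flagged explicitly.
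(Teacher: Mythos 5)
Your proposal is correct and follows essentially the same route as the paper: continuity via Lemma \ref{lem:FT1}, the continuity of the regularized value $\mathcal{F}f(0)$ and the bound on $f(\infty)$, translation invariance via \eqref{eqn:translate for FT}, and the reduction of $\tilde{\bar n}(N)$-invariance of $T$ to $\tilde n(N)$-invariance of $T_\infty=\pi^*_{\lambda,\ell}(\tilde w^{-1})T$ through \eqref{eqn:commutation of w and nx}. The only cosmetic differences are that you check $g(\infty)=f(\infty)$ via the group action at the singular point instead of the paper's direct formula \eqref{eqn:fN-infty}, and that you write out explicitly the conjugation chain the paper leaves implicit.
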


We call $T$  the {\it automorphic distribution for $\widetilde\Delta(N)$ associated with the summation formula}\/  \eqref{form:GivenSumFormula}. 

\begin{proof}
Since $T_\infty=\pi^*_{\lambda,\ell}(\tilde{w}^{-1})T$ and $\tilde{w}\tilde{n}(-N)\tilde{w}^{-1}=\tilde{\bar n}(N)$ (see \eqref{eqn:commutation of w and nx}), it is enough to prove that 
\begin{enumerate}
\def\labelenumi{$(\arabic{enumi})$}
\item $T$ is a distribution and $\pi^*_{\lambda,\ell}(\tilde{n}(-1))T=T$, and 
\item $T_\infty$ is a distribution and $\pi^*_{\lambda,\ell}(\tilde{n}(-N))T_\infty=T_\infty$.
\end{enumerate}
We give a proof only for (1). 
Since $\{\alpha(n)\}$ is assumed to be of polynomial growth, we can find a positive constant $C$ and a non-negative integer $k$ such that $|\alpha(n)|<C(1+|n|)^k$ for all $n \in \Z$. 
By Lemma~\ref{lem:FT1}, there exists a positive constant
$C_{0, k+2}$ such that 
\[
 |(\mathcal{F}f)(n)|\leq \frac{C_{0, k+2}}{|n|^{k+2}}\cdot \nu_{k+2,\lambda}(f)\qquad
 \text{for \; $n\in \Z\setminus\{0\}$ \; and \; $f\in \mathcal{V}_{\lambda, \ell}^{\infty}$}.
\]
From this estimate we have 
\begin{align*}
 \left|\sum
\begin{Sb}
n=-\infty \\
n\neq 0 
\end{Sb}^{\infty}
 \alpha(n) (\mathcal{F}f)(n)\right|
&\leq 
\sum
\begin{Sb}
n=-\infty \\
n\neq 0 
\end{Sb}^{\infty}
|\alpha(n)|\cdot |(\mathcal{F}f)(n)| \\
&\leq CC_{0, k+2}\cdot \nu_{k+2,\lambda}(f)
\sum
\begin{Sb}
n=-\infty \\
n\neq 0 
\end{Sb}^{\infty}
\frac{(1+|n|)^k}{|n|^{k+2}}<+\infty.
\end{align*}
This shows that $\sum_{n\ne0} \alpha(n) \mathcal{F}f(n)$ is absolutely convergent and the mapping 
\[
f \longmapsto \sum_{n\ne0} \alpha(n) \mathcal{F}f(n)
\] 
defines a continuous linear functional on $\mathcal{V}^\infty_{\lambda,\ell}$. 
Moreover, as stated in \S \ref{subsect:regularized value}, if $\lambda\not\in \frac 12 - \frac12\Z_{\geq0}$, 
the mapping $\mathcal{V}_{\lambda, \ell}^{\infty}\ni f\longmapsto
(\mathcal{F}f)(0) \in \C$ is continuous. 
Since 
\[
 \abs{f(\infty)}=\abs{f_{\infty}(0)} \leq \nu_{0,\lambda}(f_\infty) = \nu_{0,\lambda}(f), 
\]
the mapping $\mathcal{V}_{\lambda, \ell}^{\infty}\ni f \longmapsto f(\infty) \in \C$ 
is also continuous. 
Thus $T$ defines a distribution in $(\mathcal{V}_{\lambda,\ell}^{\infty})^*$. 
By \eqref{eqn:translate for FT}, we have 
\[
\mathcal{F}(\pi_{\lambda,\ell}(\tilde{n}(1))f)(n) = \mathcal{F}f\left(n\right).
\]
for any $n \in \Z$. 
Since
\begin{equation}
(\pi_{\lambda,\ell}(\tilde{n}(1))f)_\infty(x) = (\mathrm{sgn}\,(1+x))^{\ell/2} |1+x|^{-2\lambda} f_\infty\left(\frac{x}{1+x}\right),
\label{eqn:fN-infty}
\end{equation} 
it is obvious that $(\pi_{\lambda,\ell}(\tilde{n}(1))f)(\infty)=(\pi_{\lambda,\ell}(\tilde{n}(1))f)_\infty(0)=f_\infty(0)=f(\infty)$. 
Thus we conclude that $(\pi_{\lambda,\ell}^*(\tilde{n}(-1))T)(f)=T(\pi_{\lambda,\ell}(\tilde{n}(1))f)=T(f)$. 
\end{proof}

Now, for complex sequences  $\{\alpha(n)\}$ and $\{\beta(n)\}$ of polynomial growth,   we define the $L$-functions $\xi_{\pm}(\alpha;s)$, $\xi_{\pm}(\beta;s)$  by
\begin{equation}
\label{eqn:def of xi}
 \xi_{\pm}(\alpha;s)= \sum_{n=1}^{\infty} \frac{\alpha(\pm n)}{n^s}, \quad
 \xi_{\pm}(\beta;s) = \sum_{n=1}^{\infty} \frac{\beta(\pm n)}{n^s}.
\end{equation}
Since $\alpha(n)$ and $\beta(n)$ are assumed to be of polynomial growth, the $L$-functions converge absolutely if $\Re(s)$ are sufficiently large. 
We also define the zeta integrals with test function $f \in \mathcal{V}^\infty_{\lambda,\ell}$ by 
\begin{eqnarray}
\label{eqn:def of Z}
 Z(\alpha, \mathcal{F}f;s) &=& 
 \int_{0}^{\infty} t^{s-1}
 \sum\begin{Sb}
  n=-\infty\\
  n\neq 0
  \end{Sb}^{\infty}
  \alpha(n) \cdot \mathcal{F}f\left(tn\right) dt, \\
 Z^{(N)}(\beta, \mathcal{F}f_\infty;s) &=& 
 \int_{0}^{\infty} t^{s-1}
 \sum\begin{Sb}
  n=-\infty\\
  n\neq 0
  \end{Sb}^{\infty}
  \beta(n) \cdot (\mathcal{F}f_\infty)\left(\frac{tn}{N}\right) dt. 
\label{eqn:def of ZN}
\end{eqnarray}

\begin{lemma}
\label{lem:Int rpr of zeta}
For any $f \in \mathcal{V}_{\lambda,\ell}^\infty$, the zeta integrals $Z(\alpha,\mathcal{F}f;s)$ and $Z^{(N)}(\beta, \mathcal{F}f_\infty;s)$ are absolutely convergent if $\Re(s)$ is sufficiently large, and the following identities hold:
\begin{eqnarray*}
Z(\alpha, \mathcal{F}f;s) 
 &=& \xi_{+}(\alpha;s)\Phi_{+}(\mathcal{F}f;s)
   + \xi_{-}(\alpha;s)\Phi_{-}(\mathcal{F}f;s), \\
Z^{(N)}(\beta, \mathcal{F}f_\infty;s) 
 &=& N^s\cdot \xi_{+}(\beta;s)\Phi_{+}(\mathcal{F}f_\infty;s)
   + N^s\cdot \xi_{-}(\beta;s)\Phi_{-}(\mathcal{F}f_\infty;s),  
\end{eqnarray*}
where $\Phi_\pm(\mathcal{F}f;s)$ and  $\Phi_\pm(\mathcal{F}f_\infty;s)$ are the dual local zeta functions introduced in \S\ref{subsect:dual LZ and LFE}. 
\end{lemma}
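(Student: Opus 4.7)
The plan is to pick $\Re(s)$ so large that everything in sight converges absolutely, apply Fubini to exchange the sum and the integral, and then perform the obvious rescaling $u = tn$ (resp.\ $u = tn/N$) to extract the Dirichlet series factor. The routine final computation amounts to
\[
\int_0^\infty t^{s-1} \mathcal{F}f(tn)\,dt
 = |n|^{-s}\cdot \begin{cases} \Phi_+(\mathcal{F}f;s) & (n>0), \\ \Phi_-(\mathcal{F}f;s) & (n<0), \end{cases}
\]
and similarly
$\int_0^\infty t^{s-1}\mathcal{F}f_\infty(tn/N)\,dt = N^s|n|^{-s}\Phi_{\pm}(\mathcal{F}f_\infty;s)$ (with sign according to $\mathrm{sgn}(n)$), and then summation against $\alpha(\pm n)/n^s$ (resp.\ $\beta(\pm n)/n^s$) for $n\geq1$ gives precisely $\xi_\pm(\alpha;s)\Phi_\pm(\mathcal{F}f;s)$ and $N^s\xi_\pm(\beta;s)\Phi_\pm(\mathcal{F}f_\infty;s)$.

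The real content is the absolute convergence needed to apply Fubini. First I would fix a positive integer $m$ with $m > 1 - 2\Re(\lambda)$. By Definition~\ref{dfn:FT} together with Lemma~\ref{lem:diff of line model}, the function $f^{(m)}$ lies in $\mathcal{V}^\infty_{\lambda+m/2,\ell+2m}$ and is absolutely integrable on $\R$, so $\widehat{f^{(m)}}(t)$ is bounded on $\R$; hence
\[
|\mathcal{F}f(t)| \leq C_1 |t|^{-m} \quad (t \ne 0)
\]
for some constant $C_1$ depending only on $f$. Combining this with Lemma~\ref{lem:FT1} (which supplies rapid decay of $\mathcal{F}f$ at infinity), we see that for $\Re(s) > m$ the integral $\int_0^\infty t^{\Re(s)-1}|\mathcal{F}f(\pm tn)|\,dt$ converges and is bounded by a constant times $|n|^{-\Re(s)}$ (via $u = tn$). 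Using the polynomial-growth hypothesis $|\alpha(n)| = O(|n|^k)$, the full double series-integral
\[
\sum_{n\neq 0}|\alpha(n)|\int_0^\infty t^{\Re(s)-1}|\mathcal{F}f(tn)|\,dt
\]
is bounded by a constant multiple of $\sum_{n\geq 1}|n|^{k-\Re(s)}$, which is finite as soon as $\Re(s) > k+1$. So for $\Re(s) > \max(m, k+1)$ Fubini applies, and the identity for $Z(\alpha,\mathcal{F}f;s)$ follows.

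For $Z^{(N)}(\beta,\mathcal{F}f_\infty;s)$ the argument is identical, applied to $f_\infty \in \mathcal{V}^\infty_{\lambda,\ell}$ in place of $f$ and with the rescaling $u = tn/N$ in place of $u = tn$; the Jacobian of this substitution produces the factor $N^s$. The main obstacle is purely the Fubini bookkeeping sketched above---once one has the pointwise bound $|\mathcal{F}f(t)| \ll |t|^{-m}$ near $0$ together with rapid decay at $\infty$, the rest is formal.
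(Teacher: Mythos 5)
Your proposal is correct and follows essentially the same route as the paper: the paper likewise unfolds the sum via Fubini and the substitution $u=tn$ (resp.\ $u=tn/N$), justified by the absolute convergence of $\Phi_\pm(\mathcal{F}f;s)$ for $\Re(s)$ large, which rests on exactly the two ingredients you cite — the bound $\mathcal{F}f(t)=O(|t|^{-m})$ near $0$ coming from Definition~\ref{dfn:FT} and the rapid decay at infinity from Lemma~\ref{lem:FT1} — together with the polynomial growth of $\alpha(n)$, $\beta(n)$. Your write-up merely makes the Tonelli bookkeeping (the bound $C|n|^{-\Re(s)}$ and the resulting threshold $\Re(s)>\max(m,k+1)$) more explicit than the paper's "standard (formal) calculation is justified by Fubini's theorem."
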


\begin{proof}
If $f\in \mathcal{V}_{\lambda, \ell}^{\infty}$, then  
$\Phi_{\pm}(\mathcal{F}f;s)$ converge absolutely for $\Re(s)> m_0$, where $m_0$ is the smallest non-negative integer satisfying $m_0 > 1-2\Re(\lambda)$, as we have seen in \S \ref{subsect:dual LZ and LFE}. 
Hence the following standard (formal) calculation is justified by Fubini's theorem:
\begin{eqnarray*}
\lefteqn{
 Z(\alpha, \mathcal{F}f;s) 
} \\
 & & = 
 \int_{0}^{\infty} t^{s-1} \sum
 \begin{Sb}
  n=-\infty\\
  n\neq 0
  \end{Sb}^{\infty} \alpha(n) (\mathcal{F}f)\left(tn\right)dt \\
 & & = 
\sum_{n=1}^{\infty} \int_0^{\infty} t^{s-1}
 \cdot \alpha(n)(\mathcal{F}f)\left(tn\right) dt + \sum_{n=1}^{\infty} 
 \int_0^{\infty} t^{s-1} \cdot \alpha(-n)(\mathcal{F}f)\left(-tn\right)
dt \\
 & & = 
 \left(\sum_{n=1}^{\infty} \frac{\alpha(n)}{n^{s}}\right)\cdot 
 \int_0^{\infty} t^{s-1} (\mathcal{F}f)(t)dt +
 \left(\sum_{n=1}^{\infty} \frac{\alpha(-n)}{n^s}\right)\cdot 
 \int_{-\infty}^{0} |t|^{s-1} (\mathcal{F}f)(t)dt \\[5pt]
 & & = 
 \xi_{+}(\alpha;s)\Phi_{+}(\mathcal{F}f;s) 
     + \xi_{-}(\alpha;s)\Phi_{-}(\mathcal{F}f;s).
\end{eqnarray*}
This also shows that the zeta integral $Z(\alpha,\mathcal{F}f;s)$ converges absolutely if $\Re(s)$ is sufficiently large. The proof for $Z^{(N)}(\beta, \mathcal{F}f_\infty;s)$ is quite similar.
\end{proof}

Now, assuming the summation formula \eqref{form:GivenSumFormula},  we prove the functional equation of the $L$-functions following the method of Suzuki \cite{SuzukiAD} and Tamura \cite{Tamura}.

For $t>0$ and $f\in \mathcal{V}_{\lambda,\ell}^{\infty}$, we put
\begin{equation}
 f_{t}(x)= f(tx).
\end{equation}
Then, by \eqref{eqn:t-mult} and  
\begin{align*}
 \left(f_{t^{-1}}\right)_{\infty}(x) &
 =(\sgn x)^{\ell/2} |x|^{-2\lambda} \cdot f
 \left(-\frac{1}{tx}\right) 
 = t^{2\lambda}\cdot f_{\infty}(tx)= t^{2\lambda}\cdot (f_{\infty})_{t}(x),
\end{align*}
we have
\[
 \left(
 \mathcal{F}\left(f_{t^{-1}}\right)_{\infty}\right)(y)=
 t^{2\lambda}\cdot \left(\mathcal{F}((f_{\infty})_{t})\right)(y) =
 t^{2\lambda-1} \cdot (\mathcal{F}f_{\infty})(t^{-1}y).
\]
Further, it follows from the definition \eqref{form:DefOfFInfty} that
\begin{align*}
\left(f_{t^{-1}}\right)(\infty) &= \left(f_{t^{-1}}\right)_{\infty}(0) =
t^{2\lambda}\cdot (f_{\infty})_{t}(0) =
t^{2\lambda}\cdot f_{\infty}(t\cdot 0)=
t^{2\lambda}\cdot f(\infty), \\
\left(f_{t^{-1}}\right)_{\infty}(\infty) &=i^{\ell}\cdot f_{t^{-1}}(0) = 
i^{\ell} \cdot f(t^{-1}\cdot 0)=f_{\infty}(\infty). 
\end{align*}
We substitute $f_{t^{-1}}$ for $f$ in
\eqref{form:GivenSumFormula},
and multiply the result by $t^{-1}$ to obtain
\begin{eqnarray}
\nonumber
\lefteqn{
 t^{2\lambda-1}\cdot \alpha(\infty)f(\infty)+ \sum_{n=-\infty}^{\infty}
 \alpha(n) (\mathcal{F}f)(tn)
} \\
 & & =
 t^{-1}\cdot \beta(\infty)f_{\infty}(\infty)+ t^{2\lambda-2}
 \sum_{n=-\infty}^{\infty} \beta(n) \left(\mathcal{F}f_{\infty}\right)
 \left(\frac{t^{-1}n}{N}\right).
 \label{form:SumFormWithT>0}
\end{eqnarray}

\begin{lemma}
 \label{thm:FEofZetaIntegral1}
 Assume that $\lambda\not\in \frac12-\frac12\Z_{\geq0}$ and  the summation formula \eqref{form:GivenSumFormula} holds for any $f\in \mathcal{V}_{\lambda,\ell}^{\infty}$. 
Then the zeta integrals 
 $Z(\alpha,\mathcal{F}f; s)$ and $Z^{(N)}(\beta,\mathcal{F}f_{\infty};s)$
 have  analytic continuations to meromorphic functions in the whole $s$-plane, 
 and further, they satisfy the functional equation
 \[
  Z(\alpha,\mathcal{F}f;s)=Z^{(N)}(\beta,\mathcal{F}f_{\infty};2-2\lambda-s).
 \]
The poles of $Z(\alpha,\mathcal{F}f;s)$ and $Z^{(N)}(\beta,\mathcal{F}f_\infty;s)$ are of order at most $1$
and located only at $s=0, 1, 2-2\lambda, 1-2\lambda$,  
The residues at the poles are given by
\[
 \Res_{s=1}Z(\alpha,\mathcal{F}f;s) = \beta(\infty)f_{\infty}(\infty), 
 \quad 
 \Res_{s=1-2\lambda}Z(\alpha,\mathcal{F}f;s) = -\alpha(\infty)f(\infty), 
\]
and, if $\lambda\ne1$, then
\[
 \Res_{s=0}Z(\alpha,\mathcal{F}f;s) = -\alpha(0)(\mathcal{F}f)(0), \quad 
 \Res_{s=2-2\lambda}Z(\alpha,\mathcal{F}f;s) = \beta(0)(\mathcal{F}f_{\infty})(0). 
\]
If $\lambda=1$, then the poles at $s=0$ and $2-2\lambda$ coincide and the residue is given by 
\[
 \Res_{s=0} Z(\alpha,\mathcal{F}f;s)  
 = \beta(0)(\mathcal{F}f_{\infty})(0) - \alpha(0)(\mathcal{F}f)(0).
\]
\end{lemma}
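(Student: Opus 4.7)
The plan is to adapt the classical Riemann--Hecke splitting argument (used by Suzuki~\cite{SuzukiAD} and Tamura~\cite{Tamura}): split the zeta integral at $t=1$, replace the piece over $(0,1]$ by means of \eqref{form:SumFormWithT>0}, and recognize the resulting expression as manifestly symmetric under $s\leftrightarrow 2-2\lambda-s$. For $\Re(s)$ sufficiently large, the integral defining $Z(\alpha,\mathcal{F}f;s)$ converges absolutely (as in the proof of Lemma~\ref{lem:Int rpr of zeta}), and one writes
\[
Z(\alpha,\mathcal{F}f;s)
 = \int_1^\infty t^{s-1}\sum_{n\ne0}\alpha(n)\mathcal{F}f(tn)\,dt
 + \int_0^1 t^{s-1}\sum_{n\ne0}\alpha(n)\mathcal{F}f(tn)\,dt.
\]
The first (tail) integral is entire in $s$: by Lemma~\ref{lem:FT1}, $\mathcal{F}f(tn) = O(|tn|^{-N})$ for every $N$, and together with the polynomial growth of $\alpha(n)$ this makes the inner sum $O(t^{-N})$ uniformly for $t\ge 1$.

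In the $(0,1]$-integral one substitutes the identity \eqref{form:SumFormWithT>0} to rewrite the inner sum as
\[
-\alpha(0)\mathcal{F}f(0) - t^{2\lambda-1}\alpha(\infty)f(\infty) + t^{-1}\beta(\infty)f_\infty(\infty) + t^{2\lambda-2}\beta(0)\mathcal{F}f_\infty(0) + t^{2\lambda-2}\sum_{n\ne0}\beta(n)\mathcal{F}f_\infty\!\left(\tfrac{n}{Nt}\right).
\]
For $\Re(s)$ large, each of the first four elementary pieces integrates against $t^{s-1}$ over $(0,1]$ to a simple meromorphic function of $s$, contributing exactly $-\alpha(0)\mathcal{F}f(0)/s$, $-\alpha(\infty)f(\infty)/(s+2\lambda-1)$, $\beta(\infty)f_\infty(\infty)/(s-1)$, and $\beta(0)\mathcal{F}f_\infty(0)/(s+2\lambda-2)$. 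The fifth piece, after the change of variable $u=1/t$, becomes $\int_1^\infty u^{(2-2\lambda-s)-1}\sum_{n\ne0}\beta(n)\mathcal{F}f_\infty(un/N)\,du$, again entire in $s$ by the same rapid-decay argument. This exhibits $Z(\alpha,\mathcal{F}f;s)$ as a meromorphic function on $\C$ whose only poles are simple, located at $s=0,1,1-2\lambda,2-2\lambda$, with the claimed residues; when $\lambda=1$ the poles at $s=0$ and $s=2-2\lambda$ collide and their residues add to $\beta(0)\mathcal{F}f_\infty(0)-\alpha(0)\mathcal{F}f(0)$.

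Repeating the same procedure for $Z^{(N)}(\beta,\mathcal{F}f_\infty;s')$ with $s'=2-2\lambda-s$ --- splitting at $u=1$ and using \eqref{form:SumFormWithT>0} in the form obtained by taking $t=1/u$ there, which solves for $\sum_{n\ne0}\beta(n)\mathcal{F}f_\infty(un/N)$ in terms of the $\alpha$-side --- produces the identical right-hand side. Indeed the four new pole contributions are $-\beta(0)\mathcal{F}f_\infty(0)/s'$, $\alpha(\infty)f(\infty)/(s'-1)$, $\alpha(0)\mathcal{F}f(0)/(s'+2\lambda-2)$, $-\beta(\infty)f_\infty(\infty)/(s'+2\lambda-1)$, which match the four terms above term-by-term via the identities $s'=-(s+2\lambda-2)$, $s'-1=-(s+2\lambda-1)$, $s'+2\lambda-2=-s$, $s'+2\lambda-1=-(s-1)$. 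Hence $Z(\alpha,\mathcal{F}f;s)=Z^{(N)}(\beta,\mathcal{F}f_\infty;2-2\lambda-s)$ as meromorphic functions on $\C$.

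The main delicacy is not analytic but combinatorial: verifying that the four pole contributions on the two sides match sign-by-sign under $s\leftrightarrow 2-2\lambda-s$, and simultaneously justifying Fubini and the substitution $u=1/t$ in the half-plane where every integral converges absolutely, so that the identity extends to all of $\C$ by analytic continuation.
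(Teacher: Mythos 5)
Your proposal is correct and follows essentially the same route as the paper: the Riemann splitting at $t=1$, substitution of the dilated summation formula \eqref{form:SumFormWithT>0} into the $(0,1]$ piece, and the change of variable $t\mapsto t^{-1}$ to produce the manifestly symmetric expression with the four simple-pole terms, from which the continuation, functional equation and residues all follow. The only difference is that you carry out explicitly the "similar calculation" for $Z^{(N)}(\beta,\mathcal{F}f_\infty;2-2\lambda-s)$ that the paper leaves to the reader, and your term-by-term matching there is accurate.
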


\begin{proof}
We apply the standard Riemann trick. We put
\begin{align*}
Z_{+}(\alpha, \mathcal{F}f;s) &=
\int_{1}^{\infty} t^{s-1} \sum
  \begin{Sb}
  n=-\infty\\
  n\neq 0
  \end{Sb}^{\infty}
  \alpha(n) \cdot (\mathcal{F}f)\left(tn\right) dt, \\
Z_{-}(\alpha, \mathcal{F}f;s) &=
\int_{0}^{1} t^{s-1} \sum
  \begin{Sb}
  n=-\infty\\
  n\neq 0
  \end{Sb}^{\infty}
  \alpha(n) \cdot (\mathcal{F}f)\left(tn\right) dt, \\
Z^{(N)}_{+}(\beta, \mathcal{F}f_{\infty};s) &=
\int_{1}^{\infty} t^{s-1} \sum
  \begin{Sb}
  n=-\infty\\
  n\neq 0
  \end{Sb}^{\infty}
  \beta(n) \cdot (\mathcal{F}f_{\infty})\left(\frac{tn}{N}\right) dt, \\ 
Z^{(N)}_{-}(\beta, \mathcal{F}f_{\infty};s) &=
\int_{0}^{1} t^{s-1} \sum
  \begin{Sb}
  n=-\infty\\
  n\neq 0
  \end{Sb}^{\infty}
  \beta(n) \cdot (\mathcal{F}f_{\infty})\left(\frac{tn}{N}\right) dt.
\end{align*}
By Lemma \ref{lem:Int rpr of zeta}, the integrals $Z_{+}(\alpha, \mathcal{F}f;s)$ and 
$Z^{(N)}_{+}(\beta, \mathcal{F}f_{\infty};s)$ are absolutely convergent for 
any $s\in\C$, and define entire functions of $s$, 
while the integrals 
$Z_{-}(\alpha, \mathcal{F}f;s)$ and 
$Z^{(N)}_{-}(\beta,\mathcal{F}f_{\infty};s)$ converge absolutely 
if $\Re(s)$ is sufficiently large. 
It follows from the summation formula~\eqref{form:SumFormWithT>0}
that if $\Re(s)$ is sufficiently large,
\begin{align*}
Z_{-}(\alpha, \mathcal{F}f;s) 
 &=\int_0^1 t^{s+2\lambda-3} \sum
 \begin{Sb}
 n=-\infty \\
 n\neq 0
 \end{Sb}^{\infty}
 \beta(n) \left(\mathcal{F}f_{\infty}\right)
 \left(\frac{t^{-1}n}{N}\right)dt
 +\frac{\beta(0)(\mathcal{F}f_{\infty})(0)}{s+2\lambda-2}+
 \frac{\beta(\infty)f_{\infty}(\infty)}{s-1} \\[5pt]
 &\qquad \quad -
 \frac{\alpha(0)(\mathcal{F}f)(0)}{s}-
 \frac{\alpha(\infty)f(\infty)}{s+2\lambda-1}.
\end{align*}
By the change of the variable $t\mapsto t^{-1}$, we have
\begin{align*}
\int_0^1 t^{s+2\lambda-3} \sum
 \begin{Sb}
 n=-\infty \\
 n\neq 0
 \end{Sb}^{\infty}
 \beta(n) \left(\mathcal{F}f_{\infty}\right)
 \left(\frac{t^{-1}n}{N}\right)dt 
 &=\int_1^{\infty} t^{(2-2\lambda-s)-1}\sum
 \begin{Sb}
 n=-\infty \\
 n\neq 0
 \end{Sb}^{\infty}
 \beta(n) \left(\mathcal{F}f_{\infty}\right)
 \left(\frac{tn}{N}\right)dt \\
 &=Z^{(N)}_{+}(\beta, \mathcal{F}f_{\infty};2-2\lambda-s),
\end{align*}
and hence 
\begin{align}
\nonumber
Z(\alpha,\mathcal{F}f;s) &=
Z_{+}(\alpha,\mathcal{F}f;s)+Z_{-}(\alpha,\mathcal{F}f;s)\\
\nonumber
&=Z_{+}(\alpha,\mathcal{F}f;s)+
Z_{+}^{(N)}(\beta, \mathcal{F}f_{\infty};2-2\lambda-s)
+\frac{\beta(0)(\mathcal{F}f_{\infty})(0)}{s+2\lambda-2}+
 \frac{\beta(\infty)f_{\infty}(\infty)}{s-1} \\[5pt]
\label{form:AnalyConiZetaIntegral}
 &\qquad \quad -
 \frac{\alpha(0)(\mathcal{F}f)(0)}{s}-
 \frac{\alpha(\infty)f(\infty)}{s+2\lambda-1}.
\end{align}
Since the first two terms on the right hand side of the second identity 
are entire functions, the zeta integral $Z(\alpha, \mathcal{F}f;s)$ 
has an analytic continuation to a meromorphic function of $s$ 
on the whole $\C$ with possible poles (of order at most 1)
only at $s=0, 1, 2-2\lambda, 1-2\lambda$. 
By a similar calculation, we can prove that 
$Z(\beta, \mathcal{F}f_{\infty};2-2\lambda-s)$ is also equal to 
the last expression in \eqref{form:AnalyConiZetaIntegral}.
We therefore obtain the functional equation of the zeta integrals 
\[
Z(\alpha,\mathcal{F}f;s)=Z^{(N)}(\beta,\mathcal{F}f_{\infty};2-2\lambda-s).
\]
The residue formula follows immediately from the 
identity~\eqref{form:AnalyConiZetaIntegral}.
\end{proof}

\begin{theorem}
 \label{thm:FEofZeta1}
 Assume that $\lambda\not\in \frac12-\frac12\Z_{\geq0}$ and   the summation formula \eqref{form:GivenSumFormula} holds for any $f\in \mathcal{V}_{\lambda,\ell}^{\infty}$.  

$(1)$ The $L$-functions $\xi_{\pm}(\alpha;s)$ and 
 $\xi_{\pm}(\beta;s)$ have analytic continuations to meromorphic 
 functions on the whole $s$-plane 
 with  poles only at $s=2-2\lambda$ and $s=1$. 
The orders of the poles are at most $1$ and the residues are given by
\begin{gather*}
\mathop{\mathrm{Res}}_{s=1} \xi_\pm(\alpha;s) = i^\ell \beta(\infty), \\
\mathop{\mathrm{Res}}_{s=2-2\lambda} \xi_\pm(\alpha;s)  
 = \pm i (2\pi)^{-(2\lambda-1)} \Gamma(2\lambda-1)(i^\ell e^{\mp \lambda \pi  i} 
   - e^{\pm \lambda \pi  i})  \beta(0), \\
\mathop{\mathrm{Res}}_{s=1} \xi_\pm(\beta;s) = \frac{\alpha(\infty)}{N}, \\
\mathop{\mathrm{Res}}_{s=2-2\lambda} \xi_\pm(\beta;s)
 =  \mp i N^{2\lambda-2} (2\pi)^{-(2\lambda-1)}\Gamma(2\lambda-1)
      (i^{-\ell}e^{\pm\pi i \lambda} - e^{\mp \pi i \lambda})\alpha(0). 
\end{gather*}

$(2)$  Put
 \[
  \Xi_{\pm}(\alpha;s)=(2\pi)^{-s} \Gamma(s) \xi_{\pm}(\alpha;s), \qquad 
   \Xi_{\pm}(\beta;s)=(2\pi)^{-s} \Gamma(s) \xi_{\pm}(\beta;s)
 \]
 and 
 \begin{equation}
  \label{form:DefOfGammaAndSigma}
  \gamma(s) = 
  \begin{pmatrix}
  e^{\pi s i/2}  & e^{-\pi s i/2} \\
  e^{-\pi s i/2}  & e^{\pi s i/2} 
 \end{pmatrix}, \qquad
 \Sigma(\ell) =
 \begin{pmatrix}
 0 & i^{\ell} \\
 1 & 0
 \end{pmatrix}.
 \end{equation}
 Then the completed $L$-functions 
 $\Xi_{\pm}(\alpha;s)$ and $\Xi_{\pm}(\beta;s)$ satisfy the 
 functional equation
\begin{equation}
\label{eqn:FE of xi}
 \gamma(s)\left(
 \begin{array}{c}
 \Xi_{+}(\alpha;s) \\[2pt] \Xi_{-}(\alpha;s)
 \end{array}
 \right)\\
 = N^{2-2\lambda-s}\cdot 
 \Sigma(\ell)\cdot
 \gamma(2-2\lambda-s)\left(
 \begin{array}{c}
 \Xi_{+}(\beta;2-2\lambda-s) \\[2pt] \Xi_{-}(\beta;2-2\lambda-s)
 \end{array}
 \right).
\end{equation}

$(3)$  The functions
 $(s-1)(s+2\lambda-2)\xi_{\pm}(\alpha, s)$ and 
 $(s-1)(s+2\lambda-2)\xi_{\pm}(\beta, s)$
 are entire functions of order at most $1$.
\end{theorem}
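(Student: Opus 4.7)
The plan is to combine the factorization $Z(\alpha,\mathcal{F}f;s)=\xi_+(\alpha;s)\Phi_+(\mathcal{F}f;s)+\xi_-(\alpha;s)\Phi_-(\mathcal{F}f;s)$ of Lemma~\ref{lem:Int rpr of zeta} with the local functional equation of Theorem~\ref{thm:LFE} and the analytic properties of the zeta integral from Lemma~\ref{thm:FEofZetaIntegral1}. Feeding Theorem~\ref{thm:LFE} into the factorization rewrites $Z(\alpha,\mathcal{F}f;s)$, for $\Re(s)$ large, as the pairing
\[
Z(\alpha,\mathcal{F}f;s) = \bigl(\Xi_+(\alpha;s),\Xi_-(\alpha;s)\bigr)\,\gamma(s)\begin{pmatrix}\Phi_+(f;1-s)\\ \Phi_-(f;1-s)\end{pmatrix}.
\]
The analogous computation for $Z^{(N)}(\beta,\mathcal{F}f_\infty;2-2\lambda-s)$, using the duality $\Phi_\pm(f_\infty;u)=(\pm 1)^{\ell/2}\Phi_\mp(f;2\lambda-u)$ read off from \eqref{form:PhiFinfty}, inserts the matrix $\Sigma(\ell)^T$ between $\gamma(2-2\lambda-s)$ and $(\Phi_+(f;1-s),\Phi_-(f;1-s))^T$. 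Choosing the two test functions $f_1\in C_c^\infty((0,\infty))$ and $f_2\in C_c^\infty((-\infty,0))$---both lie in $\mathcal{V}^\infty_{\lambda,\ell}$, since they and their $(\cdot)_\infty$-images are supported away from $0$---yields, for generic $s$, linearly independent column vectors and allows one to strip the common vector off the identity $Z(\alpha,\mathcal{F}f;s)=Z^{(N)}(\beta,\mathcal{F}f_\infty;2-2\lambda-s)$ of Lemma~\ref{thm:FEofZetaIntegral1}. Transposition, together with $\gamma(s)^T=\gamma(s)$ and $(\Sigma(\ell)^T)^T=\Sigma(\ell)$, then produces the functional equation \eqref{eqn:FE of xi}.

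For the meromorphic continuation I would invert the $2\times2$ linear system
\[
Z(\alpha,\mathcal{F}f_j;s) = \xi_+(\alpha;s)\Phi_+(\mathcal{F}f_j;s) + \xi_-(\alpha;s)\Phi_-(\mathcal{F}f_j;s),\qquad j=1,2,
\]
whose determinant is, by the local functional equation, a nonzero meromorphic multiple of $\Phi_+(f_1;1-s)\,\Phi_-(f_2;1-s)\sin(\pi s)$. This expresses $\xi_\pm(\alpha;s)$ as a meromorphic linear combination of the $Z(\alpha,\mathcal{F}f_j;s)$, and the continuation of $\xi_\pm(\beta;s)$ follows from \eqref{eqn:FE of xi}. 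Because $f_j(0)=f_j(\infty)=(f_j)_\infty(\infty)=0$, the residues of $Z(\alpha,\mathcal{F}f_j;s)$ at $s=1$ and $s=1-2\lambda$ given by Lemma~\ref{thm:FEofZetaIntegral1} vanish, so those potential poles cancel after inversion, leaving $s=1$ and $s=2-2\lambda$ as the only candidates. To evaluate the residues there I would return to the factorization with a test function satisfying $f(0)\ne0$ (respectively $f_\infty(0)\ne0$) and match leading singularities via Theorem~\ref{thm:LFE}; at $s=1$ the factor $e^{\pi is/2}+e^{-\pi is/2}=2\cos(\pi s/2)$ vanishes, so the residue must be recovered from the next Taylor coefficient together with the simple-pole expansions of $\Phi_\pm(f;1-s)$ given by Theorem~\ref{thm:loc zeta}. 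The residues of $\xi_\pm(\beta;s)$ then follow from \eqref{eqn:FE of xi}. The main delicacy is the consistent bookkeeping of the phases $i^{-\ell/2}$ and the sign conventions coming from \eqref{form:PhiFinfty} and Theorem~\ref{thm:loc zeta}.

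For part~$(3)$, I would combine Lemma~\ref{lem:EstimatesOfGamma}---which provides super-polynomial decay of $\Phi_\pm(f;1-s)$ and $\Phi_\pm(\mathcal{F}f;s)$ on every vertical strip avoiding the horizontal band $|\Im(s)|\le 2|\Im(\lambda)|$---with the absolute convergence of $\xi_\pm(\alpha;s)$ for $\Re(s)$ large: the inversion above then forces $\xi_\pm(\alpha;s)$ to have at most polynomial growth on every vertical line. The functional equation \eqref{eqn:FE of xi} and Stirling's estimate for $\Gamma$ transfer the same bound to $\Re(s)$ very negative, and Phragm\'en--Lindel\"of interpolates through the intermediate strip. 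Since the only order-$1$ contribution is the $\Gamma$ factor inherent in $\gamma(s)$, the entire functions $(s-1)(s+2\lambda-2)\xi_\pm(\alpha;s)$ and $(s-1)(s+2\lambda-2)\xi_\pm(\beta;s)$ are of order at most~$1$.
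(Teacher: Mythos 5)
Your part (2) is essentially the paper's own argument (feed Theorem~\ref{thm:LFE} and \eqref{form:PhiFinfty} into the zeta-integral functional equation of Lemma~\ref{thm:FEofZetaIntegral1} and strip off the row vector $(\Phi_+(f;1-s),\Phi_-(f;1-s))$ with one-sided test functions), and that part is fine. The gap is in part (1). Isolating $\xi_\pm(\alpha;s)$ by inverting the $2\times2$ system attached to $f_1\in C_c^\infty((0,\infty))$, $f_2\in C_c^\infty((-\infty,0))$ forces you to divide by a determinant which, as you note, is $(2\pi)^{-2s}\Gamma(s)^2\,2i\sin(\pi s)\,\Phi_+(f_1;1-s)\Phi_-(f_2;1-s)$; equivalently, recovering $\xi_\pm$ means applying $\gamma(s)^{-1}$, i.e.\ dividing by $2i\sin(\pi s)$ as well as by $\Gamma(s)\Phi_\pm(f_j;1-s)$. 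This creates candidate poles of $\xi_\pm(\alpha;s)$ at the zeros of the determinant — in particular at integer points (those with $2\le s<\sigma_0$, below the abscissa of convergence, are not excluded by anything you say) and at the zeros of the two entire Mellin transforms — and your stated reason for the pole locations addresses the wrong set: with your choice $f_j(0)=0$, so $\Res_{s=1}Z(\alpha,\mathcal{F}f_j;s)=0$, and yet $\xi_\pm(\alpha;s)$ genuinely has a pole at $s=1$ (it comes from the $\sin(\pi s)$ in the denominator, compensated in $Z$ because $\xi_+-\xi_-$ is regular there); conversely $\mathcal{F}f_j(0)=\int f_j\ne0$ in general, so $Z(\alpha,\mathcal{F}f_j;s)$ does have poles at $s=0$ and $s=2-2\lambda$ for your test functions. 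The paper sidesteps all of this by taking test functions whose \emph{Fourier transforms} are one-sided bumps $\varphi_j\in C_0^\infty(\R^\times)$: then $Z(\alpha,\mathcal{F}f_0;s)=\xi_+(\alpha;s)\Phi_+(\mathcal{F}f_0;s)$ with no inversion and no $\sin(\pi s)$, $\Phi_+(\mathcal{F}f_0;s)$ is entire and can be made nonzero at any prescribed point, and since $\mathcal{F}f_0(0)=\varphi_0(0)=0$ and $f_0(\infty)=0$ the zeta integral has poles only at $s=1,2-2\lambda$; the residues then drop out directly from $\Phi_+(\mathcal{F}f_0;1)=f_0(0)$ and the local functional equation at $s=2\lambda-1$, with no "next Taylor coefficient" analysis needed.

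A similar difficulty undermines part (3). To get an a priori finite-order bound for $\xi_\pm(\alpha;s)$ in the critical strip from your inversion you must divide an upper bound for $Z(\alpha,\mathcal{F}f_j;s)$ by $|\Gamma(s)\,\sin(\pi s)\,\Phi_\pm(f_j;1-s)|$; but for compactly supported $f_j$ the Mellin transform $\Phi_\pm(f_j;1-s)$ decays faster than any polynomial on vertical lines (this is exactly the content of Lemma~\ref{lem:EstimatesOfGamma}) and has zeros, so there is no usable lower bound, and "at most polynomial growth on every vertical line" does not follow; moreover Phragm\'en--Lindel\"of needs precisely such an a priori bound as input. The paper's fix is to run the division with the Gaussian test functions $\mathcal{F}\varphi_1(x)=e^{-\pi x^2}$, $\mathcal{F}\varphi_2(x)=xe^{-\pi x^2}$, whose local zeta functions are explicit Gamma factors $\tfrac12\pi^{-s/2}\Gamma(\tfrac s2)$, $\pm\tfrac12\pi^{-(s+1)/2}\Gamma(\tfrac{s+1}2)$; Stirling then yields the bound $O(|\tau|^{4+\Re(\lambda)+\sigma_0/2}e^{\pi|\tau|/4})$ in the strip, which combined with boundedness on the right half-plane and the functional-equation estimate on the left gives order at most $1$ without any interpolation step. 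Your sketch needs to be repaired along these lines (or by some other device giving explicit lower bounds on the divisors).
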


\begin{remark}
The functional equation above is the simplest case of the result of Suzuki \cite[Theorem 1]{SuzukiAD} (see also Tamura \cite[Theorem 2]{Tamura}). 
However,  in these earlier works,  the residues at the poles of $L$-functions are not determined, since the set of test functions for the summation formula  is restricted to $C^\infty_0(\R^\times)$. 
\end{remark}

\begin{proof}
(1) Let $\varphi_0$ (resp.\ $\varphi_1$) be a function in $C^\infty_0(\R)$ whose support is contained in the set of positive (resp.\ negative) real numbers. 
We denote by $f_0$ (resp.\ $f_1$) a rapidly decreasing function on $\R$ such that $\mathcal{F}f_0=\varphi_0$ (resp,\ $\mathcal{F}f_1=\varphi_1$).  
Notice that  $f_j(\infty)=0$ and $\mathcal{F}f_j(0)=\varphi_j(0)=0$. 
Then we have 
\begin{equation}
Z(\alpha,\mathcal{F} f_0;s) = \xi_+(\alpha;s) \Phi_+(\mathcal{F} f_0;s), \quad 
Z(\alpha,\mathcal{F} f_1;s) = \xi_-(\alpha;s) \Phi_-(\mathcal{F} f_1;s).
\label{eqn:f-pm}
\end{equation}遯ｶ�｢
Since $f_j$ $(j=0,1)$ belongs to $\mathcal{V}_{\lambda,\ell}^\infty$,  
it follows from Lemma~\ref{thm:FEofZetaIntegral1} that the zeta integrals 
$Z(\alpha,\mathcal{F} f_j;s)$ has an analytic continuation to a meromorphic function of $s$ in $\C$. 
Since $f_j(\infty)=\mathcal{F}f_j(0)=0$, $Z(\alpha,\mathcal{F} f_j;s)$ has poles of order at most $1$ only at $s=1,2-2\lambda$. 
For any $s_0 \in \C$, we can choose $\varphi_j$ such that $\Phi_+(\mathcal{F} f_0;s)$ and $\Phi_-(\mathcal{F}f_1;s)$ are entire functions of $s$ which do not vanish at $s=s_0$. Hence $\xi_\pm(\alpha;s)$ can be continued meromorphically to the whole complex plane $\C$ with poles of order at most $1$ only at $s=1,2-2\lambda$.  
Let us calculate the residues of $\xi_\pm(\alpha;s)$ at $s=1, 2-2\lambda$. 
By Lemma~\ref{thm:FEofZetaIntegral1}, we have
\[
\mathop{\mathrm{Res}}_{s=1} Z(\alpha,\mathcal{F}f_j;s) 
 = \beta(\infty)(f_j)_\infty(\infty)=i^\ell \beta(\infty)f_j(0).
\]
On the other hand, since the support of $\mathcal{F}f_j(x)$ is contained in the set of real numbers with a fixed sign,  we have 
\[
\Phi_+(\mathcal{F} f_0;1)=f_0(0), \quad \Phi_-(\mathcal{F} f_1;1)=f_1(0). 
\]
Hence, by taking the residue at $s=1$ in $(\ref{eqn:f-pm})$, we get 
\begin{gather*}
\mathop{\mathrm{Res}}_{s=1} Z(\alpha,\mathcal{F}f_0;s) 
 = \Phi_+(\mathcal{F} f_0;1)\cdot \mathop{\mathrm{Res}}_{s=1} \xi_+(\alpha;s)
        = f_0(0)\cdot \mathop{\mathrm{Res}}_{s=1} \xi_+(\alpha;s), \\
\mathop{\mathrm{Res}}_{s=1} Z(\alpha,\mathcal{F}f_1;s) 
 = \Phi_+(\mathcal{F} f_1;1)\cdot \mathop{\mathrm{Res}}_{s=1} \xi_-(\alpha;s)
        = f_1(0)\cdot \mathop{\mathrm{Res}}_{s=1} \xi_-(\alpha;s).
\end{gather*}
Thus we see that 
\[
\mathop{\mathrm{Res}}_{s=1} \xi_+(\alpha;s) 
 = \mathop{\mathrm{Res}}_{s=1} \xi_-(\alpha;s) = i^\ell \beta(\infty).
\]
We now consider the residues at $s=2-2\lambda$. 
By Lemma~\ref{thm:FEofZetaIntegral1} we have
\[
\mathop{\mathrm{Res}}_{s=2-2\lambda} Z(\alpha,\mathcal{F}f_j;s) 
 = \beta(0)\mathcal{F}\left((f_j)_\infty\right)(0) 
     - \delta_{\lambda,1}\alpha(0)(\mathcal{F}f_j)(0), 
\]
where $\delta_{\lambda,1}=1$ or $0$ according as $\lambda=1$ or $\lambda\ne1$. 
Since $(\mathcal{F}f_j)(0)=0$,  
the identity implies that 
\begin{eqnarray*}
\mathop{\mathrm{Res}}_{s=2-2\lambda} Z(\alpha,\mathcal{F}f_j;s) 
 &=& \beta(0)\mathcal{F}\left((f_j)_\infty\right)(0) \\
 &=& \beta(0) \left(\Phi_+((f_j)_\infty;1) 
     +  \Phi_-((f_j)_\infty;1) \right) \\
 &=& \beta(0) \left(i^\ell \Phi_+(f_j;2\lambda-1) +  \Phi_-(f_j;2\lambda-1) \right). 
\end{eqnarray*}
On the other hand, by  $(\ref{eqn:f-pm})$, we have 
\begin{gather*}
\mathop{\mathrm{Res}}_{s=2-2\lambda} Z(\alpha,\mathcal{F}f_0;s) 
 = \Phi_+(\mathcal{F} f_0;2-2\lambda)\cdot \mathop{\mathrm{Res}}_{s=2-2\lambda} \xi_+(\alpha;s), \\
\mathop{\mathrm{Res}}_{s=2-2\lambda} Z(\alpha,\mathcal{F}f_1;s) 
 = \Phi_-(\mathcal{F} f_1;2-2\lambda)\cdot \mathop{\mathrm{Res}}_{s=2-2\lambda} \xi_-(\alpha;s).
\end{gather*}
From the local functional equation (Theorem \ref{thm:LFE}), it follows that
\[
\begin{pmatrix}
\Phi_+(f_j;2\lambda-1) \\
\Phi_-(f_j;2\lambda-1)
\end{pmatrix}
 = i (2\pi)^{1-2\lambda}\Gamma(2\lambda-1) 
     \begin{pmatrix}
       e^{-\lambda \pi  i} &  -e^{\lambda \pi  i} \\
       -e^{\lambda \pi  i} &  e^{-\lambda \pi  i} 
     \end{pmatrix}
\begin{pmatrix}
\Phi_+(\mathcal{F} f_j;2-2\lambda) \\
\Phi_-(\mathcal{F} f_j;2-2\lambda) 
\end{pmatrix}. 
\]
Since $\Phi_-(\mathcal{F} f_0;2-2\lambda) = \Phi_+(\mathcal{F} f_1;2-2\lambda)=0$, we have
\begin{eqnarray*}
\lefteqn{
i^\ell \Phi_+(f_0;2\lambda-1) +  \Phi_-(f_0;2\lambda-1)
} \\
 & &=  i \Phi_+(\mathcal{F} f_0;2-2\lambda)\cdot 
     (i^\ell e^{-\lambda \pi  i} - e^{\lambda \pi  i})
                (2\pi)^{-(2\lambda-1)}\Gamma(2\lambda-1), \\
\lefteqn{
i^\ell \Phi_+(f_1;2\lambda-1) +  \Phi_-(f_1;2\lambda-1)
} \\
 & &=  i \Phi_-(\mathcal{F} f_1;2-2\lambda)\cdot 
     (-i^\ell e^{\lambda \pi  i} + e^{-\lambda \pi  i})
                (2\pi)^{-(2\lambda-1)}\Gamma(2\lambda-1). 
\end{eqnarray*}
Hence we obtain  
\[
\mathop{\mathrm{Res}}_{s=2-2\lambda} \xi_\pm(\alpha;s) 
 = \pm i \beta(0) \cdot (i^\ell e^{\mp \lambda \pi  i} - e^{\pm \lambda \pi  i})
                  (2\pi)^{-(2\lambda-1)} \Gamma(2\lambda-1). 
\]
The assertion for $\xi_\pm(\beta;s)$ can be proved similarly.

(2)
 The functional equation of the zeta integrals in 
 Theorem~\ref{thm:FEofZetaIntegral1} can be written as
 \begin{align*}
  & \quad \left(\Phi_{+}(\mathcal{F}f;s), \Phi_{-}(\mathcal{F}f;s) \right)
 \left(
 \begin{array}{c}
 \xi_{+}(\alpha;s) \\[2pt]
 \xi_{-}(\alpha;s)
 \end{array}
 \right) \\
 &=N^{2-2\lambda-s}\cdot 
 \left(\Phi_{+}(\mathcal{F}f_{\infty};2-2\lambda-s), 
\Phi_{-}(\mathcal{F}f_{\infty};2-2\lambda-s) \right)
 \left(
 \begin{array}{c}
 \xi_{+}(\beta;2-2\lambda-s) \\[2pt]
 \xi_{-}(\beta;2-2\lambda-s)
 \end{array}
 \right).
 \end{align*}
 The local functional equation~\eqref{form:LFE} yields
 \begin{equation}
  \left(\Phi_{+}(\mathcal{F}f;s), \Phi_{-}(\mathcal{F}f;s) \right) =
 (2\pi)^{-s}\Gamma(s) 
 \left(\Phi_{+}(f;1-s), \Phi_{-}(f;1-s)\right)\gamma(s),
 \end{equation}
 and 
\begin{eqnarray*}
\lefteqn{
\left(\Phi_{+}(\mathcal{F}f_{\infty};2-2\lambda-s), 
 \Phi_{-}(\mathcal{F}f_{\infty};2-2\lambda-s) \right) 
} \\
 & & =
 (2\pi)^{s+2\lambda-2}\cdot \Gamma(2-2\lambda-s) 
 \left(\Phi_{+}(f_{\infty};s+2\lambda-1), 
 \Phi_{-}(f_{\infty};s+2\lambda-1)\right)\gamma(2-2\lambda-s).
\end{eqnarray*}
 Further, by \eqref{form:PhiFinfty}, 
 we have
 \begin{equation}
 \label{form:LocalZetaTransform}
 \left(\Phi_{+}(f_{\infty};s+2\lambda-1), 
 \Phi_{-}(f_{\infty};s+2\lambda-1)\right) = 
 \left(\Phi_{+}(f;1-s), \Phi_{-}(f;1-s)\right)\Sigma(\ell).
 \end{equation}
 Combining these formulas, we have
\begin{eqnarray*}
\lefteqn{
 \left(\Phi_{+}(f;1-s), \Phi_{-}(f;1-s)\right)
 \gamma(s)\left(
 \begin{array}{c}
 \Xi_{+}(\alpha;s) \\[2pt] \Xi_{-}(\alpha;s)
 \end{array}
 \right)
} \\
 & & = N^{2-2\lambda-s}\cdot 
 \left(\Phi_{+}(f;1-s), \Phi_{-}(f;1-s)\right)
 \Sigma(\ell)\cdot
 \gamma(2-2\lambda-s)\left(
 \begin{array}{c}
 \Xi_{+}(\beta;2-2\lambda-s) \\[2pt] \Xi_{-}(\beta;2-2\lambda-s)
 \end{array}
 \right).
\end{eqnarray*} 
Since there exist $f_0, f_1 \in C_0^{\infty}(\R)$ 
 such that 
 \[
 \left\{
 \begin{array}{l}
 \Phi_{+}(f_0;1-s)\neq 0, \\[2pt]
 \Phi_{-}(f_0;1-s)=0,
 \end{array} 
 \right. \qquad 
 \left\{
 \begin{array}{l}
 \Phi_{+}(f_1;1-s)= 0, \\[2pt]
 \Phi_{-}(f_1;1-s)\neq 0,
 \end{array} 
 \right.
 \]
 we obtain the functional equation
 \[
 \gamma(s)\left(
 \begin{array}{c}
 \Xi_{+}(\alpha;s) \\[2pt] \Xi_{-}(\alpha;s)
 \end{array}
 \right)\\
 = N^{2-2\lambda-s}\cdot 
 \Sigma(\ell)\cdot
 \gamma(2-2\lambda-s)\left(
 \begin{array}{c}
 \Xi_{+}(\beta;2-2\lambda-s) \\[2pt] 
\Xi_{-}(\beta;2-2\lambda-s)
 \end{array}
 \right).
 \]

(3) 
We consider the
 magnitude of $\xi_{\pm}(\alpha;s)$ and 
 $\xi_{\pm}(\beta;s)$.
 Take $f\in \mathcal{S}(\R)$ and fix a sufficiently large number 
 $\sigma_0>0$, and let $s=\sigma+i\tau$. 
We may assume that $\xi_\pm(\alpha;s)$, $\xi_\pm(\beta;s)$, $\Phi_\pm(\mathcal{F}f;s)$ and $Z(\alpha,f;s)$ are absolutely convergent for $\Re(s) \geq \sigma_0$. 
If $\sigma\leq \sigma_0$, then 
 \begin{align*}
 |Z_{+}(\alpha, \mathcal{F}f;s)| 
 &\leq \int_1^{\infty} t^{\sigma_0-1}
 \sum_{n\neq 0}
 |\alpha(n)| \cdot |\mathcal{F}f(nt)| dt =:C_{1, \sigma_0}<+\infty, 
 \end{align*}
 and 
 \begin{align*}
 |Z_{+}^{(N)}(\beta, \mathcal{F}f_{\infty};s)| 
 &\leq \int_1^{\infty} t^{\sigma_0-1}
 \sum_{n\neq 0}
 |\beta(n)| \cdot |\mathcal{F}f_{\infty}\left(\frac{nt}{N}\right)| dt 
 =:C_{2, \sigma_0}<+\infty.
 \end{align*}
 Hence it follows from \eqref{form:AnalyConiZetaIntegral} that 
 in the vertical strip 
 $2-2\Re(\lambda)-\sigma_0\leq \sigma\leq \sigma_0$, we have
 \begin{align*}
 & |s(s-1)(s+2\lambda-2)(s+2\lambda-1)Z(\alpha,\mathcal{F}f;s)| \\
 &\qquad \leq
 (C_{1, \sigma_0}+C_{2,\sigma_0})|s(s-1)(s+2\lambda-2)(s+2\lambda-1)| \\
 &\qquad \quad 
 + |\beta(0)(\mathcal{F}f_{\infty})(0)||s(s-1)(s+2\lambda-1)| + 
 |\beta(\infty)f_{\infty}(\infty)| |s(s+2\lambda-2)(s+2\lambda-1)| \\
 &\qquad \quad 
  + |\alpha(0)(\mathcal{F}f)(0)||(s-1)(s+2\lambda-2)(s+2\lambda-1)| + 
 |\alpha(\infty)f(\infty)| |s(s-1)(s+2\lambda-2)|.
 \end{align*}
 That is to say,  in the vertical strip 
 $2-2\Re(\lambda)-\sigma_0\leq \sigma\leq \sigma_0$, 
 an entire function
 $s(s-1)(s+2\lambda-2)(s+2\lambda-1)Z(\alpha,\mathcal{F}f;s)$
 satisfies 
 \[
 |s(s-1)(s+2\lambda-2)(s+2\lambda-1)Z(\alpha,\mathcal{F}f;s)|=
 O\left(|\tau|^4\right) \qquad (\text{as}\; \;
 |\tau|=|\Im(s)|\to \infty)
 \]
 uniformly with respect to $\sigma=\Re(s)$. 
 Now let $\varphi_1, \varphi_2$ be rapidly decreasing functions on 
 $\R$ with
 $\mathcal{F}\varphi_1(x) = e^{-\pi x^2}, 
 \mathcal{F}\varphi_2(x) = x e^{-\pi x^2}$. Then we have
 \[
 \Phi_{\pm}(\mathcal{F}\varphi_1;s) = \frac{1}{2}\pi^{-s/2} \Gamma
 \left(\frac{s}{2}\right), \qquad
  \Phi_{\pm}(\mathcal{F}\varphi_2;s) = \pm\frac{1}{2}\pi^{-(s+1)/2} \Gamma
 \left(\frac{s+1}{2}\right), 
 \]
 and thus 
 \begin{eqnarray*}
\lefteqn{
 s(s-1)(s+2\lambda-2)(s+2\lambda-1)\xi_{\pm}(\alpha;s)
}\\
 & & = s(s-1)(s+2\lambda-2)(s+2\lambda-1)\left(\pi^{s/2}\Gamma
 \left(\frac{s}{2}\right)^{-1} Z(\alpha, \mathcal{F}\varphi_1;s) 
 \vphantom{\Gamma \left(\frac{s+1}{2}\right)^{-1}}\right.\\ 
 & & \qquad {} \pm \left. 
 \pi^{(s+1)/2}\Gamma
 \left(\frac{s+1}{2}\right)^{-1} Z(\alpha, \mathcal{F}\varphi_2;s)\right).
 \end{eqnarray*}
Recall Stirling's estimate:
 \begin{equation}
  \label{form:StirlingEstimate}
 |\Gamma(\sigma+i\tau)|\sim\sqrt{2\pi} |\tau|^{\sigma-(1/2)} e^{-\pi|\tau|/2}
 \qquad (|\tau|\to \infty)
 \end{equation}
 uniformly in any vertical strip $\sigma_{1}\leq \sigma \leq \sigma_{2}$.
 Hence, we have
\begin{equation}
 |s(s-1)(s+2\lambda-2)(s+2\lambda-1)\xi_{\pm}(\alpha;s)|
 =O\left(|\tau|^{4+\Re(\lambda)+(\sigma_0/2)} e^{\pi |\tau|/4}
 \right) \qquad (|\tau|\to \infty)
\label{eqn:estimate on the strip}
\end{equation}
uniformly in the vertical strip  $2-2\Re(\lambda)-\sigma_0\leq \sigma\leq \sigma_0$.  
On the right half plane $\Re(s) \geq \sigma_0$, it follows from the absolute convergence of $\xi_\pm(\alpha;s)$ that $\xi_\pm(\alpha;s)$ is bounded. 
Hence 
\begin{equation}
(s-1)(s+2\lambda-2)\xi_{\pm}(\alpha;s)=O(|s|^2) \quad (\sigma_0 \leq \Re(s) \rightarrow \infty)
\label{eqn:estimate on the right half plane}
\end{equation}
with an $O$-constant idependent of $\Im(s)$. 
To  estimate $\Xi_\pm(\alpha;s)$ on the left half plane $\Re(s) \leq 2-2\Re(\lambda)-\sigma_0$, we use the functional equation \eqref{eqn:FE of xi} in the following form:
\begin{eqnarray*}
\begin{pmatrix}
\xi_+(\alpha;s) \\
\xi_-(\alpha;s) \\
\end{pmatrix} 
 &=& \frac{(2\pi)^s}{\Gamma(s)}\cdot \gamma(s)^{-1}\\
  & & \times 
   \left(\frac{N}{2\pi}\right)^{2-2\lambda-s}\Gamma(2-2\lambda-s)\Sigma(\ell)\gamma(2-2\lambda-s) 
\begin{pmatrix}
\xi_+(\beta;2-2\lambda-s) \\
\xi_-(\beta;2-2\lambda-s) \\
\end{pmatrix}. 
\end{eqnarray*}
Since 
\[
\frac{1}{\Gamma(s)}\gamma(s)^{-1} 
 = \frac{1}{\Gamma(s)\cdot 2i\sin(\pi s)}\gamma^*(s)
 = \frac{\Gamma(1-s)}{2\pi i}\gamma^*(s), 
\quad 
 \gamma^*(s) =
    \begin{pmatrix}
     e^{\pi i s/2} & -e^{-\pi i s/2} \\
     -e^{-\pi i s/2} & e^{\pi i s/2}
   \end{pmatrix},
\]
we have 
\begin{eqnarray*}
\begin{pmatrix}
\xi_+(\alpha;s) \\
\xi_-(\alpha;s) \\
\end{pmatrix} 
 &=& \frac{(2\pi)^{2s-2\lambda+2}\cdot N^{2-2\lambda-s}}{2\pi i}\cdot
           \Gamma(1-s) \Gamma(2-2\lambda-s) \\
  & & \times \gamma^*(s)  \Sigma(\ell)\gamma(2-2\lambda-s) 
\begin{pmatrix}
\xi_+(\beta;2-2\lambda-s) \\
\xi_-(\beta;2-2\lambda-s) \\
\end{pmatrix}. 
\end{eqnarray*}
The first factor $\frac{(2\pi)^{2s-2\lambda+2}\cdot N^{2-2\lambda-s}}{2\pi i} $ is majorized by 
$c_1 (4\pi^2/N)^{\Re(s)}$ with some positive constant $c_1$. 
Since 
\[
|\Gamma(s)| \leq \int_0^\infty x^{\Re(s)-1}e^{-x}\,dx = \Gamma(\Re(s)) 
 = O(e^{|\Re(s)|\log(|\Re(s)|)}) \quad (\epsilon \leq \Re(s) \rightarrow \infty)
\]
for any positive $\epsilon$, we have 
\[
\Gamma(1-s) \Gamma(2-2\lambda-s) 
 = O(e^{2|\Re(s)|\log(|\Re(s)|)}) \quad (2-2\Re(\lambda)-\sigma_0 \geq \Re(s) \rightarrow -\infty)
\]
uniformly with respect to $\Im(s)$. 
We see furthermore that every entry of  the matrix 
$\gamma^*(s)\Sigma(\ell)\gamma(2-2\lambda-s) $ is $O(e^{\pi|\Im(s)|})$. 
Finally $|\xi_\pm(\beta;2-2\lambda-s)|$ are bounded on $\Re(s) \leq 2-2\Re(\lambda)-\sigma_0$. 
Hence there exist positive constants $A,B$ such that 
\begin{equation}
|\xi_\pm(\alpha;s)| \leq A e^{B |s|\log(|s|)}
\quad (\Re(s) \leq 2-2\Re(\lambda)-\sigma_0).
\label{eqn:estimate on the left half plane} 
\end{equation}
By \eqref{eqn:estimate on the strip}, \eqref{eqn:estimate on the right half plane} and \eqref{eqn:estimate on the left half plane}, we see that $(s-1)(s+2\lambda-2)\xi_{\pm}(\alpha;s)$ is an entire function of order at most $1$. 
\end{proof}

\begin{corollary}
\label{cor:trivial zeros}
$(1)$ We have 
\[
\Res_{s=1} \xi_+(\alpha;s) = \Res_{s=1} \xi_-(\alpha;s), 
\quad \Res_{s=1} \xi_+(\beta;s) = \Res_{s=1} \xi_-(\beta;s).
\]

$(2)$  If $\lambda=\frac q2$ $(q \in \Z,\ q \geq 2)$, then we have
\[
\Res_{s=2-2\lambda} \xi_+(\alpha;s) + (-1)^q \Res_{s=2-2\lambda} \xi_-(\alpha;s)
 = \Res_{s=2-2\lambda} \xi_+(\beta;s) + (-1)^q \Res_{s=2-2\lambda} \xi_-(\beta;s)
 = 0.
\]
If $\lambda=\frac q2$ further satisfies that $2q \equiv \ell \pmod 4$, then we have
\[
\Res_{s=2-2\lambda} \xi_+(\alpha;s) = \Res_{s=2-2\lambda} \xi_-(\alpha;s)
 = \Res_{s=2-2\lambda} \xi_+(\beta;s) = \Res_{s=2-2\lambda} \xi_-(\beta;s)
 = 0.
\]

$(3)$ Unless $\lambda=\frac q2$ $(q \in \Z,\ q \geq 2)$ and $k = q-2$ or $q-1$, then we have 
\[
\xi_+(\alpha;-k)+(-1)^k\xi_-(\alpha;-k)=\xi_+(\beta;-k)+(-1)^k\xi_-(\beta;-k)=0
\quad (k \in \Z_{>0}). 
\]
\end{corollary}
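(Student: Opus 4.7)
The plan is to treat the three assertions separately, drawing on the residue formulas and the functional equation of Theorem~\ref{thm:FEofZeta1} together with the local results of Theorem~\ref{thm:LFE}.

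Part~(1) should follow immediately by inspection of Theorem~\ref{thm:FEofZeta1}(1): the stated residues at $s=1$, namely $i^\ell\beta(\infty)$ for $\xi_\pm(\alpha;s)$ and $\alpha(\infty)/N$ for $\xi_\pm(\beta;s)$, do not depend on the sign, so the two sides of each claimed equality coincide.

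For Part~(2) I would substitute $\lambda=q/2$ directly into the residue formula at $s=2-2\lambda$ and exploit the parity relation $(-1)^q e^{\mp q\pi i/2}=e^{\pm q\pi i/2}$ (which reflects $e^{q\pi i}=(-1)^q$). A short computation should reduce the combination $\Res_{s=2-q}\xi_+(\alpha;s)+(-1)^q\Res_{s=2-q}\xi_-(\alpha;s)$ to $-2(i^\ell+1)\sin(q\pi/2)$ when $q$ is even and to $2i(1-i^\ell)\cos(q\pi/2)$ when $q$ is odd, both of which vanish; the $\beta$-case goes through identically. For the second assertion, the stronger congruence $2q\equiv\ell\pmod 4$ gives $i^\ell=(-1)^q$, which makes each factor $i^\ell e^{\mp q\pi i/2}-e^{\pm q\pi i/2}$ collapse to zero, so every one of the four residues vanishes individually.

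For Part~(3) I would combine two complementary arguments. First, one verifies by direct computation the useful factorization
\[
\gamma(-k)\begin{pmatrix}v_+\\ v_-\end{pmatrix}=(v_++(-1)^kv_-)\begin{pmatrix}i^{-k}\\ i^k\end{pmatrix},
\]
using $e^{\pm\pi ik/2}=i^{\pm k}$. Evaluating the functional equation~\eqref{eqn:FE of xi} at $s=-k$ then shows that its left-hand side is proportional to the meromorphic function $(2\pi)^{-s}\Gamma(s)\bigl(\xi_+(\alpha;s)+(-1)^k\xi_-(\alpha;s)\bigr)$, which (under $k\neq q-2$, ensuring $\xi_\pm(\alpha)$ is holomorphic at $-k$) acquires a simple pole from $\Gamma(-k)$ with coefficient proportional to $\xi_+(\alpha;-k)+(-1)^k\xi_-(\alpha;-k)$. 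When $\lambda\notin\tfrac12\Z$, or when $\lambda=q/2$ with $k\geq q$, the shifted argument $2-2\lambda+k$ on the right is a positive integer distinct from $1$, so both $\Gamma(2-2\lambda+k)$ and $\xi_\pm(\beta;2-2\lambda+k)$ are finite and the right-hand side is regular at $s=-k$. The pole coefficient on the left must therefore vanish, giving the identity for~$\alpha$.

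The remaining case $\lambda=q/2$ with $1\leq k\leq q-3$ I would handle via the zeta-integral identity of Lemma~\ref{thm:FEofZetaIntegral1}, namely $Z(\alpha,\mathcal{F}f;s)=\xi_+(\alpha;s)\Phi_+(\mathcal{F}f;s)+\xi_-(\alpha;s)\Phi_-(\mathcal{F}f;s)$. Its left-hand side is holomorphic at $s=-k$ (the four candidate poles $\{0,1,2-q,1-q\}$ are avoided once $k\neq q-2,q-1$), $\xi_\pm(\alpha;s)$ is also holomorphic there, and by Theorem~\ref{thm:LFE}(3) the distribution $\Phi_\pm(\mathcal{F}f;s)$ has a simple pole at $s=-k$ (since $k\leq q-2$) with residue $(\pm1)^k(\mathcal{F}f)^{(k)}(0)/k!$. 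Residue matching should then force $(\mathcal{F}f)^{(k)}(0)\bigl(\xi_+(\alpha;-k)+(-1)^k\xi_-(\alpha;-k)\bigr)=0$, and choosing $f\in\mathcal{V}_{\lambda,\ell}^\infty$ with $(\mathcal{F}f)^{(k)}(0)\neq0$ (available from the definition in~\S\ref{subsect:regularized value}) yields the vanishing. The $\beta$-identity follows by the symmetric analysis applied to $Z^{(N)}(\beta,\mathcal{F}f_\infty;s)$, equivalently to the inverse functional equation. The main point to verify will be that these two approaches interlock cleanly to cover exactly the non-exceptional set: the first disposes of $\lambda\notin\tfrac12\Z$ together with $\lambda=q/2,\,k\geq q$, the second disposes of $\lambda=q/2,\,1\leq k\leq q-3$, and together they exhaust all $(\lambda,k)$ outside $\{\lambda=q/2,\ k\in\{q-2,q-1\}\}$.
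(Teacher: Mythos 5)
Your proposal is correct, and parts (1) and (2) coincide with the paper's treatment (the paper simply reads these off the residue formulas of Theorem~\ref{thm:FEofZeta1}(1); note that for (2) the combination $\Res_{s=2-q}\xi_+(\alpha;s)+(-1)^q\Res_{s=2-q}\xi_-(\alpha;s)$ vanishes identically once you use $(-1)^qe^{\pm q\pi i/2}=e^{\mp q\pi i/2}$, so no parity case split is actually needed). For part (3) your route differs from the paper's. The paper runs a single uniform argument covering all non-exceptional $(\lambda,k)$: take $f\in C_0^\infty$ supported in the positive reals, use Lemma~\ref{lem:Int rpr of zeta} together with the local functional equation \eqref{form:LFE} to write $Z(\alpha,\mathcal{F}f;s)=(2\pi)^{-s}e^{\pi is/2}\Gamma(s)\Phi_+(f;1-s)\bigl(\xi_+(\alpha;s)+e^{-\pi is}\xi_-(\alpha;s)\bigr)$, and then the simple pole of $\Gamma(s)$ at $s=-k$ against the holomorphy of $Z$ (Lemma~\ref{thm:FEofZetaIntegral1}) forces the zero; the $\beta$-case is symmetric. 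You instead split into two regimes: a direct evaluation of the completed functional equation \eqref{eqn:FE of xi} at $s=-k$, exploiting your (correct) factorization of $\gamma(-k)$, for $\lambda\notin\frac12\Z$ or $\lambda=\frac q2$, $k\geq q$; and a residue-matching argument in $Z=\xi_+\Phi_++\xi_-\Phi_-$ using the residue of $\Phi_\pm(\mathcal{F}f;s)$ at $s=-k$ from Theorem~\ref{thm:LFE}(3) for $\lambda=\frac q2$, $1\leq k\leq q-3$. Both prongs are sound (in the second, $-k$ indeed lies outside the double-pole set $A$ of Theorem~\ref{thm:LFE}(3) when $k\leq q-3$, and a positively supported bump gives $(\mathcal{F}f)^{(k)}(0)\neq0$), and together they exhaust the non-exceptional set; your second prong is essentially the paper's mechanism in disguise, since the residue of $\Phi_\pm(\mathcal{F}f;s)$ at $s=-k$ is itself produced by the $\Gamma(s)$ pole in \eqref{form:LFE}. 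The paper's version buys uniformity (no case distinction on whether $2-2\lambda+k$ hits a pole of $\Gamma$), while your first prong buys a test-function-free argument where it applies. Two small inaccuracies to fix in writing it up: for $\lambda\notin\frac12\Z$ the shifted argument $2-2\lambda+k$ is not ``a positive integer distinct from $1$'' but simply avoids the poles of $\Gamma$ and of $\xi_\pm(\beta;\cdot)$, which is all you need; and the unfolding identity you quote is Lemma~\ref{lem:Int rpr of zeta}, while the pole locations of the zeta integral come from Lemma~\ref{thm:FEofZetaIntegral1}.
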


\begin{proof}
The first and the second assertions are immediate consequences of the residue formulas in Theorem \ref{thm:FEofZeta1} (1). 
Let us prove the third assertion. 
By Lemma \ref{thm:FEofZetaIntegral1} and Theorem \ref{thm:FEofZeta1} (1), the zeta integral $Z(\alpha,\mathcal{F}f;s)$ and the $L$-functions are holomorphic at $s=-k$ $(k \in \Z_{>0})$ unless $\lambda=\frac q2$ $(q \in \Z,\ q \geq 2)$ and $k =  q-2, q-1$. Moreover, from the local functional equation \eqref{form:LFE} and Lemma \ref{lem:Int rpr of zeta},  we have 
\begin{eqnarray*}
Z(\alpha,\mathcal{F}f;s) 
 &=& (2\pi)^{-s} \Gamma(s) \left\{
   \Phi_+(f;1-s)  \left(
      e^{\pi i s/2}\xi_+(\alpha;s) 
      + e^{-\pi i s/2}\xi_-(\alpha;s) \right) \right. \\
 & & + \left.
   \Phi_-(f;1-s) \left(
      e^{-\pi i s/2}\xi_+(\alpha;s) 
      + e^{\pi i s/2}\xi_-(\alpha;s) \right)\right\}. 
\end{eqnarray*}
Take an $f \in C^\infty_0(\R)$ that is not identically zero, with non-negative values, and compactly supported in the set of positive real numbers.  
Then, $\Phi_-(f;1-s) = 0$, $\Phi_+(f;1-(-k)) \ne 0$, and  
\[
Z(\alpha,\mathcal{F}f;s) 
 = (2\pi)^{-s}  e^{\pi i s/2} \Gamma(s) \Phi_+(f;1-s)  
     \left(\xi_+(\alpha;s)  + e^{-\pi i s}\xi_-(\alpha;s) \right). 
\]
By the assumption on $\lambda$ and $k$, $Z(\alpha,\mathcal{F}f;s)$ is holomorphic at $s=-k$. 
On the other hand, by the choice of $f$, $(2\pi)^{-s}  e^{\pi i s/2} \Phi_+(f;1-s)$ does not vanish at $s=-k$, and $\Gamma(s)$ has a simple pole at $s=-k$. 
Hence $\xi_+(\alpha;s)  + e^{-\pi i s}\xi_-(\alpha;s)$ must have a zero at $s=-k$.    
This shows that
\[
\xi_+(\alpha;-k)+(-1)^k\xi_-(\alpha;-k)=0.
\]
The proof of the assertions for $\xi_\pm(\beta;s)$ is quite similar. 
\end{proof}

\subsection{A converse theorem for summation formulas}
\label{subsection:3.3}

Let us prove the converse
of Theorem~\ref{thm:FEofZeta1}.
Let $\lambda$ be a complex number with $\lambda \not\in \frac12-\frac12\Z_{\geq0}$. 
Let $\ell$ be an integer and $N$ a positive integer.

Suppose that two complex sequences 
$\alpha=\{\alpha(n)\}, \beta=\{\beta(n)\} \, (n  \in \Z\setminus\{0\})$ 
of polynomial growth are given. 
Then, 
we can define $L$-functions 
$\xi_{\pm}(\alpha;s)$ and $\xi_{\pm}(\beta;s)$ by \eqref{eqn:def of xi}
 and the zeta integrals $Z(\alpha,\mathcal{F}f;s)$ and $Z^{(N)}(\beta,\mathcal{F}f_\infty;s)$ $(f \in \mathcal{V}_{\lambda,\ell}^\infty)$ by \eqref{eqn:def of Z} and 
\eqref{eqn:def of ZN}. 
By Lemma \ref{lem:Int rpr of zeta}, if $\Re(s)$ is sufficiently large, the $L$-functions and the zeta integrals are convergent absolutely, and satisfy
 \begin{align}
\label{form:UnfoldZetaTilde1}
 Z(\alpha,\mathcal{F}f;s) &= \xi_{+}(\alpha;s)\Phi_{+}(\mathcal{F}f;s)+
\xi_{-}(\alpha;s)\Phi_{-}(\mathcal{F}f;s),\\
\label{form:UnfoldZetaTilde2}
Z^{(N)}(\beta, \mathcal{F}f_{\infty};s) &= N^s \cdot \xi_{+}(\beta;s)
\Phi_{+}(\mathcal{F}f_{\infty};s)+
N^s\cdot \xi_{-}(\beta;s)\Phi_{-}(\mathcal{F}f_{\infty};s).
\end{align} 
In the following, for simplicity, we put 
\begin{gather*}
\xi_e(\alpha;s) = \xi_+(\alpha;s) + \xi_-(\alpha;s), \quad
\xi_o(\alpha;s) = \xi_+(\alpha;s) - \xi_-(\alpha;s), \\
\xi_e(\beta;s) = \xi_+(\beta;s) + \xi_-(\beta;s), \quad
\xi_o(\beta;s) = \xi_+(\beta;s) - \xi_-(\beta;s).
\end{gather*}

Now we assume the following conditions [A1] -- [A4]:

\begin{description}
 \item[{\bf [A1]}] The $L$-functions $\xi_{\pm}(\alpha;s), \xi_{\pm}(\beta;s)$  have meromorphic continuations to the whole $s$-plane, and $(s-1)(s-2+2\lambda)\xi_\pm(\alpha;s)$ and $(s-1)(s-2+2\lambda)\xi_\pm(\alpha;s)$ are entire functions, which are of finite order in any vertical strip. 
\end{description}
Here a function $f(s)$ on a vertical strip $\sigma_1 \leq \Re(s) \leq \sigma_2$ $(\sigma_1,\sigma_2 \in \R, \sigma_1<\sigma_2)$ is said to be {\it of finite order\/} on the strip if there exist some positive constants $A,B,\rho$ such that 
\[
|f(s)| < A e^{B|\Im(s)|^\rho}, \quad \sigma_1 \leq \Re(s) \leq \sigma_2.
\]
\begin{description}
 \item[{\bf [A2]}]
The residues of  $\xi_{\pm}(\alpha;s)$ and $\xi_{\pm}(\beta;s)$ at $s=1$ satisfy
\[
\mathop{\mathrm{Res}}_{s=1} \xi_+(\alpha;s) = \mathop{\mathrm{Res}}_{s=1} \xi_-(\alpha;s), \quad 
\mathop{\mathrm{Res}}_{s=1} \xi_+(\beta;s) = \mathop{\mathrm{Res}}_{s=1} \xi_-(\beta;s). 
\] 
 \item[{\bf [A3]}] The following functional equation holds:
 \[
  \gamma(s)\left(
 \begin{array}{c}
 \Xi_{+}(\alpha;s) \\[2pt] \Xi_{-}(\alpha;s)
 \end{array}
 \right)\\
 = N^{2-2\lambda-s}\cdot 
 \Sigma(\ell)\cdot
 \gamma(2-2\lambda-s)\left(
 \begin{array}{c}
 \Xi_{+}(\beta;2-2\lambda-s) \\[2pt] \Xi_{-}(\beta;2-2\lambda-s)
 \end{array}
 \right),
 \]
 where $\Xi_\pm(\alpha;s)$,  $\Xi_\pm(\beta;s)$,  $\gamma(s)$ and $\Sigma(\ell)$ are  as defined  in Theorem \ref{thm:FEofZeta1}.
\item[{[A4]}]
\begin{itemize}
\item
If $\lambda = \frac q2$ $(q \in \Z_{\geq0},\ q \geq 4)$, then
\[
\xi_+(\alpha;-k)+(-1)^k\xi_-(\alpha;-k)=0 \quad (k = 1,2,\ldots,q-3).
\]
\item
If $\lambda = 1$, then
\[
\mathop{\mathrm{Res}}_{s=2-2\lambda} \xi_e(\alpha;s) 
 = \mathop{\mathrm{Res}}_{s=2-2\lambda} \xi_e(\beta;s) = 0.
\]
\item 
If $\ell \equiv 0 \pmod 4$ as well as $\lambda = 1$, then 
\begin{quote}
$\xi_\pm(\alpha;s), \xi_\pm(\beta;s)$ are holomorphic at $s=2-2\lambda=0$.
\end{quote}
\end{itemize}
\end{description}

As is proved in Theorem \ref{thm:FEofZeta1} and Corollary \ref{cor:trivial zeros}, 
if $\{\alpha(n)\}$ and $\{\beta(n)\}$ are the coefficients of a Ferrar-Suzuki summation formula of level $N$, then the $L$-functions $\xi_\pm(\alpha;s)$ and $\xi_\pm(\beta;s)$ satisfy the conditions [A1] -- [A4]. 
We prove that, conversely,  the conditions [A1] -- [A4] imply a summation formula. 

Under the assumptions [A1] -- [A4], we define $\alpha(0)$, $\beta(0)$, $\alpha(\infty)$, $\beta(\infty)$ by 
\begin{eqnarray}
 \label{form:DefOfA0AndAInfty}
 \alpha(0) &=&  
 \begin{cases}
-\xi_{e}(\alpha;0) & (\lambda \ne 1),  \\
-\frac12\xi_{e}(\alpha;0) + c & (\lambda=1,\ \ell \equiv0\pmod 4), \\
\frac{2\pi i}{1-i^{-\ell}}\cdot \Res_{s=0} \xi_+(\beta;s) 
  & (\lambda=1,\ \ell \not\equiv0\pmod 4),
\end{cases} \\
 \label{form:DefOfA0AndAInfty2}
\alpha(\infty) &=&
 \frac{N}{2} \Res_{s=1}\xi_{e}(\beta;s), \\[4pt]
 \label{form:DefOfB0AndBInfty}
 \beta(0) &=&
 \begin{cases}
 -\xi_{e}(\beta;0)   & (\lambda \ne 1),  \\
-\frac12\xi_{e}(\beta;0) + c & (\lambda=1,\ \ell \equiv0\pmod 4), \\
\frac{2\pi i}{i^{\ell}-1}\cdot \Res_{s=0} \xi_+(\alpha;s) 
  & (\lambda=1,\ \ell \not\equiv0\pmod 4),
\end{cases} \\
 \label{form:DefOfB0AndBInfty2}
 \beta(\infty) &=&
 \frac{i^{-\ell}}{2} \Res_{s=1}\xi_{e}(\alpha;s),
\end{eqnarray}
where $c$ appearing in $\alpha(0)$ and $\beta(0)$ in the case $\lambda=1$ and $\ell \equiv 0 \pmod 4$ is an arbitrary constant.

\begin{remark}
If $\lambda=1$ and $\ell \equiv 0 \pmod 4$, then we have $\mathcal{F}f_{\infty}(0)=\mathcal{F}f(0)$,  as we prove in Lemma \ref{lem:Ff0} (2) below. 
Hence, in the summation formula \eqref{form:SumFormulaByZeta}, we may replace $\alpha(0)$ and $\beta(0)$, respectively, by $\alpha(0)+c$ and $\beta(0)+c$ with an arbitrary constant $c$. This is the reason why an undetermined  constant $c$ appears in the definition of $\alpha(0)$ and $\beta(0)$. 
\end{remark}

\begin{theorem}
 \label{thm:ConverseTheoremForSummationFormula}
Assume that $\lambda \not\in \frac12- \frac12\Z_{\geq0}$.  
 If the $L$-functions $\xi_\pm(\alpha;s)$ and $\xi_\pm(\beta;s)$ satisfy the assumptions {\rm [A1]--[A4]}, then, for any $f\in {\mathcal{V}}_{\lambda,\ell}^{\infty}$, the following summation formula holds:
 \begin{equation}
 \label{form:SumFormulaByZeta}
 \alpha(\infty) f(\infty)+
 \sum_{n=-\infty}^{\infty} \alpha(n) (\mathcal{F}f)(n)
 = \beta(\infty) f_{\infty}(\infty) + 
\sum_{n=-\infty}^{\infty} \beta(n) (\mathcal{F}f_{\infty})
 \left(\frac{n}{N}\right).
 \end{equation}
\end{theorem}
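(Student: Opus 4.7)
The plan is to follow the Hecke--Riemann method: recover the left-hand side of \eqref{form:SumFormulaByZeta} as a Mellin integral of the zeta integral $Z(\alpha,\mathcal{F}f;s)$, shift the contour across four (possibly coinciding) poles, use the functional equation \textup{[A3]} together with the local functional equation \eqref{form:LFE} to identify the shifted integral with the Mellin inverse of $Z^{(N)}(\beta,\mathcal{F}f_\infty;\cdot)$, and finally match the residues picked up against the boundary terms via the definitions \eqref{form:DefOfA0AndAInfty}--\eqref{form:DefOfB0AndBInfty2}.

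Concretely, I fix $\sigma_0$ so large that $\xi_\pm(\alpha;s)$, $\xi_\pm(\beta;s)$ and the integrals \eqref{form:UnfoldZetaTilde1}, \eqref{form:UnfoldZetaTilde2} converge absolutely on $\Re(s)=\sigma_0$ and on $\Re(s)=\sigma_0-2+2\Re(\lambda)$. Mellin inversion applied to $t\mapsto\sum_{n\ne 0}\alpha(n)\mathcal{F}f(tn)$ then gives
\[
\sum_{n\ne 0}\alpha(n)\mathcal{F}f(n)=\frac{1}{2\pi i}\int_{(\sigma_0)}Z(\alpha,\mathcal{F}f;s)\,ds.
\]
Using \eqref{form:UnfoldZetaTilde1}, the local functional equation \eqref{form:LFE}, the identity \eqref{form:PhiFinfty} (in the form $\Phi_\pm(f_\infty;s)=(\mp 1)^{-\ell/2}\Phi_\mp(f;2\lambda-s)$), and \textup{[A3]}, a direct matrix computation yields the zeta-integral functional equation
\begin{equation}
\label{eqn:ZetaFE-converse}
Z(\alpha,\mathcal{F}f;s)=Z^{(N)}(\beta,\mathcal{F}f_\infty;2-2\lambda-s).
\end{equation}
I then shift the line of integration to $\Re(s)=2-2\Re(\lambda)-\sigma_0$. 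Inside the strip the only singularities of $Z(\alpha,\mathcal{F}f;s)$ are located at $s=0,\ 1,\ 2-2\lambda,\ 1-2\lambda$ (by \eqref{form:UnfoldZetaTilde1}, Theorem \ref{thm:LFE}, and \textup{[A1]}), so Cauchy's theorem gives
\[
\frac{1}{2\pi i}\int_{(\sigma_0)}Z\,ds=\frac{1}{2\pi i}\int_{(2-2\Re(\lambda)-\sigma_0)}Z\,ds+\sum_{s_0\in\{0,1,2-2\lambda,1-2\lambda\}}\Res_{s=s_0}Z(\alpha,\mathcal{F}f;s);
\]
by \eqref{eqn:ZetaFE-converse} and the substitution $u=2-2\lambda-s$, the shifted integral equals $\frac{1}{2\pi i}\int_{(\sigma_0)}Z^{(N)}(\beta,\mathcal{F}f_\infty;u)\,du=\sum_{n\ne 0}\beta(n)\mathcal{F}f_\infty(n/N)$ by a second Mellin inversion.

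The four residues match the boundary terms. On the $\alpha$-side, Theorem \ref{thm:LFE} gives $\Res_{s=0}\Phi_\pm(\mathcal{F}f;s)=\mathcal{F}f(0)$, so $\Res_{s=0}Z=\xi_e(\alpha;0)\mathcal{F}f(0)=-\alpha(0)\mathcal{F}f(0)$ by \eqref{form:DefOfA0AndAInfty}; condition \textup{[A2]} together with the identity $\Phi_+(\mathcal{F}f;1)+\Phi_-(\mathcal{F}f;1)=f(0)$ (obtained by feeding $s=1$ into \eqref{form:LFE} and cancelling the simple pole of $\Phi_\pm(f;1-s)$ against the vanishing of $\cos(\pi s/2)$ at $s=1$) gives $\Res_{s=1}Z=i^{\ell}\beta(\infty)f(0)=\beta(\infty)f_\infty(\infty)$. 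For the remaining two poles I use \eqref{eqn:ZetaFE-converse} together with the elementary sign flip $\Res_{s=s_0}F(s)=-\Res_{u=2-2\lambda-s_0}G(u)$ whenever $F(s)=G(2-2\lambda-s)$, which reduces $\Res_{s=2-2\lambda}Z$ and $\Res_{s=1-2\lambda}Z$ to residues of $Z^{(N)}(\beta,\mathcal{F}f_\infty;u)$ at $u=0$ and $u=1$; applying the same two computations on the $\beta$-side produces $\beta(0)\mathcal{F}f_\infty(0)$ and $-\alpha(\infty)f(\infty)$ by \eqref{form:DefOfB0AndBInfty} and \eqref{form:DefOfA0AndAInfty2}. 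Rearranging the four residues with the two Mellin-inversion identities delivers \eqref{form:SumFormulaByZeta}.

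The principal technical obstacle is justifying the contour shift: one must show that the horizontal segments $\{\sigma\pm iT:2-2\Re(\lambda)-\sigma_0\le\sigma\le\sigma_0\}$ contribute $o(1)$ as $T\to\infty$. For this I combine \textup{[A1]} (finite-order vertical growth of $(s-1)(s-2+2\lambda)\xi_\pm$) with Stirling's estimate for the $\Gamma$ factor in $\Xi_\pm$ and the rapid decay of $\Phi_\pm(\mathcal{F}f;s)$ supplied by Lemma \ref{lem:EstimatesOfGamma}, invoking a Phragm\'en--Lindel\"of argument on the strip to interpolate between the absolutely convergent right-hand edge and the left-hand edge (controlled through \textup{[A3]}). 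A secondary obstacle is the bookkeeping in the exceptional cases $\lambda\in\tfrac12\Z$ when some of the four poles coincide or acquire order two in Theorem \ref{thm:LFE}; condition \textup{[A4]} is precisely what kills the spurious residues produced in those degenerations, while the coincident-pole case $\lambda=1$ with $\ell\equiv 0\pmod 4$ is absorbed by the free constant $c$ appearing in \eqref{form:DefOfA0AndAInfty}, \eqref{form:DefOfB0AndBInfty}, reflecting the fact that $\mathcal{F}f(0)=\mathcal{F}f_\infty(0)$ in that situation.
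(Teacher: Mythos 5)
Your overall strategy is exactly the one the paper follows: Mellin inversion for $\sum_{n\ne0}\alpha(n)\mathcal{F}f(nt)$, the zeta-integral functional equation obtained by chaining \eqref{form:UnfoldZetaTilde1}, \eqref{form:LFE}, [A3] and \eqref{form:LocalZetaTransform}, a contour shift justified by Lemma \ref{lem:EstimatesOfGamma}, [A1] and Phragm\'en--Lindel\"of, and identification of the four residues with the boundary terms; in the generic case $\lambda\ne1$ (and $\lambda\ne\frac q2$, $q\ge4$, where your appeal to [A4] correctly removes the candidate poles at negative integers) your residue computations are correct and coincide with Lemma \ref{lem:AnaProperTildeZeta}.

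There is, however, a genuine gap at $\lambda=1$ with $\ell\not\equiv0\pmod4$. Your blanket claim ``$\Res_{s=0}Z(\alpha,\mathcal{F}f;s)=\xi_e(\alpha;0)\mathcal{F}f(0)=-\alpha(0)\mathcal{F}f(0)$ by \eqref{form:DefOfA0AndAInfty}'' is false there: first, \eqref{form:DefOfA0AndAInfty} defines $\alpha(0)$ in that case not through $\xi_e(\alpha;0)$ but through $\frac{2\pi i}{1-i^{-\ell}}\Res_{s=0}\xi_+(\beta;s)$; second, since $2-2\lambda=0$, the $L$-functions $\xi_\pm(\alpha;s)$ may themselves have simple poles at $s=0$, and after [A4] removes the double pole one is left with
$\Res_{s=0}Z(\alpha,\mathcal{F}f;s)=\xi_e(\alpha;0)\,\mathcal{F}f(0)+\Res_{s=0}\xi_+(\alpha;s)\cdot\Phi_o(\mathcal{F}f;0)$,
i.e.\ an extra term involving $\Phi_o(\mathcal{F}f;0)$ that your sketch never confronts (your closing remark about the constant $c$ and $\mathcal{F}f(0)=\mathcal{F}f_\infty(0)$ only covers $\ell\equiv0\pmod4$). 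To convert this into $\beta(0)\mathcal{F}f_\infty(0)-\alpha(0)\mathcal{F}f(0)$ one needs two further inputs that the paper supplies: the identities of Lemma \ref{lem:Ff0}\,(2) expressing $\mathcal{F}f(0)$ and $\mathcal{F}f_\infty(0)$ in terms of each other and of $\Phi_o(\mathcal{F}f;0)$, $\Phi_o(\mathcal{F}f_\infty;0)$, and the specialization \eqref{eqn:FE for lambda=1} of the functional equation [A3] at $\lambda=1$, which relates $\Res_{s=0}\xi_o(\beta;s)$ to $\xi_e(\alpha;0)$ and $\Res_{s=0}\xi_o(\alpha;s)$. Without these, the residues at the coincident pole $s=0=2-2\lambda$ cannot be matched to the boundary terms of \eqref{form:SumFormulaByZeta}, so the proof does not close on this part of the parameter range admitted by the theorem. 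Supplying that computation (essentially the last page of the paper's proof of Lemma \ref{lem:AnaProperTildeZeta}) would complete your argument.
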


First we prove the following 
\begin{lemma}
 \label{lem:AnaProperTildeZeta}
Under the same assumptions as in Theorem \ref{thm:ConverseTheoremForSummationFormula}, the following hold for any  $f\in {\mathcal{V}}_{\lambda,\ell}^{\infty}$. 

$(1)$ 
The zeta integrals $Z(\alpha, \mathcal{F}f;s)$ and $Z^{(N)}(
 \beta,\mathcal{F}f_{\infty};s)$ have analytic continuations to meromorphic functions of $s$ in $\C$ with poles only at $s=0, 1, 2-2\lambda, 1-2\lambda$.

$(2)$
The poles are of order at most $1$ and the residues are given as follows: 
 \begin{align*}
 \Res_{s=0}Z(\alpha, \mathcal{F}f;s)&= -\alpha(0) (\mathcal{F}f)(0), & 
 \Res_{s=1}Z(\alpha, \mathcal{F}f;s)&= \beta(\infty) f_{\infty}(\infty), \\ 
 \Res_{s=2-2\lambda}Z(\alpha, \mathcal{F}f;s)&= \beta(0) (\mathcal{F}f_{\infty})(0), &
 \Res_{s=1-2\lambda}Z(\alpha, \mathcal{F}f;s)&= -\alpha(\infty) f(\infty). &
 \end{align*}
In case $\lambda=1$, then $2-2\lambda=0$ and we understand that 
\[
\Res_{s=0}Z(\alpha, \mathcal{F}f;s)
 =  \Res_{s=2-2\lambda}Z(\alpha, \mathcal{F}f;s) 
 = \beta(0) (\mathcal{F}f_{\infty})(0)  - \alpha(0) (\mathcal{F}f)(0).
\]
$(3)$
They satisfy the same functional equation
 \begin{equation}
 \label{form:FEofTildeZetaInt}
 Z(\alpha, \mathcal{F}f;s)= Z^{(N)}(\beta, 
 \mathcal{F}f_{\infty};2-2\lambda-s)
 \end{equation}
 as the one in Lemma~\ref{thm:FEofZetaIntegral1}.
\end{lemma}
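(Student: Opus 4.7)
The plan is to start from the absolutely convergent expressions
\eqref{form:UnfoldZetaTilde1}, \eqref{form:UnfoldZetaTilde2} valid for $\Re(s)\gg 0$,
and use [A1] together with Theorem~\ref{thm:LFE} to read off the meromorphic continuation
of $Z(\alpha,\mathcal{F}f;s)$ and $Z^{(N)}(\beta,\mathcal{F}f_\infty;s)$ termwise.
By [A1] the only possible poles of $\xi_\pm(\alpha;s),\xi_\pm(\beta;s)$ lie at $s=1$ and
$s=2-2\lambda$; by Theorem~\ref{thm:LFE}, those of $\Phi_\pm(\mathcal{F}f;s)$ (resp.\
$\Phi_\pm(\mathcal{F}f_\infty;s)$) lie at $s=-k,\,1-2\lambda-k$ $(k\in\Z_{\geq 0})$.
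Hence the zeta integrals extend meromorphically to $\C$ and are \emph{a priori} allowed to
have poles on the set $\{1,\,2-2\lambda\}\cup\{-k,\,1-2\lambda-k\mid k\in\Z_{\geq 0}\}$.

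For the functional equation, I would write
$Z(\alpha,\mathcal{F}f;s)=(\Xi_+(\alpha;s),\Xi_-(\alpha;s))\,\gamma(s)\binom{\Phi_+(f;1-s)}{\Phi_-(f;1-s)}$
using \eqref{form:LFE}, then substitute [A3] in the form
$(\Xi_+(\alpha;s),\Xi_-(\alpha;s))\gamma(s)=N^{2-2\lambda-s}(\Xi_+(\beta;2-2\lambda-s),\Xi_-(\beta;2-2\lambda-s))\gamma(2-2\lambda-s)\Sigma(\ell)^{t}$,
and convert $\Sigma(\ell)^{t}\binom{\Phi_+(f;1-s)}{\Phi_-(f;1-s)}$ into
$\binom{\Phi_+(f_\infty;s+2\lambda-1)}{\Phi_-(f_\infty;s+2\lambda-1)}$ via
\eqref{form:LocalZetaTransform}; applying \eqref{form:LFE} once more (with $s$ replaced by
$2-2\lambda-s$ and $f$ by $f_\infty$) then collapses the right-hand side to
$Z^{(N)}(\beta,\mathcal{F}f_\infty;2-2\lambda-s)$. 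This chain of substitutions is valid in the
strip of absolute convergence and extends meromorphically.

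The most delicate step will be showing that all poles other than those at
$\{0,1,2-2\lambda,1-2\lambda\}$ cancel. A potential simple pole of $U(s):=\xi_+(\alpha;s)\Phi_+(\mathcal{F}f;s)+\xi_-(\alpha;s)\Phi_-(\mathcal{F}f;s)$
at $s=-k$ ($k\geq 1$) has residue proportional to $\xi_+(\alpha;-k)+(-1)^k\xi_-(\alpha;-k)$.
Reading [A3] at $s=-k$ and dividing out the pole of $\Gamma(s)$, one gets precisely
$\xi_+(\alpha;-k)+(-1)^k\xi_-(\alpha;-k)=0$ whenever the right-hand side of [A3] is regular at
$s=-k$; this fails only when $2-2\lambda+k\in\{1,2-2\lambda\}$, i.e.\ for
$\lambda\in\tfrac{1}{2}\Z_{\geq 1}$ and small $k$. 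Exactly those borderline vanishings are
imposed by hand in [A4]. The symmetric potential poles at $s=1-2\lambda-k$ ($k\geq 1$) are
handled by applying the same argument to the functional equation partner
$V(s):=Z^{(N)}(\beta,\mathcal{F}f_\infty;s)$, using [A3] with the roles of $\alpha,\beta$
swapped.

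The residue computation at the four remaining points is then bookkeeping. At $s=1$ the pole
comes solely from $\xi_\pm(\alpha;s)$, and [A2] together with the identity
$\Phi_+(\mathcal{F}f;1)+\Phi_-(\mathcal{F}f;1)=f(0)$ (which follows by expanding the local
functional equation around $s=1$, noting that $\cos(\pi s/2)$ has a simple zero balanced by
the simple pole of $\Phi_\pm(f;1-s)$ at $s=1$) gives
$\mathrm{Res}_{s=1}U(s)=i^{\ell}\beta(\infty)f(0)=\beta(\infty)f_\infty(\infty)$, matching
\eqref{form:DefOfB0AndBInfty2}. At $s=1-2\lambda$ the pole comes from $\Phi_\pm(\mathcal{F}f;s)$
and the explicit residue in Theorem~\ref{thm:LFE} combines with the evaluations
$\xi_\pm(\alpha;1-2\lambda)$ via [A3] and the definition \eqref{form:DefOfA0AndAInfty2} of
$\alpha(\infty)$ to produce $-\alpha(\infty)f(\infty)$. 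The two points $s=0$ and
$s=2-2\lambda$ are treated analogously using the functional equation already established:
the residue at $s=2-2\lambda$ of $U(s)$ equals the residue at $s=0$ of $V(s)$, where $V$ is a
local zeta integral for $f_\infty$, and \eqref{form:DefOfB0AndBInfty} gives the value. The
main obstacle here is the confluence $\lambda=1$, where $s=0$ and $s=2-2\lambda$ coincide and
the generic first-order pole might become second-order; the third clause of [A4] (forbidding
a pole of $\xi_\pm(\alpha;s),\xi_\pm(\beta;s)$ at $s=0$ when $\ell\equiv 0\pmod 4$) and the
second clause (residue cancellation of $\xi_e$ when $\ell\not\equiv 0\pmod 4$) are precisely
what is needed so that the double pole of $\xi\cdot\Phi$ degenerates to a simple pole with
the required residue from \eqref{form:DefOfA0AndAInfty}, \eqref{form:DefOfB0AndBInfty}. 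Once
(1) and (2) are established, assertion (3) follows from the already-derived functional
equation.
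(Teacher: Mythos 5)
Your skeleton is essentially the paper's: you continue $Z(\alpha,\mathcal{F}f;s)$ termwise from \eqref{form:UnfoldZetaTilde1}--\eqref{form:UnfoldZetaTilde2} using [A1] and Theorem~\ref{thm:LFE}, you prove \eqref{form:FEofTildeZetaInt} by exactly the chain \eqref{form:LFE}, [A3], \eqref{form:LocalZetaTransform}, \eqref{form:LFE}, and your residues at $s=1$ and at $s=0$ (for $\lambda\ne1$) use [A2] and the identity $\Phi_+(\mathcal{F}f;1)+\Phi_-(\mathcal{F}f;1)=f(0)$, which is Lemma~\ref{lem:Ff0}(1). Extracting the trivial zeros $\xi_+(\alpha;-k)+(-1)^k\xi_-(\alpha;-k)=0$ from [A3] by comparing residues of $\Gamma(s)$ at $s=-k$ is a legitimate variant of the paper's cleaner device, which is to intersect the pole set of $Z(\alpha,\mathcal{F}f;s)$ with that of $Z^{(N)}(\beta,\mathcal{F}f_\infty;2-2\lambda-s)$ so that everything outside $\{0,1,2-2\lambda,1-2\lambda\}\cup\{-1,\dots,-(q-3)\}$ is excluded with no residue computation at all.

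There are, however, two genuine soft spots. First, when $\lambda=\tfrac q2\in\tfrac12\Z$ ($q\ge2$), Theorem~\ref{thm:LFE}(3) allows $\Phi_\pm(\mathcal{F}f;s)$ to have poles of order two at $s=1-2\lambda-k$ ($k\in\Z_{\geq0}$) and gives no residue formula there. Consequently your claim that the residue of $Z(\alpha,\mathcal{F}f;s)$ at $s=-k$ is proportional to $\xi_+(\alpha;-k)+(-1)^k\xi_-(\alpha;-k)$ is unjustified for $k\ge q$, and your direct evaluation of $\Res_{s=1-2\lambda}Z(\alpha,\mathcal{F}f;s)$ from the Theorem~\ref{thm:LFE} residues breaks down entirely in this case (nor does it show the pole there is simple). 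These points have to be read off the partner side through the functional equation you already proved: $Z^{(N)}(\beta,\mathcal{F}f_\infty;s)$ is manifestly regular at $s=2-2\lambda+k$ for $k\ge q$ and has a visibly simple pole at $s=1$, giving $\Res_{s=1-2\lambda}Z(\alpha,\mathcal{F}f;s)=-\Res_{s=1}Z^{(N)}(\beta,\mathcal{F}f_\infty;s)=-N f_\infty(0)\Res_{s=1}\xi_+(\beta;s)=-\alpha(\infty)f(\infty)$; your appeal to ``the same argument for $V(s)$'' gestures at this but must be made precise at the overlap of the two families of points. Note also the sign in your transfer: $\Res_{s=2-2\lambda}Z(\alpha,\mathcal{F}f;s)=-\Res_{s=0}Z^{(N)}(\beta,\mathcal{F}f_\infty;s)$, not $+$. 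Second, in the confluence $\lambda=1$ you only check (via [A4]) that the double pole at $s=0$ degenerates, and then assert that the definitions give the residue $\beta(0)(\mathcal{F}f_\infty)(0)-\alpha(0)(\mathcal{F}f)(0)$. That assertion is the real content of the paper's proof and is missing here: one needs $\Res_{s=0}Z(\alpha,\mathcal{F}f;s)=\xi_e(\alpha;0)\mathcal{F}f(0)+\Res_{s=0}\xi_+(\alpha;s)\cdot\Phi_o(\mathcal{F}f;0)$, the identity of Lemma~\ref{lem:Ff0}(2) expressing $\mathcal{F}f(0)$ through $\mathcal{F}f_\infty(0)$ and $\Phi_o(\mathcal{F}f_\infty;0)$, and the $\lambda=1$ specialization \eqref{eqn:FE for lambda=1} of [A3] to trade $\Res_{s=0}\xi_o(\alpha;s)$ for $\Res_{s=0}\xi_o(\beta;s)$; only after these steps do \eqref{form:DefOfA0AndAInfty} and \eqref{form:DefOfB0AndBInfty} (whose $\ell\not\equiv0\pmod4$ branch is precisely reverse-engineered from this computation) yield the stated residue, so without them the $\lambda=1$ clause of the lemma remains unproved.
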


For the proof of Lemma \ref{lem:AnaProperTildeZeta}, we need the following lemma. 

\begin{lemma}
 \label{lem:Ff0}
Put
\[
\Phi_{e}(\mathcal{F}f;s) = \Phi_+(\mathcal{F}f;s)+\Phi_-(\mathcal{F}f;s), \quad
\Phi_{o}(\mathcal{F}f;s) = \Phi_+(\mathcal{F}f;s)-\Phi_-(\mathcal{F}f;s).
\]

$(1)$ If $\lambda \not\in -\frac12\Z_{\geq0}$, then $\Phi_{e}(\mathcal{F}f;s)$ is holomorphic at $s=1$ and the identity 
\[
\Phi_e(\mathcal{F}f;1)= f(0) 
\]
holds for any $f \in \mathcal{V}^\infty_{\lambda,\ell}$.

$(2)$ If $\lambda=1$, then $\Phi_{o}(\mathcal{F}f;s)$ is holomorphic at $s=0$ and the identities 
\begin{eqnarray*}
\mathcal{F}f(0) 
   &=& \frac{1+i^{-\ell}}2 \mathcal{F}f_\infty(0) + \frac{1-i^{-\ell}}{2\pi i} \Phi_o(\mathcal{F}f_\infty;0), \\
\mathcal{F}f_\infty(0) 
 &=& \frac{1+i^\ell}2 \mathcal{F}f(0) - \frac{1-i^\ell}{2\pi i} \Phi_o(\mathcal{F}f;0)
\end{eqnarray*}
hold for any $f \in \mathcal{V}^\infty_{\lambda,\ell}$. 
If $\ell \equiv 0 \pmod 4$ as well as $\lambda=1$, then $\mathcal{F}f(0)=\mathcal{F}f_\infty(0)$. 
\end{lemma}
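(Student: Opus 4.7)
The plan is to derive both parts of the lemma from the local functional equation of Theorem~\ref{thm:LFE}. Adding and subtracting its two scalar components gives
\begin{align*}
\Phi_{e}(\mathcal{F}f;s) &= 2(2\pi)^{-s}\Gamma(s)\cos(\pi s/2)\,\Phi_{e}(f;1-s),\\
\Phi_{o}(\mathcal{F}f;s) &= 2i(2\pi)^{-s}\Gamma(s)\sin(\pi s/2)\,\Phi_{o}(f;1-s),
\end{align*}
so the entire argument reduces to tracking the Laurent expansions of these right-hand sides near $s=1$ and $s=0$ respectively.

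For part~(1), work near $s=1$. The hypothesis $\lambda\notin-\tfrac12\Z_{\geq0}$ places us outside the exceptional case of Theorem~\ref{thm:loc zeta}, so $\Phi_{\pm}(f;s)$ has a simple pole at $s=0$ with residue $f(0)$, which gives $\Phi_{e}(f;1-s)=-2f(0)/(s-1)+O(1)$. Since $2\cos(\pi s/2)=-\pi(s-1)+O((s-1)^3)$ while $(2\pi)^{-s}\Gamma(s)\big|_{s=1}=1/(2\pi)$, the product is regular at $s=1$ with value $f(0)$.

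For part~(2), set $\lambda=1$. The poles of $\Phi_{\pm}(f;s)$ lie in $\{-k,\,2+k\mid k\in\Z_{\geq0}\}$, so $\Phi_{\pm}(f;1)$ is finite; combined with $\Gamma(s)\sin(\pi s/2)\big|_{s=0}=\pi/2$, this yields holomorphy of $\Phi_{o}(\mathcal{F}f;s)$ at $s=0$ together with $\Phi_{o}(\mathcal{F}f;0)=i\pi\,\Phi_{o}(f;1)$. Next, \eqref{form:PhiFinfty} at $(\lambda,s)=(1,1)$---using the principal values $(-1)^{-\ell/2}=i^{-\ell}$ and $(+1)^{-\ell/2}=1$---gives $\Phi_{+}(f;1)=i^{-\ell}\Phi_{-}(f_\infty;1)$ and $\Phi_{-}(f;1)=\Phi_{+}(f_\infty;1)$. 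Writing $A=\Phi_{+}(f_\infty;1)$ and $B=\Phi_{-}(f_\infty;1)$, one obtains
\[
\mathcal{F}f(0)=A+i^{-\ell}B,\quad \mathcal{F}f_\infty(0)=A+B,\quad \Phi_{o}(\mathcal{F}f_\infty;0)=i\pi(A-B),
\]
and a routine $2\times 2$ elimination verifies the first displayed identity of the lemma. The second follows by applying the first with $f_\infty$ in place of $f$ and invoking $(f_\infty)_\infty=i^\ell f$ from \eqref{eqn:square of infty-operation}. When $\ell\equiv 0\pmod 4$, $i^{-\ell}=1$ forces the coefficient $(1-i^{-\ell})/(2\pi i)$ of $\Phi_{o}(\mathcal{F}f_\infty;0)$ to vanish, leaving $\mathcal{F}f(0)=\mathcal{F}f_\infty(0)$.

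The proof is essentially bookkeeping; the one point demanding care is the consistent use of the principal-value convention $(-1)^{-\ell/2}=i^{-\ell}$, since a sign slip there would propagate through every phase factor appearing in part~(2).
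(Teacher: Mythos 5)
Your proof is correct and follows essentially the same route as the paper: both parts are obtained from the even/odd decomposition of the local functional equation \eqref{form:LFE}, the residue formulas of Theorem \ref{thm:loc zeta} (with the non-exceptional hypothesis on $\lambda$), and the relation \eqref{form:PhiFinfty} together with the definition of the regularized value $\mathcal{F}f(0)$. The only cosmetic difference is that in part (2) the paper inverts the functional equation and evaluates at $s=1$, whereas you use the forward odd component at $s=0$; both produce the same key identity $\Phi_o(\mathcal{F}f_\infty;0)=\pi i\,\Phi_o(f_\infty;1)$ and hence the same elimination.
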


\begin{proof}
(1) From the local functional equation $(\ref{form:LFE})$, it follows that
\[
\Phi_e(\mathcal{F}f;s)
 = 2(2\pi)^{-s}\Gamma(s)\cos\left(\frac{\pi s}2\right)
   (\Phi_+(f;1-s)+\Phi_-(f;1-s)).
\]
Hence we have
\begin{eqnarray*}
\Phi_e(\mathcal{F}f;1)
 &=& \frac{1}{\pi} \lim_{s\rightarrow 1}  
          \left\{\cos\left(\frac{\pi s}2\right)
                                    (\Phi_+(f;1-s)+\Phi_-(f;1-s))\right\} \\
 &=& \frac{1}{\pi} \cdot \left.\left(\frac{d}{ds}\cos\left(\frac{\pi s}2\right)\right)\right|_{s=1}
                 \cdot    (\Res_{s=1}\Phi_+(f;1-s)+\Res_{s=1}\Phi_-(f;1-s)) \\
 &=& \frac{1}{\pi} \cdot \left(-\frac{\pi}{2}\right)\cdot  (-2f(0)) = f(0).
\end{eqnarray*}

$(2)$ If  $\lambda = 1$, then we have 
\[
\mathcal{F}f(0)
 = \Phi_+(f;1)+\Phi_-(f;1) 
   = \Phi_+(f_\infty;1)+i^{-\ell}\Phi_-(f_\infty;1).
\]
From the local functional equation $(\ref{form:LFE})$, it follows that
\[
\begin{pmatrix}
\Phi_+(f;s) \\ \Phi_-(f;s) 
\end{pmatrix}
 = (2\pi)^{-s}\Gamma(s) 
    \begin{pmatrix} 1 & -i \\ 1 & i \end{pmatrix}
    \begin{pmatrix}
      \cos\left(\frac{\pi s}2\right) \Phi_e(\mathcal{F}f;1-s) \\
      \sin\left(\frac{\pi s}2\right) \Phi_o(\mathcal{F}f;1-s) 
\end{pmatrix}.
\]
Recall that $\Phi_e(\mathcal{F}f;s)$ has a pole of order at most $1$ at $s=0$ with residue $2\mathcal{F}f(0)$, and $\Phi_o(\mathcal{F}f;s)$ is holomorphic at $s=0$. 
Hence we get
\[
\begin{pmatrix}
\Phi_+(f_\infty;1) \\ \Phi_-(f_\infty;1) 
\end{pmatrix}
 =   \begin{pmatrix} 1 & -i \\ 1 & i \end{pmatrix}
    \begin{pmatrix}
      \frac12 \mathcal{F}f_\infty(0) \\
      \frac1{2\pi} \Phi_o(\mathcal{F}f_\infty;0) 
\end{pmatrix}
\]
and 
\begin{eqnarray*}
\mathcal{F}f(0) 
   &=& \left(\frac12 \mathcal{F}f_\infty(0) + \frac1{2\pi i} \Phi_o(\mathcal{F}f_\infty;0) \right)
     + i^{-\ell} \left(\frac12 \mathcal{F}f_\infty(0) - \frac1{2\pi i} \Phi_o(\mathcal{F}f_\infty;0) \right) \\
   &=& \frac{1+i^{-\ell}}2 \mathcal{F}f_\infty(0) + \frac{1-i^{-\ell}}{2\pi i} \Phi_o(\mathcal{F}f_\infty;0).
\end{eqnarray*}
Since $(f_\infty)_\infty(x)=i^\ell f(x)$, we also have
\[
\mathcal{F}f_\infty(0) 
 = \frac{1+i^\ell}2 \mathcal{F}f(0) - \frac{1-i^\ell}{2\pi i} \Phi_o(\mathcal{F}f;0).
\]
If $\ell \equiv 0 \pmod 4$, then this implies that $\mathcal{F}f_\infty(0) 
 = \mathcal{F}f(0)$.
\end{proof}

\begin{proof}[Proof of Lemma \ref{lem:AnaProperTildeZeta}]
 As we have seen in \S \ref{subsect:dual LZ and LFE}, 
 $\Phi_{\pm}(\mathcal{F}f;s)$ and $\Phi_{\pm}(\mathcal{F}f_\infty;s)$ converge absolutely for sufficiently large  $\Re(s)$, and have analytic continuations to  meromorphic functions of $s$ in $\C$. 
Hence,  by the identities \eqref{form:UnfoldZetaTilde1}, \eqref{form:UnfoldZetaTilde2}, and the assumption~[A1], 
$Z(\alpha, \mathcal{F}f;s)$ and $Z^{(N)}(
 \beta,\mathcal{F}f_{\infty};s)$ have analytic continuations to meromorphic functions of $s$ in $\C$. 

Now the functional equation of the zeta integrals can be proved as follows: 
 \begin{align*}
 Z(\alpha,\mathcal{F}f;s) &
 \overset{\eqref{form:UnfoldZetaTilde1}}{=}
 \vec{\Phi}(\mathcal{F}f;s)
 \vec{\xi}(\alpha;s) 
 \overset{\eqref{form:LFE}}{=}
 \vec{\Phi}(f;1-s) \gamma(s)
 \vec{\Xi}(\alpha;s) \\
 &\overset{\text{[A3]}}{=}
 N^{2-2\lambda-s}\vec{\Phi}(f;1-s)  
 \Sigma(\ell) \cdot \gamma(2-2\lambda-s)
 \vec{\Xi}(\beta;2-2\lambda-s) \\
 &\overset{\eqref{form:LocalZetaTransform}}{=}
 N^{2-2\lambda-s}\vec{\Phi}(f_{\infty};s+2\lambda-1) \gamma(2-2\lambda-s)
 \vec{\Xi}(\beta;2-2\lambda-s) \\
 &\overset{\eqref{form:LFE}}{=}
 N^{2-2\lambda-s}\vec{\Phi}(\mathcal{F}f_{\infty};2-2\lambda-s)
 \vec{\Xi}(\beta;2-2\lambda-s) \\
 &\overset{\eqref{form:UnfoldZetaTilde2}}{=} 
 Z^{(N)}(\beta, \mathcal{F}f_{\infty};2-2\lambda-s).
 \end{align*}
Here we put 
\[
\vec\Phi=(\Phi_+, \Phi_-), \quad
\vec\xi=\begin{pmatrix} \xi_+ \\ \xi_-\end{pmatrix}, \quad
\vec\Xi=\begin{pmatrix} \Xi_+ \\ \Xi_-\end{pmatrix}.
\]

Let us determine the poles of the zeta integrals. 
The poles of $\xi_\pm(\alpha;s)$ and  $\xi_\pm(\beta;s)$ are located at $s=1,2-2\lambda$ and,  by Theorem~\ref{thm:LFE},  the poles of $\Phi_{\pm}(\mathcal{F}f;s)$ and $\Phi_{\pm}(\mathcal{F}f_\infty;s)$ are located at $s=-n, 1-2\lambda-n$ $(n  \in \Z_{\geq0})$. 
Hence the poles of $ Z(\alpha, \mathcal{F}f;s)$ are at $s=1-n, 2-2\lambda-n$ $(n \in \Z_{\geq0})$, and the poles of $ Z^{(N)}(\beta, \mathcal{F}f_\infty;2-2\lambda-s)$ are at $s=n, 1-2\lambda+n$ $(n \in \Z_{\geq0})$. 
By the functional equation  $ Z(\alpha, \mathcal{F}f;s) = Z^{(N)}(\beta, \mathcal{F}f_\infty;2-2\lambda-s)$, the actual poles are included in 
\[
\{1-n, 2-2\lambda-n\; | \; n \in \Z_{\geq 0}\} 
\cap \{n, 1-2\lambda+n\; | \; n \in \Z_{\geq 0}\}.
\]
The assumption that $\lambda \not\in \frac12-\frac12\Z_{\geq0}$ implies that the intersection is 
\[
\begin{cases}
\{0,1,1-2\lambda, 2-2\lambda\} \cup \{-(q-3),\ldots,-2,-1\} & (\lambda=\frac q2 \in \frac12\Z,\ q \geq 4), \\
\{0,1,1-2\lambda, 2-2\lambda\} & (\text{otherwise}).
\end{cases}
\]
In the case where $\lambda=\frac q2 \in \frac12\Z$ with $q \geq 4$ and $1\leq k \leq q-3$, then $\xi_\pm(\alpha;s)$ are holomorphic at $s=-k$ and $\Phi_\pm(\mathcal{F}f;s)$ has a pole  of order at most $1$ at $s=-k$ with residue $\frac{(\pm1)^k(\mathcal{F}f)^{(k)}(0)}{k!}$. Hence, by [A4], we see that
\begin{eqnarray*}
\Res_{s=-k} Z(\alpha,\mathcal{F}f;s) 
 &=&  \xi_+(\alpha;-k) \Res_{s=-k} \Phi_+(\mathcal{F}f;s)
    + \xi_-(\alpha;-k) \Res_{s=-k} \Phi_-(\mathcal{F}f;s) \\
 &=& ( \xi_+(\alpha;-k) + (-1)^k\xi_-(\alpha;-k)) \frac{(\mathcal{F}f)^{(k)}(0)}{k!}
 = 0.
\end{eqnarray*}
This shows that $s=-k$ $(1 \leq k \leq q-3)$ is not an actual pole of $Z(\alpha,\mathcal{F}f;s)$. 
Therefore the poles of   $Z(\alpha,\mathcal{F}f;s)$ and also of  $Z^{(N)}(\beta,\mathcal{F}f_\infty;s)$ are located at $s=0,1,1-2\lambda,2-2\lambda$. 

By Theorem \ref{thm:LFE} and the assumption [A2], the orders of the poles at $s=0,1$ of $\xi_\pm(\alpha;s)$ and  $\Phi_\pm(\mathcal{F}f;s)$ are given by the following table.
\[
\begin{array}{c|cc}
 & s=0 & s=1 \\
\hline
\xi_\pm(\alpha;s) & \begin{cases} 0 & (\lambda\ne1) \\  {} \leq 1  & (\lambda=1) \end{cases}
 &  {} \leq 1  \\
\Phi_\pm(\mathcal{F}f;s) & {} \leq 1 & 0
\end{array}
\]
Therefore $s=1$ is a pole of  $Z(\alpha,\mathcal{F}f;s)$ of order at most 1. 
By Lemma \ref{lem:Ff0} and the assumption [A2], the residue there is calculated as follows:
\begin{eqnarray*}
\Res_{s=1} Z(\alpha,\mathcal{F}f;s) 
 &=& \Phi_+(\mathcal{F}f;1)\cdot \Res_{s=1} \xi_+(\alpha;s)   
        + \Phi_-(\mathcal{F}f;1)\cdot \Res_{s=1} \xi_-(\alpha;s)   \\
 &=& \Res_{s=1} \xi_+(\alpha;s)   
        (\Phi_+(\mathcal{F}f;1)+\Phi_-(\mathcal{F}f;1)) \\
 &=& f(0) \cdot \Res_{s=1} \xi_+(\alpha;s).   
\end{eqnarray*}
Similarly we can prove that $Z^{(N)}(\beta,\mathcal{F}f_\infty;s)$ has a pole of order at most $1$ at $s=1$ with residue $N\cdot f_\infty(0) \cdot \Res_{s=1} \xi_+(\beta;s)$.  
From the functional equation $Z(\alpha,\mathcal{F}f;s) = Z^{(N)}(\beta,\mathcal{F}f_\infty;2-2\lambda-s)$, it follows that $Z(\alpha,\mathcal{F}f;s)$ has a pole of order at most $1$ at $s=1-2\lambda$ and the residue is given by
\begin{eqnarray*}
\Res_{s=1-2\lambda} Z(\alpha,\mathcal{F}f;s) 
 &=& \Res_{s=1-2\lambda}  Z^{(N)}(\beta,\mathcal{F}f_\infty;2-2\lambda-s) \\
 &=& -\Res_{s=1}  Z^{(N)}(\beta,\mathcal{F}f_\infty;s) \\
 &=& -N\cdot f_\infty(0) \cdot \Res_{s=1} \xi_+(\beta;s).
\end{eqnarray*}

If $\lambda\ne1$, then the table above shows that $s=0$ is a pole of  $Z(\alpha,\mathcal{F}f;s)$ of order at most 1, and the residue there is given by 
\begin{eqnarray*}
\Res_{s=0} Z(\alpha,\mathcal{F}f;s) 
 &=& \xi_+(\alpha;0) \cdot \Res_{s=0} \Phi_+(\mathcal{F}f;s)
        + \xi_-(\alpha;0) \cdot \Res_{s=0} \Phi_-(\mathcal{F}f;s)   \\
 &=& \xi_e(\alpha;0) \cdot \mathcal{F}f(0).   
\end{eqnarray*}
Similarly, if $\lambda\ne1$, we can prove that  $Z^{(N)}(\beta,\mathcal{F}f_\infty;s)$ has a pole of order at most $1$ at $s=0$ with residue $\xi_e(\beta;0) \cdot \mathcal{F}f_\infty(0)$.  
Again, by the functional equation, we see that $Z(\alpha,\mathcal{F}f;s)$ has a pole of order at most $1$ at $s=2-2\lambda$ and the residue is given by 
\[
\Res_{s=2-2\lambda} Z(\alpha,\mathcal{F}f;s) 
 = -\xi_e(\beta;0) \cdot \mathcal{F}f_\infty(0). 
\]

Finally we examine the pole at $s=0$ in the case $\lambda=1$. 
If $\lambda=1$, then $\xi_\pm(\alpha;s)$ may have a pole at $s=0=2-2\lambda$, and  
the coefficient of $s^{-2}$ in the Laurent expansion of $Z(\alpha,\mathcal{F}f;s)$ is given by 
\[
\Res_{s=0} \xi_+(\alpha;s) \cdot \Res_{s=0} \Phi_+(\mathcal{F}f;s) 
 + \Res_{s=0} \xi_-(\alpha;s) \cdot \Res_{s=0} \Phi_-(\mathcal{F}f;s) 
 = \Res_{s=0} \xi_e(\alpha;s)  \cdot     \mathcal{F}f(0),  
\]
which vanishes by the assumption [A4]. 
Hence $Z(\alpha,\mathcal{F}f;s)$ has a pole of order at most $1$ at $s=0$ even in the case $\lambda=1$. 
Let us calculate the residues. 
Then, the assumption [A4] implies that $\xi_e(\alpha;s)$ is holomorphic at $s=0$ and $\xi_o(\alpha;s)$ has a pole of order at most $1$ at $s=0$ whose residue is equal to $2\Res_{s=0} \xi_+(\alpha;s)$. 
Since
\[
\Res_{s=0}\Phi_+(\mathcal{F}f;s)
 = \Res_{s=0}\Phi_-(\mathcal{F}f;s) = \mathcal{F}f(0), 
\]
$\Phi_o(\mathcal{F}f;s)$ is holomorphic at $s=0$, and 
$\Phi_e(\mathcal{F}f;s)$ has a pole of order at most $1$ at $s=0$ with residue 
$2\mathcal{F}f(0)$. 
Hence we have 
\begin{eqnarray*}
\Res_{s=0} Z(\alpha,\mathcal{F}f;s) 
 &=& \frac12 \Res_{s=0} (\xi_e(\alpha;s)\Phi_e(\mathcal{F}f;s) 
        + \xi_o(\alpha;s)\Phi_o(\mathcal{F}f;s)) \\
 &=& \xi_e(\alpha;0)\cdot \mathcal{F}f(0)
        + \Res_{s=0} \xi_+(\alpha;s) \cdot \Phi_o(\mathcal{F}f;0). 
\end{eqnarray*}
By Lemma \ref{lem:Ff0} and the assumption [A4], we obtain
\begin{eqnarray*}
\lefteqn{
\Res_{s=0} Z(\alpha,\mathcal{F}f;s) 
} \\
 & & = 
\begin{cases}
\left(\xi_e(\alpha;0) + \frac{\pi i  (1+i^\ell)}{1-i^\ell} 
      \Res_{s=0} \xi_+(\alpha;s)\right) \mathcal{F}f(0) & \\
\qquad {} - \frac{2 \pi i }{1-i^\ell} 
 \Res_{s=0} \xi_+(\alpha;s) \mathcal{F}f_\infty(0)  & (\ell \not\equiv 0 \pmod 4), \\
\xi_e(\alpha;0) \mathcal{F}f(0) 
 & (\ell \equiv 0 \pmod 4).
\end{cases}
\end{eqnarray*}
For $\lambda=1$, the functional equation in [A3] can be rewritten as 
\begin{equation}
\label{eqn:FE for lambda=1}
 \pi^{-s}
 \left(
 \begin{array}{c}
 \frac{\Gamma\left(\frac{s}2\right)}{\Gamma\left(\frac{1-s}2\right)}
    \xi_{e}(\alpha;s) \\[2ex] 
 i\frac{\Gamma\left(\frac{s+1}2\right)}{\Gamma\left(\frac{2-s}2\right)}\xi_{o}(\alpha;s)
 \end{array}\right)
 = N^{-s} \pi^{s}
    \begin{pmatrix}
    \frac{1+i^\ell}2 & \frac{1- i^\ell}2 \\ \frac{-1+i^\ell}2 & \frac{-1-i^\ell}2
\end{pmatrix}  
\left(
 \begin{array}{c}
 \frac{\Gamma\left(\frac{-s}2\right)}{\Gamma\left(\frac{1+s}2\right)}
 \xi_{e}(\beta;-s) \\[2ex] 
 i \frac{\Gamma\left(\frac{1-s}2\right)}{\Gamma\left(\frac{2+s}2\right)}
\xi_{o}(\beta;-s)
 \end{array}
 \right).
\end{equation}
Hence we have
\[
 i{\pi}\Res_{s=0} \xi_{o}(\beta;s)
 = -(1- i^{-\ell}) \xi_{e}(\alpha;0) +\frac{(1+i^{-\ell}) \pi i}2 \Res_{s=0} \xi_{o}(\alpha;s). 
\]
From this, it follows that  
\[
\Res_{s=0} Z(\alpha,\mathcal{F}f;s) 
= 
\begin{cases}
-\frac{2\pi i}{1-i^{-\ell}} 
      \Res_{s=0} \xi_+(\beta;s)\cdot \mathcal{F}f(0) & \\
\qquad {} - \frac{2 \pi i }{1-i^\ell} 
 \Res_{s=0} \xi_+(\alpha;s) \cdot \mathcal{F}f_\infty(0)  & (\ell \not\equiv 0 \pmod 4), \\
\xi_e(\alpha;0) \mathcal{F}f(0) 
 & (\ell \equiv 0 \pmod 4).
\end{cases}
\]
In case $\ell \equiv 0 \pmod 4$, 
by the functional equation \eqref{eqn:FE for lambda=1}, 
we have 
\begin{equation}
\label{eqn:exceptional identity alpha=-beta}
\xi_e(\alpha;0)=-\xi_e(\beta;0), 
\end{equation}
and by  Lemma \ref{lem:Ff0} (2), 
we have $\mathcal{F}f(0)=\mathcal{F}f_\infty(0)$. 
Therefore 
\[
\Res_{s=0} Z(\alpha,\mathcal{F}f;s) 
 = \left(-\frac12\xi_e(\beta;0) + c \right)\mathcal{F}f_\infty(0)
   - \left(-\frac12\xi_e(\alpha;0) + c \right)\mathcal{F}f(0) 
\]
for $\lambda=1$ and $\ell\equiv0\pmod4$. 
This completes the proof of  Lemma~\ref{lem:AnaProperTildeZeta}.
\end{proof}

Now Theorem \ref{thm:ConverseTheoremForSummationFormula} can be proved with the standard argument based on the Mellin inversion formula. 

\begin{proof}
[Proof of Theorem~\ref{thm:ConverseTheoremForSummationFormula}]
 Take a sufficiently large real number $\sigma_0>\max\{1,2-2\Re(\lambda)\}$ so that $\xi_\pm(\alpha;s)$ and  $\xi_\pm(\beta;s)$ are absolutely convergent if $\Re(s) \geq \sigma_0$. 
We consider the contour integral 
 \[
  \frac{1}{2\pi i}\int_{\Re(s)= \sigma_0}
 Z(\alpha, \mathcal{F}f;s)t^{-s}ds.
 \]
 By \eqref{form:UnfoldZetaTilde1}, we have 
 \[
  |Z(\alpha,\mathcal{F}f;s)| \leq
  |\xi_{+}(\alpha;s)|\cdot |\Phi_{+}(\mathcal{F}f;s)|+
 |\xi_{-}(\alpha;s)|\cdot |\Phi_{-}(\mathcal{F}f;s)|. 
 \]
Since the inequality 
\[
 |\xi_{\pm}(\alpha; s)|
 \leq \xi_{\pm}(|\alpha|; \sigma_0) := \sum_{n} |\alpha(\pm n)|n^{-\sigma_0}
\]
holds for any $s$ with $\Re(s) =\sigma_0$,  
 it follows from Lemma~\ref{lem:EstimatesOfGamma} that,  for arbitrary $\mu>0$, 
 \[
 |Z(\alpha, \mathcal{F}f;s)|= O(|\tau|^{-\mu})
 \qquad (\Re(s)=\sigma_0,\ |\tau |=|\Im(s)|\rightarrow\infty).
 \]
 Hence, by the Mellin inversion formula, we have
 \begin{equation}
 \label{form:MellinInversionForTildeZa}
 \sum
 \begin{Sb}
  n=-\infty \\
  n\neq 0
 \end{Sb}^{\infty}
 \alpha(n)(\mathcal{F}f)(nt) =
 \frac{1}{2\pi i}\int_{\Re(s)= \sigma_0}
 Z(\alpha, \mathcal{F}f;s)t^{-s}ds.
 \end{equation}
 On the other hand, the functional equation~\eqref{form:FEofTildeZetaInt} 
 implies that on the vertical line 
  $\Re(s) = 2-2\, \Re(\lambda)-\sigma_0$, 
 for arbitrary $\mu>0$, we have 
 \[
  |Z(\alpha, \mathcal{F}f;s)| =
 |Z^{(N)}(\beta, \mathcal{F}f_{\infty};
 2-2\lambda-s)|=O(|\tau |^{-\mu}) \qquad
 (|\tau |\rightarrow\infty).
 \]
 Furthermore, by the assumption~[A1] and 
 Lemma~\ref{lem:EstimatesOfGamma}, 
 in any vertical strip $\sigma_1\leq \Re(s)\leq \sigma_2$, 
 we have an estimate
 \[
  |Z(\alpha, \mathcal{F}f;s)| = O(e^{|\tau|^{\rho}})
 \qquad (|\tau|\rightarrow\infty)
 \]
 uniformly with respect to $\Re(s)$. 
 Now the Phragmen-Lindel\"{o}f principle
 (see e.g., \cite[Lemma~4.3.4]{Miyake}) implies that 
 in the vertical strip
 \[
  F:= \{s\in \C\, |\, 2-2\, \Re(\lambda)-\sigma_0\leq \Re(s)\leq \sigma_0\},
 \]
 for arbitrary $\mu>0$,  the  estimate
 \[
  |Z(\alpha, \mathcal{F}f;s)|=O(|\tau|^{-\mu}) \qquad
 (|\tau|\rightarrow\infty)
 \]
 holds uniformly with respect to $\Re(s)$. 
 Note that the poles 
 $s=0, 1, 2-2\lambda, 1-2\lambda$ of 
 $Z(\alpha, \mathcal{F}f;s)$ are contained in
 $F$. 
 Hence the standard argument of 
 ``moving the line of integration'' shows 
 \begin{align*}
  \sum
 \begin{Sb}
  n=-\infty \\
  n\neq 0
 \end{Sb}^{\infty}
 \alpha(n)(\mathcal{F}f)(nt)
 &=\frac{1}{2\pi i}\int_{\Re(s) = 2-2\, \Re(\lambda)-\sigma_0}
  Z(\alpha,\mathcal{F}f;s)t^{-s}ds \\[-2pt]
  &\quad +
  \Res_{s=0}Z(\alpha,\mathcal{F}f;s)\cdot t^{0}+
   \Res_{s=1}Z(\alpha,\mathcal{F}f;s)\cdot t^{-1}\\[2pt]
 &\quad +\Res_{s=2-2\lambda}Z(\alpha,\mathcal{F}f;s)\cdot t^{2\lambda-2}
+\Res_{s=1-2\lambda}Z(\alpha,\mathcal{F}f;s)\cdot t^{2\lambda-1}.
 \end{align*}
 Here, by the third assertion of Lemma~\ref{lem:AnaProperTildeZeta}, 
 we can rewrite the formula above
 as 
 \begin{align*}
  \sum
 \begin{Sb}
  n=-\infty \\
  n\neq 0
 \end{Sb}^{\infty}
 \alpha(n)(\mathcal{F}f)(nt)
 &=\frac{1}{2\pi i}\int_{\Re(s) = 2-2\, \Re(\lambda)-\sigma_0}
  Z(\alpha,\mathcal{F}f;s)t^{-s}ds 
  -\alpha(0)(\mathcal{F}f)(0)t^0 \\
 &\qquad +\beta(\infty)f_{\infty}(\infty)t^{-1}
+\beta(0)(\mathcal{F}f_{\infty})(0)t^{2\lambda-2}-\alpha(\infty)f(\infty)t^{2\lambda-1}.
 \end{align*}
 Furthermore,  by using the functional equation~\eqref{form:FEofTildeZetaInt},
 we can transform the first term of the right hand side as
 \begin{align*}
  \frac{1}{2\pi i}\int_{\Re(s) = 2-2\,\Re(\lambda)-\sigma_0}
 Z(\alpha,\mathcal{F}f;s)t^{-s}ds
  &= \frac{1}{2\pi i}
  \int_{\Re(s) = 2-2\,\Re(\lambda)-\sigma_0}
  Z^{(N)}(\beta, \mathcal{F}f_{\infty}; 2-2\lambda-s)
  t^{-s}ds \\[3pt]
  &= \frac{1}{2\pi i}
  \int_{\Re(s) = \sigma_0}
  Z^{(N)}(\beta,\mathcal{F}f_{\infty};s)t^{s+2\lambda-2}
  ds  \\[5pt]
  &=\frac{t^{2\lambda-2}}{2\pi i}
  \int_{\Re(s) =\sigma_0} Z^{(N)}(\beta,
  \mathcal{F}f_{\infty};s) (t^{-1})^{-s}ds\\
  &= t^{2\lambda-2}
  \sum
  \begin{Sb}
   n=-\infty \\
   n\neq 0
  \end{Sb}^{\infty}
  \beta(n)(\mathcal{F}f_{\infty})\left( 
 \frac{nt^{-1}}{N}\right),
 \end{align*}
 and we therefore conclude that
 \begin{align*}
  \sum
 \begin{Sb}
  n=-\infty \\
  n\neq 0
 \end{Sb}^{\infty}
 \alpha(n)(\mathcal{F}f)(nt)
  &= t^{2\lambda-2}
  \sum
  \begin{Sb}
   n=-\infty \\
   n\neq 0
  \end{Sb}^{\infty}
  \beta(n)(\mathcal{F}f_{\infty})\left( 
 \frac{nt^{-1}}{N}\right)
  -\alpha(0)(\mathcal{F}f)(0)t^0 \\
 &\qquad+\beta(\infty)f_{\infty}(\infty)t^{-1}
+\beta(0)(\mathcal{F}f_{\infty})(0)t^{2\lambda-2}-\alpha(\infty)f(\infty)t^{2\lambda-1}.
 \end{align*}
 Finally, letting $t=1$ and transposing two terms, we obtain 
 the summation formula 
 \[
 \alpha(\infty)f(\infty)+\sum_{n=-\infty}^{\infty} \alpha(n)(\mathcal{F}f)(n)=
 \beta(\infty) f_{\infty}(\infty)+
 \sum_{n=-\infty}^{\infty} \beta(n)(\mathcal{F}f_{\infty})\left(\frac{n}{N}\right),
 \]
as required. 
\end{proof}

\section{Converse theorems for automorphic distributions and Maass forms}

\subsection{Statement of the converse theorems}
\label{subsect:4.1}

Let $\lambda$ be a complex number with $\lambda \not\in \frac12-\frac12\Z_{\geq0}$. 
Fix an integer $\ell$ and a positive integer  $N$. 
We assume that $N$ is a multiple of $4$ when $\ell$ is odd.
Let 
 $\alpha=\{\alpha(n)\}_{n\in \Z\setminus\{0\}}$ and $\beta=\{\beta(n)\}_{n\in \Z\setminus\{0\}}$ be  complex sequences of polynomial growth, 
For $\alpha,\beta$, we can define the  $L$-functions 
 $\xi_{\pm}(\alpha;s), \xi_{\pm}(\beta;s)$
  and the completed $L$-functions
 $\Xi_{\pm}(\alpha;s), \Xi_{\pm}(\beta;s)$ by
 \begin{align*}
 \xi_{\pm}(\alpha;s) &= 
 \sum_{n=1}^{\infty}\frac{\alpha(\pm n)}{n^s}, & 
 \Xi_{\pm}(\alpha;s)&=
 (2\pi)^{-s} \Gamma(s) \xi_{\pm}(\alpha;s), \\
  \xi_{\pm}(\beta;s) &= 
 \sum_{n=1}^{\infty}\frac{\beta(\pm n)}{n^s}, & 
 \Xi_{\pm}(\beta;s)&=
 (2\pi)^{-s} \Gamma(s) \xi_{\pm}(\beta;s).  
 \end{align*}
As was proved in \S 3, 
the properties [A1] -- [A4] of  $\xi_{\pm}(\alpha;s), \xi_{\pm}(\beta;s)$ are equivalent to the summation formula 
\begin{eqnarray*}
\lefteqn{
 \alpha(\infty) f(\infty)+
 \sum_{n=-\infty}^{\infty} \alpha(n) (\mathcal{F}f)(n)
} \\
 & & = \beta(\infty) f_{\infty}(\infty) + 
\sum_{n=-\infty}^{\infty} \beta(n) (\mathcal{F}f_{\infty})\left(\frac nN \right) 
\quad (f \in \mathcal{V}^\infty_{\lambda,\ell}),
\end{eqnarray*}
where $\alpha(0), \alpha(\infty), \beta(0), \beta(\infty)$ are defined by 
$(\ref{form:DefOfA0AndAInfty})$, $(\ref{form:DefOfA0AndAInfty2})$, 
$(\ref{form:DefOfB0AndBInfty})$, $(\ref{form:DefOfB0AndBInfty2})$, 
respectively. 
Let $T$ be the automorphic distribution for $\widetilde\Delta(N)$ associated with the summation formula above (see Lemma \ref{lemma:distribution interpretation of sum formula}). 
In this section, we prove a converse theorem that describes a condition for $T$ to be automorphic for $\widetilde{\Gamma}_0(N)$ in terms of the $L$-functions twisted by Dirichlet characters. 
A Weil type converse theorem for Maass forms for $\widetilde{\Gamma}_0(N)$ will be derived immediately from the converse theorem for automorphic distributions for $\widetilde{\Gamma}_0(N)$ (see Theorem \ref{thm:MaassFormFromAutomDist}).

For an odd prime number $r$ with $(N,r)=1$ and a Dirichlet character $\psi$ mod $r$, the twisted $L$-functions 
  $\xi_{\pm}(\alpha, \psi;s), \Xi_{\pm}(\alpha, \psi;s), 
    \xi_{\pm}(\beta, \psi;s), \Xi_{\pm}(\beta, \psi;s)$ 
are defined by
 \begin{align*}
 \xi_{\pm}(\alpha, \psi;s) &= 
 \sum_{n=1}^{\infty}\frac{\alpha(\pm n)\tau_{\psi}(\pm n)}{n^s}, & 
 \Xi_{\pm}(\alpha,\psi; s)&=
 (2\pi)^{-s} \Gamma(s) \xi_{\pm}(\alpha,\psi;s), \\
  \xi_{\pm}(\beta,\psi;s) &= 
 \sum_{n=1}^{\infty}\frac{\beta(\pm n)\tau_{\psi}(\pm n)}{n^s}, & 
 \Xi_{\pm}(\beta, \psi;s)&=
 (2\pi)^{-s} \Gamma(s) \xi_{\pm}(\beta,\psi;s), 
 \end{align*}
where $\tau_{\psi}(n)$ is the Gauss sum defined by 
\begin{equation}
  \tau_{\psi}(n)= \sum
 \begin{Sb}
 m \bmod r \\
 (m, r)=1 
 \end{Sb} \psi(m) e^{2\pi i mn/r}. 
\label{eqn:def of Gauss sum}
\end{equation}
We also define  $\xi_{e}(\alpha, \psi;s)$ and  $\xi_{e}(\beta, \psi;s)$ by
\[
\xi_{e}(\alpha, \psi;s) = \xi_{+}(\alpha, \psi;s) + \xi_{-}(\alpha, \psi;s), \quad  
\xi_{e}(\beta, \psi;s) = \xi_{+}(\beta, \psi;s) + \xi_{-}(\beta, \psi;s). 
\]
We put 
\[
\tau_\psi=\tau_\psi(1)=\sum
 \begin{Sb}
 m \bmod r \\
 (m, r)=1 
 \end{Sb} \psi(m) e^{2\pi i m/r}
\]
and denote by $\psi_{r,0}$ the principal character modulo $r$.  
Recall that the Gauss sums are calculated as follows:
\begin{eqnarray}
\tau_\psi(n) 
 &=& \begin{cases}
   \overline{\psi(n)} \tau_\psi  & (n \not\equiv 0 \pmod{r}), \\
   0 &  (n \equiv 0 \pmod{r}),
      \end{cases}
      \quad \text{if $\psi\ne\psi_{r,0}$},  
\label{eqn:Gauss sum non-trivial psi}\\ 
\tau_{\psi_{r,0}}(n) 
  &=& \begin{cases}
  -1 &  (n \not\equiv 0 \pmod{r}), \\
   r-1 & (n \equiv 0 \pmod{r}).
  \end{cases}
\label{eqn:Gauss sum trivial psi}
\end{eqnarray}
We have therefore
\begin{equation}
\xi_\pm(\alpha,\psi;s) 
 = \begin{cases}
  \displaystyle \tau_\psi \sum_{\overset{\scriptstyle n=1}{(n,r)=1}}^\infty \frac{\alpha(\pm n)\overline{\psi(\pm n)}}{n^s} 
   & (\psi\ne\psi_{r,0}), \\[2ex]
  \displaystyle   r \sum_{n=1}^\infty \frac{\alpha(\pm rn)}{(rn)^s} - \xi_\pm(\alpha;s)
   & (\psi=\psi_{r,0}).
\end{cases}
\label{eqn:explicit twist}
\end{equation}
A similar identity holds also for $\xi_\pm(\beta,\psi;s)$. 
 
 Let $\chi$ be a Dirichlet character mod ${N}$
 that satisfies $\chi(-1)=i^{\ell}$ (resp.\ $\chi(-1)=1$)
 when $\ell$ is even (resp.\ odd).  
Let $\mathbb{P}_N$ be a set of odd prime numbers not dividing $N$ such that, for any positive integers $a, b$ coprime to each other, $\mathbb{P}_N$ contains a prime number $r$ of the form $r=am+b$ for some $m \in \Z_{>0}$.   
For an $r \in \mathbb{P}_N$, denote by $X_r$ the set of all Dirichlet characters mod $r$ (including the principal character $\psi_{r,0}$). 
For $\psi \in X_r$, we define the Dirichlet character  $\psi^*$  by
\begin{equation}
\psi^*(k)=\overline{\psi(k)}\left(\frac{k}{r}\right)^{\ell}. 
\label{eqn:psi-star}
\end{equation}
We put 
\begin{equation}
 C_{\ell,r} =
  \begin{cases}
  1 & (\text{$\ell$ is even}), \\
  \varepsilon_{r}^{\ell}  &   (\text{$\ell$ is odd}).
 \end{cases}
\label{eqn:def of clr}
\end{equation}
(For the definition of $\varepsilon_r$, see \eqref{eqn:def of epsilond}.)

For an $r \in \mathbb{P}_N$ and a $\psi \in X_r$, we consider the following conditions $\mathrm{[A1]}_{r,\psi}$ -- $\mathrm{[A5]}_{r,\psi}$ on  $\xi_{\pm}(\alpha,\psi;s)$ and $\xi_{\pm}(\beta,\psi^*;s)$.  
 
\begin{description}
\item[$\text{\bf [A1]}_{r,\psi}$] 
 $\xi_{\pm}(\alpha,\psi;s), \xi_{\pm}(\beta,\psi^*;s)$  have meromorphic 
 continuations to the whole $s$-plane, and $(s-1)(s-2+2\lambda)\xi_{\pm}(\alpha,\psi;s), (s-1)(s-2+2\lambda)\xi_{\pm}(\beta,\psi^*;s)$ are entire functions, which are of finite order in any vertical strip. 
\item[$\text{\bf [A2]}_{r,\psi}$] 
The residues of  $\xi_{\pm}(\alpha,\psi;s)$ and $\xi_{\pm}(\beta,\psi^*;s)$ satisfy
\[
\mathop{\mathrm{Res}}_{s=1} \xi_+(\alpha,\psi;s) = \mathop{\mathrm{Res}}_{s=1} \xi_-(\alpha,\psi;s), \quad 
\mathop{\mathrm{Res}}_{s=1} \xi_+(\beta,\psi^*;s) = \mathop{\mathrm{Res}}_{s=1} \xi_-(\beta,\psi^*;s). 
\]
\item[$\text{\bf [A3]}_{r,\psi}$] 
$\Xi_{\pm}(\alpha,\psi;s)$ and $\Xi_{\pm}(\beta,\psi^*;s)$
satisfy the following functional equation:
\begin{align*}
 \gamma(s)\left(
 \begin{array}{c}
 \Xi_{+}(\alpha,\psi;s) \\ \Xi_{-}(\alpha,\psi;s)
 \end{array}
 \right) &= \overline{\chi(r)}\cdot C_{\ell,r} \cdot  \psi^*(-N) \cdot 
 r^{2\lambda-2}\cdot (Nr^2)^{2-2\lambda-s} \\
 &\qquad \cdot \Sigma(\ell)\cdot  \gamma(2-2\lambda-s)
 \begin{pmatrix}
 \Xi_{+}\left(\beta, {\psi}^*;2-2\lambda-s\right) \\[4pt]
 \Xi_{-}\left(\beta, {\psi}^*;2-2\lambda-s\right)
 \end{pmatrix}, 
 \end{align*}
 where $\gamma(s)$ and $\Sigma(\ell)$ are the same as in Theorem \ref{thm:FEofZeta1},  namely, 
 \[
  \gamma(s) = 
  \begin{pmatrix}
  e^{\pi s i/2}  & e^{-\pi si/2} \\
  e^{-\pi si/2}  & e^{\pi si/2} 
 \end{pmatrix}, \qquad
 \Sigma(\ell) =
 \begin{pmatrix}
 0 & i^{\ell} \\
 1& 0
 \end{pmatrix}. 
 \]
\item[$\text{\bf [A4]}_{r,\psi}$] 
\begin{itemize}
\item
If $\lambda = \frac q2$ $(q \in \Z_{\geq0},\ q \geq 4)$, then
\[
\xi_+(\alpha,\psi;-k)+(-1)^k\xi_-(\alpha,\psi;-k)=0 \quad (k = 1,2,\ldots,q-3).
\]
\item
If $\lambda = 1$, then
\[
\mathop{\mathrm{Res}}_{s=2-2\lambda} \xi_e(\alpha,\psi;s) 
 = \mathop{\mathrm{Res}}_{s=2-2\lambda} \xi_e(\beta,\psi^*;s) = 0.
\]
\item 
If $\ell \equiv 0 \pmod 4$ as well as $\lambda = 1$, then 
\begin{quote}
$\xi_\pm(\alpha,\psi;s), \xi_\pm(\beta,\psi^*;s)$ are holomorphic at $s=2-2\lambda=0$.
\end{quote}
\end{itemize}

\item[$\text{\bf [A5]}_{r,\psi}$] 
The following relations between residues and special values hold: 
\end{description}
$$
\leqno{(1)} \quad 
\begin{cases}
 \xi_e(\alpha,\psi;0) 
  = \tau_\psi(0)
          \xi_e(\alpha;0) & (\lambda\ne1), \\
 \xi_e(\alpha,\psi;0) 
  = \tau_\psi(0)
     \left(\frac{\overline{\chi(r)}+1}2 \xi_e(\alpha;0) + c(\overline{\chi(r)}-1)\right)
                       & (\lambda=1,\ \ell\equiv0\pmod4), \\
 \overline{\chi(r)}\cdot \psi^*(-N)\cdot C_{\ell,r}
 {\displaystyle \Res_{s=0}}\,\xi_{\pm}(\beta, \psi^{*};s) 
  = \tau_{\psi}(0){\displaystyle \Res_{s=0}}\,\xi_{\pm}(\beta;s) 
  & (\lambda=1,\ \ell \not\equiv 0 \pmod{4}).
\end{cases}
$$
$$
\leqno{(2)} \quad
 \overline{\chi(r)}\cdot \psi^*(-N)\cdot C_{\ell,r}\cdot r^{2\lambda}
 {\displaystyle \Res_{s=1}}\,\xi_{\pm}(\beta, \psi^{*};s) 
     =\tau_{\psi}(0){\displaystyle \Res_{s=1}}\,\xi_{\pm}(\beta;s).
$$
$$
\leqno{(3)} \quad 
\begin{cases}
 \xi_e(\beta, \psi^{*};0)
   = \tau_{\psi^*}(0) \xi_e(\beta;0) & (\lambda\ne1), \\
  \overline{\chi(r)}\xi_e(\beta,\psi^*;0) 
  = \tau_{\psi^*}(0)
     \left(\frac{\overline{\chi(r)}+1}2 \xi_e(\beta;0) - c(\overline{\chi(r)}-1)\right)
                       & (\lambda=1,\ \ell\equiv0\pmod4), \\
 {\displaystyle \Res_{s=0}}\,\xi_{\pm}(\alpha, \psi;s) 
  = \overline{\chi(r)} C_{\ell,r} \psi^*(-N) \tau_{\psi^*}(0){\displaystyle \Res_{s=0}}\,\xi_{\pm}(\alpha;s) 
  & (\lambda=1,\ \ell \not\equiv 0 \pmod{4}).
\end{cases}
$$
$$
\leqno{(4)} \quad
 {\displaystyle \Res_{s=1}}\,\xi_{\pm}(\alpha,\psi;s) 
  =  \overline{\chi(r)}C_{\ell,r}\cdot r^{-2\lambda}
 \cdot \tau_{\psi^*}(0)\cdot {\displaystyle \Res_{s=1}}\,\xi_{\pm}(\alpha;s).
$$
Here $c$ appearing in (1) and (3) is the constant in the definition of $\alpha(0)$ and $\beta(0)$ in \eqref{form:DefOfA0AndAInfty} and \eqref{form:DefOfB0AndBInfty}. 
If $\chi$ is the principal character, then the constant $c$ disappears from the conditions and can be completely arbitrary. If $\chi$ is non-principal, then the conditions determine $c$ uniquely.

\begin{remark}
By \eqref{eqn:Gauss sum non-trivial psi}, the conditions in $\mathrm{[A5]}_{r,\psi}$  become simpler for non-principal characters.
If $\psi$ is different from the principal character, then the first and the second conditions can be reformulated as
$$
\leqno{(1)} \quad 
\begin{cases}
 \xi_e(\alpha,\psi;0)=0 & (\lambda\ne1\ \text{or}\ \lambda=1, \ell\equiv0\pmod4), \\
 \text{$\xi_{\pm}(\beta, \psi^*;s)$ is holomorphic at $s=0$}
  & (\lambda=1,\ \ell \not\equiv 0 \pmod{4}),
\end{cases}
$$
$$
\leqno{(2)} \quad
 \text{$\xi_{\pm}(\beta, \psi^*;s)$ is holomorphic at $s=1$}.
$$
If $\psi^*$ is different from the principal character, then the third and the fourth conditions can be reformulated as
$$
\leqno{(3)} \quad 
\begin{cases}
 \xi_e(\beta, \psi^*;0) = 0 & (\lambda\ne1\ \text{or}\ \lambda=1, \ell\equiv0\pmod4), \\
 \text{$\xi_{\pm}(\alpha,\psi;s)$ is holomorphic at $s=0$} 
  & (\lambda=1,\ \ell \not\equiv 0 \pmod{4}),
\end{cases}
$$
$$
\leqno{(4)} \quad
 \text{$\xi_{\pm}(\alpha,\psi;s)$ is holomorphic at $s=1$.} 
$$
\end{remark}

Now we can formulate our main theorems, the converse theorems for automorphic distributions and Maass forms for $\widetilde{\Gamma}_0(N)$.

\begin{theorem}
\label{thm:ConverseTheoremForCongSubgp}
Let  $\lambda\not\in \frac12-\frac12\Z_{\geq0}$. 
We assume that $\xi_\pm(\alpha;s)$ and $\xi_\pm(\beta;s)$ satisfy the  conditions
 {\rm [A1] -- [A4]} in \S \ref{subsection:3.3}, and define $\alpha(0), \alpha(\infty), \beta(0), \beta(\infty)$ by 
$(\ref{form:DefOfA0AndAInfty})$, $(\ref{form:DefOfA0AndAInfty2})$, 
$(\ref{form:DefOfB0AndBInfty})$, $(\ref{form:DefOfB0AndBInfty2})$, 
respectively.   
We assume furthermore that, for any $r \in \mathbb{P}_N$ and $\psi \in X_r$, 
 $\xi_{\pm}(\alpha,\psi;s)$ and $\xi_{\pm}(\beta,\psi^*;s)$  satisfy the conditions $\mathrm{[A1]}_{r,\psi}$ -- $\mathrm{[A5]}_{r,\psi}$.
Let 
\[
\alpha(\infty)f(\infty)+\sum_{n=-\infty}^{\infty} \alpha(n)(\mathcal{F}f)(n)
 = \beta(\infty) f_\infty(\infty)+
 \sum_{n=-\infty}^{\infty} \beta(n)(\mathcal{F}f_\infty)\left(\frac{n}{N}\right)
\quad (f \in \mathcal{V}^\infty_{\lambda,\ell})
\]
be the summation formula given by the assumptions {\rm [A1] -- [A4]} and Theorem  \ref{thm:ConverseTheoremForSummationFormula}, and $T$ be the automorphic distribution for $\widetilde\Delta(N)$ associated with this summation formula. 
 Then $T$ is an automorphic distribution for $\widetilde{\Gamma}_0(N)$
  with character $\chi$.   
\end{theorem}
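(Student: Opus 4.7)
The strategy follows the Razar--Diamantis--Goldfeld variant of Weil's converse theorem, adapted to distributions. By Lemma~\ref{lemma:distribution interpretation of sum formula} the distribution $T$ is already automorphic for $\widetilde\Delta(N)=\langle \tilde n(1), \tilde{\bar n}(N)\rangle$, so it remains to produce enough additional automorphy relations, one for each pair $(r,\psi)$, to cover the missing generators of $\widetilde\Gamma_0(N)$, all the while matching the character $\chi$ on both sides.

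First I would apply Theorem~\ref{thm:ConverseTheoremForSummationFormula} to each twisted $L$-function pair. For each $r \in \mathbb{P}_N$ and $\psi \in X_r$, the hypotheses $\mathrm{[A1]}_{r,\psi}$--$\mathrm{[A4]}_{r,\psi}$ are precisely the analytic conditions that Theorem~\ref{thm:ConverseTheoremForSummationFormula} demands, now at level $Nr^2$ rather than $N$: the factor $(Nr^2)^{2-2\lambda-s}$ in the functional equation of $\mathrm{[A3]}_{r,\psi}$ identifies $Nr^2$ as the correct level. The role of $\mathrm{[A5]}_{r,\psi}$ is to ensure that, when the formulas \eqref{form:DefOfA0AndAInfty}--\eqref{form:DefOfB0AndBInfty2} are applied to the twisted data, the resulting boundary constants coincide with the natural Gauss-sum twists of $\alpha(0),\alpha(\infty),\beta(0),\beta(\infty)$ by $\tau_\psi(0)$ and $\tau_{\psi^*}(0)$. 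Theorem~\ref{thm:ConverseTheoremForSummationFormula} then yields, for each $(r,\psi)$, a Ferrar--Suzuki summation formula of level $Nr^2$, whose associated distribution $T_{r,\psi}$ is, by Lemma~\ref{lemma:distribution interpretation of sum formula}, automorphic for $\widetilde\Delta(Nr^2)$, and in particular $\tilde{\bar n}(Nr^2)$-invariant.

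The second step is to convert this into a new relation on $T$ itself. Expanding the Gauss sums via $\tau_\psi(n) = \sum_{(j,r)=1} \psi(j)\,\mathbf{e}[jn/r]$ and using the translation rule \eqref{eqn:translate for FT} together with the identity $(\pi_{\lambda,\ell}(\tilde n(u))f)(\infty) = f(\infty)$, the twisted distribution decomposes as
\[
T_{r,\psi} \;=\; \tau_\psi(0)\,T \;+\; \sum_{\substack{j \bmod r \\ (j,r)=1}} \psi(j)\, \pi^*_{\lambda,\ell}\bigl(\tilde n(-j/r)\bigr) T,
\]
modulo boundary corrections controlled by $\mathrm{[A5]}_{r,\psi}$. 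Varying $\psi$ over $X_r$ and Fourier-inverting on $(\Z/r\Z)^\times$ isolates each translate $\pi^*_{\lambda,\ell}(\tilde n(-j_0/r)) T$, so the $\tilde{\bar n}(Nr^2)$-invariance of $T_{r,\psi}$ becomes, after conjugation by $\tilde n(j_0/r)$, an identity $\pi^*_{\lambda,\ell}(\tilde\gamma_{j_0,r})\,T = \chi(\tilde\gamma_{j_0,r})\,T$, where $\tilde\gamma_{j_0,r} := \tilde n(j_0/r)\,\tilde{\bar n}(Nr^2)\,\tilde n(-j_0/r)$ has matrix part $\left(\begin{smallmatrix} 1+j_0Nr & -j_0^2 N \\ Nr^2 & 1-j_0Nr\end{smallmatrix}\right) \in \Gamma_0(N)$, and the character factor emerges from $\overline{\chi(r)}\,C_{\ell,r}\,\psi^*(-N)\,r^{2\lambda-2}$ in $\mathrm{[A3]}_{r,\psi}$.

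The principal obstacle is the concluding group-theoretic step: showing that the $\tilde\gamma_{j_0,r}$, as $r$ ranges over $\mathbb{P}_N$ and $j_0$ over $(\Z/r\Z)^\times$, combined with $\tilde n(1)$ and $\tilde{\bar n}(N)$, generate all of $\widetilde\Gamma_0(N)$. This is where the defining property of $\mathbb{P}_N$ --- containing a prime in every arithmetic progression coprime to $N$, i.e.\ Dirichlet's theorem --- is crucial: for any target $\left(\begin{smallmatrix} a & b \\ cN & d \end{smallmatrix}\right)\in \Gamma_0(N)$, one selects $r \in \mathbb{P}_N$ in a suitable residue class so that the target is expressible in terms of the $\tilde\gamma_{j_0,r}$ and the generators of $\widetilde\Delta(N)$. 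A delicate secondary point is the consistency of the character: for odd $\ell$, the factor $C_{\ell,r}=\varepsilon_r^\ell$ together with the Legendre symbol $\left(\tfrac{\cdot}{r}\right)^\ell$ hidden in $\psi^*$ must combine with $\chi$ to reproduce the theta multiplier $J(\gamma,z)$ of \eqref{form:DefOfGammaStar} evaluated on $\tilde\gamma_{j_0,r}$, a verification that relies on the explicit cocycle of Lemma~\ref{thm:FactorSystem} and the quadratic-reciprocity content of $\varepsilon_d$.
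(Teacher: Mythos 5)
Your opening step matches the paper's: the hypotheses $\mathrm{[A1]}_{r,\psi}$--$\mathrm{[A4]}_{r,\psi}$ feed Theorem \ref{thm:ConverseTheoremForSummationFormula} at level $Nr^2$, and $\mathrm{[A5]}_{r,\psi}$ pins down the boundary constants so that the resulting summation formula is exactly the twisted formula \eqref{form:TwistedSummationForm}. The gap lies in what you then extract from it. You retain only the $\widetilde\Delta(Nr^2)$-automorphy of the twisted distribution, i.e.\ its invariance under $\tilde{\bar n}(Nr^2)$, and after Fourier inversion on $X_r$ and conjugation you get invariance of $T$ under the parabolic elements $\tilde\gamma_{j_0,r}=\tilde n(j_0/r)\,\tilde{\bar n}(Nr^2)\,\tilde n(-j_0/r)$. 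But every such matrix has diagonal entries $\equiv 1\pmod N$, so these elements, together with $\tilde n(1)$ and $\tilde{\bar n}(N)$, all lie in the lift of $\Gamma_1(N)$ and cannot generate $\widetilde\Gamma_0(N)$ once $\Gamma_1(N)\subsetneq\Gamma_0(N)$; moreover $\chi$ is identically $1$ on them, so the character $\chi$ can never be produced this way --- the constant $\overline{\chi(r)}\,C_{\ell,r}\,\psi^*(-N)\,r^{2\lambda-2}$ drops out entirely the moment you keep only $\tilde{\bar n}(Nr^2)$-invariance. (A secondary slip: $T_\psi=\sum_{(j,r)=1}\psi(j)\,\pi^*_{\lambda,\ell}(\tilde n(-j/r))T$ already contains the term $\tau_\psi(0)\alpha(\infty)f(\infty)$, so your extra summand $\tau_\psi(0)T$ double counts it.)

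The level-$Nr^2$ summation formula encodes strictly more than $\widetilde\Delta(Nr^2)$-automorphy: it is equivalent to the identity $T_\psi=\overline{\chi(r)}\,C_{\ell,r}\,\psi^*(-N)\,\pi^*_{\lambda,\ell}(\tilde w_{Nr^2})\bigl((T^\vee)_{\psi^*}\bigr)$, relating the $\psi$-twist of the $\alpha$-side to the $\psi^*$-twist of the dual distribution $T^\vee=\pi^*_{\lambda,\ell}(\tilde w_N^{-1})T$ built from the $\beta$-side (Lemma \ref{lem:distribution interpretation of sum formula for T psi}); this is where $\chi$ and the $\alpha$--$\beta$ interplay live, and you discard it. The paper then uses the cocycle identity \eqref{form:CreateElementOfGamma}, $\tilde w_{Nr^2}^{-1}\tilde n(-m/r)=\tilde n(-k/r)\,\tilde\gamma\,\tilde w_N^{-1}\cdot[\cdots]$ with $\gamma=\left(\begin{smallmatrix}u&k\\ mN&r\end{smallmatrix}\right)$, together with character orthogonality, to show that this family of identities over $\psi\in X_r$ is equivalent to $\pi^*_{\lambda,\ell}(\tilde\gamma)T^\vee=\chi_{N,\ell}(r)\,T^\vee$ for all $\gamma\in\Gamma_0(N)$ with lower-right entry $r$ (Lemma \ref{prop:AutomorphyAndTwists}). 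Such elements with $r\in\mathbb{P}_N$, together with $n(1)$ (and $-1_2$), do generate $\Gamma_0(N)$ --- this is the correct place where Dirichlet's theorem enters --- and Lemma \ref{lem: T and T vee} (2) converts the automorphy of $T^\vee$ with character $\chi_{N,\ell}$ into automorphy of $T$ with character $\chi$. Your outline omits $T^\vee$ and this conversion altogether, and that is precisely the step that yields both the non-parabolic generators and the character.
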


As we saw in Theorem~\ref{thm:MaassFormFromAutomDist}, the image of an
automorphic distribution under the Poisson transform $\mathcal{P}_{\lambda,\ell}$
is a Maass form, and hence Theorem~\ref{thm:ConverseTheoremForCongSubgp} implies the following converse theorem for Maass forms.

\begin{theorem}
 \label{corollary:Maassforms}
Let  $\lambda\not\in \frac12-\frac12\Z_{\geq0}$. 
We assume that $\xi_\pm(\alpha;s)$ and $\xi_\pm(\beta;s)$ satisfy the  conditions
 {\rm [A1] -- [A4]} in \S \ref{subsection:3.3}, and define $\alpha(0), \alpha(\infty), \beta(0), \beta(\infty)$ by 
$(\ref{form:DefOfA0AndAInfty})$, $(\ref{form:DefOfA0AndAInfty2})$, 
$(\ref{form:DefOfB0AndBInfty})$, $(\ref{form:DefOfB0AndBInfty2})$, 
respectively.   
We assume furthermore that, for any $r \in \mathbb{P}_N$ and $\psi \in X_r$, 
 $\xi_{\pm}(\alpha,\psi;s)$ and $\xi_{\pm}(\beta,\psi^*;s)$  satisfy the conditions $\mathrm{[A1]}_{r,\psi}$ -- $\mathrm{[A5]}_{r,\psi}$.
Define the functions $F_\alpha(z)$ and  $G_\beta(z)$ on the upper half plane $\mathcal{H}$ by 
\begin{align}
 \nonumber
 F_\alpha(z) &= \alpha(\infty) \cdot y^{\lambda-\ell/4}+
 \alpha(0)\cdot i^{-\ell/2} \cdot 
 \frac{(2\pi) 2^{1-2\lambda} \Gamma(2\lambda-1)}
 {\Gamma\left(\lambda+\frac{\ell}{4}\right)
 \Gamma\left(\lambda-\frac{\ell}{4}\right)} \cdot y^{1-\lambda-\ell/4} \\[3pt]
 \label{form:Results}
  &\quad + \sum
 \begin{Sb}
 n=-\infty \\
 n\neq 0
 \end{Sb}^{\infty} \alpha(n) \cdot 
 \frac{i^{-\ell/2}\cdot \pi^{\lambda} \cdot  |n|^{\lambda-1}}
{\Gamma\left(\lambda+\frac{\sgn(n)\ell}{4}\right)} \cdot 
 y^{-\ell/4}\, W_{\frac{\sgn(n)\ell}{4}, \lambda-\frac{1}{2}}\left(4\pi|n|y\right)
\cdot \mathbf{e}[nx],  \\
 \nonumber
G_\beta(z) &=  N^\lambda \beta(\infty) \cdot y^{\lambda-\ell/4} 
  + N^{1-\lambda}  \beta(0) \cdot i^{-\ell/2} \cdot
 \frac{(2\pi) 2^{1-2\lambda} \Gamma(2\lambda-1)}
 {\Gamma\left(\lambda+\frac{\ell}{4}\right)
 \Gamma\left(\lambda-\frac{\ell}{4}\right)} \cdot y^{1-\lambda-\ell/4} \\[3pt]
 \label{form:Results2}
  &\quad + N^{1-\lambda}  \sum
 \begin{Sb}
 n=-\infty \\
 n\neq 0
 \end{Sb}^{\infty} \beta(n) \cdot 
 \frac{i^{-\ell/2} \cdot \pi^{\lambda} \cdot  |n|^{\lambda-1}}
{\Gamma\left(\lambda+\frac{\sgn(n)\ell}{4}\right)} \cdot 
 y^{-\ell/4}\, W_{\frac{\sgn(n)\ell}{4}, \lambda-\frac{1}{2}}\left(4\pi|n|y\right)
\cdot \mathbf{e}[nx]. 
\end{align}
Then $F_\alpha(z)$ (resp.\ $G_\beta(z)$) gives a Maass form 
 for $\widetilde{\Gamma}_0(N)$ of weight $\frac{\ell}{2}$ with
character $\chi^{-1}$ (resp.\ $\chi_{N,\ell}^{-1}$), and eigenvalue $(\lambda-\ell/4)(1-\lambda-\ell/4)$, where
\begin{equation}
\chi_{N,\ell}(d)=\overline{\chi(d)}\left(\frac{N}{d}\right)^\ell.
\label{eqn:chi N ell}
\end{equation}
Moreover, 
we have 
\[
\left(\left.F_\alpha\right|_\ell \tilde w_N\right)(z) = G_\beta(z) 
\]
(for the definition of $\tilde w_N$, see \eqref{form:DefOfTildeNandBarN}). 
\end{theorem}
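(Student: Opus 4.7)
The plan is to chain the earlier converse theorems with a direct computation of two Poisson transforms. Under the hypotheses [A1]--[A4], Theorem~\ref{thm:ConverseTheoremForSummationFormula} produces the Ferrar--Suzuki summation formula of level $N$, and Lemma~\ref{lemma:distribution interpretation of sum formula} packages it as an automorphic distribution $T\in\left(\mathcal{V}_{\lambda,\ell}^{\infty}\right)^{*}$ for $\widetilde\Delta(N)$. The twisted hypotheses $\mathrm{[A1]}_{r,\psi}$--$\mathrm{[A5]}_{r,\psi}$ then allow Theorem~\ref{thm:ConverseTheoremForCongSubgp} to upgrade $T$ to an automorphic distribution for the full group $\widetilde\Gamma_0(N)$ with character $\chi$, so that Theorem~\ref{thm:MaassFormFromAutomDist} identifies $\mathcal{P}_{\lambda,\ell}T$ as a Maass form of weight $\ell/2$ for $\widetilde\Gamma_0(N)$ with character $\chi^{-1}$ and eigenvalue $(\lambda-\ell/4)(1-\lambda-\ell/4)$.

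I next identify $\mathcal{P}_{\lambda,\ell}T$ with $F_\alpha$ and $F_\alpha|_\ell\tilde w_N$ with $G_\beta$. A short calculation from the definition of $f_\infty$ gives $p_{\lambda,\ell,z}(\infty)=(p_{\lambda,\ell,z})_\infty(0)=y^{\lambda-\ell/4}$, and the remaining Fourier coefficients are read off directly from \eqref{eqn:Fourier transform of Poisson kernel}; inserting these into $T(p_{\lambda,\ell,z})=\alpha(\infty)p_{\lambda,\ell,z}(\infty)+\sum_n\alpha(n)\mathcal{F}p_{\lambda,\ell,z}(n)$ recovers exactly $F_\alpha(z)$. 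For the second identification, Lemma~\ref{lem:GrowthOfPtrans}(1) gives
\[
(F_\alpha|_\ell\tilde w_N)(z)=\mathcal{P}_{\lambda,\ell}\bigl(\pi^*_{\lambda,\ell}(\tilde w_N^{-1})T\bigr)(z)=T\bigl(\pi_{\lambda,\ell}(\tilde w_N)p_{\lambda,\ell,z}\bigr),
\]
and I evaluate the right-hand side via the second ($\beta$-)form of the summation formula. Using the decomposition $\tilde w_N=\tilde w\,\tilde d(\sqrt N)$ of \eqref{eqn: wN equals d w} together with \eqref{form:FInftyIsPiTau}, a short computation yields
\[
\bigl(\pi_{\lambda,\ell}(\tilde w_N)p_{\lambda,\ell,z}\bigr)_\infty(u)=\pi_{\lambda,\ell}(\tilde d(\sqrt N))p_{\lambda,\ell,z}(u)=N^{\ell/4}p_{\lambda,\ell,Nz}(u),
\]
and a further application of \eqref{eqn:Fourier transform of Poisson kernel} at the argument $n/N$ reproduces precisely the series defining $G_\beta(z)$. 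This proves $F_\alpha|_\ell\tilde w_N=G_\beta$; the eigenvalue assertion for $G_\beta$ then follows from \eqref{form:CommutativityOfDelta}, and moderate growth at every cusp follows from Lemma~\ref{lem:GrowthOfPtrans}(3) applied to the distribution $\pi^*_{\lambda,\ell}(\tilde w_N^{-1})T$.

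It remains to verify the automorphy of $G_\beta$ for $\widetilde\Gamma_0(N)$ with character $\chi_{N,\ell}^{-1}$. For $\tilde\gamma\in\widetilde\Gamma_0(N)$ I write $G_\beta|_\ell\tilde\gamma=F_\alpha|_\ell\bigl((\tilde w_N\tilde\gamma\tilde w_N^{-1})\,\tilde w_N\bigr)$ and invoke \eqref{eqn:commutation of wN and gamma0N}: for $\gamma=\bigl(\begin{smallmatrix} a&b\\cN&d\end{smallmatrix}\bigr)\in\Gamma_0(N)$ the matrix $w_N\gamma w_N^{-1}=\bigl(\begin{smallmatrix} d&-c\\-bN&a\end{smallmatrix}\bigr)$ lies in $\Gamma_0(N)$, and $ad\equiv1\pmod N$ forces $\chi(a)=\overline{\chi(d)}$. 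For even $\ell$ this immediately yields the required character $\chi_{N,\ell}^{-1}(d)=\chi(d)$ because $(N/d)^\ell=1$. The main obstacle is the odd-$\ell$ case: here $4\mid N$, and the lifted conjugate $\tilde w_N\gamma^*\tilde w_N^{-1}=[w_N\gamma w_N^{-1},\,\varepsilon_d^{-1}(cN/d)\sigma]$ differs from the canonical lift $(w_N\gamma w_N^{-1})^{*}=[w_N\gamma w_N^{-1},\,\varepsilon_a^{-1}(-bN/a)]$ by a central factor $[1_2,\mu]$, and one must verify that $\mu^\ell=(N/d)^\ell$. This is a quadratic-reciprocity-type identity paralleling Shimura's computation of the $\theta$-multiplier under the Fricke involution, and I expect to dispatch it by a case analysis on the signs of $a,b,c,d$ using Petersson's cocycle (Lemma~\ref{thm:FactorSystem}) together with the elementary congruence properties of $\varepsilon_d$ modulo $4$.
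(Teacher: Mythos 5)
Your route is essentially the paper's: Theorem~\ref{thm:ConverseTheoremForSummationFormula} and Lemma~\ref{lemma:distribution interpretation of sum formula} produce $T$, Theorem~\ref{thm:ConverseTheoremForCongSubgp} upgrades it to $\widetilde\Gamma_0(N)$-automorphy, and Theorem~\ref{thm:MaassFormFromAutomDist} plus the Poisson-kernel computation \eqref{eqn:Fourier transform of Poisson kernel} identifies $\mathcal{P}_{\lambda,\ell}T$ with $F_\alpha$; your identity $(\pi_{\lambda,\ell}(\tilde w_N)p_{\lambda,\ell,z})_\infty=\pi_{\lambda,\ell}(\tilde d(\sqrt N))p_{\lambda,\ell,z}=N^{\ell/4}p_{\lambda,\ell,Nz}$ is correct and feeding it into the $\beta$-side of the summation formula does reproduce $G_\beta$. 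The only organizational difference is that you handle the second half at the level of functions (applying the summation formula to the transformed Poisson kernel and checking the automorphy of $G_\beta$ by a direct slash computation), whereas the paper does the $\tilde w_N$-conjugation once at the level of distributions: Lemma~\ref{lem: T and T vee} shows $T^\vee=\pi^*_{\lambda,\ell}(\tilde w_N^{-1})T$ is the distribution attached to the flipped summation formula and is $\widetilde\Gamma_0(N)$-automorphic with character $\chi_{N,\ell}$, after which Theorem~\ref{thm:MaassFormFromAutomDist} delivers all three Maass conditions for $G_\beta=\mathcal{P}_{\lambda,\ell}T^\vee$ at once; the two are equivalent, though the distributional packaging spares you from re-deriving growth and the eigenvalue for $G_\beta$ separately. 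The one step you leave open — the odd-$\ell$ central factor in the conjugated lift — is exactly the displayed identity in the proof of Lemma~\ref{lem: T and T vee}~(2), namely $\tilde w_N^{-1}\gamma^*\tilde w_N=(w_N^{-1}\gamma w_N)^*\cdot\left[1_2,\left(\tfrac{N}{d}\right)\right]$, which the paper settles by citing Shimura~\cite[Proposition~1.4]{Shimura73} (or a direct computation with the quadratic residue symbol); so your anticipated case analysis with Petersson's cocycle \eqref{eqn:commutation of wN and gamma0N} does succeed, giving $\mu=\left(\tfrac{N}{d}\right)$ and hence $\mu^{\ell}=\left(\tfrac{N}{d}\right)^{\ell}$, but as written this verification is asserted rather than carried out and should either be completed or replaced by the Shimura citation.
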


\begin{remark}
Recall that $\alpha(0)$ and $\beta(0)$ involve an undetermined constant $c$ in the case where $\lambda=1$, $\ell \equiv 0 \pmod 4$ and $\chi$ is principal. 
Write $\ell = 4k$ $(k \in \Z)$. If $k \ne 0$, then $\lim_{\lambda\rightarrow1} 1/(\Gamma(\lambda+k)\Gamma(\lambda-k)) = 0$. 
Hence the coefficient of $y^{1-\lambda-\ell/4}$ vanishes, and the Maass forms $F_\alpha(z)$ and $G_\beta(z)$ do not depend on the constant $c$. 
If $k=0$, namely $\ell=0$, then  $y^{1-\lambda-\ell/4}=1$ and the constant function is a Maass form for  $\widetilde\Gamma_0(N)$. 
The constant $c$ in $\alpha(0)$ and $\beta(0)$ contributes to $F_\alpha(z)$ and $G_\beta(z)$ as the constant function $2\pi c$. 
\end{remark}

\begin{remark}
\label{remark:NO}
In the original case of  Weil's converse theorem for holomorphic modular forms, 
analytic properties of $L$-functions are assumed only for twists by primitive characters. 
A converse theorem for Maass forms of weight $\ell=0$ under this weaker assumptions is obtained  in Neururer and Oliver \cite{NO}, which appeared very recently in the final stage of preparation of this manuscript.  
The method of Neururer and Oliver (the two circle method) may be adapted to our situation (at least for $\lambda \not\in 1-\frac12\Z_{\geq0}$) to obtain  
Theorems \ref{corollary:Maassforms} and \ref{thm:ConverseTheoremForCongSubgp} 
under the following weaker assumptions: 
\begin{quote}
 ``for any $r \in \mathbb{P}_N$ and {\it any primitive\/} $\psi \in X_r$, 
 $\xi_{\pm}(\alpha,\psi;s)$ and $\xi_{\pm}(\beta,\psi^*;s)$  satisfy the conditions $\mathrm{[A1]}_{r,\psi}$ -- $\mathrm{[A5]}_{r,\psi}$.''
\end{quote}
If $\ell$ is even, then $\psi^*=\overline{\psi}$ and the principal character $\psi_{r,0}$ can be completely avoided from the weaker assumptions. However, if $\ell$ is odd and $\psi$ is the Legendre character, then $\psi^*$ is the principal character $\psi_{r,0}$ and imprimitive.  Hence even the weaker assumptions, which involve both $\psi$ and $\psi^*$, are not closed within primitive characters.  
This is the case also for the converse theorem of holomorphic modular forms of half-integral weight given in Shimura \cite[Section 5]{Shimura73}. 
\end{remark}

To see that Theorem~\ref{thm:ConverseTheoremForCongSubgp} actually implies the assertion for $F_\alpha(z)$ in Theorem~\ref{corollary:Maassforms}, it is enough to calculate  the Poisson transform $\mathcal{P}_{\lambda, \ell}T$ of $T$ in Theorem~\ref{thm:ConverseTheoremForCongSubgp} explicitly.  
By definition, the Poisson transform of $T$ is given by 
\[
 (\mathcal{P}_{\lambda, \ell}T)(z) = T(p_{\lambda, \ell, z}), \qquad 
 p_{\lambda, \ell, z}(t)=\frac{y^{\lambda-\ell/4}}{|z-t|^{2\lambda-\ell/2}\cdot (z-t)^{\ell/2}},
\]
and further, since the Poisson transform is continuous
(cf.\ \cite{Oshima, OS}), we have
\[
(\mathcal{P}_{\lambda, \ell}T)(z) 
 = \alpha(\infty)p_{\lambda, \ell, z}(\infty)+ \sum^{\infty}_{n=-\infty}
\alpha(n)(\mathcal{F}p_{\lambda, \ell, z})(n) 
\]
The explicit formula for $(\mathcal{F}p_{\lambda, \ell,z})(n)$ is given  by \eqref{eqn:Fourier transform of Poisson kernel} in Example \ref{example:Fourier transform of Poisson kernel}, 
Moreover, by \eqref{form:PoissonKerIntertwines}, we have 
\begin{align*}
 p_{\lambda, \ell, z}(\infty)&=  (p_{\lambda, \ell, z})_{\infty}(0)
  = (\pi_{\lambda, \ell}(\tilde{w}^{-1})p_{\lambda, \ell,z})(0) 
  =  (q_{\lambda, \ell, 0}\, \big|_{\ell} \, \tilde{w}^{-1})(z), \\
 q_{\lambda,\ell,0}(z)&= \Im(z)^{\lambda-(\ell/4)} \, \big|_{\ell} \, \tilde{w}, 
\end{align*} 
and hence 
\[
 (p_{\lambda, \ell,z})(\infty) = \Im(z)^{\lambda-(\ell/4)} = y^{\lambda-(\ell/4)}.
\]
Now it is easy to check that $(\mathcal{P}_{\lambda, \ell}T)(z)$ coincides with $F_\alpha(z)$, and Theorem~\ref{corollary:Maassforms} for $F_\alpha(z)$ follows immediately from Theorem~\ref{thm:MaassFormFromAutomDist} and Theorem~\ref{thm:ConverseTheoremForCongSubgp}. 

The assertion for $G_\beta(z)$ in Theorem~\ref{corollary:Maassforms} is a consequence of the following lemma together with Theorem~\ref{thm:ConverseTheoremForCongSubgp}. 

\begin{lemma}
\label{lem: T and T vee}
Let $T$ be a distribution on $\mathcal{V}_{\lambda,\ell}^\infty$ and put $T^\vee=\pi^*_{\lambda,\ell}(\tilde w_N^{-1})T$. 

$(1)$ 
If $T$ is the automorphic distribution for $\widetilde\Delta(N)$ associated with the summation formula 
\begin{eqnarray*}
\lefteqn{
\alpha(\infty) f(\infty)+
 \sum_{n=-\infty}^{\infty} \alpha(n) (\mathcal{F}f)(n)
} \\
 & & = \beta(\infty) f_{\infty}(\infty) + 
\sum_{n=-\infty}^{\infty} \beta(n) (\mathcal{F}f_{\infty}) \left(\frac nN\right)
\quad (f \in \mathcal{V}^\infty_{\lambda,\ell}),
\end{eqnarray*}
then $T^\vee$ is the automorphic distribution for $\widetilde\Delta(N)$ associated with the summation formula 
\begin{eqnarray}
\lefteqn{
  N^\lambda \beta(\infty)f(\infty) 
   +  N^{1-\lambda} \sum_{n=-\infty}^\infty  \beta(n)(\mathcal{F}f)(n) 
} \nonumber \\
 & & = i^{-\ell} N^{-\lambda} \alpha(\infty) f_\infty(\infty) 
   + i^{-\ell} N^{\lambda-1} \sum_{n=-\infty}^{\infty} \alpha(n) (\mathcal{F}f_\infty)\left(\frac nN\right).
\label{eqn:sum formula for Tvee}
\end{eqnarray}

$(2)$ 
The distribution $T$ is an automorphic distribution for $\widetilde{\Gamma}_0(N)$ with character $\chi$ if and only if  $T^\vee$ is an automorphic distribution for $\widetilde{\Gamma}_0(N)$ with character $\chi_{N,\ell}$. 
\end{lemma}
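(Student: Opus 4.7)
The plan is to handle parts (1) and (2) in sequence.

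For part (1), I begin by deriving an explicit formula for $h := \pi_{\lambda,\ell}(\tilde{w}_{N})f$. Writing $\tilde{w}_{N}=\tilde{d}(1/\sqrt{N})\,\tilde{w}$ via \eqref{eqn: wN w equals d}, combining \eqref{form:FInftyIsPiTau} (which, via \eqref{eqn:square of infty-operation} and a direct check from \eqref{form:DefOfActionOnVlambdaMu}, gives $\pi_{\lambda,\ell}(\tilde{w})g = i^{-\ell}g_{\infty}$) with the elementary scaling action of $\tilde{d}(1/\sqrt{N})$, I obtain
\[
h(x) = i^{-\ell}N^{\lambda}\,f_{\infty}(Nx), \qquad h_{\infty}(x) = N^{-\lambda}f(x/N).
\]
Evaluating at $x=0$ yields $h(\infty) = i^{-\ell}N^{-\lambda}f_{\infty}(\infty)$ and $h_{\infty}(\infty) = N^{\lambda}f(\infty)$, while the Fourier scaling rule \eqref{eqn:t-mult} gives $(\mathcal{F}h)(n) = i^{-\ell}N^{\lambda-1}(\mathcal{F}f_{\infty})(n/N)$ and $(\mathcal{F}h_{\infty})(n/N) = N^{1-\lambda}(\mathcal{F}f)(n)$. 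Since $T^{\vee}(f)=T(h)$, applying the given summation formula \eqref{form:GivenSumFormula} to $h$ in its two equivalent forms (left-hand side and right-hand side) and substituting produces precisely the two expressions of $T^{\vee}(f)$ making up \eqref{eqn:sum formula for Tvee}. Automorphy of $T^{\vee}$ under $\widetilde{\Delta}(N)$ is then automatic from Lemma~\ref{lemma:distribution interpretation of sum formula}, as \eqref{eqn:sum formula for Tvee} has the shape of \eqref{form:GivenSumFormula} with new coefficients.

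For part (2), the key identity is
\[
\pi^{*}_{\lambda,\ell}(\tilde{\gamma})\,T^{\vee} \;=\; \pi^{*}_{\lambda,\ell}(\tilde{w}_{N}^{-1})\;\pi^{*}_{\lambda,\ell}\bigl(\tilde{w}_{N}\tilde{\gamma}\tilde{w}_{N}^{-1}\bigr)\,T \qquad (\tilde{\gamma}\in\widetilde{\Gamma}_{0}(N)).
\]
By \eqref{eqn:commutation of wN and gamma0N}, $\tilde{w}_{N}\tilde{\gamma}\tilde{w}_{N}^{-1}$ projects to $\gamma''=\bigl(\begin{smallmatrix}d & -c \\ -bN & a\end{smallmatrix}\bigr)\in\Gamma_{0}(N)$, whose bottom-right entry is $a$, and $ad\equiv 1\pmod{N}$ gives $\chi(a)=\overline{\chi(d)}$. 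When $\ell$ is even, $\widetilde{\Gamma}_{0}(N)$ is the full preimage of $\Gamma_{0}(N)$ inside $\widetilde{G}_{1}$; since both $\tilde{w}_{N}$ and $\tilde{\gamma}$ lie in $\widetilde{G}_{1}$, so does their conjugate, which therefore belongs to $\widetilde{\Gamma}_{0}(N)$. Its character value is $\chi(a)=\overline{\chi(d)}=\chi_{N,\ell}(\tilde{\gamma})$ (using $(N/d)^{\ell}=1$). This gives one direction of the equivalence, and the reverse direction follows by the symmetric argument with $\tilde{w}_{N}$ replaced by $\tilde{w}_{N}^{-1}$, together with the involutive identity $(\chi_{N,\ell})_{N,\ell}=\chi$.

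The main obstacle is the odd $\ell$ case (with $4\mid N$), in which $\tilde{w}_{N}\tilde{\gamma}\tilde{w}_{N}^{-1}$ need not coincide with the canonical lift $(\gamma'')^{*}$ defined by \eqref{form:DefOfGammaStar}, but differs from it by a central element $[\mathbf{1}_{2},\eta]$ with $\eta\in\mu_{4}$. Since $\pi^{*}_{\lambda,\ell}([\mathbf{1}_{2},\eta])T = \eta^{\ell}T$, the argument reduces to verifying $\eta^{\ell}=(N/d)^{\ell}$. The value of $\eta$ is computed by applying the Petersson cocycle of Lemma~\ref{thm:FactorSystem} to the triple product $\tilde{w}_{N}\cdot\tilde{\gamma}\cdot\tilde{w}_{N}^{-1}$, and comparing the resulting $\mu_{4}$-factor against the canonical normalizations $\varepsilon_{d}^{-1}(cN/d)$ of $\gamma^{*}$ and $\varepsilon_{a}^{-1}(-bN/a)$ of $(\gamma'')^{*}$. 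The resulting expression is then simplified by the standard quadratic-reciprocity identities linking $\varepsilon_{a},\varepsilon_{d}$ and the Jacobi symbols for matrices in $\Gamma_{0}(4)$ satisfying $ad-bcN=1$ (classical in Shimura's theory of half-integral weight forms; cf.~\cite[\S1]{Shimura73}), which yields $\eta^{\ell}=(N/d)^{\ell}$. Combined with $\chi((\gamma'')^{*})=\chi(a)$, this produces $\chi(\tilde{w}_{N}\tilde{\gamma}\tilde{w}_{N}^{-1})=\chi(a)(N/d)^{\ell}=\chi_{N,\ell}(\tilde{\gamma})$, completing the proof.
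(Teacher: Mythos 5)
Your argument is correct and follows essentially the same route as the paper: part (1) is the paper's computation (decompose $\tilde w_N$ into $\tilde w$ and a diagonal element, use the scaling rules for $\mathcal{F}$ and for $f\mapsto f_\infty$, and evaluate the original formula at $\pi_{\lambda,\ell}(\tilde w_N)f$), and part (2) is the same conjugation argument based on \eqref{eqn:commutation of wN and gamma0N}. The one step you leave as a promised cocycle computation — that the central discrepancy satisfies $\eta^\ell=(N/d)^\ell$ in the odd-$\ell$ case — is exactly the identity $\tilde w_N^{-1}\gamma^*\tilde w_N=(w_N^{-1}\gamma w_N)^*\cdot\left[1_2,\left(\frac{N}{d}\right)\right]$ that the paper likewise obtains from Shimura's Proposition 1.4, so your proof matches the paper's level of detail there as well.
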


\begin{proof}
$(1)$ 
Since $\tilde w_N = \tilde w \tilde d(\sqrt N)$ by \eqref{eqn: wN equals d w}, we have 
\[
T^\vee(f) 
 = T\left(\pi_{\lambda,\ell}(\tilde w) \pi_{\lambda,\ell}(\tilde d(\sqrt N))f\right)
 = T_\infty\left(\pi_{\lambda,\ell}(\tilde d(\sqrt N))f\right).
\]
For simplicity, put 
\[
f^*(x)=(\pi_{\lambda,\ell}(\tilde d(\sqrt N))f)(x)=N^{-\lambda}f\left(\frac xN\right).
\]
Then 
\[
\mathcal{F}f^*\left(\frac nN\right)=N^{1-\lambda}\mathcal{F}f(n), \quad 
f^*(\infty)=N^\lambda f_\infty(0) =N^\lambda f(\infty).
\]
Hence 
\[
T^\vee(f) =  T_\infty\left(f^*\right)
 =  N^\lambda \beta(\infty)f(\infty) 
   + N^{1-\lambda} \sum_{n=-\infty}^\infty  \beta(n)(\mathcal{F}f)(n).
\]
On the other hand, by \eqref{eqn:square of infty-operation}, we have
\[
(T^\vee)_\infty(f)
 = i^{-\ell} T^\vee(f_\infty) = i^{-\ell} T^\vee(\pi_{\lambda,\ell}(\tilde w^{-1})f) 
 = i^{-\ell} T(\pi_{\lambda,\ell}(\tilde w_N \tilde w^{-1})f). 
\]
Since $\tilde w_N \tilde w^{-1}=\tilde d(1/\sqrt{N})$ by \eqref{eqn: wN w equals d}, 
\[
\pi_{\lambda,\ell}(\tilde w_N \tilde w^{-1})f(x)
  = N^{\lambda} f(Nx).
\]
Hence 
\[
(T^\vee)_\infty(f)
 = i^{-\ell} N^{\lambda} T(f(Nx)) 
 = i^{-\ell} N^{-\lambda}\alpha(\infty) f(\infty) +  i^{-\ell} N^{\lambda-1} \sum_{n=-\infty}^\infty \alpha(n) \mathcal{F}f\left(\frac nN\right).
\]
Since $T^\vee(f)=(T^\vee)_\infty(f_\infty)$, we get $(\ref{eqn:sum formula for Tvee})$.

$(2)$ $T$ is automorphic for $\widetilde{\Gamma}_0(N)$ with character $\chi$, if and only if $T^\vee$ is automorphic for $\tilde w_N^{-1} \widetilde{\Gamma}_0(N) \tilde w_N$ with character $\chi'$, where $\chi'(\tilde w_N^{-1} \tilde\gamma \tilde w_N)=\chi(\tilde\gamma)$ $(\tilde\gamma  \in \widetilde{\Gamma}_0(N))$. 
By \eqref{eqn:commutation of wN and gamma0N}, if $\ell$ is even, then $\tilde w_N^{-1} \widetilde{\Gamma}_0(N) \tilde w_N =  \widetilde{\Gamma}_0(N)$ and $\chi'(\tilde\gamma ) = \chi(a) = \overline{\chi(d)} = \overline\chi(\tilde\gamma)=\chi_{N,\ell}(\tilde\gamma)$ 
for $\tilde\gamma=\left[\begin{pmatrix} a & b \\ cN & d \end{pmatrix}, \xi\right] \in \widetilde{\Gamma}_0(N)$.  
If $\ell$ is odd, then every element in $\widetilde{\Gamma}_0(N)$ is of the form 
\[
\gamma^*=\left[\begin{pmatrix} a & b \\ cN & d \end{pmatrix}, \varepsilon_d^{-1} \left(\frac{cN}d\right)\right], \quad   
\gamma=\begin{pmatrix} a & b \\ cN & d \end{pmatrix} \in \Gamma_0(N), 
\]
and, again by \eqref{eqn:commutation of wN and gamma0N},  we have
\begin{eqnarray*}
\tilde w_N^{-1} \gamma^* \tilde w_N
 &=& \left[\begin{pmatrix} d & -c \\ -bN & a \end{pmatrix}, \varepsilon_d^{-1} \left(\frac{cN}d\right)\sigma\right] \\
 &=& \left[\begin{pmatrix} d & -c \\ -bN & a \end{pmatrix}, \varepsilon_a^{-1} \left(\frac{-bN}a\right)\right] \cdot
 \left[\begin{pmatrix} 1 & 0 \\ 0 & 1 \end{pmatrix}, \left(\frac{N}d\right)\right] \\
 &=& (w_N^{-1} \gamma w_N)^* \cdot
 \left[\begin{pmatrix} 1 & 0 \\ 0 & 1 \end{pmatrix}, \left(\frac{N}d\right)\right]. 
\end{eqnarray*}
Here the second equality follows from Shimura~\cite[Proposition~1.4]{Shimura73} (or from a direct computation based on basic properties of the quadratic residue symbol in the sense of \cite{Shimura73}).

\end{proof}

\subsection{Proof of Theorem~\ref{thm:ConverseTheoremForCongSubgp}}

The key to the proof of Theorem~\ref{thm:ConverseTheoremForCongSubgp} 
is the translation of the $\widetilde{\Gamma}_0(N)$-automorphy of the distribution $T$ into the form of Ferrar-Suzuki summation formula. 

\begin{proposition}
\label{prop:TwistsOfSumAndCongAutomDist}
Let $T$  be the automorphic distribution for $\widetilde\Delta(N)$ associated with 
the summation formula 
\[
\alpha(\infty)f(\infty)+\sum_{n=-\infty}^{\infty} \alpha(n)(\mathcal{F}f)(n)=
\beta(\infty) f_{\infty}(\infty)+
 \sum_{n=-\infty}^{\infty} \beta(n)(\mathcal{F}f_{\infty})\left(\frac{n}{N}\right) 
\quad (f \in \mathcal{V}^\infty_{\lambda,\ell}).
\]
Then the following two conditions are equivalent:
\begin{enumerate}
\def\labelenumi{(\roman{enumi})}
\item
$T$ is an automorphic distribution for $\widetilde{\Gamma}_0(N)$
with character $\chi$.
\item 
For every $r \in \mathbb{P}_N$ and every Dirichlet character $\psi \in X_r$, 
the following summation formula holds:
\begin{eqnarray}
\lefteqn{ 
\alpha(\infty)\tau_{\psi}(0)f(\infty)+
 \sum_{n=-\infty}^{\infty} \alpha(n)\tau_{\psi}(n)\left(\mathcal{F}f\right)(n) 
} \nonumber \\
 & & \nonumber
 = \overline{\chi(r)}\cdot C_{\ell,r}
 \cdot \psi^*(-N)\cdot
 \biggl\{
 r^{-2\lambda}\cdot \beta(\infty)\tau_{\psi^*}(0)f_{\infty}(\infty)
 \biggr. \\
 & &
 \hspace{4em} \left.
{} + r^{2\lambda-2}
 \sum_{n=-\infty}^{\infty} \beta(n)\tau_{\psi^*}(n)
(\mathcal{F}f_{\infty})\left(\frac{n}{Nr^2}\right)
 \right\} \quad (f \in \mathcal{V}^\infty_{\lambda,\ell}).
 \label{form:TwistedSummationForm}
\end{eqnarray}
\end{enumerate}
\end{proposition}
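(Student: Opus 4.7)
The plan is to reduce both conditions to a common intermediate family of identities. For each prime $r\in\mathbb{P}_N$ and each residue $m\in(\Z/r\Z)^\times$, choose integers $a,b$ with $ar-bmN=1$, and let $\tilde\gamma_{m,r}$ be the lift to $\widetilde{\Gamma}_0(N)$ of $\gamma_{m,r}=\left(\begin{smallmatrix}a & b \\ mN & r\end{smallmatrix}\right)$. Modulo the $\widetilde\Delta(N)$-invariance of $T$, full automorphy of $T$ under $\widetilde{\Gamma}_0(N)$ with character $\chi$ is equivalent to the family of identities $\pi^*_{\lambda,\ell}(\tilde\gamma_{m,r})T=\chi(\tilde\gamma_{m,r})T$ for all such $r,m$: indeed, the assumption that $\mathbb{P}_N$ contains primes in every residue class coprime to $N$, combined with the standard fact that $\Gamma_0(N)$ is generated by $\Delta(N)$ together with matrices of the form $\gamma_{m,r}$, ensures this reduction.

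The core computational step is to translate each identity $\pi^*_{\lambda,\ell}(\tilde\gamma_{m,r})T=\chi(\tilde\gamma_{m,r})T$ into the twisted summation formula by means of the Bruhat decomposition
\[
\tilde\gamma_{m,r} \;=\; \tilde n\!\left(\tfrac{a}{mN}\right)\cdot\tilde w_N\cdot\tilde d(m\sqrt{N})\cdot\tilde n\!\left(\tfrac{r}{mN}\right)
\]
(up to a central multiplier governed by Lemma~\ref{thm:FactorSystem} and, for odd $\ell$, by the theta multiplier). Combining this with $T_\infty = \pi^*_{\lambda,\ell}(\tilde w^{-1})T$ and the identity $\tilde w_N = \tilde w\,\tilde d(\sqrt N)$ of \eqref{eqn: wN equals d w}, the identity rewrites as an equality between shifted copies of $T$ on one side and shifted, dilated copies of $T_\infty$ on the other. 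Applied to a test function $f\in\mathcal{V}_{\lambda,\ell}^\infty$ and evaluated via the original summation formula—using \eqref{eqn:translate for FT} to convert the action of $\tilde n(u)$ into an exponential factor $\mathbf{e}[un]$ inside the Fourier sum and \eqref{eqn:t-mult} to absorb the dilation—both sides become explicit series in $n$ with exponential weights of the form $\mathbf{e}[mn/r]$ and $\mathbf{e}[m^*n/r]$ for $m^*\equiv -(mN)^{-1}\pmod r$. Summing these identities against $\psi(m)$ over $m\in(\Z/r\Z)^\times$ produces the Gauss sums $\tau_\psi(n)=\sum_m\psi(m)\mathbf{e}[mn/r]$ on the left and, after the change of summation variable $m\mapsto m^*$ dictated by the Bruhat relation, produces $\tau_{\psi^*}(n)$ on the right, recovering \eqref{form:TwistedSummationForm}. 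Conversely, the orthogonality of Dirichlet characters on $(\Z/r\Z)^\times$ inverts the character sum and extracts each individual identity $\pi^*_{\lambda,\ell}(\tilde\gamma_{m,r})T=\chi(\tilde\gamma_{m,r})T$ from the collection of twisted formulas as $\psi$ ranges over $X_r$.

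The main obstacle is the careful bookkeeping of the scalar factor $\overline{\chi(r)}\,C_{\ell,r}\,\psi^*(-N)\,r^{\pm 2\lambda}$ appearing on the right-hand side of \eqref{form:TwistedSummationForm}. The $\overline{\chi(r)}$ arises from $\chi(\tilde\gamma_{m,r})=\chi(r)$ combined with the adjoint convention in $\pi^*_{\lambda,\ell}$; the powers of $r^{\pm 2\lambda}$ come from the dilation $\tilde d(m\sqrt N)$ in the Bruhat factorization combined with the weight of $\pi_{\lambda,\ell}$; and for odd $\ell$ the theta multiplier $J(\gamma_{m,r},z)=\varepsilon_r^{-1}\!\left(\tfrac{mN}{r}\right)(mNz+r)^{1/2}$ (see \eqref{form:DefOfGammaStar}) contributes both $C_{\ell,r}=\varepsilon_r^\ell$ and a Legendre symbol $\left(\tfrac{mN}{r}\right)^\ell$, which under the substitution $m\mapsto m^*$ and the definition \eqref{eqn:psi-star} converts $\psi$ into $\psi^*$ and leaves behind the residual constant $\psi^*(-N)$. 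Reconciling all of these factors with the cocycle $\sigma$ of Lemma~\ref{thm:FactorSystem} and the commutation relation \eqref{eqn:commutation of wN and gamma0N} constitutes the central technical burden of the argument.
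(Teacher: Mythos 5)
Your high-level plan coincides with the paper's: reduce condition (i), using the $\widetilde\Delta(N)$-invariance of $T$ and the generation of $\Gamma_0(N)$ by $n(1)$ together with matrices whose lower row is $(mN,r)$ with $r\in\mathbb{P}_N$, to a family of single-matrix automorphy relations; convert each such relation into an identity between translated copies of $T$ and of $T^\vee=\pi^*_{\lambda,\ell}(\tilde w_N^{-1})T$; sum against $\psi(m)$ to create Gauss sums and recover \eqref{form:TwistedSummationForm}; and invert by orthogonality of characters for the converse. This is exactly the structure of Lemma~\ref{lem:distribution interpretation of sum formula for T psi} and Lemma~\ref{prop:AutomorphyAndTwists} (the paper routes the single-matrix relations through $T^\vee$ and the character $\chi_{N,\ell}$ via Lemma~\ref{lem: T and T vee}, but that is bookkeeping).

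The gap is in your computational core. The factorization $\tilde\gamma_{m,r}=\tilde n\!\left(\tfrac{a}{mN}\right)\tilde w_N\,\tilde d(m\sqrt N)\,\tilde n\!\left(\tfrac{r}{mN}\right)$ (up to a central factor) is correct as a matrix identity, but it is the wrong identity for this argument, and the conclusion you draw from it --- that unfolding against a test function yields series with weights $\mathbf{e}[mn/r]$, $\mathbf{e}[m^*n/r]$ and $m$-independent powers of $r$ --- does not follow. Unfolding $\pi^*_{\lambda,\ell}(\tilde\gamma_{m,r})T$ through your factorization, the translations contribute factors $\mathbf{e}\!\left[\pm an/(mN)\right]$ and $\mathbf{e}\!\left[\pm rn/(mN)\right]$, in which $m$ enters through the \emph{denominator}, so summing $\psi(m)$ over $m\bmod r$ does not assemble these into $\tau_\psi(n)$; moreover the dilation $\tilde d(m\sqrt N)$ produces $m$-dependent factors $|m|^{\pm2\lambda}$ that cannot be pulled out of the character sum, whereas the constants $r^{-2\lambda}$ and $r^{2\lambda-2}$ in \eqref{form:TwistedSummationForm} must be independent of $m$. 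The identity that makes the Gauss-sum mechanism work is the Weil--Razar commutation relation \eqref{form:CreateElementOfGamma}, namely $\tilde w_{Nr^2}^{-1}\,\tilde n(-m/r)=\tilde n(-k/r)\,\tilde\gamma\,\tilde w_N^{-1}\cdot(\text{central})$ for $\gamma=\begin{pmatrix} u& k\\ mN& r\end{pmatrix}$, equivalently $\gamma=n(k/r)\,\bar n(mNr)\,d(1/r)$ up to lift: there both translation amounts have denominator $r$, with $k\equiv-(mN)^{-1}\pmod r$ playing the role of your $m^*$, and the dilation part ($\tilde w_{Nr^2}$ versus $\tilde w_N$, i.e.\ $d(1/r)$) is independent of $m$. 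Only with that identity does summation against $\psi(m)$ give $\tau_\psi(n)$ on one side and, after replacing the summation variable $m$ by $k$, $\tau_{\psi^*}(n)$ on the other (the Legendre symbol from the theta multiplier effecting $\psi\mapsto\psi^*$ and yielding $C_{\ell,r}$ and $\psi^*(-N)$), together with the clean powers $r^{-2\lambda}$, $r^{2\lambda-2}$. Until your Bruhat factorization is replaced by this relation, with its metaplectic lift verified via Lemma~\ref{thm:FactorSystem}, the passage between the automorphy relations and \eqref{form:TwistedSummationForm} --- and hence both directions of the equivalence --- is not established.
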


Theorem~\ref{thm:ConverseTheoremForCongSubgp} can easily be proved by Proposition~\ref{prop:TwistsOfSumAndCongAutomDist}. 
Indeed,  by the converse theorem for Ferrar-Suzuki summation formulas (Theorem \ref{thm:ConverseTheoremForSummationFormula}), if the assumptions $\mathrm{[A1]}_{r,\psi}$ --  $\mathrm{[A4]}_{r,\psi}$ are satisfied by $\xi_\pm(\alpha,\psi;s)$ and  $\xi_\pm(\beta,\psi^*;s)$, then the summation formula 
\begin{eqnarray}
\lefteqn{ 
\alpha_\psi(\infty)f(\infty)+\alpha_\psi(0)\mathcal{F}f(0)
 + \sum_{n \ne 0} \alpha(n)\tau_{\psi}(n)\left(\mathcal{F}f\right)(n) 
} \nonumber \\
 & & 
 = \beta_\psi(\infty)f_\infty(\infty)+\beta_\psi(0)\mathcal{F}f_\infty(0) 
   \nonumber \\
 & &
\qquad {} + \overline{\chi(r)}\cdot C_{\ell,r}
 \cdot \psi^*(-N)\cdot r^{2\lambda-2}
 \sum_{n \ne 0} \beta(n)\tau_{\psi^*}(n)
(\mathcal{F}f_{\infty})\left(\frac{n}{Nr^2}
 \right)
 \label{form:TwistedSummationForm 2}
\end{eqnarray}
holds for some constants $\alpha_\psi(0)$, $\alpha_\psi(\infty)$, $\beta_\psi(0)$, and $\beta_\psi(\infty)$. 
It is sufficient to show that this summation formula coincides with $(\ref{form:TwistedSummationForm})$. 
The four constants $\alpha_\psi(0)$, $\alpha_\psi(\infty)$, $\beta_\psi(0)$ and $\beta_\psi(\infty)$ are determined by $(\ref{form:DefOfA0AndAInfty})$, $(\ref{form:DefOfA0AndAInfty2})$, 
$(\ref{form:DefOfB0AndBInfty})$, and $(\ref{form:DefOfB0AndBInfty2})$. 
If the assumption  $\mathrm{[A5]}_{r,\psi}$ is satisfied, 
then they are actually equal to the values given in $(\ref{form:TwistedSummationForm})$.  
This is easily seen unless $\lambda=1$ and $\ell\equiv 0 \pmod 4$. 
In the exceptional case, we have to be a little careful, since $\alpha_\psi(0)$ and $\beta_\psi(0)$ involve an undetermined constant $c_\psi$.  
If the summation formulas \eqref{form:TwistedSummationForm} and \eqref{form:TwistedSummationForm 2} coincide, then $c_\psi$ can not be arbitrary. 
By using the identities
\[
\xi_e(\beta;0)=-\xi_e(\alpha;0), \quad 
\overline{\chi(r)}\cdot C_{\ell,r}
 \cdot \psi^*(-N)\cdot r^{2\lambda-2}\xi_e(\beta,\psi^*;0)=-\xi_e(\alpha,\psi;0)  
\]
obtained from \eqref{eqn:exceptional identity alpha=-beta}, 
we see that the constant $c_\psi$ is uniquely determined by the constant $c$ in  $\alpha(0)$ and $\beta(0)$ as 
\[
c_\psi=\left(\frac{\overline{\chi(r)}-1}4 \xi_e(\alpha;0)+\frac{c(\overline{\chi(r)}+1)}2\right)\tau_\psi(0).
\]
With this choice of $c_\psi$, it follows from the assumption  $\mathrm{[A5]}_{r,\psi}$ that $\alpha_\psi(0)$, $\alpha_\psi(\infty)$, $\beta_\psi(0)$ and $\beta_\psi(\infty)$ are actually equal to the values given in $(\ref{form:TwistedSummationForm})$. 
Hence the summation formula \eqref{form:TwistedSummationForm 2} coincides with \eqref{form:TwistedSummationForm}  also in the case where $\lambda=1$ and $\ell\equiv 0 \pmod 4$.   
Therefore, if all the assumptions in Theorem~\ref{thm:ConverseTheoremForCongSubgp}  are satisfied, then $T$ becomes an automorphic distribution for $\widetilde{\Gamma}_0(N)$. 
This shows that Proposition~\ref{prop:TwistsOfSumAndCongAutomDist} implies   Theorem~\ref{thm:ConverseTheoremForCongSubgp}.  

The rest of this section is devoted to the proof of  Proposition~\ref{prop:TwistsOfSumAndCongAutomDist}. 
For a Dirichlet character $\psi$ mod a prime number $r$, we put 
\begin{equation}
\label{form:DefOfTwists}
T_\psi(f) = \sum_{\substack{m\ \mathrm{mod}\ r \\ (m,r)=1}} \psi(m) \left(\pi^*_{\lambda,\ell}\left(\tilde n\left(-\frac mr\right)\right)T\right)(f).
\end{equation}
Since $T$ is invariant under $\tilde{n}(k)$ $(k\in\Z)$, 
the right hand side does not depend on the choice of 
the representatives of mod $r$. 

\begin{lemma}
\label{lem:distribution interpretation of sum formula for T psi}
The summation formula $(\ref{form:TwistedSummationForm})$ is equivalent to the identity
\begin{equation}
T_\psi(f) = \overline{\chi(r)}\cdot C_{\ell,r} \cdot \psi^*(-N) 
             \pi^*_{\lambda,\ell}(\tilde w_{Nr^2})\left((T^\vee)_{\psi^*}\right)(f). 
\label{eqn:T psi = T vee psi}
\end{equation}遯ｶ�｢
\end{lemma}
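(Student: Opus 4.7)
The plan is to evaluate both sides of \eqref{eqn:T psi = T vee psi} as explicit linear functionals on $\mathcal{V}_{\lambda,\ell}^\infty$ and compare them term by term with the two sides of \eqref{form:TwistedSummationForm}. First I would unfold the definition \eqref{form:DefOfTwists} to rewrite $T_\psi(f)$ as $\sum_{(m,r)=1}\psi(m)T(\pi_{\lambda,\ell}(\tilde n(m/r))f)$, and substitute the expression for $T$ as the automorphic distribution associated with the given Ferrar--Suzuki summation formula. The translation identity \eqref{eqn:translate for FT} produces $\mathcal{F}(\pi_{\lambda,\ell}(\tilde n(m/r))f)(n)=e^{2\pi imn/r}\mathcal{F}f(n)$, while the $\infty$-value is preserved under every $\tilde n(u)$, as observed in the proof of Lemma~\ref{lemma:distribution interpretation of sum formula}. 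The inner sum in $m$ then collapses into the Gauss sum $\tau_\psi(n)$ of \eqref{eqn:def of Gauss sum}, yielding
\[
T_\psi(f)=\alpha(\infty)\tau_\psi(0)f(\infty)+\sum_{n\in\Z}\alpha(n)\tau_\psi(n)\mathcal{F}f(n),
\]
which is exactly the left-hand side of \eqref{form:TwistedSummationForm}.

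For the right-hand side, the key algebraic step is the factorization $\tilde w_{Nr^2}^{-1}=\tilde d(1/r)\tilde w_N^{-1}$ in $\widetilde G$, whose Petersson cocycle $\sigma(d(1/r),w_N^{-1})$ is trivial by a direct application of Lemma~\ref{thm:FactorSystem}. Setting $h=\pi_{\lambda,\ell}(\tilde w_{Nr^2}^{-1})f$, this gives $h(x)=(Nr^2)^\lambda f_\infty(Nr^2 x)$; the dilation formula \eqref{eqn:t-mult} then yields $\mathcal{F}h(n)=(Nr^2)^{\lambda-1}\mathcal{F}f_\infty(n/(Nr^2))$, and computing $h_\infty(0)$ via $(f_\infty)_\infty(x)=i^\ell f(x)$ gives $h(\infty)=(Nr^2)^{-\lambda}f_\infty(\infty)$. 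Inserting these three identities into the explicit formula for $T^\vee$ supplied by Lemma~\ref{lem: T and T vee}(1) and twisting by $\psi^*$ exactly as in the first step produces
\[
\pi^*_{\lambda,\ell}(\tilde w_{Nr^2})\bigl((T^\vee)_{\psi^*}\bigr)(f)=r^{-2\lambda}\beta(\infty)\tau_{\psi^*}(0)f_\infty(\infty)+r^{2\lambda-2}\sum_{n\in\Z}\beta(n)\tau_{\psi^*}(n)\mathcal{F}f_\infty\!\left(\frac{n}{Nr^2}\right),
\]
where the powers of $N$ combine as $N^\lambda\cdot(Nr^2)^{-\lambda}=r^{-2\lambda}$ and $N^{1-\lambda}\cdot(Nr^2)^{\lambda-1}=r^{2\lambda-2}$. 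Multiplying by $\overline{\chi(r)}\cdot C_{\ell,r}\cdot\psi^*(-N)$ reproduces the right-hand side of \eqref{form:TwistedSummationForm}, and the claimed equivalence follows.

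The proof is essentially mechanical; the main obstacle is bookkeeping --- verifying that the cocycle $\sigma(d(1/r),w_N^{-1})$ is trivial so that no extra $\xi\in\mu_4$ enters the decomposition of $\tilde w_{Nr^2}^{-1}$, and carefully tracking the powers of $i^\ell$ coming from $f_\infty(\infty)=i^\ell f(0)$ so that they match on the two sides.
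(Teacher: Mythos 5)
Your proposal is correct and follows essentially the same route as the paper: unfold $T_\psi$ via translations $\tilde n(m/r)$ and collapse the $m$-sum into Gauss sums to recover the left-hand side, then evaluate $(T^\vee)_{\psi^*}$ (whose Fourier expansion comes from Lemma \ref{lem: T and T vee}(1)) at $\pi_{\lambda,\ell}(\tilde w_{Nr^2}^{-1})f=(Nr^2)^\lambda f_\infty(Nr^2\,\cdot)$ to recover the right-hand side. The only cosmetic difference is your factorization $\tilde w_{Nr^2}^{-1}=\tilde d(1/r)\tilde w_N^{-1}$ in place of the paper's $\tilde d\bigl(1/(r\sqrt N)\bigr)\tilde w^{-1}$, which yields the same function and the same powers $r^{-2\lambda}$, $r^{2\lambda-2}$.
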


\begin{proof}
It is enough to prove that the left and right hand sides of $(\ref{eqn:T psi = T vee psi})$ 
coincide with the left and right hand sides of $(\ref{form:TwistedSummationForm})$, respectively.  
We put
\[
\varphi_t(x) = \left(\pi_{\lambda,\ell}\left(\tilde n\left(t\right)\right)f\right)(x).
\]
Then $\varphi_t(x)=f(x-t)$. 
Note that 
\begin{eqnarray*}
(\varphi_t)_\infty(x)
 &=& (\mathrm{sgn}\,x)^{\ell/2} |x|^{-2\lambda} f\left(-\frac{1+tx}x\right) \\
 &=& (\mathrm{sgn}(1+tx))^{\mathrm{sgn}(t)\ell/2} |1+tx|^{-2\lambda} 
       f_\infty\left(\frac{x}{1+tx}\right), 
\end{eqnarray*}
and hence $\varphi_t(\infty)=(\varphi_t)_\infty(0)=f_\infty(0)=f(\infty)$. 
Moreover, by \eqref{eqn:translate for FT}, 
we have $\mathcal{F}(\varphi_t)(x)=e^{2\pi i tx}\mathcal{F}f(x)$. 
By the definition of $T_\psi$, we have
\begin{eqnarray*}
T_\psi(f) 
 &=& \sum_{\substack{m\ \mathrm{mod}\ r \\ (m,r)=1}} 
  \psi(m) T\left(\varphi_{m/r}\right) \\
 &=& \sum_{\substack{m\ \mathrm{mod}\ r \\ (m,r)=1}} 
  \psi(m) \cdot
 \left(
 \alpha(\infty) f(\infty)
 + \sum_{n=-\infty}^{\infty} \alpha(n) e^{2\pi i mn/r} \mathcal{F}f(n)
 \right) \\
 &=& \tau_\psi(0) \alpha(\infty) f(\infty)
 + \sum_{n=-\infty}^{\infty} \alpha(n) \tau_\psi\left(n\right) \mathcal{F}f(n).
\end{eqnarray*}
Thus $T_\psi(f)$ coincides with the left hand side of  $(\ref{form:TwistedSummationForm})$.  
Applying the same argument to the Fourier expansion of $T^\vee$ in Lemma \ref{lem: T and T vee}, we get 
\[
(T^\vee)_{\psi^*}(f) 
 =  N^\lambda \tau_{\psi^*}(0)\beta(\infty)f(\infty) 
   + N^{1-\lambda} \sum_{n=-\infty}^\infty  \tau_{\psi^*}(n)\beta(n)(\mathcal{F}f)(n). 
\]
If we put $f_{\infty,Nr^2}(x)=  f_\infty(Nr^2x)$, we have 
\[
\pi_{\lambda,\ell}(\tilde w_{Nr^2}^{-1})f(x) 
 = \pi_{\lambda,\ell}\left(\tilde d\left(\frac{1}{r\sqrt N}\right) \cdot \tilde w^{-1}\right)f(x) 
 = (Nr^2)^{\lambda} f_\infty(Nr^2x)
 = (Nr^2)^{\lambda} f_{\infty,Nr^2}(x), 
\]
and 
\begin{gather*}
f_{\infty,Nr^2}(\infty) = i^\ell (Nr^2)^{-2\lambda} f(0) = (Nr^2)^{-2\lambda} f_\infty(\infty), \\
\mathcal{F}f_{\infty,Nr^2}(n)=(Nr^2)^{-1}\mathcal{F}f_\infty\left(\frac{n}{Nr^2}\right). 
\end{gather*}
Therefore 
\begin{eqnarray*}
\lefteqn{
\pi^*_{\lambda,\ell}(\tilde w_{Nr^2})\left((T^\vee)_{\psi^*}\right)(f) 
} \\
 & & =   (Nr^2)^{\lambda} (T^\vee)_{\psi^*}(f_{\infty,Nr^2}) \\
 & & = (Nr^2)^{\lambda} 
   \left\{
    N^\lambda \tau_{\psi^*}(0)\beta(\infty) (Nr^2)^{-2\lambda} f_\infty(\infty) 
    \vphantom{ \sum_{n=-\infty}^\infty }
   \right. \\
 & & \hspace{5em} \left.
   + N^{1-\lambda} \sum_{n=-\infty}^\infty  \tau_{\psi^*}(n)\beta(n)   
       (Nr^2)^{-1}\mathcal{F}f_\infty\left(\frac{n}{Nr^2}\right)
   \right\} \\
 & & = r^{-2\lambda} \tau_{\psi^*}(0)\beta(\infty) f_\infty(\infty) 
   + r^{2\lambda-2}\sum_{n=-\infty}^\infty  \tau_{\psi^*}(n)\beta(n)   
       \mathcal{F}f_\infty\left(\frac{n}{Nr^2}\right).  
\end{eqnarray*}
This shows that $\overline{\chi(r)}\cdot C_{\ell,r} \cdot \psi^*(-N) 
             \pi^*_{\lambda,\ell}(\tilde w_{Nr^2})\left((T^\vee)_{\psi^*}\right)(f)$ coincides with the right hand side of  $(\ref{form:TwistedSummationForm})$.  
\end{proof}

To complete the proof of Proposition~\ref{prop:TwistsOfSumAndCongAutomDist}, it suffices to prove Lemma~\ref{prop:AutomorphyAndTwists} below. 

\begin{lemma}
\label{prop:AutomorphyAndTwists}
Let $T$ be an automorphic distribution for $\widetilde\Delta(N)$, and $\chi$ a Dirichlet character mod $N$. 
Then, for every odd prime number $r$ with $(r, N)=1$, 
the following conditions
(A) and (B) are equivalent.
\begin{enumerate}
\renewcommand{\labelenumi}{(\Alph{enumi})} 
\item 
$\pi^*_{\lambda, \ell}(\tilde{\gamma})T^{\vee}=\chi_{N,\ell}(r)\cdot T^{\vee}$
for any  $\gamma\in \Gamma_0(N)$
of the form
$\gamma=
\begin{pmatrix}
\ast & \ast\\
\ast & r\\
\end{pmatrix}$.
\item $\pi^*_{\lambda, \ell}(\tilde{w}_{Nr^2}^{-1})(T_{\psi})=\overline{\chi(r)}\cdot 
C_{\ell,r}\cdot \psi^*(-N)\cdot
(T^{\vee})_{\psi^*}$
for any Dirichlet character $\psi$ of mod~$r$. 
\end{enumerate}
Here, for a given $\gamma\in \Gamma_0(N)$, 
we put $\tilde{\gamma}=[\gamma,1]$ if $\ell$ is even, 
and $\tilde{\gamma}=\gamma^*$ if $\ell$ is odd. 
(For the definition of $\gamma^*$, see \eqref{form:DefOfGammaStar}.) 
\end{lemma}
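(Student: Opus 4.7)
The plan is to reduce $\mathrm{(A)}\Leftrightarrow\mathrm{(B)}$ to a single operator identity parameterized by $m\in(\Z/r\Z)^\times$ via character orthogonality, and to derive that identity from one explicit matrix decomposition in $\widetilde G$. For each such $m$, let $m^*\in\{1,\ldots,r-1\}$ be the unique representative satisfying $mm^*N\equiv -1\pmod r$, set $a_m=(1+mm^*N)/r\in\Z$, and let $\gamma_m:=\begin{pmatrix} a_m & -m \\ -m^*N & r\end{pmatrix}\in\Gamma_0(N)$. A direct computation in $SL_2(\R)$ yields
\[
n(-m/r)\,w_{Nr^2}^{-1}\,n(m^*/r)=\gamma_m\,w_N^{-1}.
\]
Applying Lemma~\ref{thm:FactorSystem} pair by pair (the relevant lower-left entries in the intermediate products are $0$, $-r\sqrt N$, $-\sqrt N$, and $-m^*N$, all of consistent sign so that each cocycle $\sigma$ equals $1$) lifts this to
\[
\tilde n(-m/r)\,\tilde w_{Nr^2}^{-1}\,\tilde n(m^*/r)=\tilde\gamma_m\,\tilde w_N^{-1}\cdot[1_2,\eta_m^{-1}],
\]
with $\eta_m=1$ for even $\ell$ and $\eta_m=\varepsilon_r^{-1}\left(\tfrac{-m^*N}{r}\right)$ for odd $\ell$ (this is exactly the residue-symbol multiplier implicit in the $\gamma^*$-lift of $\gamma_m$).

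Applying $\pi^*_{\lambda,\ell}$ and using that $\pi^*_{\lambda,\ell}([1_2,\xi])$ acts on distributions as scalar multiplication by $\xi^\ell$, together with $T^\vee=\pi^*_{\lambda,\ell}(\tilde w_N^{-1})T$, this rearranges to the pointwise operator identity
\[
\pi^*_{\lambda,\ell}(\tilde w_{Nr^2}^{-1})\,\pi^*_{\lambda,\ell}(\tilde n(m^*/r))\,T = \eta_m^{-\ell}\,\pi^*_{\lambda,\ell}(\tilde n(m/r))\,\pi^*_{\lambda,\ell}(\tilde\gamma_m)\,T^\vee.
\]
For $\mathrm{(A)}\Rightarrow\mathrm{(B)}$, I would substitute $\pi^*_{\lambda,\ell}(\tilde\gamma_m)T^\vee=\chi_{N,\ell}(r)T^\vee$, weight by $\overline{\psi(mN)}$, and sum over $m\in(\Z/r\Z)^\times$. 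Reindexing the left side by $k=-m^*$ (so $m\equiv(kN)^{-1}\pmod r$) rewrites it as $\pi^*_{\lambda,\ell}(\tilde w_{Nr^2}^{-1})T_\psi$; reindexing the right side by $n=-m$ and exploiting the collapse $\left(\tfrac{mm^*N}{r}\right)=\left(\tfrac{-1}{r}\right)$ from $mm^*N\equiv -1\pmod r$, together with $\left(\tfrac{-m^*N}{r}\right)=\left(\tfrac{-N}{r}\right)\left(\tfrac{m^*}{r}\right)$, collapses the combined weight to exactly $\overline{\chi(r)}\,C_{\ell,r}\,\psi^*(-N)\,(T^\vee)_{\psi^*}$. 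The converse $\mathrm{(B)}\Rightarrow\mathrm{(A)}$ uses orthogonality of Dirichlet characters on $(\Z/r\Z)^\times$: knowing (B) for every $\psi\in X_r$ lets one invert the weighted sum to extract the pointwise operator identity above with $\chi_{N,\ell}(r)T^\vee$ in place of $\pi^*_{\lambda,\ell}(\tilde\gamma_m)T^\vee$, which is (A) for each $\gamma_m$.

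To extend (A) from the family $\{\gamma_m\}$ to an arbitrary $\gamma=\begin{pmatrix} a & b \\ cN & r\end{pmatrix}\in\Gamma_0(N)$, I would use the decomposition $\gamma=n(k_1)\gamma_m\bar n(k_2N)$ with $m\equiv -b\pmod r$, $k_1=(b+m)/r$, $k_2=(c+m^*)/r$, combined with the $\widetilde\Delta(N)$-invariance of $T^\vee$ from Lemma~\ref{lem: T and T vee}. The lift factors cleanly: for odd $\ell$ because $\gamma\mapsto\gamma^*$ is a homomorphism on $\Gamma_0(4)\supset\Gamma_0(N)$, and for even $\ell$ because any residual $\pm 1$-cocycle from Lemma~\ref{thm:FactorSystem} is annihilated on $\mathcal{V}_{\lambda,\ell}^\infty$ by $(\pm 1)^\ell=1$. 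The main obstacle will be the $\mu_4$-valued factor-system bookkeeping in the lift to $\widetilde G$, together with the delicate interplay of Legendre symbols, $\varepsilon_r$-factors, and Gauss-sum characters $\psi^*$ in the weighted sum; the single arithmetic cancellation that makes everything match is the congruence $mm^*N\equiv -1\pmod r$, which is precisely what converts $\psi$-twists into $\psi^*$-twists and produces the factor $C_{\ell,r}$.
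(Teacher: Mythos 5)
Your proposal is correct and follows essentially the same route as the paper: your lifted identity $\tilde n(-m/r)\,\tilde w_{Nr^2}^{-1}\,\tilde n(m^*/r)=\tilde\gamma_m\,\tilde w_N^{-1}\cdot[1_2,\eta_m^{-1}]$ is the paper's key decomposition \eqref{form:CreateElementOfGamma} rearranged (your $\gamma_m$ is the paper's $\gamma$ with upper-right entry $-m$ and lower-left entry $-m^*N$), and both implications are then obtained, exactly as in the paper, by summing against $\psi$, using orthogonality of Dirichlet characters, and exploiting $mm^*N\equiv-1\pmod r$ to convert $\psi$-twists into $\psi^*$-twists and produce the factor $\overline{\chi(r)}\,C_{\ell,r}\,\psi^*(-N)$. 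The only deviation is cosmetic: because you fix specific representatives $\gamma_m$, you need the additional (correct, but avoidable) step passing from $\{\gamma_m\}$ to all $\gamma=\left(\begin{smallmatrix}*&*\\ *&r\end{smallmatrix}\right)\in\Gamma_0(N)$ via $\gamma=n(k_1)\gamma_m\bar n(k_2N)$ and the $\widetilde\Delta(N)$-automorphy of $T^\vee$ (which itself needs the one-line justification $\tilde w_N^{-1}\widetilde\Delta(N)\tilde w_N=\widetilde\Delta(N)$), whereas the paper's form of the identity applies verbatim to every such $\gamma$, so no extension step is required there.
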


Before giving a proof of Lemma~\ref{prop:AutomorphyAndTwists}, let us see that the lemma actually yields Proposition~\ref{prop:TwistsOfSumAndCongAutomDist}.   
By Lemma~\ref{lem:distribution interpretation of sum formula for T psi}, 
the condition (ii) in Proposition~\ref{prop:TwistsOfSumAndCongAutomDist} is equivalent to the condition 
\begin{description}
\item[$(*)$]
$\pi^*_{\lambda, \ell}(\tilde{w}_{Nr^2}^{-1})(T_{\psi})=\overline{\chi(r)}\cdot 
C_{\ell,r}\cdot \psi^*(-N)\cdot
(T^{\vee})_{\psi^*}$
for  any $r \in \mathbb{P}_N$ and $\psi \in X_r$. 
\end{description}
By Lemma~\ref{prop:AutomorphyAndTwists}, this is equivalent to the condition 
\begin{description}
\item[$(**)$]
$\pi^*_{\lambda, \ell}(\tilde{\gamma})T^{\vee}=\chi_{N,\ell}(r)\cdot T^{\vee}$
for any  $\gamma\in \Gamma_0(N)$
of the form
$\gamma=
\begin{pmatrix}
\ast & \ast\\
\ast & r
\end{pmatrix}$ $(r \in \mathbb{P}_N)$.
\end{description}
It is wellknown that $\Gamma_0(N)$ is generated by elements $\gamma$ of the form appearing in $(**)$ and $\begin{pmatrix} 1 & 1 \\ 0 & 1 \end{pmatrix}$ (see, for example, \cite[Proof of Theorem 4.3.15]{Miyake} or \cite[p.~154]{Weil}). 
Hence $\widetilde{\Gamma}_0(N)$ is generated by 
\[
\tilde\gamma\ (\gamma= \begin{pmatrix}
\ast & \ast\\
\ast & r\\
\end{pmatrix} \in \Gamma_0(N),\ r \in \mathbb{P}_N), \quad \tilde n(1), \quad (\text{and $[1_2,-1]$, if $\ell$ is even}).
\]
The distribution $T^\vee$ is invariant under the action of $\tilde n(1)$, since $T^\vee$ is periodic of period $1$, and, if $\ell$ is even, then $T^\vee$ is invariant also under the action of $[1_2,-1]$. 
Hence,  the condition $(**)$ is equivalent to the condition 
\begin{description}
\item[$(***)$]
$\pi^*_{\lambda, \ell}(\tilde{\gamma})T^{\vee}=\chi_{N,\ell}(\tilde{\gamma})\cdot T^{\vee}$
for any  $\tilde\gamma\in \widetilde{\Gamma}_0(N)$.
\end{description}
By Lemma~\ref{lem: T and T vee} (2), this is equivalent to the condition (i) in   Proposition~\ref{prop:TwistsOfSumAndCongAutomDist}.   
This proves Proposition~\ref{prop:TwistsOfSumAndCongAutomDist}.    

\begin{proof}[Proof of Lemma~\ref{prop:AutomorphyAndTwists}]
By a straightforward calculation using Lemma~\ref{thm:FactorSystem}, for any  $\gamma = \begin{pmatrix} u & k \\ mN & r\end{pmatrix} \in \Gamma_0(N)$, we have 
\begin{equation}
\label{form:CreateElementOfGamma}
\tilde{w}_{Nr^2}^{-1} \cdot \tilde{n}\left(-\frac{m}{r}\right) =
\tilde{n}\left(-\frac{k}{r}\right)\cdot
\tilde{\gamma}
\cdot \tilde{w}_{N}^{-1}\cdot 
\begin{cases}
\left[1_2, 1\right] & (\text{$\ell$ is even}), \\[5pt]
\left[1_2, \varepsilon_r\left(\dfrac{mN}{r}\right)\right]
& (\text{$\ell$ is odd}).
\end{cases}
\end{equation}
For any $r,m$ with $(mN,r)=1$, take a $\gamma(r,m) \in \Gamma_0(N)$ of the form  $\begin{pmatrix} u & k \\ mN & r \end{pmatrix}$. Then $k \bmod r$ is uniquely determined by $r$ and $m$, and  $\psi(m)=\overline{\psi(-kN)}$ for $\psi \in X_r$. 
Moreover, by  \eqref{form:CreateElementOfGamma}, 
$\tilde{n}\left(-\frac{k}{r}\right)\cdot\tilde{\gamma}(r,m)$ is independent of the choice of $\gamma(r,m)$. 
Since  
\[
\pi^*_{\lambda, \ell}\left(
\left[1_2, \varepsilon_r\left(\dfrac{mN}{r}\right)\right]
\right)T = \varepsilon_r^{\ell} \left(\frac{mN}{r}\right)^{\ell} T = 
\varepsilon_r^{\ell} \left(\frac{-k}{r}\right)T  
\]
for an odd $\ell$, 
we obtain
\begin{eqnarray*}
\lefteqn{
 \pi^*_{\lambda, \ell}(\tilde{w}_{Nr^2}^{-1})\left(T_{\psi}\right)
  =  \sum \begin{Sb}   m\!\! \mod{r}  \\   (m,r)=1 \end{Sb}
 \psi(m)\cdot \pi^*_{\lambda, \ell}(\tilde{w}_{Nr^2}^{-1})\pi^*_{\lambda, \ell}
 \left(\tilde{n}\left(-\frac{m}{r}\right)\right)T 
} \\
 & & = C_{\ell,r}\left(\frac{-1}r\right)^\ell\cdot \overline{\psi(-N)}
        \left(\sum \begin{Sb}   m\!\! \mod{r}  \\   (m,r)=1 \end{Sb}
 \psi^*(k)\cdot 
\pi^*_{\lambda, \ell}
 \left(\tilde{n}\left(-\frac{k}{r}\right)\right)
\pi^*_{\lambda, \ell}\left(
\tilde{\gamma}(r,m)
\right)T^{\vee} \right). 
\end{eqnarray*}
If the condition (A) is satisfied, then we have
\[
\pi^*_{\lambda, \ell}\left(
\tilde{\gamma}(r,m)
\right)T^{\vee} = \chi_{N,\ell}(r) \cdot T^{\vee}=
\left(\dfrac{N}{r}\right)^\ell  \overline{\chi(r)}\cdot T^{\vee}
\]
and hence 
\[
 \pi^*_{\lambda, \ell}(\tilde{w}_{Nr^2}^{-1})\left(T_{\psi}\right)
   =  \overline{\chi(r)}\cdot C_{\ell,r} \cdot \psi^*(-N)
      \sum \begin{Sb}   m\!\! \mod{r}  \\   (m,r)=1 \end{Sb}
        \psi^*(k)\cdot 
      \pi^*_{\lambda, \ell} \left(\tilde{n}\left(-\frac{k}{r}\right)\right) T^{\vee}. 
\]
Note that, as $m$ ranges over a reduced residue system modulo $r$, 
so does $k$, too. 
Hence we see that the right hand side is equal to  
\[
\overline{\chi(r)}\cdot C_{\ell,r}\cdot 
\psi^*(-N)\cdot (T^{\vee})_{\psi^*}
\]
and we obtain the identity in the condition (B).  

Next we prove that the condition (B) implies the condition (A). 
From the orthogonality relation of Dirichlet characters, it follows that
\begin{equation}
\label{form:ByCharacterSummation}
\pi^*_{\lambda, \ell}\left(\tilde{n}\left(-\frac{m}{r}\right)\right)T=
\frac{1}{r-1}
\sum_{\psi \in X_r}
\overline{\psi(m)}\cdot T_{\psi}.
\end{equation}
For any  
$\gamma=\begin{pmatrix}
u & k \\
mN & r
\end{pmatrix}\in \Gamma_0(N)$,   
we have by \eqref{form:CreateElementOfGamma}
\[
\pi^*_{\lambda, \ell}(\tilde{w}_{Nr^2}^{-1})\cdot 
\pi^*_{\lambda, \ell}\left(\tilde{n}\left(-\frac{m}{r}\right)\right)T
 = C_{\ell,r}\left(\frac{-k}r\right)^\ell
\pi^*_{\lambda, \ell}\left(\tilde{n}\left(-\frac{k}{r}\right)\right)\cdot 
\pi^*_{\lambda, \ell}(\tilde{\gamma})T^{\vee}.
\]
On the other hand, if the condition (B) is satisfied, then from \eqref{form:ByCharacterSummation} and $\psi(m)=\overline{\psi(-kN)}$, it follows that  
\begin{eqnarray*}
\pi^*_{\lambda, \ell}(\tilde{w}_{Nr^2}^{-1})\cdot 
\pi^*_{\lambda, \ell}\left(\tilde{n}\left(-\frac{m}{r}\right)\right)T
&=&
\frac{1}{r-1}
\sum_{\psi \in X_r}
\overline{\psi(m)}\cdot 
\pi^*_{\lambda, \ell}(\tilde{w}_{Nr^2}^{-1})
\left(T_{\psi}\right) 
\\
&=& 
\frac{1}{r-1}
\sum_{\psi \in X_r}
\overline{\psi(m)}\cdot 
\overline{\chi(r)}\cdot 
C_{\ell,r}\cdot 
\psi^*(-N)\cdot
(T^{\vee})_{\psi^{*}} 
\\
&=& C_{\ell,r}\left(\frac{-k}r\right)^\ell \chi_{N,\ell}(r)
\cdot 
\frac{1}{r-1}
\sum_{\psi \in X_r}
\overline{\psi^*(k)}
(T^{\vee})_{\psi^{*}} 
\\
&=& C_{\ell,r}\left(\frac{-k}r\right)^\ell \chi_{N,\ell}(r)
\cdot 
\pi^*_{\lambda, \ell}\left(\tilde{n}\left(-\frac{k}{r}\right)\right)
T^{\vee}.
\end{eqnarray*}
By equating these two, we obtain
\[
\pi^*_{\lambda, \ell}
 \left(\tilde{n}\left(-\frac{k}{r}\right)\right)\pi^*_{\lambda, \ell}\left(
 \tilde{\gamma}\right)
 T^{\vee}=
\chi_{N,\ell}(r)\cdot 
\pi^*_{\lambda, \ell}\left(\tilde{n}\left(-\frac{k}{r}\right)\right)
T^{\vee},
\]
and by letting $\pi^*_{\lambda, \ell}(\tilde{n}(k/r))$ act on both sides, 
we conclude that $\pi^*_{\lambda, \ell}\left(\tilde{\gamma}\right)
 T^{\vee}=\chi_{N,\ell}(r)\cdot T^{\vee}$.
\end{proof}




\section{Maass forms arising from certain zeta functions in two variables}

In this section, applying our converse theorem for Maass forms (Theorem \ref{corollary:Maassforms}), we show that the two-variable zeta functions related to quadratic forms studied by Peter \cite{Peter} and the fourth author \cite{Ueno} independently can be viewed as $L$-functions associated with Maass forms for $\widetilde{\Gamma}_0(N)$.

\subsection{Zeta functions in two variables associated with quadratic forms}

We recall the definition of the zeta functions as given in \cite{Ueno}.
Let $Q(x)$ be a non-degenerate integral quadratic form on $V=\C^{m+2}$.
We assume that $Q(x)$ is of the form
\[
 Q(x)= x_{0}x_{m+1}+\sum_{1\leq i, j\leq m}
 a_{ij} x_{i}x_{j} \quad \text{with}\; \; x={}^t(x_0,x_1,\ldots,x_{m+1}),
\]
where $a_{ij}= a_{ji}\in \frac{1}{2}\Z \; (i\neq j)$ and 
$a_{ii}\in \Z$. The matrix of 
$Q$ is given by
\[
 \begin{pmatrix}
  0 & 0 & \frac{1}{2} \\
  0 & A& 0 \\
  \frac{1}{2} &  0 & 0
 \end{pmatrix}, \qquad A:= (a_{ij}).
\]
Let $p$ (resp.\ $q$) be the number of 
positive (resp.\ negative) eigenvalues of $A$, and put
\[
 D=\det(2A).
\]
We define another quadratic forms $Q^*$ by 
\[
 Q^{*}(y)= y_{0}y_{m+1} + \frac{1}{4}
 \sum_{1\leq i, j\leq m} a_{ij}^{*} y_{i}y_{j} \quad
 \text{with}\; \; A^{-1} = (a_{ij}^{*})\; \; \text{and} 
 \; \; y={}^t(y_0,y_1,\ldots,y_{m+1}).
\]
Let $N$ be the level of $2Q$.
By definition, $N$ is the smallest positive
integer such that $NQ^*(y)$ is an integral quadratic form. 
For $l\in \Z_{>0}$ and $n\in \Z$, we put
\begin{align*}
 r(l,n) &= \sharp\left\{
 v\in \Z^{m}/l\Z^m \, |\, {}^tvAv\equiv n \pmod{l}
 \right\}, \\
 r^{*}(l,  n) &=
  \sharp \left\{
  v^* \in \Z^{m}/2lA \Z^m \, |\, 4^{-1}N\cdot {}^tv^*A^{-1}v^*\equiv n \pmod{Nl}  
    \right\},
\end{align*}
and define the Dirichlet series $Z(n, w)$ and $Z^{*}(n,w)$ by
\begin{equation}
 \label{form:DefOfZandZstar}
 Z(n,w)=\sum_{l=1}^{\infty}\frac{r(l,n)}{l^{w}}, \qquad
 Z^{*}(n,w)=\sum_{l=1}^{\infty} \frac{r^{*}(l,n)}{l^w}.
\end{equation}
The Dirichlet series $Z(n, w)$ and $Z^{*}(n,w)$ converge absolutely for $\Re(w)>m$ and have analytic continuations to meromorphic functions of $w$ on the whole complex plane (\cite[Lemmas 3.8 and 3.9]{PeterCrelle}, \cite[Lemma 3.3]{Ueno}). 
The zeta functions considered in  \cite{Peter} and \cite{Ueno} are 
\begin{equation}
  \zeta_{\pm}(w, s) =  \sum_{n=1}^{\infty}Z(\pm n, w)n^{-s},\qquad 
  \zeta_{\pm}^{*}(w,s) = N^{s}\cdot  \sum_{n=1}^{\infty} Z^{*}(\pm n, w) n^{-s}, 
\label{eqn:Def of Ueno zeta}
\end{equation}
which are absolutely convergent for $\Re(w)>m$ and $\Re(s)>1$. 
The zeta functions $\zeta_{\pm}(w, s)$ and $\zeta_{\pm}^{*}(w,s)$ have analytic  continuations to meromorphic functions of $(w,s)$ in $\C^2$ and satisfied the functional equation 
 \begin{equation}
  \label{form:FEofUenoZeta}
  \begin{pmatrix}
   \zeta_{+}^{*}\left(w, \ \frac{m}{2}+1-w-s\right) \\
   \zeta_{-}^{*}\left(w, \ \frac{m}{2}+1-w-s\right)
  \end{pmatrix}
   =  \Gamma(w, s)
 \begin{pmatrix}
 \zeta_{+}(w, s) \\
 \zeta_{-}(w, s)
 \end{pmatrix},
 \end{equation}
 where
\[
  \Gamma(w,s) = 2|D|^{-1/2} (2\pi)^{m/2-w-2s}
 \Gamma(s)\Gamma\left(w+s-\frac{m}{2}
 \right)
 \begin{pmatrix}
  \cos\left( \frac{\pi(w+2s-p)}{2}\right) & \cos\left( \frac{\pi(w-p)}{2}\right)  \\[5pt]
  \cos\left( \frac{\pi(w-q)}{2}\right)  & \cos\left(\frac{\pi(w+2s-q)}{2}\right) 
 \end{pmatrix}.
 \]
If $w$ is specialized to an integer satisfying $w \equiv p+1 \pmod 2$ (resp.\  $w \equiv q+1 \pmod 2$), then the Gamma matrix $\Gamma(w,s)$ becomes a triangular matrix and the functional equation relates $\zeta_+^*$ (resp.\  $\zeta_-^*$) to $\zeta_+$  (resp.\ $\zeta_-$) only.  
The specialized functional equation can be transformed into a functional equation of Hecke type, and it is proved in \cite[Theorem 1.1]{Peter} and \cite[Theorem 4.5]{Ueno} that the specialized zeta functions are the Mellin transforms of holomorphic modular forms for $\Gamma_0(k|D|)$ ($k=1,2$).  
In the following we construct Maass forms from the original zeta functions $\zeta_\pm$ and $\zeta^*_\pm$ by using Theorem \ref{corollary:Maassforms}. 

We normalize  the zeta functions by
\begin{align}
\xi_\pm(w,s) &= e^{\pi i(p-q)/4}|D|^{-1/2} 
  \zeta_{\pm}(w,s)
  =\sum_{n=1}^{\infty}
  \frac{e^{\pi i(p-q)/4}|D|^{-1/2} Z(\pm n,w)}{n^s}, \\
\xi^*_\pm(w,s)  &= N^{-s}\cdot  
  \zeta_{\pm}^*(w,s)=\sum_{n=1}^{\infty} \frac{Z^*(\pm n,w)}{n^s},
\end{align}
and define the completed zeta functions
$\Xi_\pm(w,s)$ and $\Xi_\pm^*(w,s)$ by
\[
 \Xi_\pm(w,s)= (2\pi)^{-s}\Gamma(s)\cdot 
 \xi_\pm(w,s), 
 \qquad 
  \Xi_\pm^*(w,s)= (2\pi)^{-s}\Gamma(s)\cdot \xi_\pm^*(w,s).
\]
Then, 
by an elementary calculation using the Euler reflection formula 
\[
 \frac{1}{\Gamma\left(w+s-\frac{m}{2}\right)}=
  \frac{\Gamma\left(\frac{m}{2}+1-w-s\right)\sin \pi
  \left(w+s-\frac{m}{2}\right)}{\pi}
\]
we can rewrite the functional equation~\eqref{form:FEofUenoZeta} as
\begin{eqnarray}
 \gamma(s)\begin{pmatrix}
\Xi_{+}(w,s) \\[3pt]
\Xi_{-}(w,s)
\end{pmatrix} 
 & = &
N^{m/2+1-w-s}\cdot 
\Sigma(p-q)
 \gamma\left(\frac{m}{2}+1-w-s\right) \nonumber \\
 & & {} \times 
  \begin{pmatrix}
   \Xi_{+}^{*}\left(w, \ \frac{m}{2}+1-w-s\right) \\[3pt]
   \Xi_{-}^{*}\left(w, \ \frac{m}{2}+1-w-s\right)
  \end{pmatrix}
\label{form:FEofLambda}
\end{eqnarray}
(for the definitions of $\gamma(s)$ and $\Sigma(p-q)$, see \eqref{form:DefOfGammaAndSigma}).  
This is precisely the same functional equation in the condition [A3] in \S \ref{subsection:3.3} with 
\[
 \ell \equiv p-q \pmod{4}, \qquad 2-2\lambda=\frac{m}{2}+1-w. 
\]
Therefore it is reasonable to expect further that Theorem \ref{corollary:Maassforms} can apply to the normalized zeta functions $\xi_\pm(w,s)$ and $\xi^*_\pm(w,s)$ to obtain Maass forms. 
Indeed, we can obtain the following theorem. 

\begin{theorem}
\label{thm:Maass form for Ueno zeta}
Let $\lambda$ be a complex number with $\Re(\lambda)>(m+2)/4$, 
and $\ell$ an integer such that $\ell \equiv p-q \pmod{4}$. 
Define $C^{\infty}$-functions $F(z)$ and $G(z)$ on $\mathcal{H}$ by
\begin{align*}
F(z) &= 
\zeta\left(2\lambda-\frac{m}{2}\right) \cdot y^{\lambda-(\ell/4)}+
 (-1)^{(p-q-\ell)/4} \frac{Z\left(0, 2\lambda+\frac{m}{2}-1\right)}{|D|^{1/2}}
 \frac{(2\pi) 2^{1-2\lambda} \Gamma(2\lambda-1)}
 {\Gamma\left(\lambda+\frac{\ell}{4}\right)
 \Gamma\left(\lambda-\frac{\ell}{4}\right)} \cdot y^{1-\lambda-\ell/4} \\[5pt]
 &\; +  (-1)^{(p-q-\ell)/4} \sum
 \begin{Sb}
 n=-\infty \\
 n\neq 0
 \end{Sb}^{\infty} 
\frac{Z\left(n, 2\lambda+\frac{m}{2}-1\right)}{|D|^{1/2}}
\cdot 
 \frac{\pi^{\lambda} \cdot  |n|^{\lambda-1}}
{\Gamma\left(\lambda+\frac{\sgn(n)\ell}{4}\right)} \cdot 
 y^{-\ell/4}\, W_{\frac{\sgn(n)\ell}{4}, \lambda-\frac{1}{2}}\left(4\pi|n|y\right)
\cdot \mathbf{e}[nx], \\
G(z) &= 
 e^{-\pi i(p-q)/4} N^\lambda \frac{\zeta\left(2\lambda-\frac{m}{2}\right)}{|D|^{1/2}} 
 \cdot y^{\lambda-(\ell/4)} \\
 &\; +   i^{-\ell/2} N^{1-\lambda} \cdot \frac{Z^*\left(0, 2\lambda+\frac{m}{2}-1\right)}{|D|}
 \frac{(2\pi) 2^{1-2\lambda} \Gamma(2\lambda-1)}
 {\Gamma\left(\lambda+\frac{\ell}{4}\right)
 \Gamma\left(\lambda-\frac{\ell}{4}\right)} \cdot y^{1-\lambda-\ell/4} \\[5pt]
 &\; + i^{-\ell/2} \sum
 \begin{Sb}
 n=-\infty \\
 n\neq 0
 \end{Sb}^{\infty} 
  N^{1-\lambda} \cdot Z^*\left(n, 2\lambda+\frac{m}{2}-1\right)
\cdot 
 \frac{\pi^{\lambda} \cdot  |n|^{\lambda-1}}
{\Gamma\left(\lambda+\frac{\sgn(n)\ell}{4}\right)} \cdot 
 y^{-\ell/4}\, W_{\frac{\sgn(n)\ell}{4}, \lambda-\frac{1}{2}}\left(4\pi|n|y\right)
\cdot \mathbf{e}[nx].
\end{align*}
Then, $F(z)$(resp.\ $G(z)$) is a Maass form for $\widetilde{\Gamma}_0(N)$ of weight
$\ell/2$ with character $\chi_K$ (resp.\ $\chi_{K_N}$). 
Here we denote by $\chi_K$ and $\chi_{K_N}$ the Kronecker characters associated to the fields
\[
K =
  \begin{cases}
  \displaystyle
  \Q\left(\sqrt{(-1)^{m/2}D}\right)
  & (\text{$\ell$ is even}), \\[8pt]
  \displaystyle
  \Q\left(\sqrt{2|D|}\right) &  (\text{$\ell$ is odd})
  \end{cases}
\]
and
\[
K_N =
  \begin{cases}
  \displaystyle
  \Q\left(\sqrt{(-1)^{m/2}D}\right)
  & (\text{$\ell$ is even}), \\[8pt]
  \displaystyle
  \Q\left(\sqrt{2|D|N}\right) &  (\text{$\ell$ is odd}),
  \end{cases}
\]
respectively. 
Further we have 
\[
(F\big|_\ell \widetilde w_N)(z) = G(z).
\]
\end{theorem}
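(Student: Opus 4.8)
\textbf{Proof plan for Theorem~\ref{thm:Maass form for Ueno zeta}.}
The strategy is to reduce the assertion to an application of Theorem~\ref{corollary:Maassforms}, so the bulk of the work is to verify that the normalized zeta functions satisfy all of the hypotheses [A1]--[A4] and $\mathrm{[A1]}_{r,\psi}$--$\mathrm{[A5]}_{r,\psi}$ with the choices
\[
2-2\lambda = \tfrac m2 + 1 - w, \qquad \ell \equiv p-q \pmod 4,
\]
and then to match the resulting Maass forms $F_\alpha, G_\beta$ with the functions $F, G$ in the statement. First I would fix $w = 2\lambda + \tfrac m2 - 1$ (so that, since $\Re(\lambda) > (m+2)/4$, we are in the region $\Re(w) > m$ of absolute convergence of $Z(n,w)$, $Z^*(n,w)$, and also $\lambda \notin \tfrac12 - \tfrac12\Z_{\geq0}$), and set
\[
\alpha(n) = e^{\pi i(p-q)/4}|D|^{-1/2} Z(n,w), \quad \beta(n) = Z^*(n,w) \quad (n \ne 0).
\]
These are of polynomial growth in $n$ for fixed $w$ in the convergence region (this needs a word: the Dirichlet series $\xi_\pm(w,s)=\sum_n \alpha(\pm n) n^{-s}$ are exactly the normalized $\zeta_\pm(w,s)/\!\sqrt{|D|}$ up to the exponential constant, and their absolute convergence for $\Re(s)>1$ forces $\alpha(n) = O(n^{1+\epsilon})$). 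Then $\xi_\pm(\alpha;s) = \xi_\pm(w,s)$ and $\xi_\pm(\beta;s) = \xi_\pm^*(w,s)$, and $\Xi_\pm(\alpha;s), \Xi_\pm(\beta;s)$ are the completed zeta functions $\Xi_\pm(w,s), \Xi_\pm^*(w,s)$.

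\textbf{Checking the hypotheses.} Condition [A3] is precisely the rewritten functional equation \eqref{form:FEofLambda}, once one observes that $N^{m/2+1-w-s} = N^{2-2\lambda-s}$ and $\Sigma(p-q) = \Sigma(\ell)$ (the latter because $i^{p-q} = i^\ell$ when $p-q\equiv\ell\pmod4$), with the roles of $\alpha$ and $\beta$ as prescribed; note $\zeta^*_\pm(w, \tfrac m2+1-w-s) = N^{2-2\lambda-s}\xi^*_\pm(w,2-2\lambda-s)$ accounts for the $N^{s}$ in the definition of $\zeta^*_\pm$. Condition [A1] (meromorphic continuation of $\xi_\pm(w,s), \xi^*_\pm(w,s)$ in $s$, with $(s-1)(s-2+2\lambda)$ times them entire and of finite order in vertical strips) is the analytic content of \cite{Ueno}; I would cite the relevant results there for the continuation, location of poles, and the polynomial growth in $\Im(s)$ on strips. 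Conditions [A2] and [A4] (and their residue/special-value counterparts) follow by inspecting the explicit pole structure: the only poles of $\zeta_\pm(w,s)$ in $s$ come from the factors $\zeta(s)$-type contributions and the Gamma factors in $\Gamma(w,s)$, and one extracts $\mathrm{Res}_{s=1}$ and $\mathrm{Res}_{s=2-2\lambda}$ explicitly, comparing the $+$ and $-$ components, which are symmetric here because $Z(n,w)$ and $Z^*(n,w)$ have parallel descriptions for $n>0$ and $n<0$. For the twisted conditions $\mathrm{[A1]}_{r,\psi}$--$\mathrm{[A5]}_{r,\psi}$, one uses that twisting the quadratic-form data by a Dirichlet character $\psi$ mod an odd prime $r\nmid N$ produces again a zeta function of the same type (attached to $r^2 Q$, or obtained from the Gauss-sum twisted representation numbers), whose functional equation carries the extra factor $\overline{\chi(r)}C_{\ell,r}\psi^*(-N) r^{2\lambda-2}(Nr^2)^{2-2\lambda-s}$; here $\chi$ must be identified with the Kronecker character $\chi_K$ of the stated field, and the computation of how the theta-multiplier / quadratic residue symbols combine for odd $\ell$ is what forces the distinction between $K = \Q(\sqrt{2|D|})$ and $K_N = \Q(\sqrt{2|D|N})$. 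This verification — essentially extracting from \cite{Ueno} (and \cite{Peter}, \cite{PeterCrelle}) the precise functional equations of the $\psi$-twisted zeta functions together with the exact constants, poles, and residues — is the main obstacle; the rest is bookkeeping.

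\textbf{Identifying the Maass forms.} Granting the hypotheses, Theorem~\ref{corollary:Maassforms} yields Maass forms $F_\alpha, G_\beta$ for $\widetilde\Gamma_0(N)$ of weight $\ell/2$ with characters $\chi^{-1} = \chi_K^{-1} = \chi_K$ (Kronecker characters are real) and $\chi_{N,\ell}^{-1} = \chi_{K_N}$, and eigenvalue $(\lambda-\ell/4)(1-\lambda-\ell/4)$, together with the relation $(F_\alpha|_\ell \tilde w_N)(z) = G_\beta(z)$. It remains to check that $F_\alpha = F$ and $G_\beta = G$ termwise. Comparing \eqref{form:Results} with the displayed formula for $F(z)$: the constant $\alpha(\infty)$ in Theorem~\ref{corollary:Maassforms} is $\tfrac N2 \mathrm{Res}_{s=1}\xi_e(\beta;s)$, which I must show equals $\zeta(2\lambda - \tfrac m2)$; the $y^{1-\lambda-\ell/4}$-coefficient involves $\alpha(0) = -\xi_e(\alpha;0)$ (for $\lambda\neq1$), which I must identify with $(-1)^{(p-q-\ell)/4}|D|^{-1/2} Z(0,w)$ after noting $i^{-\ell/2}\cdot e^{\pi i(p-q)/4}\cdot|D|^{-1/2}\cdot(-\xi_e(\alpha;0))$ collapses to the stated real constant using $\ell\equiv p-q\pmod4$; and the $n\neq0$ terms match directly since $i^{-\ell/2}\alpha(n) = i^{-\ell/2}e^{\pi i(p-q)/4}|D|^{-1/2}Z(n,w) = (-1)^{(p-q-\ell)/4}|D|^{-1/2}Z(n,w)$. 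The analogous comparison for $G(z)$ against \eqref{form:Results2} uses $\beta(\infty) = \tfrac{i^{-\ell}}2\mathrm{Res}_{s=1}\xi_e(\alpha;s)$ and $\beta(0) = -\xi_e(\beta;0)$, with $\beta(0) = |D|^{-1}Z^*(0,w)$ to be extracted from a residue computation (the factor $|D|^{-1}$ arising from the determinant of the dual form). Since $\Re(\lambda) > (m+2)/4 > 1$ in particular $\lambda\neq1$, we are never in the exceptional $\lambda=1$ branch, so no undetermined constant $c$ appears. Finally $(F|_\ell\tilde w_N)(z) = G(z)$ is then immediate from $(F_\alpha|_\ell\tilde w_N)(z) = G_\beta(z)$. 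This completes the proof.
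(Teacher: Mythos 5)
Your overall strategy coincides with the paper's: specialize $w=2\lambda+\frac m2-1$, identify $\xi_\pm(w,s),\xi^*_\pm(w,s)$ with $\xi_\pm(\alpha;s),\xi_\pm(\beta;s)$, verify [A1]--[A4] and $\mathrm{[A1]}_{r,\psi}$--$\mathrm{[A5]}_{r,\psi}$, and feed the result into Theorem \ref{corollary:Maassforms}; your termwise matching of $F_\alpha,G_\beta$ with $F,G$ (including $\alpha(\infty)=\zeta(2\lambda-\frac m2)$, $\alpha(0)=-\xi_e(\alpha;0)$, and the collapse of $i^{-\ell/2}e^{\pi i(p-q)/4}$ to $(-1)^{(p-q-\ell)/4}$) is the same bookkeeping the paper performs. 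The genuine gap is exactly the step you flag as ``the main obstacle'' and then leave to an appeal to \cite{Ueno}: the twisted conditions are \emph{not} available off the shelf there. The paper has to build them: it realizes $\xi_\pm(\psi;w,s)$ and $\xi^*_\pm(\psi^*;w,s)$ as prehomogeneous zeta functions $\zeta_\pm(\phi_\psi;w,s)$, $\zeta^*_\pm(\widehat{\phi_\psi};w,s)$ with $\phi_\psi(x)=\tau_\psi(Q(x))\phi_0(x)$, extends the functional equation of \cite{Ueno} to \emph{arbitrary} $\Gamma_H$-invariant Schwartz--Bruhat functions (Theorem \ref{thm:FE for general phi}) — needed in particular because the principal character $\psi_{r,0}\in X_r$ must be included, a case not covered by the non-principal twists treated in \cite{Ueno} — computes $\widehat{\phi_\psi}$ via Stark's Gauss-sum lemmas (Lemma \ref{lem:CalOfFTofGaussSum}), which is precisely where the constant $K_{r,\psi}=C_{p-q,r}\psi^*(-N)\chi_K(r)$, hence the Kronecker character $\chi_K$ and the $K$ versus $K_N$ dichotomy, comes from, and evaluates the residue Dirichlet series $\eta(\phi_\psi;w),\eta^*(\phi_\psi;w)$ (Lemma \ref{lemma:exlicit eta}, another Gauss-sum computation producing $\varepsilon_r^m\bigl(\tfrac{2^mD}{r}\bigr)$) in order to verify the residue relations $\mathrm{[A5]}_{r,\psi}$ (2) and (4). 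Your sketch (``twisting produces a zeta function of the same type, attached to $r^2Q$ or to Gauss-sum twisted representation numbers'') names the phenomenon but supplies neither the exact constant in the twisted functional equation nor the twisted residue data, and without these the hypotheses of Theorem \ref{thm:ConverseTheoremForCongSubgp} are not verified and the character of the resulting Maass form is not identified.

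A secondary inaccuracy: your claim that $\Re(\lambda)>(m+2)/4>1$ excludes the exceptional branch $\lambda=1$ is false for small $m$; when $m=1$ one has $(m+2)/4=\tfrac34$, so $\lambda=1$ (i.e.\ $w=\tfrac m2+1$, still in the region $\Re(w)>m$) can occur — this is exactly the Shintani case mentioned in the paper. The $\lambda=1$ branches of [A4], $\mathrm{[A4]}_{r,\psi}$ and $\mathrm{[A5]}_{r,\psi}$ therefore do need to be checked; the paper obtains them from Corollary \ref{cor:5.5}\,(2) (vanishing of the relevant residues at $s=0$), while the undetermined-constant subcase $\ell\equiv0\pmod4$ indeed cannot arise there since $\ell\equiv p-q\pmod4$ is odd when $m$ is odd.
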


\begin{remark}
In case $A$ is positive definite, the theorem above is obtained by Mizuno \cite{Mizuno}. His proof is based on the fact that the Maass forms in the theorem appear as the coefficients of the theta  expansion of the Jacobi-Eisenstein series. 
In case $m=1$ and $A=(1)$, then the zeta functions $\zeta_\pm(w,s)$ and $\zeta^*_\pm(w,s)$ are the double Dirichlet series studied first by Shintani \cite{Shintani} (and later in \cite {SatoFE} for general $A=(a)$ and  \cite{SaitoBQF}). 
In this special case, by using their converse theorem for double Dirichlet series,  Diamantis and Goldfeld (\cite{DG} and \cite{DG2}) proved more precisely that the corresponding Maass forms are linear combinations of metaplectic Eisenstein series for $\widetilde{\Gamma}_0(4)$. 
\end{remark}

For a Dirichlet character $\psi$ modulo an odd prime number $r$ not dividing $N$, 
the twists of $\xi_\pm(w,s)$ and $\xi^*_\pm(w,s)$ by $\psi$ (in the sense of \S \ref{subsect:4.1}) are given by 
\begin{align}
\xi_\pm(\psi;w,s) &= \sum_{n=1}^{\infty}
  \frac{e^{\pi i(p-q)/4}|D|^{-1/2} \tau_\psi(\pm n)Z(\pm n,w)}{n^s}, \\
\xi^*_\pm(\psi;w,s)  &= \sum_{n=1}^{\infty} \frac{\tau_\psi(\pm n)Z^*(\pm n,w)}{n^s}.
\end{align}
For the proof of Theorem \ref{thm:Maass form for Ueno zeta}, it is sufficient to check the conditions  [A1], [A2], [A3], and [A4] in \S \ref{subsection:3.3} for $\xi_\pm(w,s)$  and $\xi^*_\pm(w,s)$, and the conditions $\mathrm{[A1]}_{r,\psi}$, $\mathrm{[A2]}_{r,\psi}$, $\mathrm{[A3]}_{r,\psi}$, $\mathrm{[A4]}_{r,\psi}$, and $\mathrm{[A5]}_{r,\psi}$ in \S \ref{subsect:4.1} for $\xi_\pm(\psi;w,s)$  and $\xi^*_\pm(\psi^*;w,s)$. 
Information necessary for the proof of these conditions is essentially contained in \cite{Ueno}.  We therefore give only a brief explanation on the proof.

\subsection{Prehomogeneous zeta functions}

The method employed in \cite{Ueno} is based on the theory of prehomogeneous vector spaces. 
We first introduce the prehomogeneous vector spaces with which 
the zeta functions $\zeta_\pm(w,s)$ and $\zeta^*_\pm(w,s)$ are associated. 

Let $H$ be the subgroup of 
the special orthogonal group
$SO(Q)=\{g\in SL_{m+2}(\C)\, |\, Q(gx)= Q(x) \; \text{for all} \;
 x\in V\}$ defined by
\[
 H =
 \left\{
 \left.
 \begin{pmatrix}
  a& -2a {}^{t}uA & -a {}^tuAu \\
  0 & 1_m & u \\
  0 & 0 & a^{-1}
 \end{pmatrix}
 \; \right|\;
 \begin{array}{c}
  a\in \C, \, a\neq 0 \\
  u\in \C^{m}
 \end{array}
 \right\}.
\]
The group
$H\times GL_{1}(\C)$ acts on $V$ by
\[
 \begin{array}{ll}
  \rho(h, t)x = th x &
   (x\in V, \, (h,t)\in H\times GL_{1}(\C)),    \\
  \rho^{*}(h,t)y = t^{-1}\; {}^{t} h^{-1} y &
   (y\in V, \, (h, t)\in H\times GL_{1}(\C)).
 \end{array}
\]
Here we have identified the dual space $V^*$ with
$V$ via the bilinear functional
\[
 \left\langle x, y\right\rangle=\sum_{i=0}^{m+1}
 x_{i}y_{i} \qquad (x={}^{t}(x_{0}, x_{1},\dots, x_{m+1}),\;
 y= {}^{t}(y_{0}, y_{1}, \dots, y_{m+1})\in V),
\]
so that the contragradient representation of $\rho$ coincides with
$\rho^*$. Then the triplets 
$(H\times GL_{1}(\C), \rho, V)$ and 
$(H\times GL_{1}(\C), \rho^{*}, V)$
are prehomogeneous vector spaces with the singular sets
\begin{align*}
 S&=\{x\in V\, |\, x_{m+1}= 0\} \cup \{x\in V\, |\, Q(x)= 0\}, \\
 S^{*}&=\{y\in V\, |\, y_{0}=0\}\cup \{y\in V\, |\, Q^{*}(y)= 0\}.
\end{align*}
Let $V_{\R}=\R^{m+2}$ and 
\begin{align*}
 V_{\pm}&=
 \{x\in V_{\R}\, |\, x_{m+1}Q(x)\ne0,\ Q(x)/|Q(x)|=\pm1\}, \\
 V_{\pm}^{*} &=
 \{y\in V_{\R}\, |\, y_0Q^*(y)\ne0,\ Q^*(y)/|Q^*(y)|=\pm1\}.
\end{align*}
Further, we put
\begin{gather*}
 \Gamma_H = H \cap GL_{m+2}(\Z) = 
 \left\{
 \left.
 \begin{pmatrix}
  \epsilon & -2\epsilon\, {}^{t}\! uA & - \epsilon\,{}^tuAu \\
  0 & 1_{m} &u \\
  0 & 0 & \epsilon
 \end{pmatrix}\;
 \right| \; u\in\Z^{m},\  \epsilon=\pm1
 \right\}, \\
 V_{\Q} = \Q^{m+2}, \qquad  V_{\Z} = \Z^{m+2}.
\end{gather*}

We call a function $\phi:V_\Q \rightarrow \C$ a {\it Schwartz-Bruhat\/} function, if there exist lattices $L_1 \subset L_2$ in $V_\Q$ such that $\phi$ vanishes outside $L_2$ and factors through $L_2/L_1$. 
For Schwartz-Bruhat functions $\phi$ and $\phi^*$ on $V_\Q$ satisfying 
\[
\phi(\rho(\gamma)x)=\phi(x), \quad 
\phi^*(\rho^*(\gamma)x)=\phi^*(x) \quad 
(\gamma \in \Gamma_H,\ x \in V_\Q), 
\]
we can define the zeta functions
$ \zeta_{\varepsilon}(\phi;w,s)$ and $\zeta_{\eta}^{*}(\phi^*;w,s)$ associated with the prehomogeneous vector spaces $(H\times GL_{1}(\C), \rho, V)$ and 
$(H\times GL_{1}(\C), \rho^{*}, V)$ by
\begin{align}
 \label{form:DefOfZeta}
 \zeta_{\pm}(\phi;w,s)&=
 \sum_{x\in \Gamma_H\backslash V_{\pm}\cap
 V_{\Q}} \phi(x) |x_{m+1}|^{-w}\cdot |Q(x)|^{-s} \\
 \label{form:DefOfZetaStar} 
 \zeta_{\pm}^{*}(\phi^*;w,s)&=
 \sum_{y\in \Gamma_H\backslash V_{\pm}^{*}\cap V_{\Q}}
 \phi^*(y) |y_{0}|^{-w}\cdot |Q^{*}(y)|^{-s}, 
\end{align}
where $w, s$ are complex variables.
By \cite[Theorem~1]{SatoConv} (or \cite[Theorem~1.2]{Saito}), $ \zeta_{\varepsilon}(\phi;w,s)$
and $\zeta_{\eta}^{*}(\phi^*;w,s)$ converge absolutely for 
$\Re(s)>1$, $\Re(w)>m$. 
For $n\in \Q$, we put 
\[
 Z(\phi;n,w) = \sum_{\nidan{x \in \rho(\Gamma_H)\backslash V_\Q}{x_{m+1}\ne0,\ Q(x)=n}} \frac{\phi(x)}{\abs{x_{m+1}}^w}, \quad 
 Z^*(\phi^*;n,w) = \sum_{\nidan{y \in \rho^*(\Gamma_H)\backslash V_\Q}{y_{0}\ne0,\ Q^*(y)=n}} \frac{\phi^*(y)}{\abs{y_{0}}^w}. 
\]
Then we have 
\begin{equation}
\zeta_\pm(\phi;w,s) 
 = \sum_{n \in \Q^\times_{>0}} \frac{Z(\phi;\pm n,w)}{n^s}, \quad
\zeta^*_\pm(\phi^*;w,s) 
 = \sum_{n \in \Q^\times_{>0}} \frac{Z^*(\phi^*;\pm n,w)}{n^s}
\label{eqn:DS hyoji}
\end{equation}

We define the Fourier transform $\widehat{\phi}$ of a Schwartz-Bruhat function $\phi$ by
\[
\widehat{\phi}(y) = \frac{1}{[V_\Z:L]} \sum_{x \in V_\Q/L} \phi(x) e^{-2\pi i \langle x,y\rangle}, 
\]
where $L$ is a sufficiently small lattice so that $L \subset V_\Z$ and the value $\phi(x) e^{-2\pi i \langle x,y\rangle}$ depends only on the residue class $x+L$. 
Though $L$ is not unique, the value $\widehat{\phi}(y)$ does not depend on the choice of $L$.  
Then, by the general theory of prehomogeneous vector spaces (\cite{SatoFE}, \cite[\S 4]{SatoZetaDist}), there exists a functional equation that relates $\zeta_\pm(\phi;w,s)$ to $\zeta^*_\pm(\widehat{\phi};w,\frac m2+1-w-s)$.

\begin{theorem}
\label{thm:FE for general phi}
$(1)$ For a $\rho(\Gamma_H)$-invariant Schwartz-Bruhat function  $\phi$, the zeta functions $\zeta_\pm\left(\phi;w,s\right)$ and  $\zeta^*_\pm\left(\widehat{\phi};w,s\right)$ have analytic continuations to meromorphic functions of $(s,w)$ 
in $\C^2$. 
For a fixed $w$ with $\Re(w)>m$, the zeta functions multiplied by $(s-1)(s+w-\frac m2-1)$ are entire functions of $s$ of finite order in any vertical strip.  

$(2)$ The residues at $s=1$ and $s=\frac m2+1-w$ are given by
\begin{gather*}
\mathop{\mathrm{Res}}_{s=1} \zeta_+(\phi;w,s) 
             = \mathop{\mathrm{Res}}_{s=1} \zeta_-(\phi;w,s) 
             = \eta(\phi;w), \\
\mathop{\mathrm{Res}}_{s=m/2+1-w} \zeta_\epsilon(\phi;w,s) 
             = 2\abs{D}^{-1/2}(2\pi)^{m/2 -w} \Gamma\left(w - \frac m2\right)Z^*(\widehat{\phi};0,w) \times 
 \begin{cases}
 \cos\frac\pi2(w-p) & (\epsilon=+), \\
 \cos\frac\pi2(w-q) & (\epsilon=-), \\
\end{cases} \\
\mathop{\mathrm{Res}}_{s=1} \zeta^*_+(\widehat{\phi};w,s) 
             = \mathop{\mathrm{Res}}_{s=1} \zeta^*_-(\widehat{\phi};w,s) 
             = \eta^*(\phi;w), \\
\mathop{\mathrm{Res}}_{s=m/2+1-w} \zeta^*_\epsilon(\widehat{\phi};w,s) 
             = 2\abs{D}^{-1/2}(2\pi)^{m/2 -w} \Gamma\left(w - \frac m2\right)Z(\phi;0,w) \times 
 \begin{cases}
 \cos\frac\pi2(w-p) & (\epsilon=+), \\
 \cos\frac\pi2(w-q) & (\epsilon=-),
\end{cases} 
\end{gather*}
where we put 
\begin{eqnarray*}
\eta(\phi;w) &=& \frac12  \sum_{x_{m+1}\in\Q^\times} \frac{\calA\phi(x_{m+1})}{|x_{m+1}|^{w-m+1}},  \quad
\calA\phi(x_{m+1}) 
 = \frac{1}{M^{m+1}}\sum_{(x_0,x')\in\Q^{m+1}/M\Z^{m+1}} 
       \phi(x_0,x',x_{m+1}), \\
\eta^*(\phi;w)  &=& \frac12 \sum_{y_0\in\Q^\times} \frac{\calA^*\phi(y_0)}{|y_{0}|^{w-m+1}}, 
\quad  \calA^*\phi(y_0) 
 = \frac{1}{M}\sum_{x_0\in\Q/M\Z} 
       \phi(x_0,0,0) \mathbf{e}\left[-x_0y_0\right].
\end{eqnarray*}
Here $M$ is a sufficiently large positive integer, and the coefficients $\calA\phi(x_{m+1})$ and  $\calA^*\phi(y_0)$ are independent of the choice of $M$.

$(3)$ The zeta functions $\zeta_\pm\left(\phi;w,s\right)$ and  $\zeta^*_\pm\left(\widehat{\phi};w,s\right)$ satisfy the following functional equation:
\begin{eqnarray*}
\begin{pmatrix}
\zeta^*_+\left(\widehat{\phi};w,\frac m2+1-w-s\right) \\
\zeta^*_-\left(\widehat{\phi};w,\frac m2+1-w-s\right) 
\end{pmatrix}
 &=& 2 \abs{D}^{-1/2} (2\pi)^{m/2-w-2s}\Gamma(s)\Gamma\left(w+s-\frac m2\right) \\
 & &  \times
\begin{pmatrix}
   \cos\left(\frac{\pi(w+2s-p)}2\right) &
   \cos\left(\frac{\pi(w-p)}2\right) \\
   \cos\left(\frac{\pi(w-q)}2\right) & 
   \cos\left(\frac{\pi(w+2s-q)}2\right)
\end{pmatrix}
\begin{pmatrix}
\zeta_+\left(\phi;w,s\right) \\
\zeta_-\left(\phi;w,s\right) 
\end{pmatrix}.
\end{eqnarray*}
\end{theorem}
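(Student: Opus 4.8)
The plan is to reduce Theorem \ref{thm:FE for general phi} to the general functional equation of zeta functions attached to prehomogeneous vector spaces, applied to the two spaces $(H\times GL_1(\C),\rho,V)$ and $(H\times GL_1(\C),\rho^*,V)$ introduced above. First I would recall that for a regular prehomogeneous vector space with several relative invariants, the adelic (or the classical Sato--Shintani) theory produces a meromorphic continuation and a functional equation relating the zeta function of $\phi$ to that of $\widehat\phi$, with a $\Gamma$-factor (the $b$-function times elementary Gamma factors) and a matrix of local archimedean zeta integrals at the real place; here the two relative invariants on $V$ are $x_{m+1}$ and $Q(x)$ (respectively $y_0$ and $Q^*(y)$ on $V^*$), which is why the zeta functions are genuinely functions of two complex variables $(w,s)$. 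So the first step is: invoke \cite{SatoFE} and \cite[\S 4]{SatoZetaDist} to get assertion (1), the continuation of $\zeta_\pm(\phi;w,s)$ and $\zeta^*_\pm(\widehat\phi;w,s)$ to meromorphic functions on $\C^2$, together with the finite-order estimate in vertical strips for the entire functions $(s-1)(s+w-\tfrac m2-1)\zeta_\pm$ — the latter following from convexity/Phragmén--Lindelöf applied to the explicit integral representation.

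Next I would compute the archimedean local factor explicitly. The key point is that the relevant real prehomogeneous vector space is, after fixing $x_{m+1}\ne0$, essentially the space of the quadratic form $A$ of signature $(p,q)$, whose local zeta function is the classical $\Gamma$-factor with cosines $\cos\frac{\pi}{2}(w+2s-p)$, $\cos\frac{\pi}{2}(w-p)$, etc.; these are exactly the entries appearing in $\Gamma(w,s)$ in \eqref{form:FEofUenoZeta}. This gives assertion (3). For the residue formulas in (2), I would track the poles of the local archimedean zeta integral: the pole at $s=1$ comes from the ``$x_{m+1}$-direction'' (the isotropic variable) and its residue is the indicated partial Eisenstein-type series $\eta(\phi;w)$ obtained by integrating $\phi$ over the fibre, while the pole at $s=\tfrac m2+1-w$ is dictated by the functional equation: it corresponds under \eqref{form:FEofUenoZeta} to the pole at $s=1$ of $\zeta^*_\pm(\widehat\phi;w,\cdot)$, so its residue is $(\text{$\Gamma$-factor at }s=\tfrac m2+1-w)\times\eta^*(\widehat\phi;w)$, and $\eta^*$ unfolds to $Z^*(\widehat\phi;0,w)$ times the cosine factor. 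The expressions for $\calA\phi$ and $\calA^*\phi$ are just the constant terms (partial Fourier transforms) along the isotropic lines, and their independence of $M$ is the usual stabilisation statement for Schwartz--Bruhat functions on $V_\Q$.

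Concretely I would organise the proof as follows. Step 1: identify the $b$-function of $(H\times GL_1,\rho,V)$ — it factors as a product coming from $x_{m+1}$ and from $Q$, yielding $\Gamma(s)\Gamma(w+s-\tfrac m2)$ after the usual shift and the Euler reflection identity already written out before the statement. Step 2: write the global zeta integral $\int_{\rho(\Gamma_H)\backslash H^1\times\R_{>0}}$ against $\phi$, split off the contributions of the singular set (the hyperplanes $x_{m+1}=0$ and $Q(x)=0$), and read off that the ``non-singular'' part is entire while the singular part produces poles only at $s=1$ and $s=\tfrac m2+1-w$ with the stated residues; the Poisson summation formula over $V_\Q$ converts the $\phi$-integral into the $\widehat\phi$-integral and produces the matrix functional equation. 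Step 3: evaluate the real local zeta function of the quadratic form of signature $(p,q)$ to obtain the cosine matrix, matching it against \cite{SatoConv}, \cite{Saito} for the domain of convergence. The main obstacle I anticipate is the bookkeeping of the two-variable pole structure and, in particular, pinning down the exact residues at $s=\tfrac m2+1-w$ including all constants ($2|D|^{-1/2}$, the powers of $2\pi$, the $\Gamma(w-\tfrac m2)$): this requires being careful that the ``partial'' functional equation (specialising so that the cosine matrix is triangular) is consistent with the full two-variable one, and that the normalisations of $\widehat\phi$, of Haar measures, and of the lattice index $[V_\Z:L]$ all cancel correctly. Everything else is a direct citation of the prehomogeneous-vector-space machinery in \cite{SatoFE}, \cite{SatoZetaDist}, \cite{SatoConv}, \cite{Saito}, already used in \cite{Ueno}.
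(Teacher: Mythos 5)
Your plan follows essentially the same route as the paper: the paper gives no independent argument for Theorem \ref{thm:FE for general phi} but appeals to Ueno's prehomogeneous-vector-space proof (the global zeta integral for $(H\times GL_1,\rho,V)$, Poisson summation over $V_\Q$, the singular-set contributions for the poles, and the archimedean local functional equation of the signature-$(p,q)$ quadratic form for the cosine matrix), remarking only that the passage from $\phi_0$, $\phi_\psi$ to a general $\rho(\Gamma_H)$-invariant Schwartz--Bruhat function is routine; that is exactly the computation your Steps 1--3 outline.

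One correction to your residue bookkeeping: the pole of $\zeta_\pm(\phi;w,s)$ at $s=\frac m2+1-w$ is \emph{not} obtained by transporting the $s=1$ pole of $\zeta^*_\pm(\widehat{\phi};w,\cdot)$ through \eqref{form:FEofUenoZeta}. Under the substitution $\frac m2+1-w-s=1$ that pole sits at $s=\frac m2-w$ and is matched by the pole of $\Gamma\left(w+s-\frac m2\right)$, while at $s=\frac m2+1-w$ the cosine matrix in $\Gamma(w,s)$ becomes singular (its determinant vanishes there, which is exactly why $\zeta^*_\pm(\widehat{\phi};w,0)$ can stay finite), so the functional equation alone does not yield this residue. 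It comes directly from the dual singular-set terms $\{Q^*(y)=0,\ y_0\neq0\}$ after Poisson summation, whose Dirichlet series is $Z^*(\widehat{\phi};0,w)$; this is a different series from $\eta^*(\phi;w)$, which is the residue of $\zeta^*_\pm(\widehat{\phi};w,s)$ at $s=1$ and is supported on the line $\{(x_0,0,0)\}$ rather than on the cone $Q^*=0$. So the phrase ``$\eta^*$ unfolds to $Z^*(\widehat{\phi};0,w)$ times the cosine factor'' should be discarded in favor of the direct singular-orbit computation, which your Step 2 in fact already contains and which is where the constants $2|D|^{-1/2}(2\pi)^{m/2-w}\Gamma\left(w-\frac m2\right)$ are produced.
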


We note that the functional equation above is rewritten as
\begin{eqnarray}
\lefteqn{ e^{-\pi i (p-q)/4} \abs{D}^{-1/2} (2\pi)^{-s}\Gamma(s)
\gamma(s)
\begin{pmatrix}
\zeta_+\left(\phi;w,s\right) \\
\zeta_-\left(\phi;w,s\right) 
\end{pmatrix}
} \nonumber \\
 & & \quad = (2\pi)^{-(m/2+1-w-s)}\Gamma\left(\frac m2+1-w-s\right) 
       \nonumber \\
 & & \quad \quad {} \times \Sigma(p-q)\gamma\left(\frac m2+1-w-s\right)
\begin{pmatrix}
\zeta^*_+\left(\widehat{\phi};w,\frac m2+1-w-s\right) \\
\zeta^*_-\left(\widehat{\phi};w,\frac m2+1-w-s\right) 
\end{pmatrix}. 
\label{eqn:normalized FE}
\end{eqnarray}

The theorem above was proved in \cite[Theorem 4.1, Corollary 4.2, Proof of Theorem 4.5]{Ueno} for special cases of $\phi$ ($\phi_0$ and $\phi_\psi$ with non-principal $\psi$ defined in \S \ref{subsection:5.3} below). 
The proof for general Schwartz-Bruhat functions $\phi$ is almost the same as that in \cite{Ueno}, and is omitted.   

\begin{remark}
Since we leave the details of the proof of Theorem \ref{thm:FE for general phi} to \cite{Ueno},  we give here a list of corrections to \cite{Ueno}:
\begin{description}
\item[{\rm p.3, line 5:}]
``$D=\det A$'' should be ``$D=-\det(2A)$''. 
Note that, in the present paper,  $D$ is defined to be $\det(2A)$, since some of identities become simpler. 
\item[{\rm p.3, Theorem; p.16, Lemma 3.4; p.17 Remark 2 (2); p.29, lines 8 and 13; p.31, line 8:}] If $m$ is even, then $(-1)^{m/2}\det(2A)\equiv 0, 1\pmod 4$, and hence the case $|D|\equiv2\mod4$ cannot occur and should be deleted. 
\item[{\rm p.3, line 13:}] 
``$G_{c(\epsilon,k)+1}(2|D|,\mathrm{id}_{2|D|})$'' should be ``$G_{c(\epsilon,k)+1}\left(2|D|,\left(\frac{2|D|}{*}\right)\right)$''. 
\item[{\rm p.13, line 3; p.26, line 5 from bottom; p.27, line 4:}]
``$q-p$'' should be ``$p-q$''. 
\item[{\rm p.13, line 3 from bottom:}]
``Proposition 3.2'' should be ``Lemma 3.2''. 
\item[{\rm p.18, lines 15 and 17; p.20, line 6:}]
``$G_\R$'' should be ``$G^+$''. 
\item[{\rm p.21, line 10:}]
``$y_0\Z^m$'' should be ``$2y_0A\Z^m$''. 
\item[{\rm p.22, line 6 from bottom:}]
``$da\,dt$'' should be ``$dt\,da$''. 
\item[{\rm p.22, line 5 from bottom:}]
``$\displaystyle{\sum_{y \in L_1^*} |y_0|^{-w}}$'' should be ``$Z^*(0,w)$''. 
\item[{\rm p.22, line 4 from bottom:}]
``$\displaystyle{\sum_{x \in L_1} |x_{m+1}|^{-w}}$'' should be ``$Z(0,w)$''. 
\item[{\rm p.24, line 5:}]
``$a^{m-w-2}$'' should be ``$a^{m-w-1}$''. 
\item[{\rm p.32. lines 4--2 from bottom:}]
``{\it or $\mathfrak{G}_{c(\epsilon,k)+1}\ \cdots {}\bmod 4$ respectively}'' should be deleted.  
\end{description} 
\end{remark}

\begin{corollary}
\label{cor:5.5}
$(1)$
For any non-negative integer $k$, we have
\begin{eqnarray*}
\zeta_+\left(\phi;w,-k\right) +  (-1)^k\zeta_-\left(\phi;w,-k\right) 
 &=& \begin{cases}
   -Z(\phi;0,w) & (k=0), \\
     0 & (k>0),
    \end{cases} \\
\zeta^*_+\left(\widehat{\phi};w,-k\right) +  (-1)^k\zeta^*_-\left(\widehat{\phi};w,-k\right) 
 &=& \begin{cases}
   -|D|^{-1}Z^*(\widehat{\phi};0,w) & (k=0), \\
     0 & (k>0).
    \end{cases} 
\end{eqnarray*}

$(2)$
Put $\lambda=\frac12(w-\frac m2 +1)$. 
If $\lambda=1$, namely, $w=\frac m2+1$, then we have 
\[
\mathop{\mathrm{Res}}_{s=0}\zeta_+(\phi;w,s) 
      + \mathop{\mathrm{Res}}_{s=0}\zeta_-(\phi;w,s) 
       = \mathop{\mathrm{Res}}_{s=0}\zeta^*_+(\widehat{\phi};w,s) 
         + \mathop{\mathrm{Res}}_{s=0}\zeta^*_-(\widehat{\phi};w,s) 
 = 0.  
\]
Further, if $p-q\equiv 0 \pmod 4$ and $\lambda=1$, then 
\[
\mathop{\mathrm{Res}}_{s=0}\zeta_+(\phi;w,s) 
      = \mathop{\mathrm{Res}}_{s=0}\zeta_-(\phi;w,s) 
       = \mathop{\mathrm{Res}}_{s=0}\zeta^*_+(\widehat{\phi};w,s) 
         = \mathop{\mathrm{Res}}_{s=0}\zeta^*_-(\widehat{\phi};w,s) 
 = 0. 
\]
\end{corollary}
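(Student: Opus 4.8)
The plan is to read off both parts directly from the analytic data assembled in Theorem~\ref{thm:FE for general phi}: the functional equation of part~(3) (equivalently \eqref{eqn:normalized FE}), the residue formulas of part~(2), and the fact that the factor $(s-1)(s+w-\frac m2-1)$ kills all $s$-poles of the zeta functions. Throughout one uses $p+q=m$, which holds because $A$ is nondegenerate symmetric of size $m$ with $p$ positive and $q$ negative eigenvalues. I would dispatch part~(2) first, as it is the quicker of the two: specialize $w=\frac m2+1$, so that $\frac m2+1-w=0$ while the two polar lines $s=1$ and $s=\frac m2+1-w$ of the $s$-variable stay distinct, whence the residue formulas of Theorem~\ref{thm:FE for general phi}(2) remain valid at this $w$ and give
\[
\Res_{s=0}\zeta_+(\phi;\tfrac m2+1,s)+\Res_{s=0}\zeta_-(\phi;\tfrac m2+1,s)
 =\frac{\abs{D}^{-1/2}Z^*(\widehat\phi;0,\tfrac m2+1)}{\pi}\Bigl(\cos\tfrac{\pi(\frac m2+1-p)}{2}+\cos\tfrac{\pi(\frac m2+1-q)}{2}\Bigr),
\]
with the analogous statement (with $Z^*(\widehat\phi;0,\cdot)$ replaced by $Z(\phi;0,\cdot)$) for the starred pair. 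By $\cos A+\cos B=2\cos\frac{A+B}2\cos\frac{A-B}2$ the bracket equals $2\cos(\frac\pi2)\cos(\cdots)=0$, since $\frac{A+B}{2}=\frac\pi4\bigl((\frac m2+1-p)+(\frac m2+1-q)\bigr)=\frac\pi4(m+2-p-q)=\frac\pi2$; this is the first assertion of~(2). If moreover $p-q\equiv0\pmod4$, then $\frac m2+1-p=1+\frac{q-p}2$ and $\frac m2+1-q=1+\frac{p-q}2$ are odd integers, so each cosine vanishes on its own, which yields the ``further'' assertion.

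For part~(1) I would use the standard device of a combination of zeta functions that is forced to vanish because it multiplies a pole of a $\Gamma$-factor on one side of a functional equation facing regularity on the other. Write \eqref{eqn:normalized FE} as $\vec\zeta^*(\widehat\phi;w,\frac m2+1-w-s)=c(s)M(s)\,\vec\zeta(\phi;w,s)$, with $c(s)=2\abs{D}^{-1/2}(2\pi)^{m/2-w-2s}\Gamma(s)\Gamma(w+s-\frac m2)$ and $M(s)$ the cosine matrix of Theorem~\ref{thm:FE for general phi}(3), and fix a generic $w$. For $k\ge1$ the left side is holomorphic at $s=-k$ while $\Gamma(s)$ has a simple pole there, so, after simplifying $\cos\frac{\pi(w-2k-p)}2=(-1)^k\cos\frac{\pi(w-p)}2$ (and likewise with $q$), comparing residues in the first component forces $(-1)^k\zeta_+(\phi;w,-k)+\zeta_-(\phi;w,-k)=0$, i.e.\ $\zeta_+(\phi;w,-k)+(-1)^k\zeta_-(\phi;w,-k)=0$. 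For $k=0$ both sides have a simple pole at $s=0$; the residue of the left-hand side is $-\Res_{\sigma=\frac m2+1-w}\zeta^*_+(\widehat\phi;w,\sigma)$, supplied by Theorem~\ref{thm:FE for general phi}(2), and matching residues gives $\zeta_+(\phi;w,0)+\zeta_-(\phi;w,0)=-Z(\phi;0,w)$. For the starred identities I would invert the matrix relation, obtaining $\vec\zeta(\phi;w,\frac m2+1-w-s)=c(\frac m2+1-w-s)^{-1}M(\frac m2+1-w-s)^{-1}\vec\zeta^*(\widehat\phi;w,s)$; a short cosine computation using $p+q=m$ shows $\det M(\frac m2+1-w+k)=0$ for every $k\ge0$, so $M(\frac m2+1-w-s)^{-1}$ has a simple pole at $s=-k$, and the very same finiteness/residue comparison yields the starred identities. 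The precise constant $-\abs{D}^{-1}$ in the $k=0$ starred case will drop out after combining the reflection formula $\Gamma(w-\frac m2)\Gamma(\frac m2+1-w)=\pi/\sin\pi(w-\frac m2)$ with $\frac{d}{d\sigma}\det M(\sigma)\big|_{\sigma=\frac m2+1-w}=\pi\sin\pi(w-\frac m2)$. Finally, each of these identities, first obtained for generic $w$, extends to all $w$ by meromorphic continuation.

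The hard part will be the constant-chasing in the $k=0$ case of part~(1), and in particular reproducing exactly $\abs{D}^{-1}$ for the starred identity, which demands the inverse of $M(\sigma)$ at the point where $\det M$ vanishes, the derivative of that determinant at the same point, the reflection formula, and a careful accounting of the $(2\pi)$- and $\Gamma$-powers hidden in $c(s)$; the rest is routine. As a cleaner alternative I might conjugate $M(s)$ by $\begin{pmatrix}1&1\\1&-1\end{pmatrix}$ to pass to the even/odd combinations $\zeta_+\pm\zeta_-$, as in \eqref{eqn:FE for lambda=1}: then the row of the functional equation that matters involves $\zeta_++\zeta_-$ alone and the cancellations above become essentially immediate.
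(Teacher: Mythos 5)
Your argument is exactly the route the paper intends — it states only that the corollary ``follows easily from the functional equation and the residue formulas at $s=\frac m2+1-w$ in Theorem \ref{thm:FE for general phi}'' — and your execution is correct: the residue comparison at the poles of $\Gamma(s)$ gives the unstarred identities, the vanishing of $\det$ of the cosine matrix at $\sigma=\frac m2+1-w+k$ handles the starred ones, and the specialization $w=\frac m2+1$ with $p+q=m$ gives part (2). I checked the deferred constant-chase: with $\frac{d}{d\sigma}\det M(\sigma)\big|_{\sigma=\frac m2+1-w}=\pi\sin\pi(w-\tfrac m2)$ and the reflection formula, the $k=0$ starred case does produce exactly $-\abs{D}^{-1}Z^*(\widehat\phi;0,w)$, so no gap remains.
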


The corollary follows easily from the functional equation and the residue formulas at $s=\frac m2+1-w$ in Theorem \ref{thm:FE for general phi}.

\subsection{Proof of Theorem \ref{thm:Maass form for Ueno zeta}}
\label{subsection:5.3}

First we show that the zeta functions $\zeta_\pm(w,s)$ and $\zeta_\pm^*(w,s)$ defined in \eqref{eqn:Def of Ueno zeta} and their twists by Dirichlet characters are special cases of the zeta functions $ \zeta_{\pm}(\phi;w,s)$ and $\zeta_{\pm}^{*}(\widehat{\phi};w,s)$.

Denote by $\phi_0$ the characteristic function of $V_\Z$. 
Then,  we have  $\widehat{\phi_0}=\phi_0$ and 
\begin{gather}
Z(\phi_0;n,w) 
 = \begin{cases}
\displaystyle \sum_{l=1}^\infty \frac{r(l,n)}{l^w} =: Z(n,w) & (n \in \Z), \\[2ex] 
0 & (n \not\in \Z),
\end{cases}
\label{eqn:z-phi0}
\\
Z^*(\widehat{\phi_0};\frac{n}N,w) 
 = \begin{cases}
\displaystyle \sum_{l=1}^\infty \frac{r^*(l,n)}{l^w} =: Z^*(n,w) & (n \in \Z), \\ 
0 & (n \not\in \Z).
\end{cases}
\label{eqn:z-star-phi0}
\end{gather}
Hence we obtain
\[
\zeta_\pm(w,s) = \zeta_\pm(\phi_0;w,s), \quad 
\zeta^*_\pm(w,s) = \zeta^*_\pm(\widehat{\phi_0};w,s).
\]
For a Dirichlet character $\psi$ modulo an odd prime number $r$ $((r,N)=1)$, we consider the function 
\[
\phi_\psi(x) = \begin{cases}
                   \tau_\psi(Q(x)) & (x \in V_\Z), \\
                   0  & (x \not\in V_\Z)
                   \end{cases}
\]
on $V_\Q$, where $\tau_\psi$ is the Gauss sum defined by \eqref{eqn:def of Gauss sum}. 
Then we have
\begin{equation}
Z(\phi_\psi;n,w) 
 = \begin{cases}
\tau_\psi(n) Z(n,w) & (n \in \Z), \\
0 & (n \not\in \Z),
\end{cases} 
\label{eqn:Z-star-phi}
\end{equation}
and 
\[
 \zeta_{\pm}(\phi_\psi;w,s) 
 =  \sum_{n=1}^{\infty}\tau_{\psi}(\pm n) Z(\pm n, w)n^{-s}. 
\]
This is the twist of $\zeta_\pm(w,s)$ by $\psi$ as introduced in \S \ref{subsect:4.1}. 
The twist of  $\zeta^*_\pm(w,s)$ by $\psi$ is obtained from $\zeta^*_\pm(\widehat{\phi_\psi};w,s)$. 
The explicit form of the Fourier transform $\widehat{\phi_\psi}$ is given by the following lemma, which is easily derived from Stark \cite[Lemmas 5 and 6]{Stark}. 

\begin{lemma}
\label{lem:CalOfFTofGaussSum}
If $y\not\in r^{-1}V_{\Z}$, then
$\widehat{\phi_{\psi}}(y)=0$. If $y\in r^{-1}V_{\Z}$, then
\begin{gather*}
 \widehat{\phi_{\psi}}(y) =  r^{-m/2-1} K_{r,\psi} \tau_{\psi^*} \left(NQ^*(ry)\right), \\
K_{r,\psi} 
 = C_{p-q,r} \psi^*(-N) \chi_K(r), 
\end{gather*}
where $\chi_K$ is the same as in Theorem \ref{thm:Maass form for Ueno zeta},  and $\psi^*$ and $C_{p-q,r}$ are given by \eqref{eqn:psi-star} and \eqref{eqn:def of clr}, respectively,  for $\ell=p-q$. 
\end{lemma}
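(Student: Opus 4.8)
The plan is to reduce the claim to a classical quadratic Gauss sum over $\Z/r\Z$ and then quote Stark's lemmas. First note that $\phi_\psi$ is a genuine Schwartz--Bruhat function that factors through $V_\Z/rV_\Z$: the Gram matrix of $2Q$ is integral, so $Q(x+rw)\equiv Q(x)\pmod r$ for all $x,w\in V_\Z$, while $\tau_\psi(n)$ depends only on $n$ modulo $r$. Since $V_\Z$ is self-dual for $\langle\,,\,\rangle$, for $y\in V_\Q$ we may take $L=rV_\Z$ in the definition of $\widehat{\phi_\psi}$ as soon as $\langle rV_\Z,y\rangle\subseteq\Z$, that is, as soon as $y\in r^{-1}V_\Z$, and a smaller lattice otherwise. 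If $y\notin r^{-1}V_\Z$, grouping the defining sum over the fibres of $V_\Z/L\to V_\Z/rV_\Z$ makes $\phi_\psi$ constant on each fibre, leaving the sum of a nontrivial additive character over the finite group $rV_\Z/L$, which vanishes; hence $\widehat{\phi_\psi}(y)=0$. For $y=r^{-1}v$ with $v\in V_\Z$ one is left with
\[
\widehat{\phi_\psi}(r^{-1}v)=r^{-(m+2)}\sum_{x\bmod r}\tau_\psi(Q(x))\,\mathbf{e}\!\left[-\frac{\langle x,v\rangle}{r}\right].
\]

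Next, inserting the definition of $\tau_\psi$ and interchanging sums reduces this to $r^{-(m+2)}\sum_{t\in(\Z/r\Z)^\times}\psi(t)S_t(v)$, where $S_t(v)=\sum_{x\bmod r}\mathbf{e}\bigl[(tQ(x)-\langle x,v\rangle)/r\bigr]$ is a quadratic Gauss sum in $m+2$ variables. Summing first over the hyperbolic pair $x_0,x_{m+1}$ yields the factor $r\,\mathbf{e}[-t^{-1}v_0v_{m+1}/r]$ and eliminates both variables; completing the square in $x'=(x_1,\dots,x_m)$ using $A^{-1}$ modulo $r$ --- legitimate because $r$ is odd and $(r,N)=1$, whence $(r,D)=1$ --- produces the factor $\mathbf{e}[-t^{-1}\,\tfrac14\,{}^tv'A^{-1}v'/r]$ and leaves $\sum_{x'\bmod r}\mathbf{e}[t\,{}^tx'Ax'/r]$. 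By diagonalising the form $t\,{}^tx'Ax'$ over $\Z/r\Z$ and applying the classical one-variable Gauss sum evaluation (Stark's Lemmas 5 and 6), this last sum equals $\bigl(\tfrac{\det(tA)}{r}\bigr)(\varepsilon_r\sqrt r)^m$. Since $Q^*(v)=v_0v_{m+1}+\tfrac14\,{}^tv'A^{-1}v'$, the two phase factors combine to $\mathbf{e}[-t^{-1}Q^*(v)/r]$, and gathering the powers of $r$ gives
\[
\widehat{\phi_\psi}(r^{-1}v)=r^{-m/2-1}\,\varepsilon_r^{\,m}\Bigl(\tfrac{\det A}{r}\Bigr)\sum_{t}\psi(t)\Bigl(\tfrac{t}{r}\Bigr)^{m}\mathbf{e}\!\left[-\frac{t^{-1}Q^*(v)}{r}\right].
\]

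To conclude, substitute $t\mapsto t^{-1}$ and then $t\mapsto-t$; using $m\equiv p-q\pmod2$ we have $\overline{\psi(t)}\bigl(\tfrac{t}{r}\bigr)^m=\psi^*(t)$ (with $\ell=p-q$), so the sum collapses to $\psi^*(-1)\tau_{\psi^*}(Q^*(v))$. The identity $\tau_{\psi^*}(an)=\overline{\psi^*(a)}\,\tau_{\psi^*}(n)$ for $(a,r)=1$ (valid also for the principal character) then rewrites $\tau_{\psi^*}(Q^*(v))$ as $\psi^*(N)\tau_{\psi^*}(NQ^*(v))=\psi^*(N)\tau_{\psi^*}(NQ^*(ry))$, since $v=ry$. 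Finally one checks the elementary identity $\varepsilon_r^{\,m}\bigl(\tfrac{\det A}{r}\bigr)=C_{p-q,r}\,\chi_K(r)$, splitting into the cases $p-q$ even and $p-q$ odd and using $\varepsilon_r^2=\bigl(\tfrac{-1}{r}\bigr)$, $m-(p-q)=2q$, $\sgn(\det A)=(-1)^q$, $D=2^m\det A$, $\bigl(\tfrac{2}{r}\bigr)^{m}=1$ resp.\ $\bigl(\tfrac{2}{r}\bigr)^{m+1}=1$, and the definition of $\chi_K$. Putting the pieces together gives $\widehat{\phi_\psi}(y)=r^{-m/2-1}K_{r,\psi}\,\tau_{\psi^*}(NQ^*(ry))$ with $K_{r,\psi}=C_{p-q,r}\psi^*(-N)\chi_K(r)$, as asserted.

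The one step that demands real attention is the evaluation of the Gauss sum $S_t(v)$ with all the $\varepsilon_r$, $\bigl(\tfrac{\cdot}{r}\bigr)$ and power-of-$r$ factors correct --- this is exactly the content of Stark's Lemmas 5 and 6 --- together with the sign bookkeeping through the substitutions $t\mapsto t^{-1}$, $t\mapsto-t$ and the rescaling by $N$, the need to keep the principal character $\psi$ (for which $\psi^*$ may also be principal) on the same footing, and not dropping a factor $\bigl(\tfrac{2}{r}\bigr)$ when passing between $\det A$ and $D=\det(2A)$. No analytic input is needed: the assertion is purely a finite Fourier computation.
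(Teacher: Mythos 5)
Your computation is correct and follows exactly the route the paper intends: you reduce $\widehat{\phi_\psi}(y)$ to a finite Fourier sum mod $r$, evaluate the hyperbolic pair and the remaining $m$-variable quadratic Gauss sum via Stark's Lemmas 5 and 6, and the final character bookkeeping (the substitutions $t\mapsto t^{-1}$, $t\mapsto -t$, the rescaling by $N$, and the identity $\varepsilon_r^{m}\left(\tfrac{\det A}{r}\right)=C_{p-q,r}\,\chi_K(r)$ in the even and odd $p-q$ cases, including the $\left(\tfrac{2}{r}\right)$ factor between $\det A$ and $D=\det(2A)$) all check out. Since the paper gives no details beyond citing Stark, your write-up is essentially the omitted verification, done correctly.
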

It follows from Lemma \ref{lem:CalOfFTofGaussSum} that 
\begin{equation}
Z^*(\widehat{\phi_\psi};\frac{n}{Nr^2},w) 
 = \begin{cases}
r^{w-m/2-1} K_{r,\psi} \tau_{\psi^*}(n) Z^*(n,w) & (n \in \Z), \\ 
0 & (n \not\in \Z)
\end{cases}
\label{eqn:Z-star-psi}
\end{equation}
and
\begin{equation}
(Nr^2)^{-s}\zeta^*_\pm(\widehat{\phi_\psi};w,s) 
 = r^{w-m/2-1} K_{r,\psi} \sum_{n=1}^\infty 
\tau_{\psi^*} \left(\pm n\right) Z^*(\pm n,w) n^{-s}.
\label{form:CalOnRazarTypeZeta}
\end{equation}
Thus we obtain the following lemma. 

\begin{lemma}
Let $\phi_0(x)$ be the characteristic function of $V_\Z$. 
For a Dirichlet character $\psi$ module an odd prime number $r$ not dividing $N$, 
we put $\phi_\psi(x)=\tau_\psi(Q(x))\phi_0(x)$.  
Then we have 
\begin{eqnarray*}
\xi_\pm(w,s) &=& e^{\pi i(p-q)/4}|D|^{-1/2}\zeta_\pm(\phi_0;w,s), \\
\xi^*_\pm(w,s) &=& N^{-s} \zeta^*_\pm(\widehat{\phi_0};w,s), \\
\xi_\pm(\psi;w,s) &=& e^{\pi i(p-q)/4}|D|^{-1/2}\zeta_\pm(\phi_\psi;w,s), \\
\xi^*_\pm(\psi^*;w,s) &=& \frac{(Nr^2)^{-s} r^{-w+m/2+1}}{K_{r,\psi}}
                    \zeta^*_\pm(\widehat{\phi_\psi};w,s).
\end{eqnarray*}
\end{lemma}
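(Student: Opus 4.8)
The proof is a matter of unwinding the Dirichlet-series definitions on both sides of each of the four identities and substituting the coefficient formulas already established; I expect no genuine obstacle, since the substantive computations have all been carried out earlier (above all the Fourier transform of the Gauss-sum test function in Lemma~\ref{lem:CalOfFTofGaussSum}). The plan is to dispose of the four identities in turn, working throughout in the region of absolute convergence $\Re(s)>1$, $\Re(w)>m$ so that all rearrangements are legitimate, the identities then extending to meromorphic functions of $(w,s)$ on $\C^{2}$ by the analytic continuations recorded in Theorem~\ref{thm:FE for general phi} (together with the continuation of $\zeta_\pm(w,s)$, $\zeta^*_\pm(w,s)$ recalled at the beginning of the section).

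First I would handle the two $\phi_0$-identities. By the expansion \eqref{eqn:DS hyoji}, $\zeta_\pm(\phi_0;w,s)=\sum_{n\in\Q^\times_{>0}}Z(\phi_0;\pm n,w)\,n^{-s}$; by \eqref{eqn:z-phi0} the coefficient $Z(\phi_0;\pm n,w)$ vanishes unless $n\in\Z_{>0}$, where it equals $Z(\pm n,w)$, so $\zeta_\pm(\phi_0;w,s)=\sum_{n\ge1}Z(\pm n,w)n^{-s}=\zeta_\pm(w,s)$. Multiplying by $e^{\pi i(p-q)/4}|D|^{-1/2}$ and comparing with the definition of $\xi_\pm(w,s)$ gives the first identity. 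For the second, \eqref{eqn:DS hyoji} gives $\zeta^*_\pm(\widehat{\phi_0};w,s)=\sum_{m\in\Q^\times_{>0}}Z^*(\widehat{\phi_0};\pm m,w)\,m^{-s}$, and \eqref{eqn:z-star-phi0} shows this coefficient is nonzero only when $Nm\in\Z_{>0}$; writing $m=n/N$ gives $\zeta^*_\pm(\widehat{\phi_0};w,s)=N^{s}\sum_{n\ge1}Z^*(\pm n,w)n^{-s}=N^{s}\xi^*_\pm(w,s)$, since $\xi^*_\pm(w,s)=\sum_{n\ge1}Z^*(\pm n,w)n^{-s}$; rearranging yields $\xi^*_\pm(w,s)=N^{-s}\zeta^*_\pm(\widehat{\phi_0};w,s)$.

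Next I would treat the two twisted identities. From \eqref{eqn:Z-star-phi}, $Z(\phi_\psi;\pm n,w)=\tau_\psi(\pm n)Z(\pm n,w)$ for $n\in\Z_{>0}$ and $0$ otherwise, so \eqref{eqn:DS hyoji} gives $\zeta_\pm(\phi_\psi;w,s)=\sum_{n\ge1}\tau_\psi(\pm n)Z(\pm n,w)n^{-s}$; multiplying by $e^{\pi i(p-q)/4}|D|^{-1/2}$ and comparing with the definition of $\xi_\pm(\psi;w,s)$ gives the third identity. For the fourth, I would feed the explicit formula for $\widehat{\phi_\psi}$ from Lemma~\ref{lem:CalOfFTofGaussSum} into the definition of $Z^*$ to obtain \eqref{eqn:Z-star-psi}, hence \eqref{form:CalOnRazarTypeZeta}, i.e.\ $(Nr^2)^{-s}\zeta^*_\pm(\widehat{\phi_\psi};w,s)=r^{w-m/2-1}K_{r,\psi}\sum_{n\ge1}\tau_{\psi^*}(\pm n)Z^*(\pm n,w)n^{-s}=r^{w-m/2-1}K_{r,\psi}\,\xi^*_\pm(\psi^*;w,s)$ by the definition of $\xi^*_\pm(\psi^*;w,s)$; solving for $\xi^*_\pm(\psi^*;w,s)$ gives the fourth identity.

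All of the mathematical substance resides in the inputs \eqref{eqn:z-phi0}, \eqref{eqn:z-star-phi0}, \eqref{eqn:Z-star-phi} and Lemma~\ref{lem:CalOfFTofGaussSum}, which are proved earlier; in particular the evaluation of $\widehat{\phi_\psi}$, which rests on Stark's Gauss-sum identities and carries the constant $K_{r,\psi}=C_{p-q,r}\,\psi^*(-N)\,\chi_K(r)$, is the only nontrivial step, and once it is granted the present lemma is pure bookkeeping. Accordingly the only point requiring care is keeping track of the powers of $N$ and $r$, and of $K_{r,\psi}$, as the sums over $\Q^\times_{>0}$ collapse onto sums over $\Z_{>0}$ through the substitutions $m=n/N$ and $m=n/(Nr^2)$; I anticipate no real obstacle.
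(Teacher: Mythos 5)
Your proposal is correct and follows essentially the same route as the paper, which likewise treats the lemma as a summary of the coefficient identities \eqref{eqn:z-phi0}, \eqref{eqn:z-star-phi0}, \eqref{eqn:Z-star-phi} and of \eqref{eqn:Z-star-psi}--\eqref{form:CalOnRazarTypeZeta} obtained from Lemma~\ref{lem:CalOfFTofGaussSum}. The only substantive input is indeed the Fourier transform of $\phi_\psi$ with its constant $K_{r,\psi}$, and your bookkeeping of the powers of $N$ and $r$ matches the paper's.
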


We need the explicit formulas for the Dirichlet series $\eta(\phi;w)$ and  $\eta^*(\phi;w)$ appearing in the residues of $\zeta_\pm(\phi;w,s)$ and $\zeta^*_\pm(\widehat\phi;w,s)$ at $s=1$ for $\phi=\phi_0$ and $\phi_\psi$. 

\begin{lemma}
\label{lemma:exlicit eta}
We have 
\begin{eqnarray*}
\eta(\phi_0;w) &=& \eta^*(\phi_0;w) = \zeta(w-m+1), \\
\eta(\phi_\psi;w) &=& r^{m/2-w-1}\varepsilon_r^m\left(\frac{2^mD}r\right) 
 \tau_{\psi^*}(0)\zeta(w-m+1), \\
\eta^*(\phi_\psi;w) &=& \tau_{\psi}(0)\zeta(w-m+1).
\end{eqnarray*}遯ｶ�｢
\end{lemma}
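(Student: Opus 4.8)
The plan is to compute the four quantities directly from the definitions of $\calA\phi$, $\calA^*\phi$, $\eta(\phi;w)$ and $\eta^*(\phi;w)$ given in Theorem~\ref{thm:FE for general phi}, and then sum the resulting Dirichlet series. First I would dispose of the three easy cases. For $\phi_0$, the characteristic function of $V_\Z=\Z^{m+2}$, one has $\calA\phi_0(x_{m+1})=\mathbf{1}_{x_{m+1}\in\Z}$ (the inner average counts the $M^{m+1}$ residue classes of $\Z^{m+1}/M\Z^{m+1}$) and $\calA^*\phi_0(y_0)=\frac1M\sum_{x_0=0}^{M-1}\mathbf{e}[-x_0y_0]=\mathbf{1}_{y_0\in\Z}$ by orthogonality of additive characters; summing $\frac12|n|^{-(w-m+1)}$ over $n\in\Z\setminus\{0\}$ then gives $\zeta(w-m+1)$ in both cases. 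For $\eta^*(\phi_\psi;w)$, note that $Q(x_0,0,0)=0$, so $\phi_\psi(x_0,0,0)=\tau_\psi(0)\,\phi_0(x_0,0,0)$, hence $\calA^*\phi_\psi=\tau_\psi(0)\,\calA^*\phi_0$ and $\eta^*(\phi_\psi;w)=\tau_\psi(0)\zeta(w-m+1)$.

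The substantive computation is that of $\eta(\phi_\psi;w)$. Here I would expand $\tau_\psi(Q(x))=\sum_{(t,r)=1}\psi(t)\,\mathbf{e}[tQ(x)/r]$, substitute it into the definition of $\calA\phi_\psi(x_{m+1})$ with $M=2r$ (a convenient period for $\phi_\psi$, which avoids the non-integrality of $r^2Q(v)$), and interchange the summations. The sum over $x_0\bmod 2r$ of $\mathbf{e}[tx_0x_{m+1}/r]$ annihilates all terms with $r\nmid x_{m+1}$ and contributes a factor $1$ otherwise, leaving the quadratic Gauss sum $\frac1{(2r)^m}\sum_{x'\bmod 2r}\mathbf{e}\!\left[\frac{t\,{}^tx'(2A)x'}{2r}\right]$. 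I would evaluate this by the Chinese Remainder Theorem: the factor modulo $2$ is $2^m$ because $2A$ has even diagonal, while the factor modulo $r$ is the classical Gauss sum of the quadratic form $2A$ (nondegenerate over $\mathbb{F}_r$ since $r\nmid ND$), which after diagonalizing equals $\varepsilon_r^m r^{m/2}\left(\frac{2^mD}{r}\right)\left(\frac{t}{r}\right)^m$ with $\varepsilon_r$ as in \eqref{eqn:def of epsilond}. This yields $\calA\phi_\psi(x_{m+1})=\mathbf{1}_{r\mid x_{m+1}}\,r^{-m/2}\varepsilon_r^m\left(\frac{2^mD}{r}\right)\sum_{(t,r)=1}\psi(t)\left(\frac{t}{r}\right)^m$. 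The change of variable $t\mapsto t^{-1}\bmod r$ (under which $\psi(t)\mapsto\overline{\psi(t)}$ while $\left(\frac{t}{r}\right)$ is fixed), combined with $m\equiv p-q\pmod2$ and the definition \eqref{eqn:psi-star} of $\psi^*$, rewrites the $t$-sum as $\sum_{(t,r)=1}\psi^*(t)=\tau_{\psi^*}(0)$. Finally, summing $|x_{m+1}|^{-(w-m+1)}$ over the nonzero multiples of $r$ supplies the remaining $2r^{-(w-m+1)}\zeta(w-m+1)$, and combining the powers of $r$ ($r^{-m/2}\cdot r^{-(w-m+1)}=r^{m/2-w-1}$) produces the asserted formula.

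The main obstacle is precisely this Gauss sum evaluation and the attendant bookkeeping: tracking the unit $\varepsilon_r$, the Jacobi symbol $\left(\frac{2^mD}{r}\right)$ and the inverse-of-$2$ factor from the CRT split, and correctly matching the residual character sum with $\tau_{\psi^*}(0)$. Everything else is routine manipulation of lattice averages and geometric sums of additive characters. Since, as noted in \S5, the residue data furnished by this lemma (together with Theorem~\ref{thm:FE for general phi} and Lemma~\ref{lem:CalOfFTofGaussSum}) is exactly what is needed to verify conditions [A2] and [A5] and their twisted analogues for the zeta functions $\xi_\pm(w,s)$, $\xi^*_\pm(w,s)$, no further analytic input will be required once this computation is in hand.
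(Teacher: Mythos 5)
Your proposal is correct and follows essentially the same route as the paper: the easy cases are handled exactly as you describe, and for $\eta(\phi_\psi;w)$ the paper likewise expands $\tau_\psi(Q(x))$ into additive characters, observes that the $x_0$-average forces $r\mid x_{m+1}$, and evaluates the remaining quadratic Gauss sum (citing Stark) as $\varepsilon_r^m r^{m/2}\left(\frac{2^mD}{r}\right)\left(\frac{t}{r}\right)^m$, with the residual character sum becoming $\tau_{\psi^*}(0)$ via $t\mapsto t^{-1}$ and $\ell\equiv p-q\equiv m\pmod 2$. The only cosmetic difference is that the paper averages modulo $M=r$, which is already a period of $\phi_\psi$ because $2A$ is integral, so no CRT splitting and no separate mod-$2$ factor are needed.
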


\begin{proof}
We give the proof only for the expression of $\eta(\phi_\psi;w)$, since the proof of the other expressions are rather easy. 
Let $\psi$ be a Dirichlet character modulo an odd prime number $r$ not dividing $N$. 
By the definition of  $\phi_\psi$, it is obvious that $\calA\phi_\psi(x_{m+1}) = 0$ for $x_{m+1}\not\in \Z$. If $x_{m+1} \in \Z$, then by \cite[Lemma 5]{Stark}, we have 
\begin{eqnarray*}
\calA\phi_\psi(x_{m+1}) 
 &=& \frac{1}{r^{m+1}} \sum_{(x_0,x') \in \Z^{m+1}/r\Z^{m+1}} \tau_\psi(x_0x_{m+1}+{}^tx'Ax') \\ 
 &=& \frac{1}{r^{m+1}} \sum_{(k,r)=1} \psi(k)
 \sum_{x' \in \Z^{m}/r\Z^{m}} \mathbf{e}\left[\frac{k}{r}\cdot{}^tx'Ax'\right]  
 \sum_{x_0 \in \Z/r\Z} \mathbf{e}\left[\frac{kx_0x_{m+1}}r\right] \\ 
 &=& \frac{1}{r^{m}} \sum_{(k,r)=1} \psi(k)
 \sum_{x' \in \Z^{m}/r\Z^{m}} \mathbf{e}\left[\frac{k}{r}\cdot{}^tx'Ax'\right]  
 \times 
 \begin{cases}
 1 & (x_{m+1} \equiv 0 \pmod r) \\ 
 0 & (x_{m+1} \not\equiv 0 \pmod r)
\end{cases} \\
 &=& r^{-m/2} \varepsilon_r^m \left(\frac{2^m D}r\right) \cdot \tau_{\psi^*}(0) \times 
 \begin{cases}
 1 & (x_{m+1} \equiv 0 \pmod r), \\ 
 0 & (x_{m+1} \not\equiv 0 \pmod r).
\end{cases} 
\end{eqnarray*}
The expression of $\eta(\phi_\psi;w)$ follows immediately from this identity. 
\end{proof}

Now we are in a position to complete the proof of Theorem \ref{thm:Maass form for Ueno zeta}. 
We denote by $\alpha(\pm n)$ and $\beta(\pm n)$ $(n \in \Z_{>0})$ the coefficients of $n^{-s}$ in $\xi_\pm(w,s)$ and  $\xi^*_\pm(w,s)$, respectively. 
Namely
\[
\alpha(n) = e^{\pi i (p-q)/4}|D|^{-1/2} Z(n,w), \quad 
\beta(n) = Z^*(n,w)  
\quad (n \in \Z,\ n\ne0), 
\] 
Then, by Corollary \ref{cor:5.5} (1) for $k=0$, \eqref{eqn:z-phi0} and \eqref{eqn:z-star-phi0}, we see that 
 $\alpha(0)$ and $\beta(0)$ defined by \eqref{form:DefOfA0AndAInfty} and 
 \eqref{form:DefOfB0AndBInfty} are given by 
\[
\alpha(0) = e^{\pi i (p-q)/4}|D|^{-1/2} Z(0,w), \quad 
\beta(0) = |D|^{-1} Z^*(0,w).
\]
From the residue formulas in Theorem \ref{thm:FE for general phi} (2) 
and Lemma \ref{lemma:exlicit eta}, we get the following explicit expressions for 
 $\alpha(\infty)$ and $\beta(\infty)$ defined by \eqref{form:DefOfA0AndAInfty2} and 
 \eqref{form:DefOfB0AndBInfty2}:
\[
\alpha(\infty) =  \zeta(w-m+1), \quad 
\beta(\infty) =  e^{-\pi i(p-q)/4} |D|^{-1/2} \zeta(w-m+1). 
\]
The conditions [A1] and $\mathrm{[A1]}_{r,\psi}$ follow immediately from Theorem \ref{thm:FE for general phi} (1),  
the conditions [A2] and $\mathrm{[A2]}_{r,\psi}$ from Theorem \ref{thm:FE for general phi} (2), 
the functional equations in [A3] and $\mathrm{[A3]}_{r,\psi}$ from the functional equation \eqref{eqn:normalized FE},  
the conditions [A4] and $\mathrm{[A4]}_{r,\psi}$ from Corollary \ref{cor:5.5}, 
and the conditions $\mathrm{[A5]}_{r,\psi}$ from Theorem \ref{thm:FE for general phi} (2) and Corollary \ref{cor:5.5}.

\end{document}